\numberwithin{equation}{section}
\theoremstyle{plain}
\newtheorem{lem}[equation]{Lemma}
\newtheorem{prop}[equation]{Proposition}
\newtheorem{thm}[equation]{Theorem}
\newtheorem{cor}[equation]{Corollary}
\newtheorem{que}[equation]{Question}
\theoremstyle{definition}
\newtheorem{definition}[equation]{Definition}
\newtheorem{remark}[equation]{Remark}
\newtheorem{example}[equation]{Example}
\newcommand*{\inc}{\ensuremath{\mathcal{I}}}
\newcommand{\A}{{\mathcal A}}
\newcommand{\F}{{\mathcal F}}
\renewcommand{\P}{{\mathcal P}}
\newcommand{\R}{\mathbb R}
\renewcommand{\S}{{\mathcal S}}
\newcommand{\V}{{\mathcal V}}
\newcommand{\Z}{\mathbb Z}
\newcommand{\acts}{\curvearrowright}
\newcommand{\al}{\alpha}
\newcommand{\Ga}{\Gamma}
\newcommand{\out}{\operatorname{Out}}
\newcommand{\Stab}{\operatorname{Stab}}
\begin{document}

\title[QUASI-ISOMETRIC CLASSIFICATION OF RIGHT-ANGLED ARTIN GROUPS II]{QUASI-ISOMETRIC CLASSIFICATION OF RIGHT-ANGLED ARTIN GROUPS II: SEVERAL INFINITE OUT CASES}

\author{JINGYIN HUANG}
\address{The Department of Mathematics and Statistics\\
McGill University\\
Burnside Hall, 805 Sherbrooke W.\\
Montreal, QC, H3A 0B9, Canada}

\email{jingyin.huang@mcgill.ca}
\begin{abstract}
We are motivated by the question that for which class of right-angled Artin groups (RAAGs), the quasi-isometric classification coincides with commensurability classification. This is previously known to hold for RAAGs with finite outer automorphism groups. In this paper, we identify two classes of RAAGs, where their outer automorphism groups are allowed to contain adjacent transvections and partial conjugations, hence are infinite. If $G$ belongs to one of these classes, then any other RAAG $G'$ is quasi-isometric to $G$ if and only if $G'$ is commensurable with $G$. We also show that in such cases, there exists an algorithm to determine whether two RAAGs are quasi-isometric by looking at their defining graphs. Compared to the finite out case, the main issue we need to deal with here is that one may not be able to straighten the quasi-isometries in a canonical way. We introduce a deformation argument, as well as techniques from cubulation to deal with this issue.

\smallskip 

\noindent
\textbf{AMS classification numbers}.  20F65, 20F67, 20F69, 05C25 %52B30, 55N25, %57N65\\%57R18, 57R55, \\

\smallskip

\noindent 
\textbf{Keywords.} Quasi-isometric classification, commensurability classification, right-angled Artin group
\end{abstract}

\maketitle
\tableofcontents
\setcounter{tocdepth}{2}

\section{Introduction}

\subsection{Motivation and Background}
Recall that a \emph{quasi-isometry} $q:X\to Y$ between two metric spaces $X$ and $Y$ is a map such that there exist constants $L,A>0$ with the following properties:
\begin{enumerate}
	\item $L^{-1}d(x,y)\le d(f(x),f(y))\le Ld(x,y)$ for any $x,y\in X$;
	\item each point in $Y$ is at most distance $A$ from a point in $q(X)$.
\end{enumerate}

Given a quasi-isometry $q:X\to Y$ between two metric spaces, one common scheme of understanding $q$ is the following. In step one we specify a collection of subspaces of $X$ and $Y$ such that they are stable under $q$, and encode the coarse intersection pattern of the these subspaces of $X$ (or $Y$) in a combinatorial object $C_X$ (or $C_Y$). Then $q$ induces a \textquotedblleft isomorphism\textquotedblright\ $q_{\ast}:C_X\to C_Y$. In step two we understand whether an isomorphism between $C_X$ and $C_Y$ implies whether $X$ and $Y$ are isometric, or at least share some interesting geometric feature. Here are two examples of these scheme:
\begin{itemize}
\item When $X=Y=SL(n,\Bbb R)/SO(n)$ for $n\ge 3$, in step one one shows that $q$ preserves the intersection pattern of maximal flats in $X$, hence induces an automorphism of the spherical building at infinity, which is a simplicial complex encoding the intersecting pattern of these flats. Moreover, this automorphism is continuous with respect to the cone topology. In step two we use the fundamental theorem of projective geometry to deduce that such automorphism of the building actually comes from a homothety of $X$. Then one deduces that every such $q$ is of bounded distance from a homothety. This is a special case of the results in  \cite{kleiner1997rigidity,eskin1997quasi}.
\item When $X$ and $Y$ are the mapping class groups of oriented closed surfaces of genus $\ge 2$, $q$ preserves the intersection pattern of Dehn twist flats \cite{hamenstaedt2005geometry,MR2928983}, hence induces an automorphism of the curve complex, which is a simplicial complex encoding the intersecting pattern of the Dehn twist flats. However, Ivanov's theorem tells us any automorphism of the curve complex is induced by a mapping class, hence $q$ is of bounded distance from a left multiplication.
\end{itemize}

While for several other classes of groups and spaces, it is tempting to follow such a scheme to study quasi-isometric classification and rigidity, one cannot expect we are always as lucky in step two where some analogue of fundamental theorem of projective geometry or Ivanov's theorem would hold. 

Now we look at the class of right-angled Artin groups (RAAG). Given a finite simplicial graph $\Gamma$ with vertex set $\{v_{i}\}_{i\in I}$, the right-angled Artin group with defining graph $\Gamma$, denoted by $G(\Gamma)$, is given by the following presentation:
\begin{center}
	\{$v_i$, for $i\in{I}\ |\ [v_i,v_j]=1$ if $v_{i}$ and $v_{j}$ are joined by an edge\}
\end{center}

For RAAGs there are no combinatorial objects as in the above two cases such that on one hand they are quasi-isometry invariants, and on the other hand they satisfy a strong analogue of Ivanov's theorem; as if such objects exist, it would imply the quasi-isometry groups of RAAGs are rather small. On the other hand, even for the most rigid class of RAAGs, their quasi-isometry groups are quite large \cite{raagqi1}.

Under certain conditions, quasi-isometries between RAAGs preserve a collection of subspaces, called standard flats. Kim and Koberda \cite{kim2013embedability} introduced the notion of extension complexes, which is a simplicial complex encoding the coarse intersecting pattern of standard flats in RAAGs. They are quasi-isometry invariants for large class of RAAGs.

For RAAGs with finite outer automorphism groups considered in \cite{MR2421136,raagqi1}, it is proved that any automorphism $\alpha$ of the extension complex induces a canonically defined bijection $\alpha'$ of the underlying RAAG which preserves enough structure for application to quasi-isometric classification. We will refer the map $\alpha'$ as the ``reconstruction map'', as the map is closely related to reconstructing the RAAG from intrinsic combinatorial structure of its extension complex (the precise definition of $\alpha'$ is in Section~\ref{sec:visible}). This is also a natural extension of classical reconstruction problems (e.g. a theorem of Darboux says that any straight line preserving bijection of Euclidean spaces must be affine) to the context of RAAGs. Note however the map $\alpha'$ is typically very far away from being a left multiplication or an isometry, so this can be viewed as a weak analogue of what happens in step two of the two cases discussed above. 

For RAAGs with infinite outer automorphism, the situation could be much worse:
\begin{enumerate}
\item It is possible that there are no well-defined reconstruction maps (in the sense of Section~\ref{sec:visible}).
\item Even if the reconstruction map exists, it may not preserve as much structure as before. This is due to the fact that RAAGs may not \textquotedblleft branch\textquotedblright\ as much as symmetric spaces, thick Euclidean buildings or mapping class groups. Extra conditions are needed to make the reconstruction map \textquotedblleft nice\textquotedblright, and such cases are studied in \cite{MR2421136,raagqi1}.
\end{enumerate}

In this paper we show that quasi-isometric classification results could still be obtained despite these two challenges. We study two classes of RAAGs in this paper. The first class is the largest class of RAAGs such that the reconstruction map exists with respect to automorphisms of extension complexes. Then we will introduce another class of RAAGs, where the reconstruction map fails to exist, and indicate how to get around this issue. It turns out that ideas from cubulation theory are relevant.

The previous quasi-isometric classification results of RAAGs fall into two classes with strong contrast in their conclusions. \cite{behrstock2008quasi} identifies a class of RAAGs whose quasi-isometry types do not depend on the defining graphs, while \cite{MR2421136} identifies another class of RAAGs such that two RAAGs in this class are quasi-isometric if and only if they are isomorphic. Higher dimensional generalizations of these two cases are in \cite{MR2727658} and \cite{raagqi1} respectively. We intend to understand this strong contrast by \textquotedblleft interpolating\textquotedblright\ between these two cases. The classes of RAAGs discussed in this paper serve as an initial step towards this goal.

%Compared to the quasi-isometry rigidity of lattices and mapping class groups, the form of rigidity obtained in the case of RAAG is weaker and subtle even if one consider the most \textquotedblleft rigid\textquotedblright\ RAAGs. The quasi-isometry group $QI(G(\Gamma))$ is very huge, see the discussion about quasi-isometry flexibility in \cite{MR2421136} and \cite{raagqi1}.

\subsection{Main results and open questions}
We denote the RAAG with defining graph $\Ga$ by $G(\Ga)$. Our search for appropriate classes of RAAGs is roughly guided by the outer automorphism group $\out(G(\Gamma))$. Namely if a property is true for all elements in $\out(G(\Gamma))$, then we ask whether it is also true for all quasi-isometries of $G(\Ga)$. See Section \ref{basics about raag} for a review of $\out(G(\Gamma))$. Since we are mainly interested in the case where $\out(G(\Gamma))$ is infinite, we need to focus on the 3 types of generators of $\out(G(\Gamma))$ which are of infinite order, namely the adjacent transvections, non-adjacent transvections and partial conjugations. Adjacent transvections happen inside free abelian subgroups, so they have relatively nice behaviour compared other types. We deal with it first.

\begin{definition}
\label{1.1}
$G(\Gamma)$ is of \textit{weak type I} if
\begin{enumerate}
\item $\Gamma$ is connected and does not contain any separating closed star.
\item There do not exist vertices $v,w\in\Gamma$ such that $d(v,w)=2$ and $\Gamma=St(v)\cup St(w)$.
\end{enumerate}
\end{definition}

We caution the reader that in this paper, the closed star of a vertex $v$, which we denote by $St(v)$, is defined to be the full subgraph spanned by $v$ and vertices adjacent to $v$. This definition is slightly different from the usual one. Similarly, $lk(v)$ is defined to be the full subgraph spanned by vertices adjacent to $v$.

It turns out that $G(\Gamma)$ is of weak type I if and only if one can always reconstruct a map from $G(\Ga)$ to itself from a given isomorphism of its extension complex in the sense of Definition \ref{visible}, see Theorem \ref{7.28} for a precise statement. In particular, all RAAGs with finite outer automorphism group are of weak type I.

If $G(\Gamma)$ is of weak type I, then $\out(G(\Gamma))$ does not contain non-adjacent transvections and partial conjugations, however, $\out(G(\Ga))$ may contain adjacent transvections. For example, we can take $\Gamma$ to be the graph which is made of a 5-cycle and a 3-cycle glued along an edge.

%If $G(\Gamma)$ is 2-dimensional, then $\out(G(\Gamma))$ contains adjacent transvections if and only if $\out(G(\Gamma))$ contains non-adjacent transvections, so in this case, $G(\Gamma)$ is of weak type I if and only if $\out(G(\Gamma))$ is finite.

\begin{thm}[=Theorem~\ref{7.26}]
\label{1.2}
If $G(\Gamma)$ and $G(\Gamma')$ are of weak type I, then they are quasi-isometric if and only if they are isomorphic.
\end{thm}

Having weak type I is not a quasi-isometry invariant (cf. \cite[Example 1.4]{MR2421136}). However, the following weaker version of Theorem \ref{1.2} is true when only $G(\Ga_1)$ is of weak type I.

\begin{thm}[=Theorem~\ref{6.12}]
\label{1.3}
Suppose $G(\Gamma_{1})$ is of weak type I. Then the following are equivalent:
\begin{enumerate}
\item $G(\Gamma_{2})$ is quasi-isometric to $G(\Gamma_{1})$.
\item $G(\Gamma_{2})$ is isomorphic to a subgroup of finite index in $G(\Gamma_{1})$.
\item $G(\Gamma_{2})$ is isomorphic to a special subgroup of $G(\Gamma_{1})$.
\end{enumerate}
\end{thm}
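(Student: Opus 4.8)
The plan is to prove the cycle of implications $(3)\Rightarrow(2)\Rightarrow(1)\Rightarrow(3)$. The implication $(3)\Rightarrow(2)$ is the easy direction: a special subgroup $G(\Lambda)$ of $G(\Gamma_1)$ associated to a full subgraph $\Lambda\subseteq\Gamma_1$ has finite index precisely when $\Lambda=\Gamma_1$, but in general we only need that $G(\Gamma_2)\cong G(\Lambda)$ embeds as a special subgroup, and then observe that being of weak type I is inherited in a way that forces $\Lambda$ to already carry a finite-index copy; more precisely, I would argue directly that whenever $G(\Gamma_1)$ is of weak type I, any RAAG isomorphic to a special subgroup of it is in fact isomorphic to a finite-index subgroup, using that weak type I rules out partial conjugations and non-adjacent transvections so the only "extra room" comes from adjacent transvections inside product pieces. (If instead the intended reading is that (3) already gives (2) only in the relevant cases, one reduces to checking that the special subgraph realizing $G(\Gamma_2)$ must be all of $\Gamma_1$ after collapsing the abelian directions.) The implication $(2)\Rightarrow(1)$ is immediate: a finite-index subgroup of a finitely generated group is quasi-isometric to it, via the inclusion map.

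The substantive content is $(1)\Rightarrow(3)$. Here I would start from a quasi-isometry $q:G(\Gamma_2)\to G(\Gamma_1)$ and invoke the existence of the reconstruction map: since $G(\Gamma_1)$ is of weak type I, by Theorem 7.28 the reconstruction map for $G(\Gamma_1)$ exists, and $q$ induces an isomorphism (or at least a suitably structured morphism) between the relevant combinatorial objects — the extension complex $P^e(\Gamma_1)$ built from the right-angled building or the intersection pattern of maximal "standard flats"/product subcomplexes — and the analogous object for $G(\Gamma_2)$. The first key step is to show $G(\Gamma_2)$ is itself forced to have enough structure: quasi-isometries preserve the JSJ-type decomposition coming from separating closed stars and product splittings, so $\Gamma_2$ cannot contain a separating closed star either, and cannot be a union $St(v)\cup St(w)$ with $d(v,w)=2$; in other words $G(\Gamma_2)$ is also of weak type I. The second key step is to promote the combinatorial isomorphism to an actual group-theoretic statement: the reconstruction map, evaluated on the standard generators/standard cosets, exhibits $\Gamma_2$ as a full subgraph of $\Gamma_1$ up to the ambiguity introduced by adjacent transvections, i.e. up to replacing some vertices of $\Gamma_1$ by parallel vertices inside their star. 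This yields an embedding of $G(\Gamma_2)$ onto a special subgroup of $G(\Gamma_1)$.

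The main obstacle, and where I would spend the most care, is exactly this last promotion step: unlike the symmetric space, thick Euclidean building, and mapping class group settings, the reconstruction map for RAAG's of weak type I need not be "nice" — the relevant complex does not branch enough to pin down the map uniquely, so one cannot simply quote a fundamental-theorem-of-projective-geometry style rigidity. The deformation argument advertised in the abstract is what resolves this: one shows the reconstruction map can be continuously deformed, through maps respecting the combinatorial structure, to one that sends standard flats to standard flats "on the nose", and then reads off the special subgraph from the image. A secondary technical point is bookkeeping with the adjacent-transvection ambiguity: I would fix, for each equivalence class of vertices under the "$v\sim w$ iff $lk(v)\subseteq St(w)$ and $lk(w)\subseteq St(v)$" relation, a normal form, and check that changing representatives changes the special subgroup only by an automorphism of $G(\Gamma_1)$, so that the conclusion (3) is representative-independent. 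Finally I would close the loop by noting that the special subgroup so produced has finite index iff its defining subgraph is all of $\Gamma_1$, but even when it is proper the quasi-isometry type is unchanged precisely because the missing directions are abelian factors absorbed by the transvection ambiguity — which is what makes $(1)\Leftrightarrow(2)\Leftrightarrow(3)$ hold simultaneously.
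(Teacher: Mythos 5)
There are two concrete errors in the proposal and one substantive gap.

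\textbf{The notion of ``special subgroup'' is misread.} In this paper (Section 2.4, Theorem~\ref{special subgroup theorem}) a special subgroup is \emph{not} the subgroup $G(\Lambda)$ generated by a full subgraph $\Lambda\subset\Gamma_1$; it is the subgroup associated to a compact convex subcomplex $K\subset X(\Gamma_1)$, generated by suitable powers $g_i^{n_i}$ of translations along geodesics meeting $K$. By Theorem~\ref{special subgroup theorem}(1) such a subgroup has finite index equal to the number of vertices of $K$. Hence $(3)\Rightarrow(2)$ is immediate \emph{by definition}; the paragraph about ``forcing $\Lambda$ to carry a finite-index copy,'' ``collapsing the abelian directions,'' and ruling out partial conjugations is not needed and indeed does not type-check against the actual definition. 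The same confusion resurfaces in your closing paragraph (``the special subgroup so produced has finite index iff its defining subgraph is all of $\Gamma_1$''), which is false for the paper's notion of special subgroup.

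\textbf{The weak-type-I claim for $\Gamma_2$ is wrong.} You assert that $\Gamma_2$ must also be of weak type I because quasi-isometries preserve separating closed stars. The paper explicitly says the opposite just before Theorem~\ref{1.3}: ``Having weak type I is not a quasi-isometric invariant.'' What is preserved is only weak type II (Corollary~\ref{7.22}, Theorem~\ref{weak type I visible}). This is not a cosmetic point: visibility of the induced map $q_\ast$ is only established in the direction emanating from the weak-type-I side, so the argument cannot be set up symmetrically as you suggest.

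\textbf{The core of $(1)\Rightarrow(3)$ is not engaged.} Your description of the ``deformation argument'' is a restatement of the abstract, not a proof sketch. The paper's argument (Section 4) constructs, by induction on dimension, a $G(\Gamma_2)$-invariant-up-to-translations $L$-atlas $\bar{\mathcal A}_L$ on $G(\Gamma_1)$, via chart-induced identifications and projections satisfying the three inductive conditions (compatibility, boxed preimages, extension). From this one deforms the reconstruction map to a cubical map $X(\Gamma_1)\to X(\Gamma_2)$ sending standard flats to standard flats, whose vertex-preimages are compact convex subcomplexes; the special subgroup is then read off from such a preimage via Theorem~\ref{special subgroup theorem}. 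Your proposal to ``fix normal forms for adjacent-transvection equivalence classes'' is not what the paper does and would not by itself resolve the non-uniqueness issues; the paper instead works with atlases modulo translations (Lemma~\ref{6.7}) and proves an equivariance statement to conclude.

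On the positive side, your high-level plan (prove the cycle $(3)\Rightarrow(2)\Rightarrow(1)\Rightarrow(3)$, with $(2)\Rightarrow(1)$ trivial and $(1)\Rightarrow(3)$ the real content) is sound and is in fact essentially the structure the paper uses, modulo the paper proving $(1)\Rightarrow(2)$ and $(1)\Rightarrow(3)$ separately from the same atlas. But as written the proposal has genuine gaps that prevent it from being a proof.
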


We refer to Section \ref{special subgroup} for the definition of special subgroups. 

%Given an arbitrary RAAG $G(\Gamma)$ (not necessarily of weak type I) and pick a standard generating set $S$ for $G(\Gamma)$, let $d_{S}$ be the word metric on $G(\Gamma)$ with respect to $S$. A subset $K\subset G(\Gamma)$ is \textit{$S$-convex} if and only if for any three points $x,y\in K$ and $z\in G(\Gamma)$ such that $d_{S}(x,y)=d_{S}(x,z)+d_{S}(z,y)$, we must have $z\in K$. Every finite $S$-convex subset $K$ naturally gives rise to a finite index RAAG subgroup $G\le G(\Gamma)$ such that $K$ is the fundamental domain of the left action $G\curvearrowright G(\Gamma)$. For example, if $G(\Gamma)=\Bbb Z\oplus\Bbb Z$ and pick $K$ to be a rectangle of size $n$ by $m$, then the corresponding subgroup is of form $n\Bbb Z\oplus m\Bbb Z$. The detailed construction of this is given in \cite[Section 6.1]{raagqi1}. $G$ is called a \textit{$S$-special} subgroup of $G(\Gamma)$. A subgroup of $G(\Gamma)$ is \textit{special} if and only if it is $S$-special for some standard generating set $S$. 

\begin{remark}\
\label{1.4}
\begin{enumerate}
\item As we will see later, in general the collection of special subgroups of $G(\Ga)$ depends on the choice of standard generators of $G(\Ga)$. However, the isomorphism types of the special subgroups does not depend on the choice of standard generators (by \cite[Section 6]{raagqi1}, all special subgroups are RAAGs and the isomorphism types of their defining graphs only depend on $\Ga$). For example, let $G(\Ga)\cong \Z\oplus\Z$. Given a base $e_1,e_2\in \Z\oplus\Z$, then special subgroups with such choice of standard generators are of form $\{n e_1+me_2\}_{n,m\in\Z}$. However, no matter what base we choose, all special subgroups are isomorphic to $n\Z\oplus m\Z$. However, we have more rigidity when $\out(G(\Gamma))$ is finite. In this case, the definition of special subgroup does not depend on the choice of standard generators and all finite index RAAG subgroups of $G(\Gamma)$ are special subgroups (\cite[Theorem 1.4]{raagqi1}). 
\item By \cite[Theorem 1.3]{raagqi1}, $G(\Gamma_{2})$ is quasi-isometric to $G(\Ga_1)$ if and only if their extension complexes (Section \ref{basics about raag}) are isomorphic, given $\out(G(\Gamma_{1}))$ is finite. The if only direction is still true in the case of weak type I group, but the other direction is not clear.
\end{enumerate}
\end{remark}

Next we deal with partial conjugations. Since if $\out(G(\Ga))$ contains partial conjugations, then $\Ga$ contains separating closed stars, one may want to cut $\Gamma$ into good pieces along separating closed stars. However, this is not well-defined in general. Then one may try the opposite way and look at graphs obtained by gluing good pieces along vertex stars in a nice way. By studying such examples, we identify the following class of RAAGs. 

\begin{definition}
\label{1.5}
$G(\Gamma)$ is of \textit{type II} if $\Gamma$ is connected and for every pair of distinct vertices $v,w\in\Gamma$, $lk(v)\cap lk(w)$ does not separate $\Gamma$.
\end{definition}

This condition has a geometric interpretation. Note that $lk(v)$ corresponds to hyperplanes in the universal covering of the Salvetti complex, as these hyperplanes are invariant under a conjugate of subgroups of $G(\Gamma)$ generated by vertices in $lk(v)$. So $lk(v)\cap lk(w)$ corresponds to the intersection of hyperplanes. Definition \ref{1.5} can be roughly interpreted as \textquotedblleft hyperplanes of codimension 2 do not coarsely separate\textquotedblright.

A model example is taking $\Gamma$ to be the union of a 5-cycle and a 6-cycle identified along a closed vertex star. If $G(\Gamma)$ is of type II, then $\out(G(\Gamma))$ may contain partial conjugations and adjacent transvections, but not non-adjacent transvections. 

A similar but different condition, called SIL, has been studied in \cite{MR2776995}. The SIL condition also plays a role in the study of right-angled Coxeter groups \cite{cunningham2016recognizing,cunningham2016cat}. 

\begin{thm}[=Theorem~\ref{8.17}]
\label{1.6}
If $G(\Gamma_{1})$ is a right-angled Artin group of type II, then $G(\Gamma_{2})$ is quasi-isometric to $G(\Gamma_{1})$ if and only if $G(\Gamma_{2})$ is commensurable with $G(\Gamma_{1})$. Moreover, there exists a right-angled Artin group $G(\Gamma)$ such that $G(\Gamma_{1})$ and $G(\Gamma_{2})$ are isomorphic to special subgroups in $G(\Gamma)$.
\end{thm}

The following is a consequence of Theorem \ref{1.3}, Theorem \ref{1.6} and \cite[Section 6.3]{raagqi1}.
\begin{cor}
Let $G(\Ga)$ be a right-angled Artin group of type II or weak type I. Then there is an algorithm to determine whether a given right-angled Artin group $G(\Ga')$ is quasi-isometric to $G(\Ga)$ or not.
\end{cor}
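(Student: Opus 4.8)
The plan is to treat the two hypotheses on $G(\Ga)$ separately and, in each, to reduce the quasi-isometry question to a decidable question about special subgroups that is handled by the algorithm of \cite[Section 6.3]{raagqi1}. Two elementary facts will be used throughout: a right-angled Artin group is determined up to isomorphism by its defining graph (Droms), so that isomorphism between RAAG's given by their graphs is decidable; and two finitely generated groups with a common finite-index subgroup are quasi-isometric, so commensurability implies quasi-isometry. We also record that for every RAAG one has $b_1(G(\Lambda))=|\Lambda|$, and that if $H\le G$ has finite index then the restriction map $H^1(G;\Q)\to H^1(H;\Q)$ is injective (its composite with the transfer is multiplication by $[G:H]\ne 0$ on $\Q$-cohomology), so $b_1(G)\le b_1(H)$.

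\emph{Case 1: $G(\Ga)$ is of weak type I.} By the equivalence of (1) and (3) in Theorem \ref{1.3}, $G(\Ga')$ is quasi-isometric to $G(\Ga)$ if and only if $G(\Ga')$ is isomorphic to a special subgroup of $G(\Ga)$. The algorithm of \cite[Section 6.3]{raagqi1} decides, for given finite graphs $\De,\De'$, whether $G(\De')$ is isomorphic to a special subgroup of $G(\De)$; running it on $\Ga,\Ga'$ answers the question. (Effectivity: a special subgroup isomorphic to $G(\Ga')$ is a finite-index RAAG subgroup whose defining graph, being isomorphic to $\Ga'$, has exactly $|\Ga'|$ vertices, and \cite[Section 6.3]{raagqi1} lets one list the defining graphs of all special subgroups of $G(\Ga)$ of that size; one then tests graph isomorphism against $\Ga'$.)

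\emph{Case 2: $G(\Ga)$ is of type II.} By Theorem \ref{1.6}, $G(\Ga')$ is quasi-isometric to $G(\Ga)$ if and only if it is commensurable to $G(\Ga)$, and by the ``moreover'' clause this happens precisely when there is a RAAG $G(\widehat\Ga)$ such that both $G(\Ga)$ and $G(\Ga')$ are isomorphic to special subgroups of $G(\widehat\Ga)$ — the reverse implication being immediate, since a common finite-index overgroup makes the two groups commensurable, hence quasi-isometric. To render the search for $\widehat\Ga$ finite, observe that $G(\Ga)$ is then of finite index in $G(\widehat\Ga)$, so $|\widehat\Ga|=b_1(G(\widehat\Ga))\le b_1(G(\Ga))=|\Ga|$ (and likewise $|\widehat\Ga|\le|\Ga'|$). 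One therefore enumerates the finitely many graphs $\widehat\Ga$ on at most $\min(|\Ga|,|\Ga'|)$ vertices, and for each applies the algorithm of \cite[Section 6.3]{raagqi1} twice — to test whether $G(\Ga)$, respectively $G(\Ga')$, is isomorphic to a special subgroup of $G(\widehat\Ga)$ — returning ``yes'' exactly when some candidate $\widehat\Ga$ passes both tests.

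The substantive input is carried by Theorems \ref{1.3} and \ref{1.6}; granting these, the only real point is the effectivity of manipulating special subgroups, supplied by \cite[Section 6.3]{raagqi1}, together with the transfer bound $|\widehat\Ga|\le|\Ga|$ that keeps the outer search in Case 2 finite. This bound is where some care is needed: the indices $[G(\widehat\Ga):G(\Ga)]$ themselves are \emph{not} bounded (the free abelian directions contribute arbitrarily large indices without changing the isomorphism type), so one must bound the overgroup at the level of its defining graph rather than through the index. Two minor remarks: if $G(\Ga)$ happens to satisfy both hypotheses, either procedure may be run and both return the correct answer by Theorems \ref{1.3} and \ref{1.6}; and we assume, as is standard, that the input RAAG is presented by its defining graph, so that recognizing a RAAG presentation is not part of the problem.
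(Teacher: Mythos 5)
Your proof is correct and follows the route the paper intends: Theorem~\ref{1.3} (respectively Theorem~\ref{1.6}) reduces the quasi-isometry question to a special-subgroup question, which the algorithm of \cite[Section~6.3]{raagqi1} decides. The transfer/Betti-number bound $|\widehat\Ga|\le\min(|\Ga|,|\Ga'|)$ you use in the type~II case is exactly the right way to keep the search for the common overgroup finite (the index itself is indeed unbounded), a detail the paper leaves implicit.
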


We close this section with several comments and open questions. A RAAG of weak type I is not necessarily of type II. The following class contains RAAGs of both weak type I and type II (see Lemma \ref{7.25}).

\begin{definition}
$G(\Ga)$ is said to have \emph{weak type II} if $\Gamma$ is connected and for vertices $v,w\in\Gamma$ such that $d(v,w)=2$, $\Gamma\setminus (lk(v)\cap lk(w))$ is connected.
\end{definition}

It turns out that weak type II is a quasi-isometry invariant for RAAGs, see Corollary \ref{7.22}. Though a large portion of our discussion also generalize to RAAGs of weak type II, the following question remains open.

\begin{que}
Suppose $G(\Ga)$ is of weak type II and $G(\Ga')$ is quasi-isometric to $G(\Ga)$. Is $G(\Ga')$ commensurable with $G(\Ga)$?
\end{que}

The techniques in this paper does not seem to apply effectively to the case when there are non-adjacent transvections in the outer automorphism group. Indeed, in this case, there is serious breakdown of the above form of rigidity. For example, there exist two tree RAAGs which are quasi-isometric but not commensurable (\cite{behrstock2008quasi}). This leads to the following question:
\begin{que}
Suppose $\Ga$ is connected and does not admit a non-trivial join decomposition. Suppose $\out(G(\Ga))$ contains non-trivial non-adjacent transvection. Does there exist $\Ga'$ such that $G(\Ga)$ and $G(\Ga')$ are quasi-isometric, but not commensurable?
\end{que}

\subsection{Comments on the Proof}
We refer to Section \ref{basics about raag} for definitions of relevant terms. The Salvetti complex of $G(\Gamma)$ is denoted by $S(\Gamma)$, the universal covering of $S(\Gamma)$ is denoted by $X(\Gamma)$, and flats in $X(\Gamma)$ that cover standard tori in $S(\Gamma)$ are called standard flats. Two standard flats are \textit{coarsely equivalent} if they have finite Hausdorff distance. Let $\P(\Ga)$ be the extension complex of $X(\Ga)$. The $k$-dimensional simplices in $\P(\Ga)$ are in 1-1 correspondence with coarse equivalent classes of $(k+1)$-dimensional standard flats in $X(\Ga)$. Thus $\P(\Ga)$ captures the coarse intersection pattern of standard flats in $X(\Ga)$. It turns out to be a quasi-isometry invariant for a large class of RAAGs.

\begin{thm}
\label{1.7}
Let $q:G(\Gamma_{1})\to G(\Gamma_{2})$ be a quasi-isometry. Suppose $\out(G(\Gamma_{i}))$ does not contain any non-adjacent transvection for $i=1,2$. Then $q$ preserves maximal standard flats up to finite Hausdorff distance. Moreover it induces a simplicial isomorphism $q_{\ast}:\mathcal{P}(\Gamma_{1})\to\mathcal{P}(\Gamma_{2})$. 
\end{thm}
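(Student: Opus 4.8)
The plan is to show that a quasi-isometry $q:G(\Gamma_1)\to G(\Gamma_2)$ must send each maximal standard flat to within finite Hausdorff distance of a maximal standard flat, and that this assignment respects coarse intersections, so that it descends to a simplicial isomorphism of extension complexes. The starting point is the general theory of quasiflats and their characterization inside CAT(0) cube complexes: a top-dimensional standard flat in $X(\Gamma_i)$ is a maximal-dimensional quasiflat, and the image under $q$ of such a flat is a quasiflat of the same dimension $n = \dim X(\Gamma_i)$ in $X(\Gamma_2)$ (so in particular $\dim X(\Gamma_1) = \dim X(\Gamma_2)$). The first step is therefore to invoke (or reprove, using the divergence/asymptotic-cone technology of \cite{MR2727658} and the structure of top-dimensional quasiflats in RAAG's) the fact that every top-dimensional quasiflat in $X(\Gamma)$ lies in a bounded neighborhood of a finite union of maximal standard flats, and conversely that a single maximal standard flat cannot be coarsely covered by finitely many others unless one of them coincides with it up to finite Hausdorff distance. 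This gives a candidate correspondence on \emph{sets} of maximal standard flats.

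The second, and genuinely delicate, step is to upgrade this to a bijection flat-by-flat. The obstacle is that a priori $q$ could "shuffle" the pieces of a union of maximal standard flats among the pieces of another such union, so one must rule out that the image of a single maximal flat genuinely needs more than one flat to cover it. Here is where the hypothesis that $\out(G(\Gamma_i))$ contains no non-adjacent transvection enters: combinatorially this says that no vertex link in $\Gamma_i$ is contained in another vertex link for non-adjacent vertices, which translates into a strong "irreducibility/branching" property of the pattern of maximal standard flats — two maximal standard flats that are not coarsely equal diverge from each other, and a maximal flat is coarsely rigid in the sense that it is the unique maximal flat within finite Hausdorff distance of any of its deep subsets. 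I would make this precise by analyzing, for a maximal standard flat $F$, the collection of maximal standard flats sharing an unbounded subflat with $F$, showing (using the no-non-adjacent-transvection hypothesis) that this collection interacts with $F$ in a way detectable purely by coarse geometry; then since $q$ and $q^{-1}$ both preserve coarse-geometric data, $q(F)$ must be coarsely a single maximal standard flat. This is the main hurdle, because it is exactly the place where weaker hypotheses (allowing non-adjacent transvections) break down — as the tree-RAAG examples of \cite{behrstock2008quasi} show — so the argument must genuinely use the hypothesis and cannot be purely formal.

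Once the flat-to-flat bijection $q_\ast$ is established, the third step is to check it is simplicial, i.e. that it preserves the full coarse intersection pattern, not just maximality: a collection of maximal standard flats whose pairwise (and total) coarse intersections have a prescribed combinatorial type must be sent by $q$ to a collection with the same combinatorial type. Since lower-dimensional standard flats, and their coarse equivalence classes, are recovered from maximal ones by taking coarse intersections — a $(k+1)$-dimensional standard flat is coarsely the intersection of the maximal standard flats containing it — and since $q$ distorts Hausdorff distances and coarse intersections only up to bounded error, $q_\ast$ carries the intersection lattice of maximal flats to the intersection lattice of maximal flats, hence induces a well-defined simplicial map $\mathcal{P}(\Gamma_1)\to\mathcal{P}(\Gamma_2)$. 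Applying the same reasoning to a quasi-inverse of $q$ yields a simplicial inverse, so $q_\ast$ is an isomorphism. The routine verifications — that the constants stay bounded, that "coarse intersection" is well defined on coarse equivalence classes of standard flats, and that the correspondence is independent of the choice of quasi-inverse up to finite Hausdorff distance — I would relegate to lemmas, as they follow the now-standard pattern in \cite{raagqi1} and present no new difficulty.
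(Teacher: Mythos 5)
Your proposal takes a genuinely different route from the paper --- you argue from quasiflat theory, while the paper invokes the stability machinery of \cite{raagqi1} --- but as written it has a gap. Your step~1 rests on the structure theorem for \emph{top-dimensional} quasiflats: a top-dimensional quasiflat lies near a finite union of top-dimensional standard flats. But a \emph{maximal} standard flat is one arising from a maximal clique of $\Gamma$, and the maximal cliques of $\Gamma$ need not all have the same size; the paper's own model example of weak type I (a 5-cycle glued to a 3-cycle along an edge) has no non-adjacent transvections yet has maximal cliques of size 3 and of size 2. So step~1 produces a correspondence only among the top-dimensional maximal flats and says nothing about the lower-dimensional ones. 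This is not a minor omission: a vertex of $\mathcal{P}(\Gamma_i)$ is a parallel class of standard geodesics, and its label in $\Gamma_i$ may lie only in lower-dimensional maximal cliques, so such a vertex never appears in $\Delta(F)$ for any top-dimensional $F$ and the lattice of coarse intersections you generate from top-dimensional flats misses it entirely. Thus $q_{\ast}$ would not even be defined on the full $0$-skeleton of the extension complex.

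The paper avoids the issue by never working with quasiflats directly. It cites the combinatorial characterization of ``stable'' subgraphs from \cite[Theorem 3.35]{raagqi1} and confines the use of the no-non-adjacent-transvection hypothesis to the short check in Lemma~\ref{4.7}: if $\Gamma_1$ is a maximal clique, $v\in\Gamma_1$, and $v^\perp\subset St(w)$, then absence of non-adjacent transvections forces $w\in v^\perp$, so $w\in\Gamma_1$ by maximality --- an argument that is uniform in the dimension of the clique. The map $q_{\ast}$ is then produced by a formal lattice argument on coarse equivalence classes of stable subcomplexes. Your step~2, which you rightly flag as the main hurdle (``I would make this precise by analyzing\ldots''), is also exactly where you do not give a concrete argument; the paper sidesteps that hurdle by leaning on the pre-existing stability framework, so the hard quasi-geometric content is a single citation rather than a fresh asymptotic-cone analysis.
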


The assumption of Theorem \ref{1.7} is motivated by the observation that any automorphism of $G(\Ga)$ preserves maximal standard flats up to finite Hausdorff distance if and only if there is no non-adjacent transvection in $\out(G(\Ga))$. This observation can be easily checked by going through the list at the end of Section \ref{basics about raag} (note that maximal standard flats are in 1-1 correspondence to maximal standard abelian groups in $G(\Ga)$).

One can try to reconstruct a \textquotedblleft straightening\textquotedblright\ of $q$ from $q_{\ast}$ as follows. Pick vertex $x\in X(\Ga_1)$ and let $\{F_i\}_{i\in I}$ be the collection of maximal standard flats containing $x$. Under a mild condition on $\Gamma$, we have $x=\cap_{i\in I}F_i$. Each $F_i$ is associated with a maximal standard flat $F'_{i}\subset X(\Ga_2)$ by Theorem \ref{1.7}. It is natural to define $\bar{q}:G(\Gamma_{1})\to G(\Gamma_{2})$ such that $\bar{q}(x)=\cap_{i\in I}F'_i$. However, it is possible that $\cap_{i\in I}F'_i=\emptyset$. 

\subsubsection{The weak type I case} It turns out that this is exactly the case that we always have $\cap_{i\in I}F'_i\neq\emptyset$. Under an extra mild condition we can deduce $\cap_{i\in I}F'_i$ is actually a point. Then the map $\bar{q}$ is well-defined, and it preserves all the maximal standard flats. A priori, $\bar{q}$ may not preserve standard flats which are not maximal, and the key to prove Theorem \ref{1.3} is to deform $\bar{q}$ such that it preserves all standard flats.

A standard flat is \textit{rigid} if $\bar{q}$ sends its vertex set to the vertex set of another standard flat, otherwise it is \textit{non-rigid}. For example, all intersections of maximal standard flats are rigid, but the converse may not be true.

We will deform $\bar{q}$ in an inductive way. The first step is to show one can deform with respect to minimal rigid flats such that any standard flat contained in a minimal rigid flat is preserved by $\bar{q}$. To continue the induction argument, note that inside a (not necessarily minimal) rigid flat, there are directions which are rigid and directions which are not rigid. So we need to perform the deformation such that each move does not undo the previous moves, and does not place obstructions to the moves after. The second point is non-trivial, since rigid flats may intersect each other in a complicated pattern. To describe the deformation, we introduce an atlas for $G(\Gamma)$, where the vertex sets of standard flats are consistently labelled by free abelian groups. The detail is discussed in Section 5.

\subsubsection{The type II case}
In this case, the map $\bar{q}$ may fail to exist. For example, one can take $q$ to be a partial conjugation. 

Instead of reconstructing maps, we ask whether one can reconstruct the space $X(\Ga)$ from $\P(\Ga)$. Note that $X(\Ga)$ is a $CAT(0)$ cube complex. In general, the collection of halfspaces in a $CAT(0)$ cube complex, and their intersection pattern contains the complete information needed to reconstruct the complex itself. This can be formalized in the language of pocset (see Definition \ref{2.3} and Theorem \ref{2.5}).

Then we ask whether we can put a pocset structure on $\P(\Ga)$ such that it is the right pocset structure to recover $X(\Ga)$. This can always be done. Roughly speaking, one can embed $\P(\Ga)$ into the Tits boundary of $X(\Ga)$. Moreover, the collection of subsets of $\P(\Ga)$ which are the intersections of $\P(\Ga)$ and the Tits boundary of halfspaces of $X(\Ga)$ has a natural pocset structure. 

Briefly speaking, $X(\Ga)$ is equivalent to $\P(\Ga)$ with some decorations on $\P(\Ga)$. In general, these decorations depend on how one embeds $\P(\Ga)$ into the Tits boundary, so they do not come from intrinsic properties of $\P(\Ga)$. Thus the rigidity of $X(\Ga)$ depends on the amount of non-intrinsic decorations we need to put on $\P(\Ga)$. For example, in the most rigid case when $G(\Ga)$ has finite outer automorphism group, the amount of decorations needed is minimal. The worst case is when $X(\Ga)$ is tree, then $\P(\Ga)$ is just a discrete set.  

If $G(\Ga)$ is of type II, then the amount of extra decorations is reasonably small (see Corollary \ref{7.14} (1) for a precise statement). We prove Theorem \ref{1.6} in two steps. The pocset structure on $\P(\Ga)$ is defined in terms of a certain partition of $\P(\Ga)$. First we show it is possible to refine this partition to obtain a new pocset which does not admit any reasonable further refinement. It turns out the new pocset gives rise to another RAAG which is commensurable with the original one. Such RAAGs are called \textit{prime} RAAGs (Definition \ref{8.3}). Then we show two prime RAAGs are quasi-isometric if and only if they are isomorphic, which finishes the proof of Theorem \ref{1.6}. We caution the reader that in order to avoid some technicality, we work with pocset on $\Ga$ rather than $\P(\Ga)$ in Section 6. However, the idea is similar.

\subsection{Organization of the Paper}
In Section 2 we summarize and generalize several results from \cite{raagqi1} about $CAT(0)$ cube complexes, right-angled Artin groups and extension complexes. In particular, Theorem \ref{1.7} will be proved in Section 2.2.

In Section 3 we study the structure of the extension complex $\mathcal{P}(\Gamma)$ and prove Theorem \ref{1.2} at the end of Section 3. In Section 4, we prove some extra properties for extension complex for later use in Section 6.

The goal of Section 5 is to prove Theorem \ref{1.3}. We will introduce a notion of atlas for right-angled Artin group in Section 5.1 and use this in Section 5.2 as an effective language to describe the deformation argument mentioned above. 

We prove Theorem \ref{1.6} in Section 6. Section 6 does not depend on Section 5.

\subsection{Index of notation}
\label{sec_index_of_notation}
\begin{itemize}
	\item $St(v,K)$: the closed star of $v$ in $K$, or $St(v)$ if $K$ is clear (Section~\ref{subsec_notation}).
	\item $Lk(x,X)$ or $Lk(c,X)$: the link of a vertex $x$ or a cell $c$ in a polyhedral complex $X$ (Section~\ref{subsec_notation}).
	\item $\Ga_1\circ\Ga_2$: the join of two graphs (Section~\ref{subsec_notation}).
	\item $K_1\ast K_2$: the join of two simplicial complexes (Section~\ref{subsec_notation}).
	\item $V^\perp $: collection of vertices which are adjacent to each vertex in $V$ (Section~\ref{subsec_notation}).
	\item $G(\Ga)$ the right-angled Artin group with defining graph $\Ga$ (Section~\ref{basics about raag}).
	\item $F(\Gamma)$: the flag complex of a simplicial graph $\Gamma$.
	\item $X(\Ga)\to S(\Ga)$ the universal covering of the Salvetti complex (Section~\ref{basics about raag}).
	\item $\Gamma_K$: the support of a subcomplex $K$ in $X(\Gamma)$ (Definition~\ref{notation}).
	\item $V_K$: vertex set of $\Gamma_K$ (Definition~\ref{notation}).
	\item $\inc(C_{1},C_{2})=(Y_{1},Y_{2})$: nearest point sets between $Y_1$ and $Y_2$ defined in Lemma~\ref{2.6} and the paragraph after.
	\item $\mathcal{P}(\Ga)$: the extension complex 
	(Section~\ref{basics about raag}).
	\item $\Delta(K)$: for each subcomplex $K\subset X(\Gamma)$, $\Delta(K)$ is a subcomplex of $\mathcal{P}(\Gamma)$ defined in the paragraph before Lemma~\ref{boundary of std subclex}.
	\item For a vertex $p$ of $X(\Gamma)$, the map $i_p:F(\Gamma)\to \P(\Ga)$ is defined right before Lemma~\ref{isometric embedding}.
	\item $(F(\Ga))_p$: the image of $i_p$, see the paragraph before Lemma~\ref{isometric embedding}.
	\item $\pi:\mathcal{P}(\Ga)\to F(\Ga)$: label-preserving canonical projection defined in the 3rd paragraph after Lemma~\ref{coarse contain of standard subcomplexes}.
	\item $v(M)$: the set of vertices in a subset $M$ of a complex.
	\item $\pi_{v}$ or $\pi_{\Delta(\ell)}$ where $\ell$ is a standard line with $\Delta(\ell)=v$: a projection map defined in Definition~\ref{def:proj}.
	\item $\pi_\ell:X(\Gamma)\to\ell$: CAT(0) projection from $X(\Gamma)$ to a standard geodesic $\ell$ (Section~\ref{subsec:tiers}).
	\item $P_v$: the parallel set of a standard geodesic $\ell$ with $\Delta(\ell)=v$ (see the paragraph after Example~\ref{ex:type II}).
	\item $\partial B$: boundary of a subcomplex $B$, defined in the paragraph after Lemma~\ref{7.10}.
	\item $\bar B$: the full subcomplex spanned by $B$ and $\partial B$ (Section~\ref{sec:correspondence}).
	\item $\Pi$: this is a map which associated a component of $\P(\Ga)\setminus St(v)$ with a component of $F(\Gamma)\setminus St(\bar v)$, explained in the paragraph before Definition~\ref{def:6.4}.
\end{itemize}

\subsection{Acknowledgement}
The author thanks Ruth Charney, Bruce Kleiner, and Thomas Koberda for helpful discussions and Do-Gyeom Kim for valuable comments. The author is very grateful to the anonymous referees for very detailed comments.

\section{Preliminaries}
\subsection{Notations and conventions}
\label{subsec_notation}
Notations here are consistent with \cite[Section 2.1]{raagqi1}. All graphs in this paper are simplicial. The flag complex of a graph $\Gamma$ is denoted by $F(\Gamma)$, i.e. $F(\Gamma)$ is a flag complex such that its 1-skeleton is $\Gamma$.

Let $K$ be a polyhedral complex. 
\begin{enumerate}
\item By viewing the 1-skeleton of $K$ as a metric graph with edge lengths 1, we obtain a metric defined on the 0-skeleton of $K$, which we denote by $d$.
\item A subcomplex $K'\subset K$ is \textit{full} if $K'$ contains all the subcomplexes of $K$ which have the same vertex set as $K'$. If $K$ is 1-dimensional, then we also call $K'$ a \textit{full subgraph}. 
\item We use $\circ$ to denote the join of two graphs, namely $\Gamma_1\circ \Gamma_2$ is a graph obtained by adding to $\Gamma_1\sqcup \Gamma_2$ an edge between each vertex of $\Gamma$ and each vertex of $\Gamma_2$ and $\ast$ to denote the join of two polyhedral complexes.
\item For a set of vertices $V\subset K$, $V^{\perp}$ is defined to be collection of vertices which are adjacent to each vertex in $V$.
\item Let $v\in K$ be a vertex. The \textit{link} of $v$ in $K$, denoted by $lk(v,K)$ or $lk(v)$ when $K$ is clear, is defined to be the full subcomplex spanned by $v^{\perp}$. The \textit{closed star} of $v$ in $K$, denoted by $St(v,K)$ or $St(v)$ when $K$ is clear, is defined to be the full subcomplex spanned by $\{v,v^{\perp}\}$. 
\item Let $M\subset K$ be an arbitrary subset. We denote the collection of vertices inside $M$ by $v(M)$.
\end{enumerate}

We will be using the following simple observation repeatedly.

\begin{lem}
\label{connect}
Let $K$ be a simplicial complex and let $K^{(1)}$ be the 1-skeleton of $K$. Suppose $L\subset K$ is a full subcomplex. Then there is a 1-1 correspondence between connected components of $K\setminus L$ and of $K^{(1)}\setminus L^{(1)}$. Moreover, the intersection of each component of $K\setminus L$ with $K^{(1)}$ is a component of $K^{(1)}\setminus L^{(1)}$.
\end{lem}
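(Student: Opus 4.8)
The plan is to reduce everything to the observation that a simplicial complex deformation retracts onto its $1$-skeleton in a way compatible with passing to full subcomplexes, or more elementarily, to argue directly with paths. First I would fix a connected component $C$ of $K\setminus L$ and consider $C^{(1)}:=C\cap K^{(1)}$. The first claim is that $C^{(1)}$ is nonempty: any point of $C$ lies in the interior of some simplex $\sigma$ of $K$, and since $L$ is full, if all vertices of $\sigma$ were in $L$ then $\sigma\subset L$, contradicting $\sigma\cap C\neq\emptyset$; hence $\sigma$ has a vertex outside $L$, and that vertex lies in $K^{(1)}\setminus L^{(1)}$, and it is connected to the chosen point of $C$ by a segment inside $\sigma\setminus L$ (here again fullness of $L$ guarantees $\sigma\cap L$ is a face of $\sigma$, so $\sigma\setminus L$ is connected and in fact star-shaped from any vertex of $\sigma$ not in $L$). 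So $C^{(1)}\subset K^{(1)}\setminus L^{(1)}$ is nonempty.

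Next I would show $C^{(1)}$ is connected and is exactly a connected component of $K^{(1)}\setminus L^{(1)}$. For connectedness: given two vertices $x,y\in C^{(1)}$, connect them by a path $\gamma$ in $C$; cover $\gamma$ by finitely many open simplices, and using the previous paragraph's star-shapedness, homotope $\gamma$ rel endpoints into the $1$-skeleton while staying in $K\setminus L$ — concretely, replace each maximal sub-arc of $\gamma$ lying in (the interior of) a single simplex $\sigma$ by a path along the edges of $\sigma$ avoiding $L$, which exists because $\sigma\setminus L$ is a simplex minus a face and hence its $1$-skeleton minus the corresponding face is connected. This produces an edge-path in $K^{(1)}\setminus L^{(1)}$ from $x$ to $y$, so $C^{(1)}$ lies in a single component of $K^{(1)}\setminus L^{(1)}$. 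Conversely, an edge-path in $K^{(1)}\setminus L^{(1)}$ starting in $C$ stays in $K\setminus L$ and hence in $C$, so $C^{(1)}$ is a full component of $K^{(1)}\setminus L^{(1)}$, not a proper subset of one.

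Finally I would assemble the correspondence: the map $C\mapsto C^{(1)}$ from components of $K\setminus L$ to components of $K^{(1)}\setminus L^{(1)}$ is well-defined and surjective by the above. It is injective because distinct components $C\neq C'$ are disjoint, so $C^{(1)}$ and $C'^{(1)}$ are disjoint (hence distinct, being nonempty). The "moreover" statement is just the definition of the map. I expect the only real care needed — the "main obstacle", though it is routine — is the homotopy step that pushes a path in $K\setminus L$ into the $1$-skeleton; the point that makes it work is fullness of $L$, which ensures $\sigma\cap L$ is always a (possibly empty) face of each simplex $\sigma$, so that $\sigma\setminus L$ retracts onto its $1$-skeleton-minus-$L$ without ever crossing $L$. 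One should also note the degenerate case $L=\emptyset$ is covered, and that the argument only uses that $K$ is a simplicial (or even polyhedral) complex with $L$ full.
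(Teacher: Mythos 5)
The paper states Lemma \ref{connect} without proof (as a ``simple observation''), so there is no argument in the text to compare against; I can only assess your proposal on its own terms. The plan is sound and the key ideas are the right ones: fullness of $L$ forces $\sigma\cap L$ to be a single (possibly empty) face of each simplex $\sigma$, so $\sigma\setminus L$ is nonempty and star-shaped from any vertex of $\sigma$ outside $L$; compactness of a connecting path lets one pass to finitely many simplices; and then a path in $K\setminus L$ can be traded for an edge-path in $K^{(1)}\setminus L^{(1)}$. The bijection bookkeeping at the end is also correct.

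Two things to tighten. First, the phrase ``replace each maximal sub-arc of $\gamma$ lying in the interior of a single simplex'' does not give a finite decomposition in general: a continuous path can cross the boundary of a simplex infinitely often (think of a path zigzagging into a vertex), so there may be infinitely many such maximal sub-arcs. The standard repair is to use compactness of $[0,1]$ and the Lebesgue number of the cover by preimages of open stars to choose a finite subdivision $0=t_0<\cdots<t_n=1$ with each $\gamma([t_{i-1},t_i])$ inside a single closed simplex $\sigma_i\not\subset L$. Each $\gamma(t_i)$ lies in the interior of a face $\tau_i\leq\sigma_i\cap\sigma_{i+1}$ with $\tau_i\not\subset L$, so $\tau_i$ has a vertex $w_i\notin L$; then $w_{i-1}$ and $w_i$ are both vertices of $\sigma_i$ outside $L$, hence directly joined by an edge of $\sigma_i$ which meets $L$ in nothing. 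This gives the desired edge-path, and in fact makes the argument simpler than you suggest, since one never needs to know more about $\sigma^{(1)}\setminus L$ than that any two of its vertices outside $L$ span an edge disjoint from $L$.

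Second, the closing remark that the argument works for polyhedral complexes in general is false, and it is worth seeing why, since the paper does also work with cube complexes elsewhere. Take $K$ a single square with vertices $(0,0),(1,0),(0,1),(1,1)$ and $L=\{(0,0),(1,1)\}$; $L$ is full (no cell of the square has vertex set $\{(0,0),(1,1)\}$), $K\setminus L$ is connected, but $K^{(1)}\setminus L^{(1)}$ is two disjoint open arcs. The step that fails is precisely the one that is automatic for simplices: two vertices of a cell outside $L$ need not span an edge of that cell. The lemma and your argument are fine for simplicial complexes, which is all that is needed here.
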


Let $X$ be a metric space. We use $d_{H}$ to denote the Hausdorff distance and use $N_{R}(Y)$ to denote the open $R$-neighbourhood of a subspace $Y\subseteq X$. Two subspaces $A$ and $B$ are \textit{coarsely equivalent} if they have finite Hausdorff distance. $A$ is \emph{coarsely contained} in $B$ if $A$ is contained in the $R$-neighborhood of $B$ some $R>0$. A subspace $V\subseteq X$ is the \textit{coarse intersection} of subspaces $Y_1$ and $Y_2$ if $V$ is at finite Hausdorff distance from $N_R(Y_1)\cap N_{R}(Y_2)$ for all sufficiently large $R$. In general the coarse intersection of two subspaces might not exist.

\subsection{$CAT(0)$ spaces and $CAT(0)$ cube complexes}
We mention several relevant facts here and refer to \cite{MR1744486} and \cite{sageevnotes} for more background on $CAT(0)$ spaces and $CAT(0)$ cube complexes. The reader can also check \cite[Section 2.2]{raagqi1}. 

Let $(X,d)$ be a $CAT(0)$ space and let $C\subset X$ be a convex subset. We denote the nearest point projection from $X$ to $C$ by $\pi_{C}:X\to C$. Denote the Tits boundary of $X$ by $\partial_{T}X$. If $C'\subset X$ be another convex set, then $C'$ is \textit{parallel} to $C$ if $d(\cdot,C)|_{C'}$ and $d(\cdot,C')|_{C}$ are constant functions. We define the \textit{parallel set} of $C$, denoted by $P_{C}$, to be the union of all convex subsets of $X$ parallel to $C$.

Now we turn to $CAT(0)$ cube complexes. All cube complexes in this paper are assumed to be finite dimensional. There are two common metrics on a $CAT(0)$ cube complex $X$, namely the $CAT(0)$ metric and the $\ell^{1}$-metric. In this paper, we will mainly use the $CAT(0)$ metric unless otherwise specified.

A \textit{geodesic segment, geodesic ray} or \textit{geodesic} in a $CAT(0)$ cube complex $X$ is an isometric embedding of $[a,b]$, $[0,\infty)$ or $\Bbb R$ into $X$ with respect to the $CAT(0)$ metric. A \textit{combinatorial geodesic segment, combinatorial geodesic ray} or \textit{combinatorial geodesic} is a $\ell^{1}$-isometric embedding of $[a,b]$, $[0,\infty)$ or $\Bbb R$ into $X^{(1)}$ such that its image is a subcomplex.

The collection of convex subcomplexes in a $CAT(0)$ cube complex enjoys the following version of Helly's property (\cite{gerasimov1998fixed}):

\begin{lem}
\label{2.1}
Let $X$ be a finite-dimensional $CAT(0)$ cube complex and let $\{C_{i}\}_{i=1}^{k}$ be a collection of convex subcomplexes. If $C_{i}\cap C_{j}\neq\emptyset$ for any $1\le i\neq j\le k$, then $\cap_{i=1}^{k}C_{i}\neq\emptyset$.
\end{lem}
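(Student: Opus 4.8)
The plan is to reduce everything to the median structure carried by $X^{(1)}$ under the $l^1$-metric. I would use the standard fact (see \cite{sageevnotes,gerasimov1998fixed}) that $X^{(1)}$ with the $l^1$-metric is a median graph: for any three vertices $a,b,c$ there is a unique vertex $m(a,b,c)$ lying on a combinatorial geodesic between each of the three pairs, and $m(a,b,c)$ belongs to the combinatorial interval $I(a,b)$, the union of all combinatorial geodesics from $a$ to $b$. I would also use that a subcomplex $C\subseteq X$ is convex in the $CAT(0)$ metric if and only if it is combinatorially convex, i.e. $I(a,b)\subseteq C$ whenever $a,b\in v(C)$; in particular $v(C)$ is median-closed, so if $a,b\in v(C)$ then $m(a,b,c)\in v(C)$ for \emph{every} vertex $c$. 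Finally note that if $C_i\cap C_j\neq\emptyset$, then $C_i\cap C_j$ is again a nonempty convex subcomplex, hence contains a vertex.

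First I would settle the case $k=3$. Given convex subcomplexes $C_1,C_2,C_3$ with pairwise nonempty intersection, choose vertices $p_{12}\in v(C_1\cap C_2)$, $p_{13}\in v(C_1\cap C_3)$, $p_{23}\in v(C_2\cap C_3)$, and set $x:=m(p_{12},p_{13},p_{23})$. Since $p_{12},p_{13}\in v(C_1)$ and $C_1$ is convex, $x\in v(C_1)$; likewise $p_{12},p_{23}\in v(C_2)$ forces $x\in v(C_2)$, and $p_{13},p_{23}\in v(C_3)$ forces $x\in v(C_3)$. Hence $x\in C_1\cap C_2\cap C_3$. One can run the same argument with the gate (nearest-point) projection $\pi_{C_1}$ instead: for any $c\in C_2\cap C_3$ one has $\pi_{C_1}(c)\in I(c,p)$ for each $p\in C_1$, so taking $p$ in $C_1\cap C_2$ and in $C_1\cap C_3$ in turn gives $\pi_{C_1}(c)\in C_1\cap C_2\cap C_3$; this is the same fact in different clothing.

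Then I would induct on $k$, the cases $k\le 2$ being trivial. Assuming the statement for $k-1$, let $C_1,\dots,C_k$ be pairwise intersecting convex subcomplexes and put $C_i':=C_i\cap C_k$ for $1\le i\le k-1$. Each $C_i'$ is a convex subcomplex, nonempty by hypothesis, and for $i\neq j$ we have $C_i'\cap C_j'=C_i\cap C_j\cap C_k\neq\emptyset$ by the $k=3$ case applied to $C_i,C_j,C_k$. Thus $\{C_i'\}_{i=1}^{k-1}$ is a pairwise intersecting family of $k-1$ convex subcomplexes, and the inductive hypothesis gives $\bigcap_{i=1}^{k-1}C_i'=\bigcap_{i=1}^{k}C_i\neq\emptyset$. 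The only step with genuine content is the $k=3$ case, and the whole argument rests on the single structural input that convex subcomplexes of a $CAT(0)$ cube complex are median-closed; this is where the cube-complex hypothesis is essential, since the analogous Helly property fails for general $CAT(0)$ spaces (three thin half-planes in $\R^2$ whose boundaries bound a triangle). Granting that input, the rest is bookkeeping; this is essentially the argument of \cite{gerasimov1998fixed}.
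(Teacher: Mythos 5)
The paper does not prove this lemma; it simply cites it to Gerasimov \cite{gerasimov1998fixed}, so there is no in-paper proof to compare against. That said, your argument is correct and is the standard one: the $k=3$ case is exactly the statement that the vertex set of a (combinatorially) convex subcomplex is closed under the median operation of the median graph $X^{(1)}$, and the induction from $k-1$ to $k$ via $C_i'=C_i\cap C_k$ is routine, with the pairwise nonemptiness of the $C_i'$ supplied by the already-established $k=3$ case. The one place worth flagging when writing this up formally is the implicit use of the fact that a nonempty intersection of two convex subcomplexes is itself a subcomplex (hence contains a vertex), so that the points $p_{12},p_{13},p_{23}$ can be taken to be vertices; this is true because the intersection of subcomplexes is a subcomplex and the intersection of $CAT(0)$-convex sets is $CAT(0)$-convex, but it should be stated. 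Your parenthetical alternative via the gate projection $\pi_{C_1}$ is the same median fact repackaged, as you note; either formulation is fine, and both correctly isolate the combinatorial-convexity input (Lemma \ref{combinatorial convex}) as the only nontrivial ingredient.
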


\begin{lem}
\label{combinatorial convex}
\cite{haglund2008finite} Let $X$ be a $CAT(0)$ cube complex and let $Y\subset X$ be a convex subcomplex. Then $Y$ is also combinatorially convex in the sense that any combinatorial geodesic segment joining two vertices in $Y$ is contained in $Y$.
\end{lem}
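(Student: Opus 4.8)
The plan is to deduce the lemma from the stronger statement that a $CAT(0)$-convex subcomplex $Y$ is an \emph{intersection of halfspaces} of $X$; granting this, combinatorial convexity is immediate, since a combinatorial geodesic segment joining two vertices that lie in a common combinatorial halfspace cannot cross the bounding hyperplane (it would have to cross it an even, hence zero, number of times), so the whole segment stays in that halfspace. Throughout I use the standard facts (see \cite{sageevnotes}) that each hyperplane $H$ of $X$ is $CAT(0)$-convex, that it cuts $X$ into two $CAT(0)$-convex halfspaces and $X^{(0)}$ into two combinatorial halfspaces, that a combinatorial geodesic segment between two vertices crosses exactly the hyperplanes separating its endpoints, each exactly once, and that the $CAT(0)$ geodesic between two vertices meets exactly those same hyperplanes.

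First I would record two elementary reductions. (i) $Y$ is \emph{full}: if every vertex of a cube $c$ of $X$ lies in $Y$, then $Y$ contains the $CAT(0)$-convex hull of $v(c)$, which is all of $c$, so $Y\supseteq c$. (ii) Given vertices $a,b\in Y$ and a combinatorial geodesic segment $\gamma$ from $a$ to $b$: for every combinatorial halfspace $\mathfrak h$ with $Y\subseteq\mathfrak h$ we have $a,b\in\mathfrak h$, hence $\gamma\subseteq\mathfrak h$ by the observation above; therefore $\gamma\subseteq Z$, where $Z$ is the intersection of all combinatorial halfspaces containing $Y$. Combining (i) and (ii), it suffices to show that $Z$ contains no vertex outside $Y$: then every vertex of $\gamma$ lies in $Y$, and every edge of $\gamma$, having both endpoints in $Y$, lies in $Y$ by fullness, so $\gamma\subseteq Y$. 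Thus the whole content is the following \emph{separation statement}: for every vertex $v\notin Y$ there is a hyperplane $H$ with $v$ on one side and all of $Y$ on the other.

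To prove the separation statement I would use the $CAT(0)$ nearest-point projection $p:=\pi_{Y}(v)$; since $v$ is a vertex and $v\notin Y$ we have $p\ne v$ and $[v,p]\cap Y=\{p\}$. From the local structure of $X$ at $p$ one pins down a hyperplane $H$ that is crossed by $[v,p]$ close to $p$ and is dual to an edge incident to $p$; in particular $v$ and $p$ lie on opposite sides of $H$, and the claim to be proved is that all of $Y$ lies on the $p$-side of $H$. If not, apply $CAT(0)$-convexity of $Y$ to $p$ and to a vertex of $Y$ on the $v$-side of $H$: the $CAT(0)$ geodesic between them lies in $Y$ and crosses $H$, which forces $Y$ to contain a cube dual to $H$, hence an edge $e$ of $Y$ dual to $H$. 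Choosing such an $e$ whose $v$-side endpoint is as close as possible, in the $l^{1}$-metric, to the corresponding endpoint of the edge at $p$, and analysing the combinatorially convex \textquotedblleft ladder\textquotedblright\ spanned by these two parallel edges — using fullness of $Y$ together with the elementary fact that a $CAT(0)$-convex subcomplex of a square that contains two adjacent edges of the square is the whole square — one should be able to collapse the ladder one rung at a time, contradicting either the minimality just imposed or the assumption $v\notin Y$. (The same mechanism can be phrased as: a connected full subcomplex which is locally convex — every square with two adjacent edges in $Y$ lies in $Y$ — is combinatorially convex; and local convexity follows from $CAT(0)$-convexity by looking at diagonals of squares.)

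The main obstacle is exactly this last, purely combinatorial, step. When $\pi_{Y}(v)$ fails to be a vertex, the nearest edge of $Y$ dual to $H$ need not share a vertex with the edge at $p$, so one must genuinely control the rectangle between two parallel edges; arranging the minimality so that each rung-collapse strictly decreases the chosen quantity while the newly produced cells remain inside $Y$, and bookkeeping which side of $H$ each new vertex lies on, is where the work is. Everything else reduces to routine $CAT(0)$ cube-complex combinatorics together with Helly's property, Lemma \ref{2.1}.
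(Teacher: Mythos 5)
The paper offers no proof of this lemma, citing [haglund2008finite], so I assess your sketch on its own terms. The preliminary reductions are sound: fullness follows because a cube is the CAT(0)-convex hull of its vertices, and reducing the problem to a separation statement (each vertex $v\notin Y$ is separated from $Y$ by some hyperplane) is a correct strategy, since a combinatorial geodesic crosses each hyperplane at most once and therefore stays in any combinatorial halfspace containing its endpoints. But the ladder-collapse, which you yourself flag as the obstacle, does not get started: to apply your ``two adjacent edges of a square'' observation to the first rung $[s_1,s_2]\times[0,1]$ you would need two adjacent edges of that square already in $Y$, and at the outset you only know that the single vertical edge $\{s_1\}\times[0,1]=e$ is in $Y$; you have no control over the horizontal edges $[s_1,s_2]\times\{j\}$ nor over $\{s_2\}\times[0,1]$, so the induction has no base case. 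The situation when $\pi_Y(v)$ is not a vertex, which you also flag, only compounds this, and the ``ladder'' may in fact be a thicker convex hull, not a strip of squares.

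Your closing parenthetical is not ``the same mechanism phrased differently''; it is a genuinely different, shorter, and standard route, and it is the one the cited reference takes. CAT(0)-convexity gives fullness and local convexity: if two adjacent edges of a square lie in $Y$, so do their three vertices, hence the diagonal between the two opposite ones, which is a CAT(0) geodesic in $Y$ meeting the square's interior, hence the whole square, since $Y$ is a subcomplex. One then invokes the local-to-global lemma — a connected, full, locally convex subcomplex of a CAT(0) cube complex is combinatorially convex — which is nontrivial (disc-diagram or median-algebra argument) and is essentially what [haglund2008finite] establishes; it should be stated and cited as the key black box, not folded into the halfspace scheme. If you still want ``intersection of halfspaces'', note it then follows easily \emph{from} combinatorial convexity (take the $\ell^1$-nearest vertex of $Y$ to $v$ and the hyperplane dual to the last edge of a combinatorial geodesic towards it), so deriving it first from CAT(0)-convexity, as you attempt, is fighting the implication in its harder direction.
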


\begin{lem}[Lemma 2.10 of \cite{quasiflat}]
\label{2.6}
Let $X$ be a $CAT(0)$ cube complex of dimension $n$ and let $C_{1}$, $C_{2}$ be convex subcomplexes. Denote $\bigtriangleup=d(C_{1},C_{2})$. Put $Y_{1}=\{y\in C_{1}\mid d(y,C_{2})=\bigtriangleup\}$ and $Y_{2}=\{y\in C_{2}\mid d(y,C_{1})=\bigtriangleup\}$. Then:
\begin{enumerate}
\item $Y_{1}$ and $Y_{2}$ are not empty.
\item $Y_{1}$ and $Y_{2}$ are convex; $\pi_{C_{1}}$ maps $Y_{2}$ isometrically onto $Y_{1}$ and $\pi_{C_{2}}$ maps $Y_{1}$ isometrically onto $Y_{2}$; the convex hull of $Y_{1}\cup Y_{2}$ is isometric to $Y_{1}\times [0,\bigtriangleup]$.
\item $Y_{1}$ and $Y_{2}$ are subcomplexes, and $\pi_{C_{2}}|_{Y_{1}}$ is a cubical isomorphism with its inverse given by $\pi_{C_{1}}|_{Y_{2}}$.
\item There exists $A=A(\Delta,n,\epsilon)$ such that if $p_{1}\in C_{1}$, $p_{2}\in C_{2}$ and $d(p_{1},Y_{1})\geq \epsilon>0$, $d(p_{2},Y_{2})\geq \epsilon>0$, then 
\begin{equation}
\label{2.7}
d(p_{1}, C_{2})\geq \bigtriangleup + A\cdot d(p_{1},Y_{1});\ \ \  d(p_{2}, C_{1})\geq \bigtriangleup + A\cdot d(p_{2},Y_{2})\,.
\end{equation}
\end{enumerate}
\end{lem}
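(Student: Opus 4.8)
The plan is to separate the ``soft'' CAT(0) content from the genuinely cubical input. Write $\Delta=d(C_1,C_2)$. The core cubical fact I would establish first is that $\Delta$ is realized and that $\pi_{C_1}(C_2)=Y_1$ is a subcomplex (symmetrically $\pi_{C_2}(C_1)=Y_2$). For this, note the set $\W$ of hyperplanes having $C_1$ strictly on one side and $C_2$ strictly on the other is finite, since each such hyperplane separates a fixed pair of vertices $v_i\in C_i$ and so $|\W|\le d_1(v_1,v_2)$. For a vertex $v\in C_2$, the combinatorial gate $\mathfrak g_{C_1}(v)$ then realizes $d_1(C_1,C_2)=|\W|$: if some hyperplane $h\notin\W$ separated $\mathfrak g_{C_1}(v)$ from $C_2$, then $C_1$ would have vertices on both sides of $h$, and since $\mathfrak g_{C_1}(v)$ lies on an $\ell^1$-geodesic from any such vertex to $v$, that geodesic would have to cross $h$ twice -- impossible. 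On vertices the combinatorial gate agrees with the CAT(0) nearest-point projection, and $\pi_{C_1}$ carries each cube of $C_2$ into a cube of $C_1$, so $\pi_{C_1}$ sends $C_2$ into the full subcomplex $Y_1$; combined with the fact that the gate attains the minimum distance, this gives $\pi_{C_1}(C_2)=Y_1\neq\emptyset$, which is (1). Standard facts about carriers of the hyperplanes in $\W$ then upgrade this to realization of $\Delta$ in the CAT(0) metric, on $Y_1$ and $Y_2$.

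Parts (2) and (3) now follow from the classical Sandwich (Bridge) Lemma for CAT(0) spaces: once $\Delta$ is realized, $Y_1,Y_2$ are convex, $\pi_{C_2}|_{Y_1}$ and $\pi_{C_1}|_{Y_2}$ are mutually inverse isometries, and the convex hull of $Y_1\cup Y_2$ is isometric to $Y_1\times[0,\Delta]$. These isometries are cubical isomorphisms because on $0$-skeleta they are the gate maps, which are cubical and restrict to a bijection between the vertex sets of $Y_1$ and $Y_2$.

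For (4) the key is a single CAT(0) quadrilateral estimate. Given $p_1\in C_1$, let $q=\pi_{C_2}(p_1)$ and $c=\pi_{C_1}(q)$. The obtuse-angle property of nearest-point projections in a CAT(0) space gives $\angle_c(p_1,q)\ge\pi/2$, so the CAT(0) comparison inequality yields
\[
d(p_1,C_2)^2=d(p_1,q)^2\ge d(p_1,c)^2+d(c,q)^2\ge d(p_1,c)^2+\Delta^2,
\]
using $d(c,q)=d(q,C_1)\ge\Delta$. Since $c\in\pi_{C_1}(C_2)=Y_1$, we get $d(p_1,c)\ge d(p_1,Y_1)$, hence $d(p_1,C_2)\ge\sqrt{d(p_1,Y_1)^2+\Delta^2}$. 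Putting $r=d(p_1,Y_1)\ge\epsilon$, the elementary inequality $\sqrt{r^2+\Delta^2}-\Delta=r^2/(\sqrt{r^2+\Delta^2}+\Delta)\ge r^2/(r+2\Delta)\ge r\cdot\tfrac{\epsilon}{\epsilon+2\Delta}$ gives the first estimate of (4) with $A=\epsilon/(\epsilon+2\Delta)$; the second is symmetric. (The dimension $n$ permitted in the statement is only needed if one prefers to route the argument through the $\ell^1$-metric, in which $d_1(p_1,C_2)=d_1(p_1,Y_1)+d_1(C_1,C_2)$ splits exactly and the conversion back to the CAT(0) metric costs a factor depending on $n$; the argument above does not.)

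I expect the main obstacle to be precisely the cubical identity $\pi_{C_1}(C_2)=Y_1$. It is false in a general CAT(0) space -- already the nearest-point projection of a $45^\circ$ ray onto a coordinate line of $\R^2$ violates it -- so the proof must exploit that $C_1$ and $C_2$ are \emph{subcomplexes}, through the three ingredients above: the combinatorial gate attains $d_1(C_1,C_2)$, the gate coincides with the CAT(0) projection on vertices, and $\pi_{C_1}$ maps cubes to cubes. Everything else is routine CAT(0) comparison geometry, and -- importantly, since the cube complex need not be locally finite -- no step above relies on a compactness argument.
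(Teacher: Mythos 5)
The paper does not contain a proof of this lemma: it is quoted verbatim as Lemma 2.10 of \cite{quasiflat} and used as a black box, so there is no in-paper argument to compare yours against. Evaluated on its own terms, your outline is the right one, and the pure CAT(0) half of the argument is clean. In particular, once one grants $\pi_{C_1}(C_2)\subseteq Y_1$, your derivation of (4) from a single comparison triangle at $c=\pi_{C_1}(\pi_{C_2}(p_1))$ is correct and even sharpens the statement, giving $A=\epsilon/(\epsilon+2\Delta)$ with no dependence on $n$.

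The genuine gap is the pivot from the $\ell^1$ picture to the CAT(0) one, which is precisely the step you compress into ``standard facts about carriers of the hyperplanes in $\mathcal W$ then upgrade this to realization of $\Delta$ in the CAT(0) metric.'' Your combinatorial argument shows that the set $S_1$ of vertices of $C_1$ separated from $C_2$ by exactly the hyperplanes of $\mathcal W$ is nonempty and realizes the minimal $\ell^1$-distance, and that the gate $\mathfrak g_{C_1}$ lands in $S_1$. But $Y_1$ in the statement is defined via the CAT(0) metric, and the identification of $Y_1$ with the subcomplex spanned by $S_1$ is exactly what the lemma asserts; it does not follow from the three ingredients you list. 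What is actually needed is the combinatorial bridge decomposition --- the combinatorial convex hull of $S_1\cup S_2$ splits as $Y_1\times G$ with $G$ the subcomplex dual to $\mathcal W$ --- together with the fact that the 1-Lipschitz gate map onto this bridge forces $d(p_1,C_2)\geq\operatorname{diam}_{CAT(0)}(G)$ for every $p_1\in C_1$, with equality only on $Y_1$. Note also that your argument for (4) invokes $\pi_{C_1}(C_2)=Y_1$, which sits downstream of this gap, so the parts cannot be proved in the order presented without first filling it. You should either carry out the product decomposition explicitly or cite the cubical bridge lemma directly; likewise, the assertion that the CAT(0) projection onto a convex subcomplex is a cubical map (not merely that it agrees with the $\ell^1$-gate on vertices) is true but is not so standard that it can be invoked silently in a self-contained proof.
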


The above lemma implies $Y_{1}$ and $Y_{2}$ are coarsely equivalent, and $Y_1$ (or $Y_2$) is the coarse intersection of $C_1$ and $C_2$. We use $\inc(C_{1},C_{2})=(Y_{1},Y_{2})$ to describe this situation, where $\mathcal{I}$ stands for \textquotedblleft intersect\textquotedblright. 

Pick edge $e\subset X$. It turns out the parallel set $P_e$ is a convex subcomplex (actually $P_e$ is made of cubes which contain an edge parallel to $e$). There is a natural splitting $P_e=e\times h_e$. The \textit{hyperplane dual to $e$} is defined to be the subset of $P_e$ of form $\{m\}\times h_e$ where $m$ is the middle point of $e$. Each hyperplane separates $X$ into exactly two connected components. The closure of these components are called \textit{halfspaces}. The sets $Y_{1}$ and $Y_{2}$ in Lemma \ref{2.6} can be characterized in terms of hyperplanes. 

\begin{lem}
\label{2.9}
$($\cite[Lemma 2.6]{raagqi1}$)$ Let $X$, $C_{1}$, $C_{2}$, $Y_{1}$ and $Y_{2}$ be as in Lemma \ref{2.6}. Pick an edge $e$ in $Y_{1}$ $($or $Y_{2})$ and let $h$ be the hyperplane dual to $e$. Then $h\cap C_{i}\neq\emptyset$ for $i=1,2$. Conversely, if a hyperplane $h'$ satisfies $h'\cap C_{i}\neq\emptyset$ for $i=1,2$, then $$\inc(h'\cap C_{1},h'\cap C_{2})=(h'\cap Y_{1},h'\cap Y_{2})$$ and $h'$ comes from the dual hyperplane of some edge $e'$ in $Y_{1}$ $($or $Y_{2})$.
\end{lem}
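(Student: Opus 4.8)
The plan is to prove the two directions separately, relying on the structure from Lemma \ref{2.6}, namely that the convex hull of $Y_1 \cup Y_2$ splits as $Y_1 \times [0,\bigtriangleup]$, and on the basic fact that a hyperplane $h$ meets a convex subcomplex $C$ if and only if $h$ separates two vertices of $C$, in which case $h \cap C$ is itself a nonempty convex subcomplex (a hyperplane of $C$). First I would set up coordinates: write $Z = Y_1 \times [0,\bigtriangleup]$ for the convex hull of $Y_1 \cup Y_2$, with $Y_1 = Y_1 \times \{0\}$ and $Y_2 = Y_1 \times \{\bigtriangleup\}$ under the isometry of Lemma \ref{2.6}(2). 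Hyperplanes of $X$ crossing $Z$ are of two types: the "vertical" ones, which are of the form $h_0 \times [0,\bigtriangleup]$ for $h_0$ a hyperplane of $Y_1$, and the "horizontal" ones, which separate $Y_1 \times \{0\}$ from $Y_1 \times \{\bigtriangleup\}$ inside the interval direction. An edge $e$ lying in $Y_1$ (equivalently in $Z$ at level $0$) is dual to a vertical hyperplane $h = h_0 \times [0,\bigtriangleup]$; this $h$ obviously meets $Y_1$, and since $h_0 \times \{\bigtriangleup\}$ lies in $Y_2$, it meets $Y_2$ as well, hence meets $C_1 \supseteq Y_1$ and $C_2 \supseteq Y_2$. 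That gives the forward direction.

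For the converse, suppose a hyperplane $h'$ satisfies $h' \cap C_i \neq \emptyset$ for $i = 1, 2$. The key step is to show that $h'$ must then cross $Z$ and be \emph{vertical}, i.e. of the form $h_0 \times [0,\bigtriangleup]$. To see it crosses $Z$: pick vertices $a \in h' \cap C_1$ and $b \in h' \cap C_2$ on opposite... actually on appropriate sides; more carefully, since $h'$ meets $C_1$ it separates some pair of vertices of $C_1$, and I would argue using Lemma \ref{2.6}(4) — if $h'$ did not cross $Z$, then the halfspace of $h'$ not containing, say, $Y_1$ would meet $C_1$ in a region at definite distance from $Y_1$, and crossing from there to $C_2$ would force a path strictly longer than $\bigtriangleup$, contradicting that a nearest-point pair realizing $d(C_1,C_2) = \bigtriangleup$ exists on both sides of $h'$. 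Once $h'$ crosses $Z$, I must rule out $h'$ being horizontal: a horizontal hyperplane separates $Y_1$ from $Y_2$, so it cannot meet both $Y_1 \subseteq C_1$ and $Y_2 \subseteq C_2$ — but I need it to separate $C_1$ from $C_2$ entirely, which again follows from the $Y_1 \times [0,\bigtriangleup]$ product structure together with Lemma \ref{2.6}(4): any point of $C_1$ on the $Y_2$-side of a horizontal $h'$ would be at distance $> \bigtriangleup$ from $C_2$ is the wrong inequality — rather, such a configuration contradicts that $\pi_{C_2}$ moves $Y_1$ exactly distance $\bigtriangleup$. So $h' = h_0 \times [0,\bigtriangleup]$ is vertical, hence $h'$ is dual to some edge $e'$ of $Y_1$ (take $e' = e_0 \times \{0\}$ for $e_0$ dual to $h_0$ in $Y_1$), and $h' \cap Y_i$ is nonempty and convex.

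It remains to identify $\inc(h' \cap C_1, h' \cap C_2) = (h' \cap Y_1, h' \cap Y_2)$. For this I would observe that $h' \cap C_i$ is a convex subcomplex of $h'$ (a $CAT(0)$ cube complex of dimension $\le n-1$), that $d(h' \cap C_1, h' \cap C_2) = \bigtriangleup$ since the product splitting restricts: $h' \cap Z = (h_0 \cap Y_1) \times [0,\bigtriangleup]$, and the nearest-point set of $h' \cap C_1$ towards $h' \cap C_2$ is $h' \cap Y_1$ because $\pi_{C_2}$ restricted to $Y_1$ is an isometry commuting with the product structure and hence preserves $h'$. Applying the uniqueness of the nearest-point sets from Lemma \ref{2.6}(2) inside $h'$ finishes it.

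I expect the main obstacle to be the converse direction — specifically, showing cleanly that $h'$ meeting both $C_1$ and $C_2$ forces it to cross the product region $Z$ transversally rather than, say, peeling off one $C_i$ near $Z$ but crossing far away. The honest tool here is Lemma \ref{2.6}(4): the linear lower bound $d(p_1, C_2) \ge \bigtriangleup + A\,d(p_1, Y_1)$ is exactly what prevents a hyperplane from "escaping" the bridge $Z$ while still separating points of both $C_1$ and $C_2$, and making that argument precise (choosing the vertices on which $h'$ is witnessed to meet $C_i$, and tracking which halfspace contains $Y_1$ versus $Y_2$) is where the real work lies. The rest is bookkeeping with the product decomposition.
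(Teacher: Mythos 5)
Your forward direction is essentially sound, though the picture of ``vertical hyperplanes $h_0 \times [0,\bigtriangleup]$'' should be handled with care: the convex hull of $Y_1 \cup Y_2$ is only $CAT(0)$-isometric to $Y_1 \times [0,\bigtriangleup]$ (Lemma \ref{2.6}(2)), not cubically so. The clean way to see the claim is that $\pi_{C_2}|_{Y_1}$ is a cubical isomorphism (Lemma \ref{2.6}(3)), so it carries the edge $e \subset Y_1$ to an edge $e'' \subset Y_2$ dual to the same hyperplane $h$; hence $h \cap Y_2 \neq \emptyset$ and $h \cap C_2 \neq \emptyset$.

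The converse has a genuine gap at exactly the step you flag as the hard one. You argue that if $h'$ did not cross $Z$ there would be a contradiction because ``a nearest-point pair realizing $d(C_1,C_2)=\bigtriangleup$ exists on both sides of $h'$.'' That is not an established fact, and it is false in your scenario: every nearest-point pair $(y_1,\pi_{C_2}(y_1))$ lies in $Z$, which is by hypothesis contained entirely in one halfspace of $h'$, so there is nothing on the other side to contradict. Lemma \ref{2.6}(4) does give $d(p_1,C_2) > \bigtriangleup$ for a vertex $p_1 \in C_1$ in the far halfspace, and likewise $d(p_2,C_1) > \bigtriangleup$ for $p_2 \in C_2$ there, but these two inequalities are mutually consistent and do not force a contradiction; the quantitative estimate is the wrong tool for this step. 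What actually does the job is the combinatorics of gate projections: for a convex subcomplex $C_1$ and a hyperplane $h'$ with $h' \cap C_1 \neq \emptyset$, $h'$ separates $\pi_{C_1}(x)$ from $\pi_{C_1}(y)$ if and only if it separates $x$ from $y$ (equivalently, the hyperplanes separating $v$ from $\pi_{C_1}(v)$ are exactly those disjoint from $C_1$ with $v$ and $C_1$ on opposite sides). Applying this to the endpoints $b, b'$ of an edge $e_2 \subset C_2$ dual to $h'$, since $h'$ separates $b$ from $b'$ and $h'$ crosses $C_1$, $h'$ separates $\pi_{C_1}(b)$ from $\pi_{C_1}(b')$, both of which lie in $Y_1 = \pi_{C_1}(C_2)$; convexity of $Y_1$ then gives $h' \cap Y_1 \neq \emptyset$. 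This also disposes of your ``horizontal'' case for free (a hyperplane separating $Y_1$ from $Y_2$ cannot cross $Y_1$). With $h' \cap Y_1 \neq \emptyset$ and $h' \cap Y_2 \neq \emptyset$ established, your final identification of $\inc(h'\cap C_1,h'\cap C_2) = (h'\cap Y_1, h'\cap Y_2)$ goes through as you sketched, by applying Lemma \ref{2.6} inside $h'$ and using that $\pi_{C_2}|_{Y_1}$ is cubical and hence respects $h'$.
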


The collection of halfspaces in $X$ contains enough information to recover $X$. More generally, we can view $X$ as a space with walls, and \cite{MR1347406,ed1998simplicit} introduces a way to construct a $CAT(0)$ cube complex from a given space with walls. There are several variations and developments of this construction (\cite{roller1998poc,nica2004cubulating,chatterji2005wall,hruska2014finiteness}). Here we follow the construction in \cite{roller1998poc}, see Sageev's notes \cite{sageevnotes}.

\begin{definition}[Definition 1.5 of \cite{sageevnotes}]
\label{2.3}
A \textit{pocset} is a partially ordered set with an involution $A\to A^{c}$ such that
\begin{enumerate}
\item $A\neq A^{c}$ and $A$ and $A^{c}$ are incomparable.
\item $A\le B$ implies $B^{c}\le A^{c}$.
\end{enumerate}
\end{definition}

Note that the collection of all closed halfspaces in a $CAT(0)$ cube complex forms a pocset. The partially order comes from inclusion of sets, and the involution is defined by map a halfspace $h$ to the unique halfspace which intersects $h$ along a hyperplane.

\begin{definition}[Definition 2.1 of \cite{sageevnotes}]
\label{2.4}
Let $P$ be a pocset. An \textit{ultrafilter} $U$ is a subset of $P$ such that
\begin{enumerate}
\item For all pairs $\{A,A^{c}\}$ in $P$, precisely one of them is in $U$.
\item If $A\in U$ and $A\le B$, then $B\in U$.
\end{enumerate}
\end{definition}

For example, pick a vertex $p$ in a $CAT(0)$ cube complex $X$, then the collection of closed halfspaces in $X$ that contains $p$ forms an ultrafilter. Note that if $U$ is an ultrafilter and $A$ is minimal in $U$ with respect to the partial order on $P$, then $(U\setminus\{A\})\cup \{A^{c}\}$ is also an ultrafilter.

\begin{thm}[\cite{roller1998poc}]
\label{2.5}
If $P$ is a finite pocset, then there is a $CAT(0)$ cube complex $X$ such that its vertices are in 1-1 correspondence with ultrafilters of $P$ and two vertices $U$ and $U'$ are connected by an edge if and only if $$U'=(U\setminus\{A\})\cup \{A^{c}\}$$ for some $A$ minimal in $U$. Moreover, there is a natural pocset isomorphism from $P$ to the pocset of halfspaces in $X$.
\end{thm}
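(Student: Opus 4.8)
The plan is to construct $X$ directly from the pocset $P$, verify it is a CAT(0) cube complex, and then identify $P$ with its halfspace pocset. First I would define the vertex set $V$ to be the set of all ultrafilters of $P$, and impose the edge relation stated in the theorem. To see this gives a connected graph of finite diameter, I would show that any two ultrafilters $U, U'$ differ in only finitely many elements (since $P$ is finite) and that one can pass from $U$ to $U'$ by a sequence of single "flips" across minimal elements of the symmetric difference; the key sublemma here is that if $U \neq U'$, there is always some $A \in U \setminus U'$ that is minimal in $U$ among the elements of $U \setminus U'$, and flipping it at $A$ produces another genuine ultrafilter that agrees with $U'$ on strictly more of $P$. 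Transitivity of the partial order plus the pocset axiom $A \le B \implies B^c \le A^c$ are exactly what keep the flipped set an ultrafilter.

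Next I would build the cube complex $\widehat{X}$ by gluing in an $n$-cube for every subset of $n$ mutually "flippable" directions: concretely, whenever vertices $U_0, \dots$ span a combinatorial $n$-cube in the graph (equivalently, there are $n$ elements $A_1, \dots, A_n$ of $P$ that can be flipped independently starting from some ultrafilter $U$, meaning each $A_i$ is minimal in the corresponding intermediate ultrafilter), attach an $n$-cube. To check $\widehat{X}$ is CAT(0) I would verify Gromov's link condition — simply connectedness follows because the graph is connected and every cycle can be filled using the flip moves, and the links being flag reduces to a combinatorial statement about sets of independently flippable directions, which one checks using the partial order: if $A_i$ and $A_j$ are each individually flippable and are not comparable (nor complementary), then they are jointly flippable. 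The pocset axioms ensure incomparable-or-complementary is the only obstruction, giving flagness.

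Finally, to produce the natural pocset isomorphism $P \to \{\text{halfspaces of } X\}$: for each $A \in P$, let $\mathfrak{h}_A = \{U \in V : A \in U\}$. One shows $\mathfrak{h}_A$ is a halfspace, i.e. the set of ultrafilters on one side of the hyperplane dual to any edge whose flip is at $A$; its complement is $\mathfrak{h}_{A^c}$; and $A \le B \iff \mathfrak{h}_A \subseteq \mathfrak{h}_B$ (the forward direction is immediate from the ultrafilter axioms, the reverse needs that there are "enough" ultrafilters, which follows since given $A \not\le B$ one can build an ultrafilter containing $A$ but not $B$ by a Zorn/greedy argument that is finite here). Injectivity of $A \mapsto \mathfrak{h}_A$ and surjectivity onto all halfspaces complete the isomorphism. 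The main obstacle I expect is the verification of the flag link condition — keeping careful track of which families of elements of $P$ are simultaneously flippable from a given ultrafilter, and showing the only failure of simultaneous flippability is pairwise comparability or complementarity; this is where all three pocset axioms genuinely get used, and it is the technical heart of Sageev's construction.
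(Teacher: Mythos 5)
This theorem is cited from Roller's paper (\cite{roller1998poc}) and is not proved in the text at all, so there is no ``paper's proof'' to compare against; what you have written is an outline of the standard Sageev--Roller cubulation construction, and it is substantially correct. The architecture you describe --- vertices as ultrafilters, edges as single flips at minimal elements, connectedness by reducing the (finite) symmetric difference, filling in cubes for sets of jointly flippable directions, checking the Gromov link condition, and realizing $P$ as the halfspace pocset via $A \mapsto \{U : A \in U\}$ --- is exactly the standard route and each step you describe is a genuine step in that proof.

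One place where your wording undersells the content: the flag-link verification does not reduce to ``$A_i, A_j$ not comparable and not complementary.'' The correct condition for two minimal halfspaces $A, B \in U$ to be jointly flippable is \emph{transversality}: none of $A \le B$, $A \le B^c$, $A^c \le B$, $A^c \le B^c$ holds. Given $A, B \in U$ and both minimal, the ultrafilter and minimality axioms already rule out $A \le B^c$, $B \le A^c$, $A \le B$, $B \le A$; the one genuine obstruction that remains is $A^c \le B$ (equivalently $B^c \le A$), and this is a nesting of $A^c$ with $B$, not a comparability of $A$ with $B$ or a complementarity. You must name this explicitly, since it is precisely the case where flipping $A$ destroys the minimality of $B$. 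Once stated correctly, your claim that pairwise transversality of minimal elements implies joint flippability is the right lemma, and it is where all three pocset axioms are used, as you anticipate. The rest of your outline (simple connectedness by cycle reduction across squares, the greedy construction of ultrafilters separating $A$ from $B$ when $A \not\le B$, using $B^c \le A^c \iff A \le B$ to see $A$ and $B^c$ are compatible) is the standard argument and is sound.
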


If $P$ is infinite, then similar conclusions hold under the additional assumptions that $P$ is discrete and of finite width, see \cite{roller1998poc,sageevnotes}. However, in this paper we only need the case when $P$ is finite.

\subsection{Basics about RAAGs}
\label{basics about raag}
Pick a finite simplicial graph $\Gamma$, and let $G(\Gamma)$ be the right-angled Artin group (RAAG) with defining graph $\Gamma$. Let $S(\Gamma)$ be the Salvetti complex (\cite[Section 2.6]{charney2007introduction}) of $G(\Gamma)$, which is a non-positively curved cube complex whose 2-skeleton is the presentation complex of $G(\Ga)$. Denote the universal cover of $S(\Gamma)$ by $X(\Gamma)$. Pick a standard generating set $S$ for $G(\Gamma)$, and we label the 1-cells in $S(\Gamma)$ by elements in $S$ and choose an orientation for each 1-cell in $S(\Gamma)$. This lifts to orientation and labelling of edges in $X(\Gamma)$ which are invariant under the action $G(\Ga)\acts X(\Ga)$. As $S(\Gamma)$ only has one vertex, we can identify $G(\Ga)$ as the 0-skeleton of $X(\Ga)$.

Let $\Gamma'\subset\Gamma$ be a full subgraph. Then the images of the embeddings $G(\Gamma')\to G(\Gamma)$ and $S(\Gamma')\to S(\Gamma)$ are called \textit{standard subgroup} (of type $\Gamma'$) and \textit{standard subcomplex} (of type $\Gamma'$) respectively. \textit{Standard subcomplexes} of $X(\Gamma)$ are lifts of standard subcomplexes of $S(\Gamma)$. When $\Gamma'$ is a complete subgraph, $G(\Gamma')$ is called a \textit{standard abelian subgroup}, $S(\Gamma')$ is called a \textit{standard torus}, and lifts of $S(\Gamma')$ are called \textit{standard flats}. One dimensional standard flats are also called \textit{standard geodesics}. As we are identifying $G(\Gamma)$ as the 0-skeleton of $X(\Gamma)$, sometime we will slightly abuse the notation by referring to a left coset of a standard abelian subgroup (resp. standard $\mathbb Z$ subgroup) of $G(\Gamma)$ as a standard flat (resp. standard geodesic).

\begin{definition}
\label{notation}
For every edge $e\in X(\Gamma)$, there is a vertex in $\Gamma$ which shares the same label as $e$. We denote this vertex by $V_{e}$. If $K\subset X(\Gamma)$ is a subcomplex ($K$ does not need to be a standard subcomplex), we define $V_{K}$ to be $\{V_{e}\mid e$ is an edge in $K\}$ and $\Gamma_{K}$ to be the full subgraph spanned by $V_{K}$. The subgraph $\Gamma_K$ is called the \textit{support} of $K$. Pick a vertex $v\in X(\Gamma)$ and a full subgraph $\Gamma'\subset\Gamma$, we denote the unique standard subcomplex with support $\Gamma'$ that contains $v$ by $K(v,\Gamma')$. 
\end{definition}

The following two results are strengthened versions of Lemma \ref{2.6} and Lemma \ref{2.9} in the case of coarse intersection of two standard subcomplexes:

\begin{lem}
\label{3.1}
$($\cite[Lemma 3.1]{raagqi1}$)$ Let $\Gamma$ be a finite simplicial graph and let $K_{1}$, $K_{2}$ be two standard subcomplexes of $X(\Gamma)$. If $(Y_{1},Y_{2})=\inc (K_{1},K_{2})$, then $Y_{1}$ and $Y_{2}$ are also standard subcomplexes.
\end{lem}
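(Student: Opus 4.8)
The goal is to show that $Y_{1}=K(v,\Gamma_{Y_{1}})$ for every $v\in v(Y_{1})$, and symmetrically $Y_{2}=K(v',\Gamma_{Y_{2}})$; since each $K(\cdot,\cdot)$ is by construction a standard subcomplex (Definition~\ref{notation}), this is exactly the assertion. By Lemma~\ref{2.6}, $Y_{1}$ and $Y_{2}$ are nonempty convex subcomplexes of $X(\Gamma)$ and $\pi_{K_{2}}$ restricts to a cubical isomorphism $Y_{1}\to Y_{2}$, so their supports $\Gamma_{Y_{1}},\Gamma_{Y_{2}}$ are defined; write $K_{i}=K(x_{i},\Gamma_{i})$.

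The first step is to pin down the support. By Lemma~\ref{2.9}, a hyperplane of $X(\Gamma)$ is dual to an edge of $Y_{1}$ precisely when it crosses both $K_{1}$ and $K_{2}$; since the label of a hyperplane meeting $K_{i}$ is a vertex of $\Gamma_{i}$, it follows that $\Gamma_{Y_{1}}$ is the full subgraph of $\Gamma_{1}\cap\Gamma_{2}$ spanned by the labels of the hyperplanes crossing both $K_{1}$ and $K_{2}$, and likewise for $\Gamma_{Y_{2}}$; in particular $\Gamma_{Y_{1}}=\Gamma_{Y_{2}}=:\Gamma_{0}$. One inclusion is then immediate: for $v\in v(Y_{1})$ and any $w\in v(Y_{1})$, a combinatorial geodesic from $v$ to $w$ lies in $Y_{1}$ by combinatorial convexity (Lemma~\ref{combinatorial convex}), all of its edges carry labels in $V(\Gamma_{0})\subseteq V(\Gamma_{1})$, and — the vertex set of $K(v,\Gamma_{0})$ being the coset of the standard subgroup $G(\Gamma_{0})$ through $v$ — such a path stays in $K(v,\Gamma_{0})$. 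Hence $Y_{1}\subseteq K(v,\Gamma_{0})$.

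The crux is the reverse inclusion $K(v,\Gamma_{0})\subseteq Y_{1}$, equivalently the \emph{saturation} of $Y_{1}$: for every $w\in v(Y_{1})$ and every label $s\in V(\Gamma_{0})$, the standard geodesic through $w$ labelled $s$ lies in $Y_{1}$. Granting saturation, an induction on combinatorial distance from $v$ places every vertex of $K(v,\Gamma_{0})$ into $Y_{1}$, and combinatorial convexity upgrades this to $K(v,\Gamma_{0})\subseteq Y_{1}$. By combinatorial convexity it suffices, for fixed $w$ and $s$, to show that the two edges of the $s$-geodesic incident to $w$ lie in $Y_{1}$; fix one such edge $e$ and let $h$ be its dual hyperplane. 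Then $h$ is $s$-labelled and, because $w\in K_{1}=K(w,\Gamma_{1})$ and $s\in V(\Gamma_{1})$, $h$ crosses $K_{1}$. What has to be shown is that $h$ crosses $K_{2}$ as well: once this is known, the converse part of Lemma~\ref{2.9} makes $h$ dual to an edge of the convex subcomplex $Y_{1}$, and then combinatorial convexity of $Y_{1}$, applied inside the carrier of $h$ (which splits as a product), forces $e\subseteq Y_{1}$.

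I expect the step ``$h$ crosses $K_{2}$'' to be the main obstacle, and it is precisely here that one must use that $K_{1}$ and $K_{2}$ are \emph{standard}, not merely convex: for a general convex subcomplex an $s$-edge of the coarse intersection need not extend. One plan is to argue, using that $K_{1}$ is a standard subcomplex and $w$ realizes $d(K_{1},K_{2})$, that no hyperplane crossing $K_{1}$ separates $w$ from $K_{2}$ — so that $h$, which crosses $K_{1}$, is not separating and, in view of the existence of an $s$-labelled hyperplane crossing both $K_{1}$ and $K_{2}$ (present because $s\in V(\Gamma_{0})$), must meet $K_{2}$. An alternative, more robust route is an induction on $\dim X(\Gamma)$: choosing an $s$-labelled hyperplane $h_{1}$ crossing both $K_{1}$ and $K_{2}$, the sets $h_{1}\cap K_{i}$ are standard subcomplexes of the lower-dimensional cube complex $h_{1}$, so by the inductive hypothesis and Lemma~\ref{2.9} the pair $\inc(h_{1}\cap K_{1},h_{1}\cap K_{2})=(h_{1}\cap Y_{1},h_{1}\cap Y_{2})$ consists of standard subcomplexes of $h_{1}$; transporting this through the carrier of $h_{1}$ and using the RAAG structure to slide the $s$-direction then forces every vertex of $Y_{1}$ to be incident to an $s$-edge of $Y_{1}$. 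The base case $\dim X(\Gamma)\le 1$ (so $X(\Gamma)$ is a tree) is immediate, since a coarse intersection of standard subtrees is again a standard subtree. Finally, the conclusion for $Y_{2}$ follows by symmetry, or by transporting along the cubical isomorphism $\pi_{K_{2}}|_{Y_{1}}$.
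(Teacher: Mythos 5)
This lemma is quoted in the paper from \cite{raagqi1} without proof, so your proposal can only be judged on its own terms. Its skeleton is sound: the reduction to saturation (every vertex $w$ of $Y_{1}$ carries both of its $s$-edges inside $Y_{1}$, for each vertex $s$ of $\Gamma_{0}$), and the closing step — once the hyperplane $h$ dual to the $s$-edge $e$ at $w$ is known to cross both $K_{1}$ and $K_{2}$, Lemma \ref{2.9} makes $h$ dual to an edge of $Y_{1}$, and combinatorial convexity (Lemma \ref{combinatorial convex}) inside the carrier then forces $e\subseteq Y_{1}$ — are both correct. But the step you yourself flag as the main obstacle, that $h$ crosses $K_{2}$, is left unproved, and neither of your sketches closes it. Plan A is a non sequitur: knowing that no hyperplane crossing $K_{1}$ separates $w$ from $K_{2}$ only excludes the case that $K_{2}$ lies entirely beyond $h$; it does not exclude the case that $K_{2}$ lies entirely on the same side of $h$ as $w$, which is exactly the configuration that must be ruled out, and the mere existence of some other $s$-labelled hyperplane crossing both $K_{1}$ and $K_{2}$ does not by itself transfer anything to $h$. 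Plan B fares no better: the inductive information lives only in the carrier of the chosen hyperplane $h_{1}$, i.e.\ at vertices of $Y_{1}$ adjacent to $h_{1}$; for a vertex $w\in Y_{1}$ far from $h_{1}$, ``sliding the $s$-direction'' is precisely the unproved claim restated, and it is here (not in the formal induction on dimension) that the standardness of $K_{1},K_{2}$ must enter.

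The missing ingredient is a label/commutation argument, essentially the content of Lemma \ref{3.2}. Since $s$ is a vertex of $\Gamma_{0}$, there is an $s$-labelled hyperplane $h_{1}$ meeting both $K_{1}$ and $K_{2}$; any hyperplane separating $K_{1}$ from $K_{2}$ has points of $h_{1}$ on both of its sides, hence crosses $h_{1}$, hence carries a label adjacent to $s$ (crossing hyperplanes bound a common square, so their labels commute). Combine this with the gate property you invoke but also do not prove (for $w\in Y_{1}$, every hyperplane separating $w$ from $K_{2}$ in fact separates $K_{1}$ from $K_{2}$): then a combinatorial geodesic from $w$ to its projection $w'\in Y_{2}$ crosses only hyperplanes labelled by vertices of $lk(s)$, so $w^{-1}w'\in G(lk(s))$ and, moving the $s$-edges across the corresponding squares, the two $s$-hyperplanes at $w$ coincide with the two $s$-hyperplanes at $w'$. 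The latter cross $K_{2}$ because $K_{2}$ is a standard subcomplex whose support contains $s$. This is where ``standard, not merely convex'' is actually used, and with it supplied your argument (and its symmetric counterpart for $Y_{2}$) goes through.
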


We can compute the supports of $Y_1$ and $Y_2$ as follows.
\begin{lem}
\label{3.2}
$($\cite[Corollary 3.2]{raagqi1}$)$ Let $K_{1},K_{2},Y_{1}$ and $Y_{2}$ be as above. 
\begin{enumerate}
\item Let $h$ be a hyperplane separating $K_{1}$ and $K_{2}$ and let $e$ be an edge dual to $h$. Then $V_{e}\in V^{\perp}_{Y_{1}}=V^{\perp}_{Y_{2}}$ (see Definition \ref{notation} for relevant notations).
\item A vertex $v\in\Gamma$ satisfies $v\in V_{Y_{1}}$ if and only if
\begin{enumerate}
\item $v\in V_{K_{1}}\cap V_{K_{2}}$.
\item For any hyperplane $h'$ separating $K_{1}$ from $K_{2}$ and any edge $e'$ dual to $h'$,  we have $d(v,V_{e'})=1$.
\end{enumerate}
\end{enumerate}
\end{lem}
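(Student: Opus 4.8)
The plan is to establish the perpendicularity assertion first (this is the substantial part), and then to deduce the \emph{iff}-statement from it with the help of a short computation inside $G(\Gamma)$.

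\medskip
\emph{Step 1: $V_e\in V_{Y_1}^{\perp}=V_{Y_2}^{\perp}$.} By Lemma~\ref{3.1}, $Y_1$ and $Y_2$ are standard subcomplexes; by Lemma~\ref{2.6}(2)--(3) the convex hull $H$ of $Y_1\cup Y_2$ is a convex subcomplex of $X(\Gamma)$ which splits as a product of cube complexes $H\cong Y_1\times I$, where $I$ is a combinatorial segment with $\bigtriangleup:=d(K_1,K_2)$ edges $e_1,\dots,e_{\bigtriangleup}$, $Y_1=Y_1\times\{0\}$ and $Y_2=Y_1\times\{\bigtriangleup\}$. Since every two consecutive rungs of a ladder of squares $f\times I$ ($f$ an edge of $Y_1$) are dual to the same hyperplane, all rungs of the ladder carry the same label, so $V_{Y_1}=V_{Y_2}$ and hence $V_{Y_1}^{\perp}=V_{Y_2}^{\perp}$. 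Now take a hyperplane $h$ separating $K_1$ from $K_2$; since all edges dual to $h$ share a single label, it suffices to produce one such edge witnessing $V_e\in V_{Y_1}^{\perp}$. As $h$ separates $K_1$ from $K_2$ it separates $Y_1$ from $Y_2$, hence crosses $H$ but not the slice $Y_1\times\{0\}$; in the product $Y_1\times I$ this forces $h$ to meet $H$ in a wall of the $I$-factor, so inside $H$ the hyperplane $h$ is dual to an edge $\{u\}\times e_j$ with $u\in v(Y_1)$. Then for every edge $f$ of $Y_1$ the square $f\times e_j$ lies in $X(\Gamma)$, and a square of $X(\Gamma)$ corresponds to an edge of $\Gamma$ whose two endpoints label the two edge-directions of the square; hence the label of $e_j$ is adjacent in $\Gamma$ to the label of $f$. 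Letting $f$ range over the edges of $Y_1$ gives $V_{e_j}\in V_{Y_1}^{\perp}$, and $V_e=V_{e_j}$ since $e$ and $\{u\}\times e_j$ are both dual to $h$. (One also uses that the hyperplanes separating $K_1$ from $K_2$ are exactly those crossed by a combinatorial geodesic from a vertex of $Y_1$ to its $\pi_{K_2}$-image in $Y_2$ — so that they all lie in $H$ — which is immediate from Lemma~\ref{2.6} and Lemma~\ref{2.9}, such a geodesic having length $\bigtriangleup=d(K_1,K_2)$.)

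\medskip
\emph{Step 2: the characterization.} If $v\in V_{Y_1}$, then $v\in V_{Y_1}\subseteq V_{K_1}$ and $v\in V_{Y_1}=V_{Y_2}\subseteq V_{K_2}$, giving (1); and for a hyperplane $h'$ separating $K_1$ from $K_2$ with dual edge $e'$, Step~1 gives $V_{e'}\in V_{Y_1}^{\perp}$, so $V_{e'}$ is adjacent to $v$, i.e. $d(v,V_{e'})=1$, giving (2). Conversely, assume $v$ satisfies (1) and (2). Translating the whole configuration by an element of $G(\Gamma)$, we may assume a chosen vertex $y_1\in v(Y_1)$ equals the identity $e$, so $K_1=G(\Gamma_{K_1})$; set $w:=\pi_{K_2}(y_1)\in v(Y_2)$, so $K_2=wG(\Gamma_{K_2})$ and $|w|=d(e,K_2)=\bigtriangleup$, and fix a geodesic word representing $w$. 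Its letters are exactly the labels of the hyperplanes separating $K_1$ from $K_2$, so (2) says that $v$ is adjacent in $\Gamma$ to every letter of $w$; hence the standard generator $s_v$ labelled $v$ commutes with $w$. Also (1) gives $v\in V_{K_2}$, hence $s_v\in G(\Gamma_{K_2})$, and $v\in V_{K_1}$, hence $s_v\in G(\Gamma_{K_1})=K_1$. Now $ws_v=s_vw\in wG(\Gamma_{K_2})=K_2$, and $d(s_v,ws_v)=|s_v^{-1}ws_v|=|w|=\bigtriangleup$, while $d(s_v,K_2)\ge d(K_1,K_2)=\bigtriangleup$ since $s_v\in K_1$; therefore $d(s_v,K_2)=\bigtriangleup$, i.e. $s_v\in v(Y_1)$. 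Since $e,s_v\in v(Y_1)$ and $Y_1$ is a full subcomplex, the edge joining them — which is labelled $v$ — lies in $Y_1$, whence $v\in V_{Y_1}$.

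\medskip
\emph{Expected main obstacle.} The subtle part is Step~1: one must verify that the product decomposition of the hull furnished abstractly by Lemma~\ref{2.6} is compatible with the cubical and labelling structure of the Salvetti cube complex, and that the hyperplanes separating $K_1$ from $K_2$ are precisely the walls of the $I$-factor of $H\cong Y_1\times I$. Once conditions (1) and (2) have been recast as ``$s_v\in G(\Gamma_{K_2})$'' and ``$s_v$ commutes with $w$'', the converse direction of the characterization is just the one-line distance estimate above, so the real content of the lemma sits in the perpendicularity statement.
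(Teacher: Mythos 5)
The lemma is cited from \cite[Corollary~3.2]{raagqi1} and is not reproved in this paper, so there is no proof here to compare against; I can only assess your argument on its own terms, and it is correct.

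Your Step~1 is the right idea: $Y_1$ and $Y_2$ are parallel (Lemma~\ref{2.6}), parallel edges carry the same label, so $V_{Y_1}=V_{Y_2}$; and any hyperplane separating $K_1$ from $K_2$ separates $Y_1$ from $Y_2$, hence crosses the hull $H\cong Y_1\times I$, and being disjoint from $Y_1\times\{0\}$ it must appear as a wall of the $I$-factor, so the squares $f\times e_j$ witness adjacency of $V_e$ with every label occurring in $Y_1$. The parenthetical remark you add at the end of Step~1 is exactly the point that needs to be said: hyperplanes separating $K_1$ from $K_2$ are precisely those crossed by a combinatorial geodesic from $y_1\in Y_1$ to $\pi_{K_2}(y_1)\in Y_2$, and this uses that for a vertex in the gate $Y_1$, the hyperplanes separating it from $K_2$ are exactly the hyperplanes separating $K_1$ from $K_2$ (a cardinality-plus-inclusion argument, using that the number of separating hyperplanes equals the combinatorial distance between convex subcomplexes). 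In Step~2 the translation to $y_1=e$, the reading of~(2) as ``$s_v$ commutes with $w$'', and the distance computation $d(s_v,ws_v)=|w|=d(K_1,K_2)\le d(s_v,K_2)$ forcing $s_v\in v(Y_1)$ is a clean and correct way to close the converse; fullness of the standard subcomplex $Y_1$ (Lemma~\ref{3.1}) then gives the labelled edge inside $Y_1$. Two cosmetic points worth cleaning up if this were to be written out: the symbol $e$ is used simultaneously for the edge dual to $h$ and for the identity of $G(\Gamma)$; and $\bigtriangleup$ in Lemma~\ref{2.6} is a CAT(0) distance while $|w|$ is a word length, so one should say explicitly that for convex subcomplexes and vertices these minimizers coincide, or work consistently with the $\ell^1$-metric. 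Neither affects the validity of the argument.
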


The proof roughly goes as follows. Pick vertex $x\in K_1$ and let $y$ be the vertex in $K_2$ closest to $x$. Let $\ell$ be a combinatorial geodesic joining $x$ and $y$. Then there is a combinatorial embedding $Y_1\times \ell\hookrightarrow X$. Note that parallel edges has the same label. Two edges span a square if and only if their label are adjacent in $\Ga$. Thus the label of each edge in $\ell$ is adjacent to the label of every edge in $Y_1$. If $h$ separates $K_1$ and $K_2$, then $h$ must intersect $\ell$. Then (1) follows. Suppose $v\in V_{Y_1}$. Since $Y_1$ and $Y_2$ are parallel, then $V_{Y_1}=V_{Y_2}$, thus (2a) follows. (2b) is a consequence of (1).

\begin{lem}
	\label{coarse contain of standard subcomplexes}
Let $K_1$ and $K_2$ be two standard subcomplexes of $X(\Gamma)$. Then
\begin{enumerate}
	\item $K_1$ is coarsely contained in $K_2$ (cf. Section \ref{subsec_notation}) if and only if there is a standard subcomplex $K'_2\subset K_2$ such that $K_1$ and $K'_2$ are parallel.
	\item $K_1$ is coarsely equivalent to $K_2$ if and only if $K_1$ and $K_2$ are parallel.
\end{enumerate}
\end{lem}

\begin{proof}
We only prove (1). (2) is similar. It suffices to prove the only if direction of (1). Let $\inc(K_1,K_2)=(J_1,J_2)$. It suffices to prove $J_1=K_1$. By Lemma \ref{2.6} (4), $J_1$ and $K_1$ have finite Hausdorff distance, moreover, Lemma \ref{3.1} implies $J_1$ is a standard subcomplex of $K_1$. If $J_1\subsetneq K_1$, then their supports satisfy $\Gamma_{J_1}\subsetneq \Gamma_{K_1}$. Pick vertex $\bar{v}\in \Gamma_{K_1}\setminus \Gamma_{J_1}$ and let $\ell\subset K_1$ be a standard geodesic line with its support $=\{\bar{v}\}$. Then Lemma \ref{3.2} (2a) implies $\inc(J_1,\ell)$ is a pair of points, which implies $\ell$ is not contained in a bounded neighborhood of $K_1$ by Lemma \ref{2.6} (4). This contradicts that $J_1$ and $K_1$ have finite Hausdorff distance. So we must have $J_1=K_1$.
\end{proof}

Now we recall extension graph and extension complex defined by Kim and Koberda \cite{kim2013embedability}, which will be the key quasi-isometry invariants used in this paper. The combinatorial structure of extension complexes is instrumental for studying quasi-isometries between RAAGs. It is worth mentioning that it was known before that the extension graph is a commensurability invariant in certain classes of RAAGs \cite{kim2014geometry}. There is also a related graph called contact graph \cite{hagen2014weak} which is quasi-isometric to the extension graph as proved in \cite{kim2014geometry}, but with a quite different combinatorial structure.

Let $\mathcal{P}(\Gamma)$ be the \emph{extension complex} of $\Gamma$, which is the flag complex of the extension graph introduced in \cite{kim2013embedability}. We give an alternative but equivalent definition here. The vertices of $\mathcal{P}(\Gamma)$ are in 1-1 correspondence with the parallel classes of standard geodesics in $X(\Gamma)$ (two standard flats are in the same parallel class if they are parallel). Two distinct vertices $v_{1},v_{2}\in\mathcal{P}(\Gamma)$ are connected by an edge if for $i=1,2$, there is a standard geodesic $\ell_{i}$ in the parallel class associated with $v_{i}$ such that $\ell_{1}$ and $\ell_{2}$ span a standard 2-flat. This definition is equivalent to the definition in \cite{kim2013embedability} as explained in \cite[Lemma 4.2]{raagqi1}.

Note that edges in the same standard geodesics of $X(\Gamma)$ have the same label, and edges in parallel standard geodesics also have the same label. This induces a well-defined labeling of vertices in $\mathcal{P}(\Gamma)$ by vertices of $\Gamma$.  There is a label-preserving simplicial map $\pi:\mathcal{P}(\Gamma)\to F(\Gamma)$, where $F(\Gamma)$ is the flag complex of $\Gamma$. Moreover, since $G(\Gamma)\curvearrowright X(\Gamma)$ by label-preserving cubical isomorphisms, we obtain an induced action $G(\Gamma)\curvearrowright\mathcal{P}(\Gamma)$ by label-preserving simplicial isomorphisms.

Note that each complete subgraph in the 1-skeleton of $\mathcal{P}(\Gamma)$ gives rise to a collection of mutually orthogonal standard geodesics lines. Thus there is a 1-1 correspondence between the $(k-1)$-simplexes in $\mathcal{P}(\Gamma)$ and parallel classes of standard $k$-flats in $X(\Gamma)$ (\cite[Section 4.1]{raagqi1}). For standard flat $F\subset X(\Gamma)$, we denote the simplex in $\mathcal{P}(\Gamma)$ associated with the parallel class containing $F$ by $\Delta(F)$. For a standard subcomplex $K\subset X(\Gamma)$, define $\Delta(K):=\cup_{\lambda\in\Lambda}\Delta(F_{\lambda})$, here $\{F_{\lambda}\}_{\lambda\in\Lambda}$ is the collection of standard flats in $K$.

\begin{lem}
	\label{boundary of std subclex}
Let $K_1$ and $K_2$ be two standard subcomplexes. Then 
\begin{enumerate}
	\item $\Delta(K_1)=\Delta(K_2)$ if and only if $K_1$ and $K_2$ are parallel. 
	\item $\Delta(K_1)\subset\Delta(K_2)$ if and only if $K_1$ is coarsely contained in $K_2$.
\end{enumerate}
\end{lem}

\begin{proof}
It suffices to (1). As (2) follows from (1) and Lemma \ref{coarse contain of standard subcomplexes} (1). Suppose $K_1$ and $K_2$ are parallel. Let $F\subset K_1$ be a standard flat. Then $F_1$ is coarsely contained in $K_2$, hence by Lemma \ref{coarse contain of standard subcomplexes}, there exists a standard flat $F_2\subset K_2$ which is parallel to $F_1$. Thus $$\Delta(F_1)=\Delta(F_2)\subset \Delta(K_2)$$ and we deduce that $\Delta(K_1)\subset\Delta(K_2)$. Similarly we know the inclusion on the other direction, hence $\Delta(K_1)=\Delta(K_2)$.

Now we assume $\Delta(K_1)=\Delta(K_2)$. Then each standard flat of $K_1$ is coarsely contained in $K_2$. Let $\inc(K_1,K_2)=(J_1,J_2)$. Then each standard flat of $K_1$ is coarsely contained in $J_1$ by Lemma \ref{2.6} (4). By the same proof as Lemma \ref{coarse contain of standard subcomplexes}, we know that if $J_1\subsetneq K_1$, then there is a standard geodesic line in $K_1$ which is not coarsely contained in $J_1$. Thus $K_1=J_1$. Similarly we can prove $K_2=J_2$. Hence $K_1$ and $K_2$ are parallel.
\end{proof}

Given arbitrary vertex $p\in X(\Gamma)$, one can obtain a simplicial embedding $$i_{p}:F(\Gamma)\to\mathcal{P}(\Gamma)$$ by considering the collection of standard flats passing through $p$ (here $F(\Gamma)$ denotes the flag complex of $\Gamma$). We will denote the image of $i_{p}$ by $(F(\Gamma))_{p}$. Note that $\pi\circ i_{p}$ is the identity map, which implies the following lemma.
\begin{lem}
\label{isometric embedding}
The map $i_p$ is an isometric embedding with respect to the combinatorial distance between vertices of $F(\Ga)$.
\end{lem}

Now we look at the outer automorphism group $\out(G(\Gamma))$ of $G(\Ga)$. By \cite{servatius1989automorphisms,laurence1995generating}, $\out(G(\Gamma))$ is generated by the following four types of elements (we identify the vertex set of $\Gamma$ with a standard generating set of $G(\Gamma)$): 
\begin{enumerate}
\item Given vertex $v\in\Gamma$, sending $v\to v^{-1}$ and fixing all other vertices.
\item Graph automorphisms of $\Gamma$.
\item If $lk(w)\subset St(v)$ for vertices $w,v\in\Gamma$, sending $w\to wv$ and fixing all other vertices induces a group automorphism. It is called a \textit{transvection}. When $d(v,w)=1$, it is an \textit{adjacent transvection}, otherwise it is a \textit{non-adjacent transvection}.
\item Suppose $\Gamma\setminus St(v)$ is disconnected. Then one obtains a group automorphism by picking a connected component $C$ and sending $w\to vwv^{-1}$ for each vertex $w\in C$. It is called a \textit{partial conjugation}.
\end{enumerate}

\subsection{Special subgroups of RAAGs}
\label{special subgroup}
We first consider the special case $G(\Ga)\cong \Z^{n}$. Pick a finite rectangle $K\subset\Z^{n}$. Then the finite index subgroup $H=\oplus_{i=1}^{n}n_i\Z$, where $n_i$'s are the number of vertices along the edges of the rectangle, has $K$ as its fundamental domain. This can be naturally generalized to all RAAGs in the following way. Let $K\subset X(\Ga)$ be a compact convex subcomplex. Let $\{\ell_{i}\}_{i=1}^{s}$ be a maximal collection of standard geodesics such that $\ell_{i}\cap K\neq\emptyset$ for all $i$ and $\Delta(\ell_{i})\neq \Delta(\ell_{j})$ for any $i\neq j$. We consider the left action $G(\Ga)\acts X(\Ga)$. For each $i$, let $g_i\in G(\Ga)$ be the unique element that translates $\ell_i$ towards the positive direction with translation length $=1$ (recall that we have oriented edges of $X(\Ga)$ in a $G(\Ga)$-invariant way). Let $n_{i}=|v(K\cap \ell_{i})|$. 

\begin{thm}(\cite[Section 6.1]{raagqi1})
\label{special subgroup theorem}
Let $G\le G(\Ga)$ be the subgroup generated by $\{g^{n_{i}}_{i}\}_{i=1}^{s}$. Then the following holds.
\begin{enumerate}
\item The subcomplex $K$ is a \textquotedblleft fundamental domain\textquotedblright\ for $G$ in the sense that for $g_1,g_2\in G$, $g_1 K\cap g_2 K\neq \emptyset$ if and only if $g_1=g_2$. Moreover, the $G$-orbit of $K$ cover the $0$-skeleton of $X(\Ga)$. Thus $|G(\Ga):G|=$ the number of vertices in $K$.
\item Let $\Ga'$ be the 1-skeleton of the full subcomplex of $\P(\Ga)$ spanned by $\{\Delta(\ell_{i})\}_{i=1}^{s}$. Then $G$ is isomorphic to the RAAG $G(\Ga')$.
\end{enumerate}
\end{thm}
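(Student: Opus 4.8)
The plan is to derive both statements from one device: for each $i$, the hyperplane of $X(\Ga)$ lying \emph{immediately past $K$} along $\ell_i$, together with its $G$-orbit. Write $v_i:=V_{\ell_i}$; let $q^i_0,\dots,q^i_{n_i-1}$ be the consecutive vertices of the segment $\ell_i\cap K$ (a segment since $\ell_i$ and $K$ are both convex), oriented so that $g_iq^i_k=q^i_{k+1}$, and let $h_i$ be the hyperplane dual to the edge $e_i\subset\ell_i$ joining $q^i_{n_i-1}$ to $g_iq^i_{n_i-1}=:q^i_{n_i}\notin K$. The first thing to prove is that \emph{$h_i$ does not cross $K$}: the edges dual to $h_i$ are exactly those of the form $[q^i_{n_i-1}w,\ q^i_{n_i-1}wv_i]$ with $w\in G(lk(v_i))$, and since every letter of $w$ commutes with $v_i$ there is a combinatorial geodesic from $q^i_{n_i-1}$ to $q^i_{n_i-1}wv_i$ running through $q^i_{n_i}$. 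So if $h_i$ crossed $K$ along an edge $f\subset K$, then $f$ would be such a dual edge, hence have both endpoints in $K$, and combinatorial convexity of $K$ (Lemma \ref{combinatorial convex}) would put that geodesic -- hence $q^i_{n_i}$ -- into $K$, a contradiction. The same argument at the other end of $\ell_i\cap K$ gives that $g_i^{-n_i}h_i$ does not cross $K$. Consequently $h_i$ separates $K$ from $g_i^{n_i}K$ and $g_i^{-n_i}h_i$ separates $K$ from $g_i^{-n_i}K$; in particular $g_i^{\pm n_i}K\cap K=\emptyset$.

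For statement (1) I would first show $G\cdot v(K)=v(X(\Ga))$ by showing $G\cdot v(K)$ is closed under passing to adjacent vertices and using connectedness of $X(\Ga)^{(1)}$: if $v\in v(K)$ and $v'=vu^{\pm1}$ is adjacent to $v$ with $v'\notin K$, then $u$ is the label of a standard geodesic $\ell$ through $v$ meeting $K$, so by maximality of $\{\ell_i\}$ one has $\De(\ell)=\De(\ell_j)$ and $u=v_j$ for some $j$; in the parallel set $P_{\ell_j}$, which splits as a product $\ell_j\times X(lk(v_j))$ on which $g_j^{n_j}$ acts by translation by $n_j$ in the $\ell_j$-factor and trivially on the other factor, one checks that $\ell\cap K$ occupies the same interval of positions as $\ell_j\cap K$, so $v'$ is the vertex of $\ell$ just outside $\ell\cap K$ and therefore lies in $g_j^{\pm n_j}(\ell\cap K)\subset g_j^{\pm n_j}K$; hence $v'\in G\cdot v(K)$. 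Next set $\mathcal W:=G\cdot\{h_i,g_i^{-n_i}h_i\}_{i=1}^{s}$. One checks that no wall of $\mathcal W$ crosses $K$, so the component $R$ of $X(\Ga)\setminus\bigcup\mathcal W$ containing $K$ satisfies $v(R)=v(K)$ (a vertex outside $K$ is cut off from $K$ by one of the $2s$ walls above, by the last-edge analysis used for Step 1), while $G$ permutes the components of $X(\Ga)\setminus\bigcup\mathcal W$ freely (if $g$ fixes $R$ it fixes $v(K)$, hence fixes the convex subcomplex $K$, hence $g=\id$ since $G(\Ga)$ acts freely) and transitively (by the covering just proved). Therefore $\{v(gK)\}_{g\in G}$ partitions $v(X(\Ga))$, which is precisely (1): $g_1K\cap g_2K\neq\emptyset\iff g_1=g_2$, the orbit covers the $0$-skeleton, and $|G(\Ga):G|=|v(K)|$.

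For statement (2), equip $\mathcal W$ with its natural pocset structure (inclusion of halfspaces, complementation involution); it is discrete of finite width since $G$ acts cocompactly and $X(\Ga)$ is finite-dimensional, so Theorem \ref{2.5} produces a $CAT(0)$ cube complex $X'$ with a free cocompact $G$-action whose vertex set is the $G$-set of components above. Identifying $X'$ with $X(\Ga')$ and the $G$-action with the standard one then comes down to two facts about $\mathcal W$: (a) two walls from the $G$-orbits of $h_i$ and $h_j$ ($i\neq j$) cross, after translation by some element of $G$, precisely when $\De(\ell_i)$ and $\De(\ell_j)$ are joined by an edge in $\P(\Ga)$; and (b) two walls in the same $G$-orbit never cross. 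Given (a) and (b), the flag property of $CAT(0)$ cube complexes shows $X'$ is the universal cover of the Salvetti complex of $\Ga'$ and $G=\pi_1(X'/G)=G(\Ga')$, with the generator labelled $\De(\ell_i)$ mapping to $g_i^{n_i}$. Equivalently -- and this is how I would actually phrase it, to stay inside $X(\Ga)$ -- one shows $g_i^{n_i}g_j^{n_j}K=g_j^{n_j}g_i^{n_i}K$ exactly when $\De(\ell_i)\sim\De(\ell_j)$, which by (1) gives $[g_i^{n_i},g_j^{n_j}]=1$ in those cases and no others.

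I expect the main obstacle to be exactly (a)/(b): translating the adjacency combinatorics of $\P(\Ga)$ into relations among the rigid translations $g_i^{n_i}$, i.e.\ proving that the only relations are the commutations coming from edges of $\Ga'$ and that $\mathcal W$ reproduces the crossing pattern of the hyperplanes of $X(\Ga')$. The same verification -- that the wall family $\mathcal W$, equivalently the tiling of $X(\Ga)$ by translates of $K$, has the combinatorics of a right-angled Artin group on $\Ga'$ -- also underlies the claim in the middle paragraph that no wall of $\mathcal W$ crosses $K$. Controlling this requires a careful study of how the rigid standard geodesics $\ell_i$ intersect one another inside the convex subcomplex $K$, via the coarse-intersection lemmas (Lemmas \ref{2.6}, \ref{2.9}, \ref{3.1}, \ref{3.2}) and the product decomposition of parallel sets; by contrast, Step 1 and the covering argument are routine once that product structure is available.
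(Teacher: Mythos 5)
The local hyperplane analysis in your first paragraph is correct (the combinatorial geodesic argument via Lemma \ref{combinatorial convex} does show $h_i$ and $g_i^{-n_i}h_i$ do not cross $K$), and so is the covering step $G\cdot v(K)=v(X(\Ga))$, once one knows that a convex subcomplex of the product $P_{\ell_j}\cong\ell_j\times X(lk(v_j))$ splits as a product (true, and provable by a combinatorial-geodesic exchange argument). But the crucial assertion ``no wall of $\mathcal W$ crosses $K$'' is stated, not proved, and it carries the entire weight of the fundamental-domain conclusion. What your first paragraph establishes is only that $h_i$ and $g_i^{-n_i}h_i$ do not cross $K$; the claim you need is that $g\,h_i$ does not cross $K$ for \emph{every} $g\in G$, equivalently that $h_i$ avoids $g^{-1}K$ for every $g\in G$. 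That is nothing less than saying the $G$-translates of $K$ tile $X(\Ga)$ without overlap -- i.e.\ it is (essentially) statement (1) itself, restated. Without it, $K$ need not sit inside a single component of $X(\Ga)\setminus\bigcup\mathcal W$, so the set $R$ in your middle paragraph is not well-defined, and neither the freeness nor the transitivity argument goes through (the transitivity step in particular uses $v(gR)=v(gK)$, which fails if some wall cuts across $K$). You flag this yourself in the last paragraph as ``the main obstacle,'' and you are right, but this means the wall-space strategy has not reduced the problem: to run it you must already control how arbitrary words in the $g_j^{n_j}$ move the boundary walls relative to $K$, which is where the real work of \cite[Section 6.1]{raagqi1} lives. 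The same unresolved combinatorics is what you call (a)/(b) in the sketch of statement (2). Two smaller remarks: the freeness step ``$g$ fixes $v(K)$ hence $g=\id$'' needs a sentence (a nontrivial isometry can permute a finite vertex set; you should invoke that $g$ then stabilizes the compact convex $K$ and hence has a fixed point, contradicting freeness of $G(\Ga)\acts X(\Ga)$); and $G\cdot h_i=G\cdot g_i^{-n_i}h_i$ since $g_i^{-n_i}\in G$, so the two families listed in $\mathcal W$ are the same $G$-orbit.
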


Such a group $G$ is called a \textit{special subgroup of $G(\Ga)$ (associated with $K$)}. Note that the definition of special subgroups implicitly depend on the choice of standard generators of $G(\Ga)$ (we can think $G(\Ga)$ as a fixed set, and different choices of standard generators give different ways of building $X(\Ga)$ from $G(\Ga)$). A subgroup is \textit{$S$-special} if it is special with respect to the standard generating set $S$. In most part of the paper, we fix a standard generating set, so there will be no confusion.

Alternatively, $G$ can be characterized as the fundamental group of the canonical completion (\cite{MR2377497}) of the local isometry $K\hookrightarrow X(\Ga)\to S(\Ga)$. However, we will not need this fact.

Let $G(\Ga)\cong F_2$, the free group with two generators. We take $K$ to be an edge in $X(\Ga)$. Then the associated special subgroup $G$ is isomorphic to $F_3$. In this case, if we collapse all the $G$-translations of $K$ in $X(\Ga)$, then the resulting space is isomorphic to the Cayley graph for $F_3$. Note that $F_3$ is a special subgroup of $F_2$ in the sense defined above. This specific example can be generalized to all RAAGs in the following way.

Recall that a cellular map between cube complexes is \textit{cubical} (see \cite{caprace2011rank}) if its restriction $\sigma\to\tau$ between cubes factors as $\sigma\to\eta\to\tau$, where the first map $\sigma\to\eta$ is a natural projection onto a face of $\sigma$ and the second map $\eta\to\tau$ is an isometry.

\begin{thm}(\cite[Lemma 6.18]{raagqi1})
\label{retraction map}
Let $G$, $\Ga$ and $\Ga'$ be as in Theorem \ref{special subgroup theorem}. There is a surjective cubical map $q:X(\Ga)\to X(\Ga')$ such that
\begin{enumerate}
\item the map $q$ sends standard flats onto standard flats. Moreover, each standard flat in $X(\Ga')$ is the $q$-image of some standard flat in $X(\Ga)$;
\item pick a vertex $x'\in X(\Ga')$, then $q^{-1}(x')=g\cdot K$ for some element $g\in G$ (here $K$ is the compact convex subcomplex of $X(\Gamma)$ defined above).
\item the map $q$ is $G$-equivariant. In particular, $q$ is a quasi-isometry.
\end{enumerate}
\end{thm}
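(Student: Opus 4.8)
The plan is to obtain $q$ by collapsing a suitable $G$-invariant family of hyperplanes of $X(\Gamma)$ — a cubical quotient in the sense of the wallspace construction recalled in Section~2.2 — so that the quotient complex is $X(\Gamma')$ and the point-preimages are exactly the $G$-translates of $K$. Concretely, let $\W_{0}$ be the finite set of hyperplanes of $X(\Gamma)$ dual to an edge of $K$, let $\W' := G\cdot\W_{0}$ (equivalently, the set of hyperplanes crossing some translate $gK$), and let $\W''$ be the complementary family. Passing to closed halfspaces, $\W''$ inherits a pocset structure from the pocset of all halfspaces of $X(\Gamma)$, and it is discrete and of finite width (bounded by $\dim X(\Gamma)$); by the infinite version of Theorem~\ref{2.5} it is the halfspace pocset of a $CAT(0)$ cube complex $X'$, and restriction of ultrafilters yields a cubical map $q:X(\Gamma)\to X'$ (a vertex of $X(\Gamma)$, viewed as an ultrafilter on all halfspaces, is sent to its intersection with $\W''$). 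Since $\W''$ is $G$-invariant, $G$ acts on $X'$ and $q$ is $G$-equivariant; cocompactness of the $G$-action on $X(\Gamma)$ (a consequence of Theorem~\ref{special subgroup theorem}(1)) together with finiteness of $\W_{0}$ makes the $G$-action on $X'$ cocompact, and $q$ surjective.

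The main point is the computation of the fibres of $q$, which will also identify $X'$. Two vertices of $X(\Gamma)$ have the same image under $q$ precisely when every hyperplane separating them lies in $\W'$. If $v,w\in gK$, then since $gK$ is convex, hence combinatorially convex by Lemma~\ref{combinatorial convex}, a combinatorial geodesic from $v$ to $w$ stays in $gK$, so every separating hyperplane is dual to an edge of $gK$ and therefore lies in $\W'$; thus $gK$ is contained in a single fibre. Conversely, if $v,w$ lie in distinct translates $gK\ne g'K$, I would take a combinatorial geodesic realizing $d(gK,g'K)$: by Lemma~\ref{2.9} the hyperplanes it crosses are exactly those separating $gK$ from $g'K$, and one argues — using convexity and disjointness of the translates of $K$ — that not all of these can cross translates of $K$, so some separating hyperplane lies in $\W''$. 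Combining this with the facts that $G\cdot K$ covers the $0$-skeleton of $X(\Gamma)$ and that $K$ is a strict fundamental domain (Theorem~\ref{special subgroup theorem}(1)) gives $q^{-1}(x')=gK$ for a unique $g\in G$, which is property (2); in particular the induced $G$-action on $X'$ is free (an element fixing a vertex stabilizes a translate of $K$, hence is trivial).

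It remains to identify $X'$ with $X(\Gamma')$ and to check (1) and (3). For the identification, $X'$ is now a proper $CAT(0)$ cube complex with a free cocompact $G$-action, $G\cong G(\Gamma')$ by Theorem~\ref{special subgroup theorem}(2), so it suffices to match $X'/G$ with the Salvetti complex $S(\Gamma')$: each hyperplane of $X(\Gamma)$ carries a vertex of $\Gamma$ as its label, and bookkeeping these labels — using that parallel standard geodesics crossing a common cube correspond to a clique of $\P(\Gamma)$ — shows that the $G$-orbits of hyperplanes of $X'$, and their incidences, reproduce exactly the graph $\Gamma'$ which is the $1$-skeleton of the full subcomplex of $\P(\Gamma)$ spanned by $\{\Delta(\ell_{i})\}_{i=1}^{s}$; since $X'$ is simply connected this gives $X'\cong X(\Gamma')$. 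Property (3): $G$-equivariance is built in, and $q$ is a quasi-isometry because it is a $G$-equivariant map between two proper cube complexes each admitting a proper cocompact $G$-action (equivalently: $q$ is coarsely Lipschitz and coarsely surjective with uniformly bounded fibres $gK$). Property (1): every standard geodesic $\ell$ of $X(\Gamma)$ has a vertex in some $gK$, so $g^{-1}\ell$ meets $K$ and is therefore parallel to some $\ell_{i}$; since $q$ collapses none of the infinitely many edges of $g^{-1}\ell$ that lie between consecutive translates of $K$, it sends $g^{-1}\ell$ to a standard geodesic of $X(\Gamma')$ labelled by $\Delta(\ell_{i})\in\Gamma'$; applying $g$ and taking joins shows $q$ maps standard flats to standard flats, and surjectivity of $q$ together with equivariance shows every standard flat of $X(\Gamma')$ is obtained this way.

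The step I expect to be the real obstacle is the converse half of the fibre computation — showing that after collapsing $\W'$ no two distinct translates of $K$ get identified, i.e.\ that $\W'$ is exactly the right family to collapse — together with the label bookkeeping needed to see that the quotient is $S(\Gamma')$ rather than the Salvetti complex of some other graph. Once $q^{-1}(x')=gK$ is established, properties (1) and (3) and the identification with $X(\Gamma')$ follow with little extra effort.
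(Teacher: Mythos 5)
Your route — cubulate the complementary family $\W''$ and identify the dual with $X(\Gamma')$ — is genuinely different from the cited argument, which constructs $q$ directly: it sends a vertex $x$ to the unique $g\in G$ with $x\in gK$ and then verifies cubicality by showing that an edge leaving $gK$ along a standard geodesic $\ell$ (with $\Delta(\ell)=\Delta(\ell_i)$) lands in $gg_i^{\pm n_i}K$, because the translates $\{gg_i^{kn_i}K\}_{k\in\Z}$ tile $\ell$. That tiling fact is quick, and once $q$ is known to be cubical with fibres $\{gK\}$ one gets \emph{for free} that the collapsed hyperplanes are exactly $\W'$. Your approach inverts this logical order.

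This is where the gap is. You flag ``the converse half of the fibre computation'' as the real obstacle, and the sketch you give — ``using convexity and disjointness of the translates of $K$'' — does not actually close it. Convexity only rules out the separating hyperplane crossing $gK$ or $g'K$ themselves (the carrier argument), and disjointness by itself says nothing about a hyperplane crossing a \emph{third} translate $g''K$ while also separating $gK$ from $g'K$; a single hyperplane can certainly meet one disjoint convex set and separate two others. What one must prove is: if $e=[x,y]$ exits $gK$ at $x$, the dual hyperplane $h$ crosses \emph{no} translate at all. That statement is true, but its proof is essentially the same structural work as the direct construction: one uses the tiling of $\ell$ by $\{gg_i^{kn_i}K\}$, then (if $h$ crossed some $g''K$ via a parallel edge $[a,b]$) runs a combinatorial geodesic from $x$ to $a$ inside the carrier $h\times[0,1]$, tracks the translate–transitions on the two parallel sides, and derives a contradiction from the RAAG normal form when the ``exit status'' of the parallel edges first changes. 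None of this is supplied by convexity plus disjointness, and the pocset machinery does not let you avoid it. The same goes for the label bookkeeping needed to identify $X'$ with $X(\Gamma')$: the 1-skeleton of $X'$ is the Cayley graph of $G\cong G(\Gamma')$ only after the fibre computation and the adjacency analysis are complete, and that analysis again reduces to the tiling fact. So the proposal is salvageable, but as written it reroutes the proof through a harder version of the key step rather than around it.
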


If we identify each left coset of standard $\mathbb Z$ subgroup of $G(\Ga)$ and $G(\Ga')$ with a copy of $\mathbb Z$ (the identification is well-defined up to a translation of $\mathbb Z$), then Theorem~\ref{retraction map} implies that the restriction of $q$ to a left coset of standard $\mathbb Z$ subgroup takes form $$q(x)=\lfloor x/d\rfloor+r$$ for some integers $d$ and $r$ which depends on the $\mathbb Z$-coset. To see, denote the $\mathbb Z$-coset by $\ell$ and let $\ell'=q(\ell)$. Theorem~\ref{retraction map} (3) implies $q|_{\ell}$ is $H$-equivariant where $H$ is the $G$-stabilizer of $\ell'$. As $H$ acts transitively on $\ell'$ and $(q|_{\ell})^{-1}(y)$ is an interval for any $y\in \ell'$, the formula $$q|_{\ell}(x)=\lfloor x/d\rfloor+r$$ follows.

We claim the map $q$ in Theorem~\ref{retraction map} maps parallel standard geodesics to parallel standard geodesics. Indeed, this follows from that $q$ is cubical if two parallel geodesics are contained in the same standard flat. In general, we can interpolate any two parallel standard geodesics in $X(\Gamma)$ with a chain of parallel standard geodesics in $X(\Gamma)$ such that adjacent members in the chain are contained in the same standard flats. Then the claim follows from Theorem~\ref{retraction map} (1). 

Thus $q$ induces a map $$q_{\ast}:(\P(\Ga))^{(0)}\to (\P(\Ga'))^{(0)}.$$ Moreover, as $q$ is quasi-isometry, it cannot map a pair of standard geodesics without finite Hausdorff distance to a pair of standard geodesics with finite Hausdorff distance. Thus $q_{\ast}$ is injective. From Theorem~\ref{retraction map} (1) and the definition of $\P(\Ga)$, we also know that $q_{\ast}$ send adjacent vertices to adjacent vertices. Now the moreover part of Theorem~\ref{retraction map} (1) implies that $q_*$ is actually a bijection and extends to a simplicial isomorphism $q_{\ast}:\P(\Ga)\to \P(\Ga')$. 

The following lemma can be viewed as a form of converse to Theorem~\ref{retraction map}.
\begin{lem}
	\label{lem:recognizing special subgroup}
Let $G(\Gamma)$ and $G(\Gamma')$ be two RAAGs with a homomorphism $i:G(\Gamma')\to G(\Gamma)$. Suppose there exists a surjective cubical map $q:X(\Gamma)\to X(\Gamma')$ such that
\begin{enumerate}
	\item $q$ is a $G(\Gamma')$-equivariant, where the action $G(\Gamma')\curvearrowright X(\Gamma)$ is induced by $i$ and the left action $G(\Gamma)\curvearrowright X(\Gamma)$;
	\item the $q$-image of a standard flat in $X(\Gamma)$ is a standard flat in $X(\Gamma')$ and the restriction of $q$ to a left coset of standard $\mathbb Z$ subgroup takes form $$q(x)=\lfloor x/d\rfloor+r$$ for some integers $d\ge 1$ and $r$ which depends on the $\mathbb Z$-coset;
	\item $q$ induces a simplicial isomorphism $q_{\ast}:\P(\Ga)\to \P(\Ga')$;
	\item $q^{-1}(x')$ is bounded for some vertex $x'\in X(\Gamma')$.
\end{enumerate}
Then $i$ is injective and $i(G(\Gamma'))$ is a special subgroup of $G(\Gamma)$.
\end{lem}

\begin{proof}
Take a vertex $x'\in X(\Gamma')$ representing the identity element of $G(\Gamma')$. It follows from the proof of \cite[Theorem 5.12 (2)]{raagqi1} that $K=q^{-1}(x')$ is a convex subcomplex (assumption (2) and (3) as above is used here). As $G(\Gamma')$ acts transitively on the vertex set of $X(\Gamma')$, by (1) and (4), $q^{-1}(y)$ is bounded for any vertex $y\in X(\Gamma')$. Thus $K$ is bounded, hence compact. 
Let $\{\ell_{i}\}_{i=1}^{s}$, $g_i\in G(\Ga)$ and $n_{i}=|v(K\cap \ell_{i})|$ be as in the beginning of Section~\ref{special subgroup}.
Let $\ell'_i=q(\ell_i)$ and let $v_i$ be the generator of $G(\Gamma')$ which acts on $\ell'_i$ by translation. Note that $v_i\neq v_j$ if $i\neq j$ (as $\Delta(\ell_{i})\neq \Delta(\ell_{j})$ for any $i\neq j$ and $q_{\ast}$ is an isomorphism). By assumption (1), $i(v_i)=g^{n_i}_i$. Let $H$ be the standard subgroup of $G(\Gamma')$ generated by $\{v_i\}_{i=1}^s$. By \cite[Lemma 6.3 and 6.4]{raagqi1}, $i|_{H}$ is injective with finite index image. The lemma would follow if we know $H=G(\Gamma')$. If this is not true, then $H$ is of infinite index in $G(\Gamma')$. Take point $x'\in X(\Gamma')$ and $x\in X(\Gamma)$ with $q(x)=x'$. On one hand, as $i(H)$ is finite index in $G(\Gamma)$, the orbit $i(H)\cdot x$ is cobounded in $X(\Gamma)$ (i.e. $X(\Gamma)$ is contained in a finite neighborhood of $i(H)\cdot x$), then we deduce from that $q$ is equivariant and Lipschitz that $H\cdot x'$ is cobounded in $X(\Gamma')$. On the other hand, this is impossible as $H$ is of infinite index in $G(\Gamma)$. Thus we must have $H=G(\Gamma')$ and the lemma is proved.
\end{proof}

\subsection{Coarse invariants for RAAGs}
\label{subsec_qi invariant}
Here we summarize and generalize some results from \cite{raagqi1}. 

Note that every join decomposition $\Ga=\Ga_1\circ\Ga_2$ induces a direct sum decomposition $G(\Ga)=G(\Ga_1)\oplus G(\Ga_2)$. $G(\Ga)$ or $\Ga$ is called \textit{irreducible} if $\Ga$ does not allow a non-trivial join decomposition. There is a well-defined De Rham decomposition of $X(\Gamma)$ induced by the join decomposition of $\Gamma$, which is stable under quasi-isometries.

\begin{thm}
\label{2.13}
$($\cite[Theorem 2.9]{raagqi1}$)$ Given $X=X(\Gamma)$ and $X'=X(\Gamma')$, let $$X=\Bbb R^{n}\times\prod_{i=1}^{k}X(\Gamma_{i})$$ and $$X'=\Bbb R^{n'}\times\prod_{j=1}^{k'}X(\Gamma'_{j})$$ be the corresponding De Rahm decomposition. If $\phi:X\to X'$ is an $(L,A)$ quasi-isometry, then $n=n'$, $k=k'$ and there exist constants $$L'=L'(L,A),A'=A'(L,A),D=D(L,A)$$ such that after re-indexing the factors in $X'$, we have $(L',A')$ quasi-isometry $$\phi_{i}: X(\Gamma_{i})\to X(\Gamma'_{i})$$ so that $$d(p'\circ\phi, \prod_{i=1}^{k}\phi_{i}\circ p)<D,$$ where $$p: X\to \prod_{i=1}^{k}X(\Gamma_{i})$$ and $$p':X'\to \prod_{i=1}^{k}X(\Gamma'_{i})$$ are the projections.
\end{thm}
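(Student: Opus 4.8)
The plan is to show that the De Rham decomposition of $X(\Ga)$ is a quasi-isometry invariant, by first realising it combinatorially and then reducing to a product-rigidity statement. Write $\Ga=\De\circ\Ga_{1}\circ\cdots\circ\Ga_{k}$, where $\De$ is the (possibly empty) complete subgraph spanned by those vertices of $\Ga$ that are adjacent to every other vertex, and each $\Ga_{i}$ is irreducible and is not a single vertex. This induces an isometric splitting $X(\Ga)=\R^{n}\times\prod_{i=1}^{k}X(\Ga_{i})$ with $n=|v(\De)|$, and one checks that this is precisely the De Rham decomposition: $\R^{n}$ is a Euclidean factor, while for $\Ga_{i}$ irreducible and not a single vertex the complex $X(\Ga_{i})$ is an irreducible $\cat(0)$ cube complex with no Euclidean De Rham factor. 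The content of the theorem is then that a quasi-isometry $\phi:X\to X'$ must respect this decomposition up to a permutation of the non-Euclidean factors, with uniform control on the resulting factor maps.

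The key step is coarse irreducibility of the non-Euclidean factors: I claim that for $\Ga_{i}$ irreducible and not a single vertex, $X(\Ga_{i})$ admits no coarse product decomposition in which both factors are unbounded. Suppose it did. By the quasiflat theorem (the statement underlying Lemma \ref{2.6}; see \cite{quasiflat}), every top-dimensional quasiflat of $X(\Ga_{i})$ lies at finite Hausdorff distance from a finite union of maximal standard flats, and these maximal standard flats correspond to the maximal complete subgraphs of $\Ga_{i}$, with coarse intersection pattern recorded by $\mathcal{P}(\Ga_{i})$ (Lemma \ref{3.1}, Lemma \ref{3.2}). A coarse product decomposition would split every maximal standard flat compatibly into two subflats lying coarsely in the two factors; propagating this splitting through the adjacency pattern of $\mathcal{P}(\Ga_{i})$, one obtains a partition of $v(\Ga_{i})$ into two nonempty subsets with every vertex of one adjacent to every vertex of the other, i.e.\ a nontrivial join decomposition of $\Ga_{i}$, a contradiction. (Alternatively one may appeal to the structure theory of essential cocompact actions on $\cat(0)$ cube complexes.)

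With coarse irreducibility in hand, the proof concludes with a product-rigidity argument of Kapovich--Kleiner--Leeb type, carried out directly in the $\cat(0)$ cube complex setting since $X(\Ga)$ need not be a manifold. The maximal Euclidean De Rham factor is coarsely canonical --- for instance $\R^{n}$ appears, up to parallelism, inside every maximal standard flat, and since the $X(\Ga_{i})$ are non-Euclidean no larger Euclidean factor can occur --- so $n=n'$. The remaining factors $X(\Ga_{i})$ and $X(\Ga'_{j})$ are coarsely irreducible and non-Euclidean, so $\phi$ forces $k=k'$ and a bijection $\sigma$ with $X(\Ga_{i})$ quasi-isometric to $X(\Ga'_{\sigma(i)})$; moreover, after composing with the projections $p$ and $p'$ that collapse the Euclidean factors, $\phi$ becomes coarsely the product of quasi-isometries $\phi_{i}:X(\Ga_{i})\to X(\Ga'_{\sigma(i)})$, and a uniformity argument shows all constants can be taken to depend only on $(L,A)$. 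Re-indexing by $\sigma$ yields the statement.

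The step I expect to be the main obstacle is the coarse irreducibility of the irreducible factors. Because $\cat(0)$ cube complexes are not manifolds, and a group can act geometrically on a product without being a virtual product, one cannot invoke the classical manifold product-rigidity theorems verbatim; one genuinely needs the cube-complex input --- the quasiflat theorem together with the join structure of $\mathcal{P}(\Ga)$, or the Caprace--Sageev machinery --- to rule out exotic coarse product structures. A secondary technical point is extracting the factor quasi-isometries with constants uniform in $(L,A)$ and dealing with the ambiguity of the $\R^{n}$-factor, which is why the conclusion is phrased only after composing with $p$ and $p'$.
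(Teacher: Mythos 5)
The paper does not prove Theorem \ref{2.13}; it cites it from \cite[Theorem 2.9]{raagqi1}, so there is no in-paper argument to compare yours against. Taking your proposal on its own terms: the scaffolding (identify the De Rham factors with the join-irreducible pieces of $\Gamma$, reduce to a Kapovich--Kleiner--Leeb product-rigidity statement) is the expected route, and you correctly flag coarse irreducibility of $X(\Gamma_i)$ as the crux.

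The argument you sketch for coarse irreducibility, however, does not close. You assert that a coarse product decomposition $X(\Gamma_i)\approx Y_1\times Y_2$ ``would split every maximal standard flat compatibly into two subflats lying coarsely in the two factors,'' but this is not a consequence of the quasiflat theorem as stated. That theorem says a top-dimensional quasiflat in $X(\Gamma_i)$ lies near a finite union of standard flats; it says nothing about how a given flat interacts with the factors $Y_1, Y_2$, about which a priori you know nothing --- in particular you cannot assume the projections of a flat to $Y_1$ and $Y_2$ are again flats or anything flat-like. Deducing a compatible splitting of the standard flats requires either the full Kapovich--Kleiner--Leeb structure theory (their ``coarse type'' hypotheses, which the quasiflat theorem verifies, together with the resulting canonical coarse De Rham decomposition) or a separate argument; as written, this step is a restatement of what needs to be proved. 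Your parenthetical alternative via Caprace--Sageev does not help: that machinery yields the \emph{isometric} product decomposition, which for $X(\Gamma)$ is already visible as the join decomposition of $\Gamma$ and is not the same as coarse irreducibility. A smaller but similar issue: ``no larger Euclidean factor can occur'' is exactly the content of $n=n'$ and cannot be offered as its justification.
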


Note that when there is no Euclidean De Rham factor, the above theorem basically says that any quasi-isometry between $X$ and $X'$ is a product of quasi-isometries between their factors.

We are particularly interested in those standard subcomplexes that are stable under quasi-isometries.
\begin{definition}
\label{3.38}
A subgraph $\Gamma_{1}\subset\Gamma$ is \textit{stable in $\Gamma$} if $\Gamma_{1}$ is a full subgraph and for any standard subcomplex $K\subset X(\Gamma)$ with $\Gamma_{K}=\Gamma_{1}$ and $(L,A)$-quasi-isometry $q:X(\Gamma)\to X(\Gamma')$, there exists $D=D(L,A,\Gamma_{1},\Gamma)>0$ and standard subcomplex $K'\subset X(\Gamma')$ such that the Hausdorff distance $$d_{H}(q(K),K')<D.$$ A standard subcomplex $K\subset X(\Gamma)$ is \textit{stable} if its support is a stable subgraph of $\Gamma$.
\end{definition}

\begin{remark}
We caution the reader that the above definition of stable standard subcomplex is different from ``stable subgroups'' defined in	\cite{durham2015convex}. In particular, a stable subcomplex in the sense of Definition~\ref{3.38} does not have to be Gromov hyperbolic, and quasi-geodesics connecting points inside a stable standard subcomplex $Y$ with uniform quasi-isometric constants do not have to stay in a uniform neighborhood of $Y$.
\end{remark}

It is clear that the intersection of two stable subgraphs is still a stable subgraph. See \cite[Section 3.2]{raagqi1} for more properties about stable subgraphs. In this paper, we will use the following two properties repeatedly:

\begin{lem}\cite[Lemma 3.24]{raagqi1}
\label{3.50}
Let $\Gamma$ be a finite simplicial graph and pick stable subgraphs $\Gamma_{1},\Gamma_{2}$ of $\Gamma$. Let $\bar{\Gamma}$ be the full subgraph spanned by $V$ and $V^{\perp}$ where $V$ is the vertex set of $\Gamma_1$. If $\Gamma_{2}\subset\bar{\Gamma}$, then the full subgraph spanned by the vertices in $\Gamma_{1}\cup\Gamma_{2}$ is stable in $\Gamma$. 
\end{lem}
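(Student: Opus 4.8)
The plan is to reduce to the case that $\Gamma_1$ and $\Gamma_2$ are perpendicular, and then, given a standard subcomplex $K$ with support $\langle V(\Gamma_1)\cup V(\Gamma_2)\rangle$ and a quasi-isometry $q\colon X(\Gamma)\to X(\Gamma')$, to locate a standard subcomplex of $X(\Gamma')$ at bounded Hausdorff distance from $q(K)$.

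\emph{Reduction to the perpendicular case.} Since no vertex is adjacent to itself, $V\cap V^\perp=\emptyset$, so $\bar\Gamma$ (which is the closed star of $\Gamma_1$) is the join $\Gamma_1\circ\Lambda$, where $\Lambda$ is the full subgraph on $V^\perp$. Let $\Gamma_W$ be the full subgraph on $V(\Gamma_2)\setminus V$; as $\Gamma_2\subset\bar\Gamma$ we have $V(\Gamma_W)\subset V^\perp$, hence $\langle V(\Gamma_1)\cup V(\Gamma_2)\rangle$ spans $\Gamma_1\circ\Gamma_W$, and moreover $\Gamma_2=(\Gamma_1\cap\Gamma_2)\circ\Gamma_W$. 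Now $\Gamma_1\cap\Gamma_2$ is stable, being an intersection of stable subgraphs, so applying Theorem~\ref{2.13} to the join decomposition $\Gamma_2=(\Gamma_1\cap\Gamma_2)\circ\Gamma_W$ — more precisely, to the quasi-isometry obtained by restricting $q$ to a standard subcomplex with support $\Gamma_2$ and post-composing with the nearest-point projection onto the standard subcomplex it is coarsely carried to — shows that $\Gamma_W$ is stable, with constants uniform in the chosen subcomplex. Since $V(\Gamma_1)\subset V(\Gamma_W)^\perp$, we may replace $\Gamma_2$ by $\Gamma_W$ and assume henceforth that $\Gamma_1,\Gamma_2$ are stable and mutually perpendicular, so $\langle V(\Gamma_1)\cup V(\Gamma_2)\rangle$ spans $\Gamma_1\circ\Gamma_2$.

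\emph{The upper bound.} Let $K\subset X(\Gamma)$ be a standard subcomplex with support $\Gamma_1\circ\Gamma_2$ and base vertex $p$, so $K=K_1\times K_2$ with $K_i=K(p,\Gamma_i)$. Stability of $\Gamma_i$ produces a standard subcomplex $L_i\subset X(\Gamma')$ with $d_H(q(K_i),L_i)$ bounded; since $q$ carries parallel standard subcomplexes to ones at finite Hausdorff distance, hence parallel, hence with the same support, the support $\Gamma_i'$ of $L_i$ is independent of which parallel copy of $K_i$ we use. Writing $K=\bigcup_{y}(K_1\times\{y\})$ as a union of parallel copies of $K_1$, we get that $q(K)$ is coarsely contained in the parallel set $P_1$ of $L_1$, which is the standard subcomplex with support $\mathrm{St}(\Gamma_1')$; symmetrically $q(K)$ is coarsely contained in the parallel set $P_2$ of $L_2$. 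By Lemma~\ref{2.6}(4), a subset coarsely contained in both $N_R(P_1)$ and $N_R(P_2)$ is coarsely contained in the coarse intersection of $P_1$ and $P_2$, which by Lemma~\ref{3.1} is a standard subcomplex $Y$. Hence $q(K)\subset N_C(Y)$ for a uniform $C$.

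\emph{The lower bound and the main obstacle.} It remains to show $Y$ is coarsely contained in $q(K)$; together with the upper bound this gives $d_H(q(K),Y)$ bounded and finishes the proof. I would first show that $\Gamma_1'$ and $\Gamma_2'$ are perpendicular in $\Gamma'$: $q$ carries the product flats $F_1\times F_2\subset K_1\times K_2$ to flats coarsely contained in $Y$ whose full dimension is realized by sub-flats coarsely lying in $L_1$ and in $L_2$, which forces the corresponding standard geodesics of $\Gamma_1'$ and $\Gamma_2'$ to be pairwise orthogonal. Hence $\mathrm{hull}(L_1\cup L_2)$ is, coarsely, the standard subcomplex $L_0$ with support $\Gamma_1'\circ\Gamma_2'$, obtained from $L_1\cup L_2$ in a bounded number of convex-hull steps; since $q(K)=q(\mathrm{hull}(K_1\cup K_2))$ is quasiconvex and coarsely contains $L_1$ and $L_2$, it coarsely contains $L_0$. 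As $L_1,L_2$ are coarsely contained in the convex set $Y$, so is $L_0$, so it now suffices to show $L_0$ and $Y$ agree coarsely; equivalently, that the support of $Y$ is $\langle V(\Gamma_1')\cup V(\Gamma_2')\rangle$ rather than something strictly larger. This support computation, carried out via Lemma~\ref{3.2} and the hyperplane description in Lemma~\ref{2.9}, is where I expect the real work to lie: one must rule out that passing to the coarse intersection of the two parallel sets $P_1,P_2$ introduces spurious orthogonal directions. The leverage is that $q(K)$ is coarsely isometric to $X(\Gamma_1)\times X(\Gamma_2)$, hence cannot accommodate a product factor beyond those furnished by $\Gamma_1'$ and $\Gamma_2'$.
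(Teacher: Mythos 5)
The place where your argument breaks is the reduction step. You claim that if $\Gamma_1\cap\Gamma_2$ and $\Gamma_2=(\Gamma_1\cap\Gamma_2)\circ\Gamma_W$ are stable, then $\Gamma_W$ is stable, by applying Theorem \ref{2.13} to the quasi-isometry restricted to a subcomplex of support $\Gamma_2$. But Theorem \ref{2.13} only controls the irreducible non-Euclidean De Rham factors; it gives no control of directions inside the Euclidean (clique) factor, and $\Gamma_W$ may split the clique factor of $\Gamma_2$ with $\Gamma_1\cap\Gamma_2$. Concretely, let $\Gamma$ be the cone over a pentagon: vertices $a,x_1,\dots,x_5$ with $x_1\cdots x_5$ a $5$-cycle and $a$ adjacent to every $x_i$. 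Since $\out(G(\Gamma))$ has no non-adjacent transvections, every maximal clique (triangle) is stable by Lemma \ref{4.7}, so $\Gamma_2=\{a,x_1\}=\{a,x_5,x_1\}\cap\{a,x_1,x_2\}$ and $\Gamma_1=\{a\}=\{a,x_1\}\cap\{a,x_3\}$ are stable, being intersections of stable subgraphs. Here $\Gamma_2\subset\bar\Gamma$ and your $\Gamma_W=\{x_1\}$, but $\{x_1\}$ is \emph{not} stable: the adjacent transvection $x_1\mapsto x_1a$ is an automorphism of $G(\Gamma)$, hence a quasi-isometry, and it sends the standard geodesic $\langle x_1\rangle$ to the diagonal $\{x_1^na^n\}$ of the flat $\langle a,x_1\rangle$, which is at infinite Hausdorff distance from every standard subcomplex. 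So the intermediate claim is false, and the rest of your proof, which needs a standard subcomplex $L_2$ uniformly close to $q(K_2)$ for $K_2$ of support $\Gamma_W$, cannot start. (The lemma itself survives in this example because the span of $\Gamma_1\cup\Gamma_2$ is $\Gamma_2$; the moral is exactly that one must work with $\Gamma_2$ itself rather than dividing off the stable factor.)

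Even granting a perpendicular reduction, the second half is not yet a proof. The containment $q(K)\subset N_C(Y)$ is fine, but the reverse containment is the entire content of the lemma and both of its ingredients are missing: the assertion that $q(K)$, being ``quasiconvex'' and coarsely containing $L_1$ and $L_2$, coarsely contains the standard subcomplex $L_0$ of support $\Gamma_1'\circ\Gamma_2'$ does not follow from anything quoted (images of convex subcomplexes under quasi-isometries need not coarsely contain convex hulls of subsets they contain, and establishing this containment is essentially equivalent to the statement being proved), and the computation that the support of $Y$ is no larger than the span of $\Gamma_1'\cup\Gamma_2'$ is explicitly deferred as ``where the real work lies.'' Note also that this paper does not prove the lemma at all but imports it as Lemma 3.24 of \cite{raagqi1}, so a proof written here would have to be self-contained; as it stands, yours contains a false reduction and leaves the core step open.
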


\begin{lem}
\label{4.7}
Suppose there is no non-adjacent transvection in $\out(G(\Gamma))$. Then every maximal clique subgraph of $\Gamma$ is stable.
\end{lem}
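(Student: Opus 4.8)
# Proof Proposal for Lemma 4.7

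The plan is to characterize maximal cliques in terms of a coarse geometric feature that every quasi-isometry must preserve, and then to use the absence of non-adjacent transvections to promote "preserved up to Hausdorff distance by automorphisms" to "preserved by all quasi-isometries." The natural invariant to use is maximal standard flats: a clique subgraph $\Delta \subset \Gamma$ is maximal if and only if the corresponding standard flats $K$ with $\Gamma_K = \Delta$ are maximal standard flats, i.e. flats in $X(\Gamma)$ of maximal dimension among all standard flats, equivalently flats not properly contained in any other standard flat. So the statement reduces to: if $\out(G(\Gamma))$ has no non-adjacent transvection, then maximal standard flats form a stable family.

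First I would invoke Theorem \ref{1.7} (proved in Section 2.2, so available here): any quasi-isometry $q: X(\Gamma) \to X(\Gamma')$ sends maximal standard flats to within finite Hausdorff distance of maximal standard flats, provided neither $\out(G(\Gamma))$ nor $\out(G(\Gamma'))$ contains a non-adjacent transvection. But Lemma \ref{4.7} asserts stability against \emph{all} target RAAGs $G(\Gamma')$, with no hypothesis on $\out(G(\Gamma'))$. So the key point is to remove the hypothesis on the target. The mechanism: if $q: X(\Gamma) \to X(\Gamma')$ is a quasi-isometry, then $G(\Gamma)$ and $G(\Gamma')$ are quasi-isometric, hence have the same De Rham decomposition pattern by Theorem \ref{2.13}; moreover maximal standard flats in $X(\Gamma)$ have dimension $n + \max_i(\dim X(\Gamma_i)\text{-flats})$, a number determined by the coarse geometry (e.g. via the maximal dimension of quasiflats, which is the geometric dimension and a QI invariant). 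So a maximal standard flat is detected purely coarsely as: a maximal quasiflat, up to the standard facts identifying maximal quasiflats with (finite unions of) maximal standard flats. The image $q(K)$ of a maximal standard flat is then a maximal quasiflat in $X(\Gamma')$, hence within finite Hausdorff distance of a maximal standard flat (or a controlled union thereof; one uses irreducibility of the relevant De Rham factors and the structure of quasiflats in RAAGs to pin it down to a single standard flat after passing through the De Rham splitting of Theorem \ref{2.13}).

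The cleanest route, which I expect the author takes, avoids re-proving quasiflat statements: apply Theorem \ref{1.7} not directly but after noting that stability is a statement we only need for $\Gamma'$ in the relevant QI class, and that class is constrained. Concretely: a maximal clique $\Delta$ of $\Gamma$ of size $k$ gives standard flats of dimension $k$; the maximal standard flat dimension equals the maximal flat dimension, which is $\dim$-type data preserved by QI. Then for the image, one uses that $q(K)$ is an $\R^k$-quasiflat in $X(\Gamma')$, and since $X(\Gamma')$ is a $CAT(0)$ cube complex of a RAAG, top-dimensional quasiflats lie near standard flats (this is the content needed, available via the cited quasiflat results, e.g. as used to establish Theorem \ref{1.7} itself). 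One should be slightly careful that $q(K)$ might a priori lie near a union of maximal standard flats sharing lower-dimensional faces; ruling this out, or absorbing it into the constant $D$, is handled exactly as in the proof of Theorem \ref{1.7}.

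The main obstacle is this last point: controlling the image of a single maximal standard flat and showing it stays near a \emph{single} maximal standard flat (not a sprawling union), uniformly in the QI constants, without the no-non-adjacent-transvection hypothesis on the target. I would handle it by first reducing to the irreducible case via Theorem \ref{2.13} (in an irreducible factor, maximal standard flats branch less and the quasiflat rigidity is cleaner), and then citing the quasiflat structure theorem for RAAGs that underlies Theorem \ref{1.7}. Everything else — the translation between maximal cliques and maximal standard flats, and between maximal standard flats and maximal-dimensional flats — is bookkeeping with the definitions in Section \ref{basics about raag} and Definition \ref{3.38}.
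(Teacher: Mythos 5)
There are two problems with this plan, one fatal. First, invoking Theorem \ref{1.7} (i.e.\ Theorem \ref{4.8}) is circular: Lemma \ref{4.7} is cited inside the proof of Theorem \ref{4.8} precisely at the step ``By Lemma \ref{4.7}, $\mathcal{V}^{n-1}_i$ is exactly the $0$-skeleton of $\mathcal{P}(\Gamma_i)$'', so you cannot use Theorem \ref{1.7} as an ingredient here without re-proving it first. Second, and independently of the circularity, you conflate \emph{maximal clique} with \emph{clique of maximal size}. You write that maximal cliques correspond to ``flats in $X(\Gamma)$ of maximal dimension among all standard flats, equivalently flats not properly contained in any other standard flat''; these are not equivalent. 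A maximal clique of $\Gamma$ can have strictly fewer vertices than the largest clique (e.g.\ an edge hanging off a triangle), and then the corresponding standard flat is maximal in the containment sense but far from top-dimensional. The strategy of detecting its image as a top-dimensional quasiflat therefore does not apply to such flats at all, and the rest of the argument (De Rham reduction, quasiflat structure) has nothing to latch onto.

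The paper's actual proof is a short combinatorial verification of a stability criterion, not a coarse-geometric argument. It cites a characterization from the prequel (\cite[Theorem 3.35]{raagqi1}): a full subgraph $\Gamma_1\subset\Gamma$ is stable provided that for every $v\in\Gamma_1$ and $w\in\Gamma$, $v^\perp\subset St(w)$ forces $w\in\Gamma_1$. Under the hypothesis that $\out(G(\Gamma))$ has no non-adjacent transvection, $v^\perp\subset St(w)$ already forces $w\in v^\perp$ (otherwise $w,v$ would give a non-adjacent transvection); then $w$ is adjacent to every vertex of the clique $\Gamma_1$, so $w\in\Gamma_1$ by maximality. If you want to pursue a more geometric route, the criterion you should be trying to re-derive is that local combinatorial one, rather than building on a downstream theorem whose proof depends on the lemma you are proving.
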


A slightly weaker reformulation of the above lemma is the following. If each automorphism of $G(\Ga)$ preserves maximal standard flats up to finite Hausdorff distance, then so does each quasi-isometry of $G(\Ga)$.

\begin{proof}
Let $\Gamma_{1}\subset\Gamma$ be a maximal clique. By \cite[Theorem 3.35]{raagqi1}, it suffices to prove for any vertices $v\in \Gamma_{1}$ and $w\in \Gamma$, $v^{\perp}\in St(w)$ implies $w\in\Gamma_{1}$. Note that $v^{\perp}\in St(w)$ implies $w\in v^{\perp}$ since there is no non-adjacent transvection. Then $w$ and vertices of $\Gamma_{1}$ span a clique in $\Gamma$, thus $w\in\Gamma_{1}$ by the maximality of $\Gamma_{1}$.
\end{proof}

Let $\Delta$ be the map sending standard subcomplexes of $X(\Gamma_{i})$ to subcomplexes of $\mathcal{P}(\Gamma_{i})$ defined in Section \ref{basics about raag}. Let $\S(\P(\Gamma))$ be the collection of subcomplexes of $\P(\Gamma)$ which are $\Delta$-images of stable standard subcomplexes in $X(\Gamma)$ (we assume the empty set is also in $\S(\P(\Gamma))$). 

\begin{lem}
	\label{iso}
Any quasi-isometry $q:X(\Gamma_1)\to X(\Gamma_2)$ induces a well-defined bijection $\tilde{q}_{\ast}:\S(\mathcal{P}(\Gamma_{1}))\to\S(\mathcal{P}(\Gamma_{2}))$.
\end{lem}

\begin{proof}
Pick element $M\in \S(\mathcal{P}(\Gamma_{1}))$ and let $K_1\subset X(\Ga_1)$ be a standard subcomplex such that $\Delta(K_1)=M$. Then we define $\tilde{q}_{\ast}$ to be $\Delta(K_2)$, where $K_2\subset X(\Gamma_2)$ is a standard subcomplex which is at finite Hausdorff distance from $q(K_1)$. Note that $K_2$ is also stable, so $$\Delta(K_2)\in \S(P(\Ga_2)).$$ If we choose another standard subcomplex $K'_1\subset X(\Ga_1)$ such that $\Delta(K'_1)=M$ and choose another standard subcomplex $K'_2\subset X(\Gamma)$ which is at finite Hausdorff distance from $q(K'_1)$, then Lemma \ref{boundary of std subclex} implies $K_1$ and $K'_1$ are parallel. Hence $K_2$ and $K'_2$ are coarsely equivalent. Then Lemma \ref{coarse contain of standard subcomplexes} implies $K_2$ and $K'_2$ are parallel and Lemma \ref{boundary of std subclex} implies $\Delta(K_2)=\Delta(K'_2)$. Thus $\tilde{q}_{\ast}$ is well-defined. By considering the quasi-isometry inverse of $q$, we know $\tilde{q}_{\ast}$ is a bijection.
\end{proof}

\begin{lem}
	\label{intersection}
The set $\S(\P(\Ga_1))$ is closed under intersection. Moreover, for $M,M'\in \S(\P(\Ga_1))$, we have
\begin{enumerate}
	\item $\tilde{q}_{\ast}(M)\cap \tilde{q}_{\ast}(M')=\tilde{q}_{\ast}(M\cap M')$.
	\item $M\subset M'$ if and only if $\tilde{q}_{\ast}(M)\subset \tilde{q}_{\ast}(M')$.
\end{enumerate}
\end{lem}

\begin{proof}
First we show $M\cap M'\in \S(P(\Ga_1))$. For $i=1,2$, let $K_1$ and $K'_1$ be stable standard subcomplexes of $X(\Ga_1)$ such that $\Delta(K_1)=M$ and $\Delta(K'_1)=M'$. Let $\inc(K_1,K'_1)=(J_1,J'_1)$. Since we already know $J_1$ is a standard subcomplex by Lemma \ref{3.1}, it remains to show $\Delta(J_1)=M\cap M'$ and $J_1$ is stable. 

Since $J_1$ is coarsely contained in $K'_1$ and $K_1$, it follows from Lemma \ref{boundary of std subclex} (2) that $\Delta(J_1)\subset M\cap M'$. Now pick simplex $s\subset M\cap M'$ and let $F_s$ be a standard flat in $X(\Ga_1)$ with $\Delta(F_s)=s$. Then $F_s$ is coarsely contained in $K_1$ and $K'_1$. By Lemma \ref{2.6} (4), for $R$ large enough, we have
\begin{equation}
\label{e:1}
d_{H}(J_1,N_R(K_1)\cap N_R(K'_1))<\infty,
\end{equation}
where $d_H$ is the Hausdorff distance. By taking $R$ large enough, we conclude that $F_s$ is coarsely contained in $J_1$, thus $s\subset \Delta(J_1)$ by Lemma \ref{boundary of std subclex} (2) and $$M\cap M'\subset \Delta(J_1).$$ Now we show $J_1$ is stable. Let $K'_2$ and $K_2$ be standard subcomplexes in $X(\Ga_2)$ which are Hausdorff close to $q(K'_1)$ and $q(K_1)$ respectively. Let $$\inc(K_2,K'_2)=(J_2,J'_2).$$ Then $$d_{H}(J_2,N_R(K_2)\cap N_R(K'_2))<\infty$$ by Lemma \ref{2.6} (4). This, together with (\ref{e:1}) imply $$d_H(q(J_1),J_2)<\infty.$$ Since $J_2$ is a standard subcomplex by Lemma \ref{3.1}, we know $J_1$ is a stable standard subcomplex. Moreover, $$\tilde{q}_{\ast}(M)\cap \tilde{q}_{\ast}(M')=\Delta(K_2)\cap\Delta(K'_2)=\Delta(J_2)=\tilde{q}_{\ast}(\Delta(J_1))=\tilde{q}_{\ast}(M\cap M').$$

It remains to prove (2). The only if direction follows from (1) and the if direction follows by considering the quasi-isometry inverse of $q$.
\end{proof}

A subcomplex of $\P(\Ga_1)$ (or $\P(\Ga_2)$) is \emph{stable} if it is a member of $\S(\P(\Ga_1))$ (or $\S(\P(\Ga_2))$). Then the map $\tilde{q}_{\ast}$ defined in Lemma \ref{iso} induces a 1-1 correspondence between stable $k$-simplexes in $\P(\Ga_1)$ and stable $k$-simplexes in $\P(\Ga_2)$.

The following result is the starting point of this paper.

\begin{thm}
\label{4.8}
Let $q:X(\Gamma_{1})\to X(\Gamma_{2})$ be a quasi-isometry. Suppose $\out(G(\Gamma_{1}))$ does not contain any non-adjacent transvection. Then there exists a simplicial embedding $q_{\ast}:\mathcal{P}(\Gamma_{1})\to\mathcal{P}(\Gamma_{2})$ such that for any stable simplex $s\subset \P(\Ga)$, we have
\begin{equation}
\label{coincidence}
q_{\ast}(s)=\tilde{q}_{\ast}(s).
\end{equation}
If we also assume $\out(G(\Gamma_{2}))$ does not contain any non-adjacent transvection, then $q_{\ast}$ is a simplicial isomorphism.
\end{thm}

\begin{proof}
For $i=1,2$, let $\mathcal{V}^{k}_{i}$ be the collection of vertices of $\P(\Ga_i)$ which are inside some stable $m$-simplex of $\P(\Ga)$ for $0\le m\le k$. By Lemma \ref{4.7}, $\mathcal{V}^{n-1}_{1}$ is exactly the 0-skeleton of $\mathcal{P}(\Gamma_{1})$, where $n=\dim(X(\Gamma_{1}))=\dim(X(\Gamma_{2}))$.

We first construct $q_{\ast}$ from the 0-skeleton of $\mathcal{P}(\Gamma_{1})$ to the 0-skeleton of $\mathcal{P}(\Gamma_{2})$ inductively as follows: define $q_{\ast}(v)=\tilde{q}_{\ast}(v)$ for $v\in\mathcal{V}^{0}_{1}$ and suppose we have already defined $q_{\ast}$ on $\mathcal{V}^{k}_{1}$ such that 
\begin{center}
($\ast$) For any stable simplex $s\subset \P(\Ga_1)$, $q_{\ast}$ is a bijection from $\mathcal{V}^{k}_{1}\cap s$ to $\mathcal{V}^{k}_{2}\cap \tilde{q}_{\ast}(s)$.
\end{center}
By Lemma~\ref{iso} and Lemma~\ref{intersection}, $q_{\ast}$ restricted on $\mathcal{V}^{0}_{1}$ satisfies $(\ast)$ for $k=0$. 

Now we define $q_{\ast}$ on $\mathcal{V}^{k+1}_{1}$. Pick a stable $(k+1)$-simplex $s^{k+1}\subset\P(\Ga_1)$. If all vertices of $s^{k+1}$ belong to $\mathcal{V}^{k}_{1}$, then we do not need to do anything. Otherwise we pick vertex $v\in s^{k+1}\setminus\mathcal{V}^{k}_{1}$. Note that $s^{k+1}$ is the only stable $(k+1)$-simplex of $\P(\Ga_1)$ that contains $v$ (if there is a distinct stable $(k+1)$-simplex $s^{k+1}_{1}\subset\P(\Ga_1)$ with $v\in s^{k+1}_{1}$, then $v\in s^{k+1}_{1}\cap s^{k+1}$, which is a stable simplex of dimension $\le k$ by Lemma \ref{intersection}. This implies $v\in \mathcal{V}^{k}_{1}$, which is a contradiction). By inductive assumption, vertices in $s^{k+1}\setminus\mathcal{V}^{k}_{1}$ and vertices in $\tilde{q}_{\ast}(s^{k+1})\setminus\mathcal{V}^{k}_{2}$ have the same cardinality, so we can choose an arbitrary bijection between them. 

Now we have $q_{\ast}$ defined on $\mathcal{V}^{k+1}_{1}$ and it remains to verify $(\ast)$. Given a stable simplex $s\subset\P(\Gamma)$, let $\{s_{i}\}_{i=1}^{d}$ be the collection of stable $(k+1)$-simplexes of $\P(\Ga_1)$ such that $s_{i}\subset s$. By Lemma \ref{intersection} (2), $\{\tilde{q}_{\ast}(s_{i})\}_{i=1}^{d}$ is exactly the collection of stable $(k+1)$-simplexes of $\P(\Ga_2)$ contained in $\tilde{q}_{\ast}(s)$. Then $\mathcal{V}^{k+1}_{1}\cap s$ is the vertex set of $$(\mathcal{V}^{k}_{1}\cap s)\cup (\cup_{i=1}^{d} s_{i}),$$ and $$\mathcal{V}^{k+1}_{2}\cap \tilde{q}_{\ast}(s)$$ is the vertex set of $$(\mathcal{V}^{k}_{1}\cap \tilde{q}_{\ast}(s))\cup(\cup_{i=1}^{d}\tilde{q}_{\ast}(s_{i})).$$ By our construction of $q_{\ast}$, it maps vertices in $s_i$ bijectively to vertices in $\tilde{q}_{\ast}(s_i)$. Thus $q_{\ast}$ maps $\mathcal{V}^{k+1}_{1}\cap s$ surjectively to $\mathcal{V}^{k+1}_{2}\cap \tilde{q}_{\ast}(s)$. It remains to check injectivity. Pick two points $v,v'\in \mathcal{V}^{k+1}_{1}\cap s$. The case $v,v'\in \mathcal{V}^{k}_{1}$ follows from induction. Now we consider the case $v,v'\notin \mathcal{V}^{k}_{1}$ and they are contained in different $s_i$'s (for simplicity we assume $v\in s_1$ and $v'\in s_2$). By construction of $q_{\ast}$, we have $$q_{\ast}(v)\in \tilde{q}_{\ast}(s_1)\setminus\mathcal{V}^{k}_{2}$$ and $$q_{\ast}(v')\in \tilde{q}_{\ast}(s_2)\setminus\mathcal{V}^{k}_{2}.$$ If $q_{\ast}(v)=q_{\ast}(v')$, then $\tilde{q}_{\ast}(s_1)\cap\tilde{q}_{\ast}(s_2)$ contains a point outside $\mathcal{V}^{k}_{2}$. On the other hand, by Lemma \ref{intersection} (1), $$\tilde{q}_{\ast}(s_1)\cap\tilde{q}_{\ast}(s_2)=\tilde{q}_{\ast}(s_1\cap s_2),$$ which is a stable simplex of dimension $\le k$. Thus every vertex of $$\tilde{q}_{\ast}(s_1)\cap\tilde{q}_{\ast}(s_2)$$ is in $\mathcal{V}^{k}_{2}$, which is a contradiction. Thus we must have $q_{\ast}(v)\neq q_{\ast}(v')$ in this case. The other cases are actually simpler and can be handled similarly.

Up to now, we have defined $q_{\ast}$ on the $0$-skeleton such that for each stable simplex $s\subset\P(\Ga)$, $q_{\ast}$ maps vertices in $s$ bijectively to vertices in $\tilde{q}_{\ast}(s)$. Next we show such $q_{\ast}$ is injective. Pick distinct vertices $v_{1},v_{2}$ in $\mathcal{P}(\Gamma_{1})$, if $d(v_{1},v_{2})=1$, then by applying $(\ast)$ to the maximal simplex containing $v_{1}$ and $v_{2}$ (recall that each maximal simplex is stable by our assumption), we have $d(q_{\ast}(v_{1}),q_{\ast}(v_{2}))=1$. If $d(v_{1},v_{2})\ge 2$, let $s_{i}$ be a maximal simplex containing $v_{i}$ for $i=1,2$. Then $$\tilde{q}_{\ast}(s_{1})\cap\tilde{q}_{\ast}(s_{2})=\tilde{q}_{\ast}(s_1\cap s_2)$$ by Lemma \ref{intersection} (1). By $(\ast)$, $q_{\ast}$ maps vertices in $s_1\cap s_2$ bijectively to vertices of $\tilde{q}_{\ast}(s_1\cap s_2)$. Since $v_1\notin s_1\cap s_2$, we apply $(\ast)$ to $s_1$ to deduce that $$q_{\ast}(v_1)\in \tilde{q}_{\ast}(s_{1})\setminus \tilde{q}_{\ast}(s_1\cap s_2)=\tilde{q}_{\ast}(s_{1})\setminus (\tilde{q}_{\ast}(s_{1})\cap\tilde{q}_{\ast}(s_{2}))=\tilde{q}_{\ast}(s_{1})\setminus \tilde{q}_{\ast}(s_{2}).$$ On the other hand, applying $(\ast)$ to $s_2$ implies $q_{\ast}(v_2)\in \tilde{q}_{\ast}(s_{2})$. Thus $$q_{\ast}(v_{1})\neq q_{\ast}(v_{2}).$$

We have already seen if $d(v_1,v_2)=1$, then $d(q_{\ast}(v_1),q_{\ast}(v_2))=1$. Thus $q_{\ast}$ naturally extends to an injective map on the 1-skeleton. Since $\mathcal{P}(\Gamma_{1})$ and $\mathcal{P}(\Gamma_{2})$ are flag complexes, we can further extend $q_{\ast}$ to obtain the required simplicial embedding.

Now we assume $\out(G(\Ga_2))$ does not contain non-adjacent transvection. Then $\V^{n-1}_2$ is the $0$-skeleton of $\P(\Ga_2)$. Thus $q_{\ast}$ is surjective on $0$-skeleton. We claim $q_{\ast}$ is an isomorphism between the 1-skeleton of $\P(\Ga_1)$ and the 1-skeleton of $\P(\Ga_2)$. It suffices to show if $d(q_{\ast}(v_1),q_{\ast}(v_2))=1$, then $d(v_1,v_2)=1$. However, this follows by considering a maximal simplex in $\P(\Ga_2)$ (which is also stable) containing $q_{\ast}(v_1)$ and $q_{\ast}(v_2)$ and applying $(\ast)$. Now we know $q_{\ast}$ is a simplicial isomorphism on the whole complex since $\mathcal{P}(\Gamma_{1})$ and $\mathcal{P}(\Gamma_{2})$ are flag complexes.
\end{proof}

\begin{cor}
\label{4.9}
Let $q:X(\Gamma_{1})\to X(\Gamma_{2})$ be a quasi-isometry. Suppose $\out(G(\Gamma_{1}))$ does not contain any non-adjacent transvection and let $q_{\ast}$ be the map defined in Theorem \ref{4.8}. Then for any subcomplex $M\in\mathcal{S}(\mathcal{P}(\Gamma_{1}))$.
\begin{equation}
\label{4.11}
q_{\ast}(M)\subset\tilde{q}_{\ast}(M)\ \textmd{and}\  M=q_{\ast}^{-1}(\tilde{q}_{\ast}(M)).
\end{equation} 
If we also assume $\out(G(\Gamma_2))$ does not contain any non-adjacent transvection, then
\begin{equation}
\label{4.10}
q_{\ast}(M)=\tilde{q}_{\ast}(M)
\end{equation}
for any subcomplex $M\in\mathcal{S}(\mathcal{P}(\Gamma_{1}))$.
\end{cor}

\begin{proof}
Since each maximal simplex in $\P(\Ga_1)$ is stable, the intersection of such simplex with $M$ is also stable by Lemma \ref{intersection}. Since $\P(\Ga_1)$ is a union of its maximal simplexes, $M$ is a union of stable simplexes. Now the first inclusion of (\ref{4.11}) follows from Lemma \ref{intersection} (2) and (\ref{coincidence}).

Now we prove the second equality of (\ref{4.11}). Suppose there exists vertex $v\notin M$ such that $q_{\ast}(v)\in\tilde{q}_{\ast}(M)$. Let $s_{v}$ be the minimal stable simplex of $\mathcal{P}(\Gamma_{1})$ such that $v\in s_{v}$ (recall that the collection of stable simplexes are closed under intersection by Lemma \ref{intersection}, and any maximal simplex which contains $v$ is stable, so $s_v$ is well-defined). By (\ref{coincidence}), $q_{\ast}(v)\in \tilde{q}_{\ast}(s_{v})$. By Lemma \ref{intersection} (2) and (\ref{coincidence}), $\tilde{q}_{\ast}(s_{v})$ is the minimal stable simplex in $\P(\Ga_2)$ that contains $q_{\ast}(v)$. Since $\tilde{q}_{\ast}(s_{v})\cap\tilde{q}_{\ast}(M)$ is also a stable simplex containing $q_{\ast}(v)$, we have $$\tilde{q}_{\ast}(s_{v})\subset\tilde{q}_{\ast}(M)$$ Thus $s_{v}\subset M$ by Lemma \ref{intersection} (2), which is contradictory to $v\in M$.

Now we assume $\out(G(\Ga_2))$ does not contain non-adjacent transvection. Then $\tilde{q}_{\ast}(M)$ is a union of stable simplexes by the same argument as before. By Lemma \ref{intersection} (2), there is a 1-1 correspondence between stable simplexes in $M$ and stable simplexes in $\tilde{q}_{\ast}(M)$. Thus (\ref{4.10}) follows from (\ref{coincidence}).
\end{proof}

\subsection{Visible isomorphisms between extension complexes}
\label{sec:visible}
It is natural to ask to what extent is the converse of Theorem \ref{4.8} true, namely, suppose $\alpha:\mathcal{P}(\Gamma)\to\mathcal{P}(\Gamma')$ is a simplicial isomorphism, does $\alpha$ induce a map from $G(\Gamma)\to G(\Gamma')$? Here is a natural construction. We identify $G(\Ga)$ and $G(\Ga')$ with the 0-skeleton of $X(\Ga)$ and $X(\Ga')$ respectively. Pick vertex $p\in G(\Gamma)$, let $\{F_{i}\}_{i=1}^{n}$ be the collection of maximal standard flats containing $p$. For each $i$, let $F'_{i}\subset X(\Gamma')$ be the unique maximal standard flat such that $\Delta(F'_{i})=\alpha(\Delta(F_{i}))$. One may wish to map $p$ to $\cap_{i=1}^{n}F'_{i}$, which motivates the following definition:
\begin{definition}
\label{visible}
The simplicial isomorphism $\alpha$ is \textit{visible} if $\cap_{i=1}^{n}F'_{i}\neq\emptyset$ for any $p\in G(\Gamma)$.
\end{definition}

If $G(\Gamma)$ has trivial center and $\alpha$ is visible, then it is easy to see $\alpha$ induces a unique map $\alpha_{\ast}:G(\Gamma)\to G(\Gamma')$. To see this, recall that by \cite{servatius1989automorphisms}, $G(\Ga)$ has trivial center if and only if $\Ga$ is not contained in the closed star of one of its vertex. Let $\{F_i\}_{i=1}^{n}$ and $\{F'_i\}_{i=1}^{n}$ be as in the above discussion. Since the intersection of all maximal cliques in $\Ga$ is empty, we have $\cap_{i=1}^{n}\Delta(F_i)=\emptyset$, hence $\cap_{i=1}^{n}\Delta(F'_i)=\emptyset$. Thus $p$ is the only point in $\cap_{i=1}^{n}F_{i}$ and $\cap_{i=1}^{n}F'_{i}$ contains at most one point. However, the visibility implies $\cap_{i=1}^{n}F'_{i}$ is exactly one point. We define this point to be $\alpha_{\ast}(p)$.

If $G(\Gamma)$ has non-trivial center, then $\cap_{i=1}^{n}F_{i}$ and $\cap_{i=1}^{n}F'_{i}$ corresponds to cosets of centralizers of $G(\Ga)$ and $G(\Ga')$ respectively. The map $\alpha$ only tells us which coset go to which coset. In order to define $\alpha_{\ast}:G(\Gamma)\to G(\Gamma')$, we need to choose a map for each coset. Thus $\alpha_{\ast}$ is not uniquely defined.

A sufficient condition for $\alpha$ to be visible has been given previously in \cite[Lemma 4.10]{raagqi1}. Here we will find a necessary and sufficient condition for the visibility of $\alpha$.

\section{The structure of extension complex}
\label{sec:extension complex structure}

In this section, we introduce the classes of RAAGs we want to investigate, namely weak type II and type II (Definition~\ref{7.2}), and weak type I (Definition~\ref{7.24}). The bulk of this section is on the structure of extension complexes for RAAGs of (weak) type II, and quasi-isometries invariance of (weak) type II. The application of this to quasi-isometric classification of RAAGs of weak type I will be discussed at the end of the section. In particular, Theorem~\ref{1.2} is proved in Section~\ref{sec:3.4}.

Throughout this section, we identify $\Gamma$ with the 1-skeleton of $F(\Gamma)$, and we will implicitly use Lemma \ref{connect} in various places.

\subsection{Tiers and branches of the extension complex}
\label{subsec:tiers}
Let $\pi:\mathcal{P}(\Gamma)\to F(\Gamma)$ be the label-preserving simplicial map in Section \ref{basics about raag}.

Pick a standard geodesic $\ell\subset X(\Gamma)$ and let $\pi_{\ell}: X(\Gamma)\to \ell$ be the $CAT(0)$ projection onto $\ell$. Suppose $\ell_{1}\subset X(\Gamma)$ is a standard geodesic such that $d(\Delta(\ell_{1}),\Delta(\ell))\ge 2$. Then $\pi_{\ell}(\ell_{1})$ is a vertex in $\ell$ by Lemma \ref{3.1} and Lemma \ref{3.2}. Moreover, if $\ell_{2}$ is a standard geodesic parallel to $\ell_{1}$, then $\pi_{\ell}(\ell_{1})=\pi_{\ell}(\ell_{2})$  (see \cite[Lemma 6.2]{raagqi1}). Thus $\pi_{\ell}$ induces a well-defined map $\pi_{\Delta(\ell)}$ from the $v(\mathcal{P}(\Gamma)\setminus St(\Delta(\ell)))$, the set of vertices in $\mathcal{P}(\Gamma)\setminus St(\Delta(\ell))$, to $v(\ell)$. 

\begin{lem}
\label{5.17}
\cite[Lemma 6.2]{raagqi1} If $v_{1}$ and $v_{2}$ are in the same connected component of $\mathcal{P}(\Gamma)\setminus St(\Delta(\ell))$, then $\pi_{\Delta(\ell)}(v_{1})=\pi_{\Delta(\ell)}(v_{2})$.
\end{lem}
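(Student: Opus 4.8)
The plan is to reduce the statement to a connectivity property of a full subcomplex of $\mathcal{P}(\Gamma)$ together with a "local constancy" property of $\pi_{\Delta(l)}$ along edges. First I would observe that $St(\Delta(l))$ is a full subcomplex of $\mathcal{P}(\Gamma)$ (it is the closed star of a vertex in a flag complex, hence full), so by Lemma \ref{connect} it suffices to work with the $1$-skeleton: $v_1$ and $v_2$ lie in the same component of $\mathcal{P}(\Gamma)\setminus St(\Delta(l))$ if and only if they are joined by an edge-path in $\mathcal{P}(\Gamma)^{(1)}$ avoiding $v(St(\Delta(l)))$. By induction on the length of such a path, the whole lemma follows once I know that $\pi_{\Delta(l)}(v_1)=\pi_{\Delta(l)}(v_2)$ whenever $v_1$ and $v_2$ are \emph{adjacent} vertices of $\mathcal{P}(\Gamma)$ with $d(\Delta(v_i),\Delta(l))\ge 2$ for $i=1,2$.

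So the core of the argument is the edge case. Here I would use the defining property of adjacency in $\mathcal{P}(\Gamma)$: if $v_1$ and $v_2$ are adjacent, there are standard geodesics $l_1, l_2$ in the parallel classes of $v_1, v_2$ spanning a standard $2$-flat $F$. Since $F$ is a convex subcomplex on which the two "coordinate directions" are $l_1$ and $l_2$, and since $d(\Delta(l_i),\Delta(l))\ge 2$, Lemma \ref{3.1} and Lemma \ref{3.2} tell us $\pi_l(l_i)$ is a single vertex of $l$; I would then argue that $\pi_l(F)$ is likewise a single vertex of $l$, forcing $\pi_l(l_1)$ and $\pi_l(l_2)$ to coincide. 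Concretely, the support of $\mathcal{I}(F,l)$ is computed via Lemma \ref{3.2} from the labels of hyperplanes separating $F$ from $l$; the hypothesis $d(\Delta(l_i),\Delta(l))\ge 2$ means the label of $l$ is non-adjacent to both coordinate labels of $F$, hence the coarse projection of $F$ onto $l$ has empty support, i.e.\ is a point. Since $\pi_l(F)$ is this point and $l_1,l_2\subset F$, we get $\pi_l(l_1)=\pi_l(l_2)$, and this vertex is exactly $\pi_{\Delta(l)}(v_1)=\pi_{\Delta(l)}(v_2)$ by definition of the induced map.

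Finally I would assemble the pieces: given $v_1,v_2$ in the same component of $\mathcal{P}(\Gamma)\setminus St(\Delta(l))$, choose an edge-path $v_1=u_0,u_1,\dots,u_m=v_2$ in that component; each $u_j$ satisfies $d(\Delta(u_j),\Delta(l))\ge 2$ (as $u_j\notin St(\Delta(l))$), and each consecutive pair is adjacent, so by the edge case $\pi_{\Delta(l)}(u_j)=\pi_{\Delta(l)}(u_{j+1})$ for all $j$, and transitivity finishes the proof.

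I expect the main obstacle to be the edge case, specifically the clean identification of $\pi_l(F)$ with the coarse intersection $\mathcal{I}(F,l)$ and the verification that this coarse intersection reduces to a single vertex. The subtlety is that $F$ and $l$ may be disjoint or at positive distance, so one must use the structure of $\mathcal{I}(F,l)$ from Lemma \ref{3.2} (reading off the support from separating hyperplanes) rather than a naive projection argument, and check that the distance hypothesis $d(\Delta(l_i),\Delta(l))\ge 2$ genuinely forces the support to be empty; this is where the combinatorics of $\Gamma$ enters. Since this is essentially a mild strengthening of \cite[Lemma 6.2]{raagqi1} with the two-standard-geodesic case replaced by an induction along edges, I anticipate the remaining steps to be routine once the edge case is in hand.
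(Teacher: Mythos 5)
Your reduction to the edge case via Lemma~\ref{connect} is fine (and $St(\Delta(l))$ is full by the paper's definition of closed star). The gap is in the edge case: you assert that $d(\Delta(l_i),\Delta(l))\ge 2$ in $\mathcal{P}(\Gamma)$ ``means the label of $l$ is non-adjacent to both coordinate labels of $F$,'' and use this to conclude $V_F\cap V_l=\emptyset$, hence that $\mathcal{I}(F,l)$ has empty support. That implication is false --- it actually runs the other way (non-adjacency of labels in $\Gamma$ implies distance $\ge 2$ in $\mathcal{P}(\Gamma)$, not conversely). In particular $\Delta(l_1)$ and $\Delta(l)$ may carry the \emph{same} label $\bar v$: any two distinct parallel classes of $\bar v$-lines are automatically at distance $\ge 2$ in $\mathcal{P}(\Gamma)$, since an edge of the extension complex requires two distinct adjacent labels. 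So the distance hypothesis gives you no control over $V_F\cap V_l$.

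The conclusion you want (that $\mathcal{I}(F,l)$ is a single vertex) is nevertheless true, but for a different reason: if $Y_2$ were all of $l$, then $Y_1\subset F$ would be a standard geodesic parallel to $l$; since every standard geodesic in $F=l_1\times l_2$ is parallel to $l_1$ or $l_2$, this would force $\Delta(l)\in\{\Delta(l_1),\Delta(l_2)\}$, contradicting $d(\Delta(l_i),\Delta(l))\ge 2$. Simpler still, you can bypass $\mathcal{I}(F,l)$ entirely: since $l_1$ and $l_2$ span a standard $2$-flat they meet in a point $p$, so $\pi_l(p)\in\pi_l(l_1)\cap\pi_l(l_2)$; as each $\pi_l(l_i)$ is already known to be a single vertex (from the paragraph preceding the lemma), they must coincide. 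This direct route also avoids the subtlety you correctly flagged but did not resolve, namely identifying $\pi_l(F)$ with the coarse intersection of $F$ and $l$.
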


The follow definition plays a central role in our understanding of the extension complex.
\begin{definition}
	\label{def:proj}
Pick $v\in\mathcal{P}(\Gamma)$, and let $\ell\subset X(\Gamma)$ be a standard geodesic such that $\Delta(\ell)=v$. Let $$\pi_{\Delta(\ell)}:v(\mathcal{P}(\Gamma)\setminus St(v))\to v(\ell)$$ be the map in Lemma \ref{5.17}. A \textit{$v$-tier} is the full subcomplex spanned by $\pi_{\Delta(\ell)}^{-1}(x)$, where $x$ is a vertex in $\ell$ and $x$ is called the \textit{height} of the $v$-tier. A \textit{$v$-branch} is the full subcomplex spanned by vertices in one connected component of $\mathcal{P}(\Gamma)\setminus St(v)$. 
	
\end{definition}

By Lemma \ref{5.17}, a $v$-branch has non-empty intersection with a $v$-tier if and only if it belongs to the $v$-tier, thus a $v$-tier is consists of disjoint union of $v$-branches. Also note that a simplicial isomorphism $\alpha:\mathcal{P}(\Gamma_{1})\to\mathcal{P}(\Gamma_{2})$ will map branches to branches, but it may not map tiers to tiers.

\begin{lem}
\label{7.1}
If the $\alpha$-image of any $v$-tier of $\mathcal{P}(\Gamma_{1})$ is inside a single $\alpha(v)$-tier of $\mathcal{P}(\Gamma_{2})$, then $\alpha$ is visible.
\end{lem}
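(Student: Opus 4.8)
The plan is to upgrade the conclusion ``$s$ is visible'' to the statement that $s\big((F(\Ga_1))_p\big)$ is contained in a single copy $(F(\Ga_2))_q$ of $F(\Ga_2)$ inside $\P(\Ga_2)$, for a suitable vertex $q$ of $X(\Ga_2)$. This would suffice: $\Gamma_{F_i}$ is a maximal clique, so $\Delta(F_i)$ is a maximal simplex of $\P(\Ga_1)$, hence $s(\Delta(F_i))=\Delta(F'_i)$ is a maximal simplex of $\P(\Ga_2)$; once $s(\Delta(F_i))$ lies in $(F(\Ga_2))_q$ it is a maximal simplex there, so $F'_i$ is a maximal standard flat through $q$, and $q\in\bigcap_i F'_i$. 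Moreover $s\big((F(\Ga_1))_p\big)$ is a full subcomplex of $\P(\Ga_2)$ (because $\pi\circ i_p=\mathrm{id}$ forces adjacency in $(F(\Ga_1))_p$ to agree with adjacency in $\P(\Ga_1)$, and $s$ is an isomorphism), so it lands inside $(F(\Ga_2))_q$ as soon as each of its vertices does; and a vertex $w$ of $\P(\Ga_2)$ lies in $(F(\Ga_2))_q$ precisely when the parallel class $w$ has a representative standard geodesic through $q$, i.e.\ when $q$ lies in the parallel set $P_w$, which is a convex standard subcomplex of $X(\Ga_2)$. So, by the Helly property for convex subcomplexes (Lemma~\ref{2.1}), I would reduce the whole lemma to the following statement: \emph{for any two vertices $v,v'$ of $(F(\Ga_1))_p$ the parallel sets $P_{s(v)}$ and $P_{s(v')}$ intersect.}

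The elementary ingredient I would isolate first is: if $w$ is a vertex of $(F(\Ga_1))_p$ with $w\notin St(v)$, and $l$ is the standard geodesic through $p$ with $\Delta(l)=v$, then $\pi_{\Delta(l)}(w)=p$. To see this, take $l_w:=K(p,\{\pi(w)\})$, which passes through $p$ and has $\Delta(l_w)=w$; since $d(w,v)\ge 2$, the projection $\pi_l(l_w)$ is a single vertex of $l$ by Lemmas~\ref{3.1} and \ref{3.2}, and it equals $\pi_l(p)=p$ because $p\in l_w\cap l$. In particular, if $v,v'$ are distinct non-adjacent vertices of $(F(\Ga_1))_p$, then $v'$ lies in the $v$-tier $T_p$ at height $p$, and symmetrically $v$ lies in the $v'$-tier at height $p$.

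Next I would treat the pair $(v,v')$. If $v\sim v'$ in $\P(\Ga_1)$ then $s(v)\sim s(v')$, so some representative $\alpha$ of $s(v)$ and some representative $\beta$ of $s(v')$ span a standard $2$-flat $\Phi$; since a standard $2$-flat containing a standard geodesic lies in that geodesic's parallel set, $\Phi\subseteq P_{s(v)}\cap P_{s(v')}$ (the parallel set $P_w$ does not depend on the chosen representative, parallelism of geodesics being an equivalence relation), and this case is finished. If $v\not\sim v'$, the ingredient above places $v'$ in $T_p$ and $v$ in the corresponding $v'$-tier; the hypothesis then forces $s(T_p)$, hence $s(v')$, into a single $s(v)$-tier $T'=\pi_{\Delta(l')}^{-1}(q_0)$ for a chosen representative $l'$ of $s(v)$ and a vertex $q_0\in l'$, and symmetrically $s(v)$ lies in an $s(v')$-tier $\pi_{\Delta(l'')}^{-1}(q_1)$. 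As $\pi_{\Delta(l')}$ and $\pi_{\Delta(l'')}$ are well defined on parallel classes, this yields $\pi_{l'}(l'')=q_0$ and $\pi_{l''}(l')=q_1$, whence Lemma~\ref{2.6} applied to $(l',l'')$ shows that their coarse intersection is $(\{q_0\},\{q_1\})$ and $d(l',l'')=d(q_0,q_1)$. So $P_{s(v)}\cap P_{s(v')}\neq\emptyset$ would follow once I show $q_0=q_1$, i.e.\ that $l'$ actually meets $l''$.

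The hard part will be exactly this last step, and it is the point where the tier hypothesis, rather than the weaker fact that $s$ carries branches to branches, is indispensable: on $X(\Ga_1)$ the representatives of $v$ and $v'$ through $p$ do meet, at $p$, and the job of the hypothesis is to propagate this ``coincidence'' across $s$. I expect to argue by contradiction: if $l'$ and $l''$ were disjoint, a hyperplane $h$ of $X(\Ga_2)$ would separate them, and tracking—via the ingredient of the second paragraph—how $s^{-1}$ of the parallel class of the standard geodesics dual to $h$ sits relative to $(F(\Ga_1))_p$ and the tier $T_p$ should exhibit a vertex of some $v$-tier whose $s$-image is forced into two distinct $s(v)$-tiers, contradicting the hypothesis. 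I expect this to organize itself most cleanly when phrased in terms of pairs of maximal flats $F_i,F_j$ through $p$: when $\Delta(F_i)\cap\Delta(F_j)\neq\emptyset$ one passes to the parallel set of the standard flat supported on $\Delta(F_i)\cap\Delta(F_j)$, which splits as $\mathbb R^{k}\times X(\Ga'')$ with $G(\Ga'')$ a right-angled Artin group on strictly fewer vertices into which $s$ restricts—the tier hypothesis being inherited, since tiers of a standard convex subcomplex are cut out by the ambient tiers (nearest-point projection onto a standard geodesic being computed the same inside a convex subcomplex)—and one inducts on the number of vertices down to the case $\Delta(F_i)\cap\Delta(F_j)=\emptyset$, where $F_i\cap F_j$ is a single point and the separating-hyperplane argument applies. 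Granting this, all the parallel sets $P_{s(v)}$, $v$ a vertex of $(F(\Ga_1))_p$, pairwise intersect, Lemma~\ref{2.1} produces a common point $q$, and by the first paragraph $s$ is visible at $p$; since $p$ is arbitrary, $s$ is visible.
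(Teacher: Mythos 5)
Your Helly reduction and the ``elementary ingredient'' of the second paragraph are both correct, and they coincide with the tools the paper uses. But the proposal, as you acknowledge, does not actually prove the key step, and the route you sketch has two concrete obstructions. First, in the separating-hyperplane argument you need the vertices you compare to lie at combinatorial distance $\ge 2$ from $\Delta$ of the dual geodesic, so that the tier projection is even defined on them; working only with the pair of standard geodesics $l',l''$, the hyperplane $h$ separating them may be dual to a geodesic whose parallel class $w'=\Delta(l_h)$ is adjacent to $s(v)$ or $s(v')$, and then $s(v)$ or $s(v')$ simply does not lie in any $w'$-tier, so the intended contradiction never fires. Second, the fallback induction quietly uses that $s$, a simplicial isomorphism $\P(\Ga_1)\to\P(\Ga_2)$, restricts to a simplicial isomorphism of the extension complexes of the smaller RAAGs coming from the support of $\Delta(F_i)\cap\Delta(F_j)$, with the tier hypothesis inherited. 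Neither claim is immediate from what is assumed; $s$ is not assumed to respect labels or the projection $\pi$, so the restriction statement is delicate and would itself need a proof.

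The paper's proof avoids both problems by applying Helly directly to the maximal flats $F'_i$, $F'_j$ rather than the parallel sets $P_{s(v)}$. If $F'_i\cap F'_j=\emptyset$, take a separating hyperplane $h$, a dual geodesic $l'$ and $v'=\Delta(l')$. Since $F'_i$ lies on one side of $h$, no representative of $v'$ can lie in $F'_i$, so $v'\notin\Delta(F'_i)$; hence $\pi_{l'}(F'_i)$ is a single vertex, similarly for $F'_j$, and the two projections are on opposite sides of $h$. Maximality of the clique $\Delta(F'_i)$ then hands you a vertex $v'_1\in\Delta(F'_i)$ with $d(v'_1,v')\ge 2$ (if all its vertices were adjacent to $v'$ you could enlarge the clique, unless $v'$ shares a label with some vertex of $\Delta(F'_i)$, and then that vertex is automatically at distance $\ge 2$); likewise $v'_2\in\Delta(F'_j)$. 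So $v'_1$ and $v'_2$ lie in different $v'$-tiers, yet $s^{-1}(v'_1)\in\Delta(F_i)$ and $s^{-1}(v'_2)\in\Delta(F_j)$ both lie in $(F(\Ga_1))_p$ and both project, by exactly your projection observation, to $\pi_l(p)$ under $\pi_{\Delta(l)}$ for any representative $l$ of $s^{-1}(v')$—the same $s^{-1}(v')$-tier. This contradicts the hypothesis in one step; there is no case split on $v\sim v'$ and no induction. In short, the place you lose traction is the decision to run the contradiction through a single pair of standard geodesics rather than through the maximal flats: maximality is exactly what furnishes the distance-$\ge 2$ condition for free.
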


\begin{proof}
Let $p$, $\{F_{i}\}_{i=1}^{n}$ and $\{F'_{i}\}_{i=1}^{n}$ be as in Definition \ref{visible}. By Lemma \ref{2.1}, it suffices to show $F'_{i}\cap F'_{j}\neq\emptyset$ for any $i\neq j$. Suppose $\alpha$ is not visible. Then there exists a hyperplane $h$ separating $F'_{i}$ and $F'_{j}$. Let $\ell'$ be a standard geodesic dual to $h$ and let $v'=\Delta(\ell')$. Note that $F'_i$ does not contain any line which is parallel to $\ell'$, otherwise $h$ would have non-trivial intersection with $F'_i$. Let $\pi_{\ell'}(F'_i)$ be the $CAT(0)$ projection of $F'_i$ to $\ell'$. Then $\pi_{\ell'}(F'_i)$ is a point. Similarly $\pi_{\ell'}(F'_j)$ is a point. Since $h$ separates $F'_{i}$ and $F'_{j}$, we have  $$\pi_{\ell'}(F'_i)\neq\pi_{\ell'}(F'_j).$$ The maximality of $F'_{i}$ and $F'_{j}$ implies there exist vertices $v'_{1}\in\Delta(F'_{i})$ and $v'_2\in\Delta(F'_{j})$ such that $v'_i\notin St(v')$ for $i=1,2$. Since $\pi_{\ell'}(F'_i)\neq\pi_{\ell'}(F'_j)$, $v'_{1}$ and $v'_{2}$ are in different $v'$-tiers. On the other hand, we claim $\alpha^{-1}(v'_{1})$ and $\alpha^{-1}(v'_{2})$ are in the same $\alpha^{-1}(v')$-tier, which would give a contraction. To see the claim, note that $\alpha^{-1}(v'_{i})$ can be represented by a standard geodesic $\ell_i\subset F_i$ for $i=1,2$. Since both $F_i$ and $F_j$ contain $p$, we can assume $p\in \ell_1\cap \ell_2$. Then the $CAT(0)$-projection of $\ell_1$ and $\ell_2$ to any standard geodesic representing $\alpha^{-1}(v')$ is the same. Thus $\alpha^{-1}(v'_{1})$ and $\alpha^{-1}(v'_{2})$ are in the same $\alpha^{-1}(v')$-tier.
\end{proof}

The main goal of this subsection is Corollary~\ref{7.14}, where
we characterize $v$-branches in a $v$-tier for a certain class of $\Gamma$ defined as follows. 

\begin{definition}
\label{7.2}
A graph $\Gamma$ is of \textit{type II} if $\Gamma$ is connected and for every pair of distinct vertices $v,w\in\Gamma$, $\Gamma\setminus (lk(v)\cap lk(w))$ is connected. $\Gamma$ is said to have \emph{weak type II} if $\Gamma$ is connected and for vertices $v,w\in\Gamma$ such that $d(v,w)=2$, $\Gamma\setminus (lk(v)\cap lk(w))$ is connected. 

We say $\mathcal{P}(\Gamma)$ is of \textit{type II} if $\mathcal{P}(\Gamma)$ is connected and for every pair of distinct vertices $v,w\in\mathcal{P}(\Gamma)$, we know $(\mathcal{P}(\Gamma))^{(1)}\setminus (lk(v)\cap lk(w))$ is connected. We define $\mathcal{P}(\Gamma)$ being \textit{weak type II} in a similar way.

We say $G(\Gamma)$ or $F(\Gamma)$ is of (weak) type II if $\Gamma$ is of (weak) type II.
\end{definition}

Note that $\Gamma$ is connected if it is of (weak) type II. 

\begin{example}
	\label{ex:type II}
A pentagon is a graph of type II. A slightly more complicated example of graph of type II is a 5-cycle and a 6-cycle identified along a closed star. However, a path of length 3 (or more generally a tree of diameter $\ge 3$) is not of weak type II. In the following discussions and proofs, it would be helpful to have these basic examples in mind and compare them.
\end{example}

%Let $K$ be a simplicial complex and let $K_{1},K_{2}$ be two subcomplexes. $K_{1}$ and $K_{2}$ \textit{contact} if there exist vertices $v_{i}\in K_{i}$ for $i=1,2$ such that $v_{1}$ and $v_{2}$ are adjacent.

Let $v\in\mathcal{P}(\Gamma)$ be a vertex, and let $\ell\subset X(\Gamma)$ be a standard geodesic such that $\Delta(\ell)=v$. Define $P_{v}$ to be the parallel set $P_{\ell}$ of $\ell$. Note that $P_{v}$ does not depend on the choice of the standard geodesic $\ell$ with $\Delta(\ell)=v$. A subset $K\subset P_{v}$ is \textit{horizontal} if $\pi_{\ell}(K)$ is a point (where $\pi_{\ell}:X(\Gamma)\to \ell$ is the $CAT(0)$ projection) and $\pi_{\ell}(K)$ is called the \textit{height} of $K$.

Let $\bar{v}\in\Gamma$ be the label of $v\in\P(\Ga)$. Then $P_v$ is a standard subcomplex whose support (Definition \ref{notation}) is $St(\bar{v})$. 

\begin{lem}
\label{7.4}
Suppose $\Gamma$ is of weak type II. Pick vertices $v,w\in\mathcal{P}(\Gamma)$ such that $d(v,w)=2$. Let $u\in v^{\perp}\cap w^{\perp}$. Then there exists a vertex $w'$ such that
\begin{enumerate}
\item $d(v,w')=2$ and $d(u,w')=1$.
\item $w'$ and $w$ are in the same $v$-branch.
\item $P_{v}\cap P_{w'}\neq\emptyset$.
\end{enumerate}
In particular, every $v$-branch contains a vertex $w'$ such that $P_{w'}\cap P_{v}\neq\emptyset$.
\end{lem}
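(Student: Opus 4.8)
The plan is to produce $w'$ explicitly by working in the extension graph, staying inside the $v$-tier containing $w$, and then invoking the weak type II hypothesis at the level of $\Gamma$ to move $w$ along a path in $lk(\bar v)$-directions until we reach a vertex adjacent to $u$ whose parallel set meets $P_v$. First I would set up notation: let $\bar v, \bar w, \bar u$ be the labels of $v,w,u$ in $\Gamma$. Since $u\in v^\perp\cap w^\perp$ we have $\bar u$ adjacent to both $\bar v$ and $\bar w$ (note $\bar v \neq \bar w$ need not hold a priori, but if $\bar v = \bar w$ then $v,w$ being non-adjacent forces them to lie in different tiers of a common parallel set, and one handles this degenerate case separately; in the main case $\bar v \neq \bar w$ and $\bar w \in lk(\bar v)$ is impossible since then $d(v,w)$ could be $1$, so actually $\bar w\notin St(\bar v)$). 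The key geometric picture: $P_v$ is the standard subcomplex with support $St(\bar v)$, and the condition $P_{w'}\cap P_v\neq\emptyset$ is what we must arrange. The point $u$ gives us $P_u$ meeting $P_v$ (since $v,u$ are adjacent, the corresponding standard geodesics span a standard $2$-flat, so their parallel sets intersect), so $P_u\cap P_v$ is a nonempty standard subcomplex; I would try to slide $w$ within its $v$-branch until its parallel set passes through a vertex of $P_u\cap P_v$.

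The main steps, in order: (i) Observe that $w$ and $u$ being adjacent means there is a standard $2$-flat $f$ spanned by geodesics $\ell_w, \ell_u$ with $\Delta(\ell_w)=w$, $\Delta(\ell_u)=u$; replacing $w$ by a parallel geodesic we may assume $\ell_u\subset P_v$, i.e. $\ell_u$ passes through a vertex $x\in P_v$. (ii) Now look at the $v$-tier $T$ of height $\pi_{\Delta(\ell_v)}(w)$ containing $w$, where $\ell_v$ is the geodesic with $\Delta(\ell_v)=v$ realizing $P_v=P_{\ell_v}$. Within $P_u$, the vertex $w$ (as a parallel class of geodesics inside $P_u$) corresponds to a vertex $\bar w$ of the support graph of $P_u$, which is $St(\bar u)$. (iii) Apply weak type II to the pair $(\bar v,\bar u)$: since $d(\bar v,\bar u)=1$, this is not directly the hypothesis, so instead apply it to $(\bar v, \bar w)$ — here $d(\bar v,\bar w)=2$ (as $d(v,w)=2$ and $\pi$ is distance non-increasing and preserves adjacency of distinct-label vertices), so $\Gamma\setminus(lk(\bar v)\cap lk(\bar w))$ is connected. (iv) Use this connectivity, together with $\bar u\in lk(\bar v)\cap lk(\bar w)$, to find a path in $\Gamma$ from $\bar w$ to a neighbour of $\bar u$ that avoids $lk(\bar v)\cap lk(\bar w)$; lift this path to a sequence of moves in the $v$-branch of $w$ (each move replaces the current geodesic by one spanning a $2$-flat with it, along a direction labeled by an edge of the path, hence not crossing $St(v)$, so staying in the same $v$-branch by Lemma \ref{5.17} and staying in the $v$-tier $T$). (v) The endpoint $w'$ of this sequence satisfies $d(u,w')=1$ by construction, lies in the same $v$-branch as $w$, and — because we have brought its supporting geodesic to pass through (a translate inside $P_u\cap P_v$ of) $x$ — satisfies $P_{w'}\cap P_v\neq\emptyset$; that $d(v,w')=2$ rather than $\leq 1$ follows since $w'$ is still in $\mathcal{P}(\Gamma)\setminus St(v)$ (it never left the $v$-branch) and is adjacent to $u\in v^\perp$, so $d(v,w')=2$. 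The final sentence of the statement is then immediate: every $v$-branch contains some vertex $w$ with $d(v,w)=2$ (take $w$ adjacent to any $u\in v^\perp$ and in that branch, which exists since $lk(\bar v)\cap lk(\bar w)$ being non-separating forces branches to be "reachable" through star directions), and then apply the construction.

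The hard part will be step (iv)–(v): translating the graph-theoretic path in $\Gamma\setminus(lk(\bar v)\cap lk(\bar w))$ into an honest sequence of parallelism moves in $X(\Gamma)$ that (a) keeps us in one $v$-branch, (b) keeps us in the single $v$-tier $T$, and (c) actually drives the parallel set of the moving geodesic to intersect $P_v$ rather than merely keeping the support in the right place. The subtlety in (c) is that having the right support graph ($St(\bar u)\cap$ something) is necessary but not sufficient for $P_{w'}\cap P_v\neq\emptyset$; one genuinely needs to track a basepoint, which is why I want to carry along the intersection point $x\in P_u\cap P_v$ throughout and argue that each move can be performed so as to preserve incidence with (the relevant translate of) $x$ — here Lemma \ref{3.1}, Lemma \ref{3.2} computing supports of coarse intersections, and the Helly property Lemma \ref{2.1} for the standard subcomplexes $P_v$, $P_u$, and the intermediate parallel sets will do the bookkeeping. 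I expect that once the weak type II hypothesis is correctly converted into the statement "the $v$-tier $T$ is connected after removing the copies of $St$ corresponding to $lk(\bar v)\cap lk(\bar w)$," conditions (a) and (b) become automatic and only (c) requires care.
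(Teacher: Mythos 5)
Your step (iii) has a genuine gap: you apply weak type II to the pair $(\bar v, \bar w)$ claiming $d(\bar v,\bar w)=2$, but this does not follow from $d(v,w)=2$. The projection $\pi$ is distance non-increasing, so you only get $d(\bar v,\bar w)\le 2$; the cases $\bar v=\bar w$ (same label, different parallel classes) and $d(\bar v,\bar w)=1$ (adjacent labels whose geodesics happen not to span a $2$-flat) are both possible and are \emph{not} excluded by the hypothesis. Your remark that ``$\bar w\in lk(\bar v)$ is impossible since then $d(v,w)$ could be $1$'' is not a valid implication — ``could be'' is not ``is.'' The paper's proof avoids this exactly: it takes a combinatorial geodesic $\omega$ from a point $x\in P_w\cap P_u$ to its nearest point $y\in P_v$, shows $\omega\subset P_u$, breaks $\omega$ into maximal sub-segments $[x_i,x_{i+1}]$ along standard geodesics $l_i$ with $v_i=\Delta(l_i)$, and then proves $d(\pi(v_{n-1}),\pi(v))=2$ for the \emph{last} such piece (equation (\ref{7.7})), using that $l_{n-1}$ actually meets $P_v$ together with $d(v_{n-1},v)\ge 2$ and $d(v_{n-1},u)=1$. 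Weak type II is then applied to the pair $(\pi(v_{n-1}),\pi(v))$, not to $(\bar w,\bar v)$; the vertex $w'$ is taken to be $v_{n-1}$, which by construction already satisfies $P_{w'}\cap P_v\neq\emptyset$, solving your concern (c) for free.

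A second, smaller issue: in your sketch of the final ``in particular'' sentence you invoke ``$lk(\bar v)\cap lk(\bar w)$ being non-separating'' to find a starting $w$ in a given branch, but $\bar w$ is defined only after $w$ is chosen, so the argument as written is circular. The correct route is elementary: pick $u\in\partial B$ (nonempty since $\P(\Gamma)$ is connected and $B$ is not all of $\P(\Gamma)$) and $w\in B$ with $d(u,w)=1$; then $u\in lk(v)=v^\perp$, $w\notin St(v)$, and $d(v,w)=2$, so the hypotheses of the main statement hold with that $u$ and $w$. The high-level idea of ``sliding $w$ along $P_u$ while tracking a basepoint'' is essentially right, and your instinct that the hard part is guaranteeing $P_{w'}\cap P_v\neq\emptyset$ (rather than merely getting the right support) is correct; but to make it work you need the paper's device of producing a \emph{specific} auxiliary vertex at the correct label-distance $2$ from $\bar v$, rather than hoping $\bar w$ itself is at that distance.
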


\begin{proof}
Let $B$ be a $v$-branch containing $w$. Pick vertex $x\in P_{w}\cap P_{u}$ and let $y\in P_{v}$ be the nearest vertex to $x$ with respect to the $\ell^1$-metric. The existence and uniqueness of such vertex follows from \cite[Lemma 13.8]{haglund2008special}. Let us assume $x\neq y$, otherwise we are done by putting $w'=w$. Let $\omega$ be a combinatorial geodesic connecting $x$ and $y$. We claim $\omega\subset P_{u}$. By Lemma \ref{combinatorial convex}, it suffices to show $y\in P_{u}$. However, this follows by applying \cite[Lemma 13.8]{haglund2008special} with $C'=P_v$ and $C=P_u$.

Let $\{x_{i}\}_{i=0}^{n}$ be vertices in $\omega$ such that for $0\le i\le n-1$, $[x_{i},x_{i+1}]$ is a maximal sub-segment of $\omega$ that is contained in a standard geodesic ($x_{0}=x$ and $x_{n}=y$). Denote the standard geodesic containing $[x_i,x_{i+1}]$ by $\ell_{i}$ and let $v_{i}=\Delta(\ell_{i})$ for $0\le i\le n-1$. Since $\omega$ is the shortest combinatorial geodesic connecting $x$ and some vertex in $P_v$, every dual hyperplane of some edge in $\omega$ must separate $x$ and $P_{v}$. Thus for each $i$, there exists a hyperplane dual to $\ell_{i}$ which does not intersect $P_{v}$. It follows that
\begin{equation}
\label{7.5}
d(v_{i},v)\ge 2
\end{equation}
for all $i$. Since $\ell_{i}\subset P_{u}$, we also have 
\begin{equation}
\label{7.6}
d(v_{i},u)=1.
\end{equation}
Let $\pi:\mathcal{P}(\Gamma)\to F(\Gamma)$ be the projection mentioned at the beginning of this section. Since $\ell_{n-1}\cap P_{v}\neq\emptyset$, it follows from (\ref{7.5}) and (\ref{7.6}) that 
\begin{equation}
\label{7.7}
d(\pi(v_{n-1}),\pi(v))= 2.
\end{equation}

We claim $v_{0}\in B$. Let $K_{x_{0}}=(F(\Gamma))_{x_{0}}$ (i.e. $K_{x_0}$ is the subcomplex of $\P(\Ga)$ made of simplexes which come from standard flats passing $x_0$, see the paragraph before Lemma \ref{isometric embedding}). First we show $K_{x_{0}}\cap St(v)$ is contained in the intersection of the links of two vertices. Pick vertex $s\in K_{x_{0}}\cap St(v)$, and let $\ell_{s}$ be the standard geodesic such that $x_{0}\in \ell_s$ and $\Delta(\ell_s)=s$. Let $h$ be a hyperplane dual to $\ell_{n-1}$ such that it separates $x_{0}$ from $P_{v}$. Then $h\cap \ell_{s}=\emptyset$ by (\ref{7.5}) (note that if $h\cap \ell_s\neq\emptyset$, then $s=v_{n-1}$), hence $h$ separates $\ell_{s}$ from $P_{v}$. It follows from Lemma \ref{3.1} and Lemma \ref{3.2} that $$\pi(s)\in (\pi(v_{n-1}))^{\perp}\cap(\pi(v))^{\perp}.$$ Let $K$ be the full subgraph of $\Ga$ spanned by $(\pi(v_{n-1}))^{\perp}\cap(\pi(v))^{\perp}$. Then $t\notin St(v)$ for any vertex $t\in K_{x_{0}}$ such that $\pi(t)\notin K$. 

By (\ref{7.7}), $K$ does not separate $\Gamma$, so if $\pi(w)\notin K$ and $\pi(v_{0})\notin K$, then they can be connected by an edge path outside $K$, which lifts to a path in $K_{x_{0}}$ connecting $w$ and $v_{0}$ outside $St(v)$, thus $v_{0}\in B$. If $\pi(w)\notin K$ and $\pi(v_{0})\in K$, then we connect $\pi(w)$ and $\pi(v_{n-1})$ by an edge path outside $K$, then connect $\pi(v_{n-1})$ and $\pi(v_{0})$ by an edge, this path also lifts to a path in $K_{x_{0}}$ connecting $w$ and $v_{0}$ outside $St(v)$. The other cases can be dealt with in a similar way. We can repeat this process and argue inductively that actually $v_{i}\in B$ for $0\le i\le n-1$, then the lemma follows by taking $w'=v_{n-1}$.
\end{proof}

\begin{lem}
	\label{7.8}
Suppose $\Gamma$ is of weak type II. Take vertices $v\in \mathcal{P}(\Gamma)$ and $x\in X(\Gamma)$. If there exists two vertices $v_{1},v_{2}\in(F(\Gamma))_{x}$ such that they are in different $v$-branches, then $v\in (F(\Gamma))_{x}$.
\end{lem}
\begin{proof}
	If this is not true, then $x\notin P_{v}$. We take a combinatorial geodesic of shortest length from $x$ to a vertex in $P_v$ and repeat the above argument to see that $(F(\Gamma))_{x}\setminus St(v)$ is connected, which contradicts that $v_{1}$ and $v_{2}$ are in different $v$-branches.
\end{proof}

\begin{lem}
\label{7.9}
Suppose $\Gamma$ is an arbitrary finite simplicial graph. Let $v\in\P(\Ga)$ be a vertex and let $v_{1},v_{2}\in\mathcal{P}(\Gamma)\setminus St(v)$ be two vertices such that $P_{v_{i}}\cap P_{v}\neq\emptyset$ for $i=1,2$. Suppose $\bar{v}=\pi(v)$. If $\pi(v_{1})$ and $\pi(v_{2})$ are in different connected components of $F(\Gamma)\setminus St(\bar{v})$, then $v_{1}$ and $v_{2}$ are in different $v$-branches. 
\end{lem}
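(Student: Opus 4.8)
The plan is to prove the contrapositive: assuming $v_1$ and $v_2$ lie in a common $v$-branch $B$, we show that $\pi(v_1)$ and $\pi(v_2)$ lie in one connected component of $F(\Gamma)\setminus St(\bar v)$.

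First we normalize. Fix a standard geodesic $l\subset X(\Gamma)$ with $\Delta(l)=v$, so that $P_v=P_l$; recall that $P_v$ is the standard subcomplex of support $St(\bar v)$, and since $St(\bar v)=\bar v\circ lk(\bar v)$ it splits as $l\times Z$ with $Z$ a copy of $X(lk(\bar v))$ transverse to $l$. As $v_i\notin St(v)$, we have $d(v_i,v)\ge 2$, so the height $\pi_l(v_i)\in v(l)$ is defined; since $v_1,v_2$ lie in one component of $\mathcal{P}(\Gamma)\setminus St(v)$, Lemma~\ref{5.17} gives $\pi_l(v_1)=\pi_l(v_2)=:z_0$. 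Put $Z_0:=P_v\cap\pi_l^{-1}(z_0)$, a standard subcomplex of support $lk(\bar v)$. Choose $x_i\in P_{v_i}\cap P_v$ and let $l_i$ be the representative of $v_i$ through $x_i$; since $\pi_l(l_i)=\pi_l(v_i)=z_0$ is a single vertex, $\pi_l(x_i)=z_0$ and so $x_i\in Z_0$. Comparing $l_i$ with the representative of $v$ through $x_i$: if $\pi(v_i)=\bar v$ these two geodesics coincide, forcing $v_i=v$; if $\pi(v_i)\in lk(\bar v)$ they span a standard $2$-flat through $x_i$, forcing $v_i\sim v$; both contradict $v_i\notin St(v)$. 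Hence $\pi(v_i)\notin St(\bar v)$, so $\pi(v_i)$ lies in a well-defined component $D_i$ of $F(\Gamma)\setminus St(\bar v)$, and $v_i=i_{x_i}(\pi(v_i))\in i_{x_i}(D_i)$.

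The idea now is to transport components of $F(\Gamma)\setminus St(\bar v)$ along $Z_0$. For $y\in Z_0\subset P_v$ we have $v=i_y(\bar v)\in(F(\Gamma))_y$, so $i_y$ carries the components of $F(\Gamma)\setminus St(\bar v)$ bijectively onto the components of $(F(\Gamma))_y\setminus St(v)$; for a component $D$ let $\Phi(y,D)$ be the unique $v$-branch containing the connected set $i_y(D)$. We want: (a) for adjacent vertices $y,y'$ of $Z_0$ and every $D$, $\Phi(y,D)=\Phi(y',D)$; and (b) for fixed $y\in Z_0$, the map $D\mapsto\Phi(y,D)$ is injective. Granting these, connectedness of $Z_0$ and iteration of (a) yield $\Phi(x_1,D)=\Phi(x_2,D)$ for every $D$; since $v_1\in i_{x_1}(D_1)\subset B$ and $v_2\in i_{x_2}(D_2)\subset B$, we get $B=\Phi(x_1,D_1)=\Phi(x_2,D_1)$ and $B=\Phi(x_2,D_2)$, so (b) forces $D_1=D_2$, which is what we want.

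The remaining work is (a) and (b). For (a), if the label $b$ of the edge $[y,y']$ is adjacent in $\Gamma$ to some vertex $w$ of $D$, then the $w$-axes through $y$ and through $y'$ are parallel, so $i_y(w)=i_{y'}(w)$ and $i_y(D)\cap i_{y'}(D)\neq\emptyset$; this settles (a) except when $b\in lk(\bar v)$ is adjacent to no vertex of $D$. That remaining case of (a), together with (b), is where I expect the main difficulty: both come down to the local assertion that, over a point of $P_v$, the branching of $\mathcal{P}(\Gamma)$ away from $v$ is no coarser than the branching of $F(\Gamma)$ away from $\bar v$, i.e.\ $i_y$ of distinct components land in distinct $v$-branches and crossing a $b$-hyperplane with $b\sim\bar v$ cannot connect them. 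The plan to prove this is to work inside the convex subcomplex $R:=\pi_l^{-1}(z_0)$ — which contains $Z_0$ and, by Lemma~\ref{5.17}, every representative of a vertex of $B$ — and to analyze crossings of the two hyperplanes dual to the edges of $l$ at $z_0$ using the Helly and coarse-intersection lemmas (Lemmas~\ref{2.1}, \ref{2.6}, \ref{2.9}, \ref{3.1}, \ref{3.2}), the point being that such a crossing cannot "reach around" $St(v)$.
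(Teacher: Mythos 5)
Your reduction to the transport properties (a) and (b) is a reasonable reorganization, and the normalization (heights, $Z_0$, the fact that $x_i\in Z_0$) is correct. But you have not proved the lemma. Property (b) --- that for a single $y\in Z_0$ the map $D\mapsto\Phi(y,D)$ is injective, i.e.\ that $i_y$ carries distinct components of $F(\Gamma)\setminus St(\bar v)$ into distinct $v$-branches --- is exactly the special case $x_1=x_2$ of the lemma being proved, and together with the hard case of (a) it carries essentially all of the content. You leave both as a ``plan,'' and the plan does not look viable as stated: the two hyperplanes bounding your slab $R=\pi_l^{-1}(z_0)$ are crossed by both $l_1$ and $l_2$, so they cannot separate one from the other, and in $X(\Gamma)$ itself no hyperplane separates them. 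So the Helly/coarse-intersection analysis inside $R$ has nothing to latch onto.

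The paper's proof of this point is by a genuinely different mechanism that your outline does not contain. After producing an edge path $\omega$ from a vertex of $l_1\setminus P_v$ to a vertex of $l_2\setminus P_v$ staying outside $P_v$ (this is the part your argument implicitly also needs and can be done similarly), the paper changes the ambient space: it builds a complex $\bar S(\Gamma)$ by unrolling $S(\Gamma)$ along the $\bar v$-circle, attaching the $S(\Gamma_1)$-piece and the $S(\Gamma_2)$-piece at antipodal points of that circle, together with a collapsing homotopy equivalence $g:\bar S(\Gamma)\to S(\Gamma)$ which lifts to $\tilde g:\bar X(\Gamma)\to X(\Gamma)$. In $\bar X(\Gamma)$ there is a new hyperplane $\bar h$ with $\tilde g(\bar h)=M\subset P_v$ ($\Gamma_M=lk(\bar v)$), and the hypothesis that $\pi(v_1)$ and $\pi(v_2)$ lie in different components of $\Gamma\setminus St(\bar v)$ is exactly what forces the lifts $\bar l_1$ and $\bar l_2$ to lie on opposite sides of $\bar h$. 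The connected set $\tilde g^{-1}(\omega)$ meets both $\bar l_1$ and $\bar l_2$ but avoids $\bar h$, giving the contradiction. This separating hyperplane exists only after passing to $\bar X(\Gamma)$; it has no counterpart in $X(\Gamma)$ or in your slab $R$. So there is a real gap: (b) (and the hard case of (a)) needs an actual argument, and the one in the paper uses a topological construction that is absent from your proposal.
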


\begin{proof}
For $i=1,2$, let $\ell_{i}$ be a standard geodesic such that $\Delta(\ell_{i})=v_{i}$ and $\ell_{i}\cap P_{v}\neq \emptyset$. We prove the lemma in 2 steps. In the first step, we will show that is we assume $v_1$ and $v_2$ are in the same $v$-branch, then there exist vertices $x\in \ell_{1}\setminus P_{v}$ and $y\in \ell_{2}\setminus P_{v}$ such that they can be connected by an edge path outside $P_{v}$. In the second step, we will show $x$ and $y$ cannot be connected by an edge path outside $P_v$. Such contradiction will finish the proof.

\emph{Step 1:} Note that $\ell_{1}\cap P_{v}$ and $\ell_{2}\cap P_{v}$ are of the same height, otherwise $v_1$ and $v_2$ are in different $v$-tiers, hence are in different $v$-branches. Let $\{w_{i}\}_{i=1}^{n}\subset\mathcal{P}(\Gamma)\setminus St(v)$ be vertices such that $d(w_{i},w_{i+1})=1$, $w_{1}=v_{1}$ and $w_{n}=v_{2}$. Pick $x=x_{1}$ to be any vertex in $\ell_{1}\setminus P_{v}$. For $1\le i\le n-1$, let $r'_{i}$ be a standard geodesic with $\Delta(r'_{i})=w_{i}$ and $r'_{i}\cap P_{w_{i+1}}\neq \emptyset$ (set $r'_{n}=\ell_{2}$). Let $\omega_{1}$ be horizontal edge path in $P_{w_{1}}$ connecting $x_{1}$ and a vertex $x_{2}\in r'_{1}$. Note that $\omega_{1}\cap P_{v}=\emptyset$ since $P_{v}\cap P_{w_{1}}$ is either empty or horizontal in $P_{w_{1}}$. Let $r_{2}$ be the standard geodesic such that $x_{2}\in r_{2}$ and $\Delta(r_{2})=w_{2}$. If $P_{w_{2}}\cap P_{v}=\emptyset$ or $P_{w_{2}}\cap P_{v}$ and $x_{2}$ have different height in $P_{w_2}$, let $\omega_{2}$ be a horizontal edge path joining $x_{2}$ and a vertex $x_{3}\in r'_{2}$. If $P_{w_{2}}\cap P_{v}$ and $x_{2}$ have the same height in $P_{w_2}$, let $\omega'_{2}$ be an edge in $r_{2}$ joining $x_{2}$ and another vertex $x'_{2}$ and let $\omega''_{2}$ be a horizontal edge path joining $x'_{2}$ and a vertex $x_{3}\in r'_{2}$. Set $\omega_{2}=\omega_{2}'\cup\omega''_{2}$, it clear that $\omega_{2}\cap P_{v}=\emptyset$ in both cases. We can define $\omega_{i}$ and $x_{i+1}$ for $3\le i\le n$ in the same way. Let $y=x_{n+1}$ and the first step follows.

\emph{Step 2:} Let $C_{1}$ be the component of $F(\Gamma)\setminus St(\bar{v})$ that contains $\pi(v_{1})$ and $C_{2}$ be the union of all other components. For $i=1,2$, let $\Gamma_{i}$ be the full subgraph spanned by vertices in $C_{i}\cup St(\bar{v})$. Then $St(\bar{v})=\Gamma_{1}\cap\Gamma_{2}$. Let $S(St(\bar{v}))$ and $S(lk(\bar{v}))$ be the Salvetti complexes with defining graphs $St(\bar{v})$ and $lk(\bar{v})$ respectively. Note that $$S(St(\bar{v}))\cong S(lk(\bar{v}))\times \Bbb S^{1}.$$ Note that $S(St(\bar{v}))$ sits naturally in $S(\Ga_1)$ and $S(\Ga_2)$, we can obtain $S(\Ga)$ by gluing $S(\Ga_1)$ and $S(\Ga_2)$ along $S(St(\bar{v}))$.

Now we glue $S(\Ga_1)$ and $S(\Ga_2)$ in a different to obtain a new space $\bar{S}(\Ga)$ as follows. For reason which will be clear shortly, we assume the $\S^1$ factor in $S(St(\bar{v}))$ has length $=4\pi$. Let $h$ be an isometry of $S(St(\bar{v}))$ which is identity on the $S(lk(\bar{v}))$ factor and is a rotation of degree $=2\pi$ on the $\Bbb S^{1}$ factor. Now we gluing $S(\Ga_1)$ and $S(\Ga_2)$ using the isometry $h$ to obtain $\bar{S}(\Ga)$. Note that there is a homotopy equivalence $g:\bar{S}(\Gamma)\to S(\Gamma)$ induced by collapsing the interval $[e^{i0},e^{i2\pi}]$ in the $\Bbb S^{1}$ factor of $S(St(\bar{v}))$ to one point (see the following picture, where the black part is collapsed). It lifts to a cubical map $\tilde{g}:\bar{X}(\Gamma)\to X(\Gamma)$. 
\begin{center}
\includegraphics[scale=0.3]{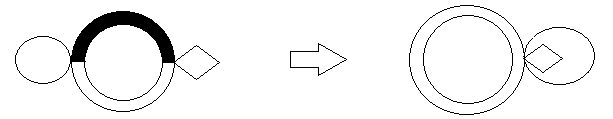}
\end{center}

Let $M\subset P_{v}$ be the standard subcomplex such that $\ell_{1}\cap P_{v}\subset M$ and the support of $M$ satisfies $\Gamma_{M}=lk(\bar{v})$. Then there exists a unique hyperplane $\bar{h}\subset\bar{X}(\Gamma)$ such that $\tilde{g}(\bar{h})=M$. For $i=1,2$, let $\bar{\ell}_{i}\subset \bar{X}(\Gamma)$ be the unique geodesic such that $\tilde{g}(\bar{\ell_{i}})=\ell_{i}$. Then $\bar{\ell}_1$ and $\bar{\ell}_2$ have non-empty intersection with $\tilde{g}^{-1}(P_v)$. Since $\tilde{g}^{-1}(P_v)$ is a lift of $S(St(\bar{v}))$ in $\bar{X}(\Gamma)$, and $\pi(\Delta(\ell_i))\in C_i$ for $i=1,2$, we know that $\bar{\ell}_{1}$ and $\bar{\ell}_{2}$ are separated by $\bar{h}$. Let $\omega=\cup_{i=1}^{n}\omega_{i}$ be the edge path connecting $x$ and $y$ in the previous step. Note that the inverse image of each edge in $X(\Ga)$ under $\tilde{g}$ is either an edge, or a square; the inverse image of each vertex is either a vertex, or an edge. Then $\tilde{g}^{-1}(\omega)$ is a compact connected subcomplex of $X(\bar{\Gamma})$. Since $\omega\cap P_v=\emptyset$, $\tilde{g}^{-1}(\omega)\cap \tilde{g}^{-1}(P_v)=\emptyset$. Hence $\tilde{g}^{-1}(\omega)\cap\bar{h}=\emptyset$. Moreover, $\tilde{g}^{-1}(\omega)\cap\bar{\ell}_{i}\neq\emptyset$ for $i=1,2$, which contradicts the separation property of $\bar{h}$.
\end{proof}

The following observation follows from step 2 of the proof of Lemma \ref{7.9}.
\begin{lem}
\label{7.10}
Let $\Gamma$ be arbitrary. Let $\omega\subset X(\Gamma)$ be an edge path joining vertices $x_{1},x_{2}\in P_{v}$, and suppose $\omega\setminus\{x_{1},x_{2}\}$ stays inside one component of $X(\Gamma)\setminus P_{v}$. Then
\begin{enumerate}
\item $x_{1}$ and $x_{2}$ are of the same height in $P_v$.
\item For $i=1,2$, let $e_{i}\subset\omega$ be the edge containing $x_{i}$, and let $\bar{v}_{i}\in\Gamma$ be the label of $e_{i}$. Then $\bar{v}_{1}$ and $\bar{v}_{2}$ are in the same component of $\Gamma\setminus St(\bar{v})$.
\end{enumerate}
\end{lem}

Let $v\in\P(\Ga)$ be a vertex and let $\bar{v}=\pi(v)\in\Ga$. Let $C$ be a component of $\Gamma\setminus St(\bar{v})$. We define $\partial C$ to be the full subgraph spanned by vertices in $\bar{C}\setminus C$, here $\bar{C}$ is the closure of $C$. Equivalently, $\partial C$ is the full subgraph spanned by vertices in $\{u\in\Gamma\setminus C\mid$ there exists vertex $w\in C$ such that $d(w,u)=1 \}$. Similarly, for every $v$-branch $B\subset\mathcal{P}(\Gamma)$, we define the \textit{boundary} of $B$, denoted by $\partial B$, to be the full subcomplex spanned by vertices in $\{u\in\mathcal{P}(\Gamma)\setminus B\mid$ there exists vertex $w\in B$ such that $d(w,u)=1 \}$ = $\{u\in St(v)\mid$ there exists vertex $w\in B$ such that $d(w,u)=1 \}$. Such $\partial B$ is called a \textit{$v$-peripheral subcomplex} of $\mathcal{P}(\Gamma)$. We caution the reader that $B\cup\partial B$ may not equal to the closure of $B$.

A subcomplex $K\subset P_v$ is called a \textit{$v$-peripheral subcomplex (of type $\partial C$)} if $K$ is a standard subcomplex and $\Gamma_{K}=\partial C$ for some component $C$ of $\Gamma\setminus St(\bar{v})$. If the vertex set of $\partial C$ is properly contained in $\bar{v}^{\perp}$, then there are infinitely many $v$-peripheral subcomplexes of type $\partial C$ which are of the same height. 

\begin{example}
We give an example of $v$-peripheral subcomplex. Let $\Ga$ be a pentagon and $\bar{v}\in\Ga$ be a vertex. Pick a lift $v\in\P(\Ga)$ of $\bar{v}$. Then $P_v$ is isometric to $\R\times T_4$ where $T_4$ is the $4$-valence tree. Note that $\Ga\setminus St(\bar{v})$ only have one component $C$ and $\partial C=lk(\bar{v})$. So any standard subcomplex of $P_v$ whose support is $lk(\bar{v})$ is a $v$-peripheral subcomplex of type $\partial C$. In our case $v$-peripheral subcomplexes are those $T_4$-slices in $\R\times T_4$. For a given height, there is only one $v$-peripheral subcomplex of type $\partial C$.
\end{example}

\begin{lem}
\label{7.11}
Let $\Gamma$ be arbitrary. Let $x_{1},x_{2},\bar{v}_{1},\bar{v}_{2},\omega$ and $P_{v}$ be as in Lemma \ref{7.10} and let $C$ be the component of $\Gamma\setminus St(\bar{v})$ containing $\bar{v}_{1}$ and $\bar{v}_{2}$. Then $x_{1}$ and $x_{2}$ are in the same $v$-peripheral subcomplex of type $\partial C$.
\end{lem}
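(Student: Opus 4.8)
The plan is to show that $x_1$ and $x_2$ both lie in a single standard subcomplex $K\subset P_v$ with support $\partial C$, using the edge path $\omega$ joining them together with the lift-and-collapse machinery already developed in the proof of Lemma \ref{7.9}. First I would record the structure of $P_v$: it is a standard subcomplex with support $St(\bar v)\cong lk(\bar v)\circ\{\bar v\}$, so $P_v\cong (\text{the standard subcomplex on }lk(\bar v))\times\ell$ where $\ell$ is a standard geodesic with $\Delta(\ell)=v$. By Remark \ref{7.10}(1), $x_1$ and $x_2$ have the same height in $P_v$, i.e.\ $\pi_\ell(x_1)=\pi_\ell(x_2)$, so both lie in a common horizontal slice $H\subset P_v$ which is a standard subcomplex with support $lk(\bar v)$. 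Thus a $v$-peripheral subcomplex of type $\partial C$ through $x_i$ is exactly the standard subcomplex $K_i\subset H$ with $\Gamma_{K_i}=\partial C$ and $x_i\in K_i$, and the claim reduces to showing $K_1=K_2$, equivalently that $x_1$ and $x_2$ cannot be separated in $H$ by a hyperplane $h'$ of $X(\Gamma)$ with $V_{h'}\in St(\bar v)\setminus \partial C$ (such hyperplanes are exactly the ones whose dual edges carry a label in $lk(\bar v)$ but not in $\partial C$, and they detect which type-$\partial C$ peripheral subcomplex a vertex of $H$ lies in).

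The key step is to run the argument from Lemma \ref{7.9}. By Remark \ref{7.10}(2), the labels $\bar v_1,\bar v_2$ of the extreme edges $e_1,e_2$ of $\omega$ lie in the same component $C$ of $\Gamma\setminus St(\bar v)$. Now build $\bar S(\Gamma)$ and the collapsing homotopy equivalence $g:\bar S(\Gamma)\to S(\Gamma)$ and its lift $\tilde g:\bar X(\Gamma)\to X(\Gamma)$ exactly as before, taking the doubling/rotation along $S(St(\bar v))$ with respect to the splitting of $\Gamma$ into $C$ and the union of the other components of $\Gamma\setminus St(\bar v)$. Given a hyperplane $h'$ of $X(\Gamma)$ separating $x_1$ and $x_2$ within $H$ with $V_{h'}\in lk(\bar v)\setminus\partial C$, I would argue: since $V_{h'}\in lk(\bar v)$, the preimage $\tilde g^{-1}(h')$ contains a unique hyperplane $\bar h$ of $\bar X(\Gamma)$; since $V_{h'}\notin\partial C$ it is \emph{not} adjacent to any vertex of $C$, so on the "$C$-side" copy $S(\Gamma_1)$ (where $\Gamma_1$ is spanned by $C\cup St(\bar v)$), the corresponding standard geodesic direction is orthogonal to the $C$-directions — concretely $h'\cap P_v$ is a standard subcomplex whose support omits $\bar v$ and all of $\partial C$, so $\bar h$ meets $\tilde g^{-1}(P_v)$ but is disjoint from both $\bar e_1=\tilde g^{-1}(e_1)$ and $\bar e_2=\tilde g^{-1}(e_2)$ and does not separate them. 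On the other hand $\omega\setminus\{x_1,x_2\}$ stays in one component of $X(\Gamma)\setminus P_v$, so $\tilde g^{-1}(\omega)$ is a connected subcomplex of $\bar X(\Gamma)$ disjoint from $\bar h$, yet it meets both $\bar e_1$ and $\bar e_2$; since $x_i$ is separated from $\ldots$ — more carefully, lift $x_1,x_2$ to $\bar x_1,\bar x_2\in\tilde g^{-1}(P_v)$ along $\tilde g^{-1}(\omega)$; the path $\tilde g^{-1}(\omega)$ then connects $\bar x_1$ to $\bar x_2$ avoiding $\bar h$, so $\bar x_1,\bar x_2$ are on the same side of $\bar h$; pushing down by $\tilde g$ (which near $P_v$ identifies the two copies and only collapses the $[e^{i0},e^{i\pi}]$ interval, not anything crossed by $h'$) shows $x_1,x_2$ are on the same side of $h'$, a contradiction.

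Finally I would package this: since no hyperplane with support in $lk(\bar v)\setminus\partial C$ separates $x_1$ and $x_2$, and both lie in the same horizontal slice $H\subset P_v$, they lie in the same standard subcomplex of $H$ with support $\partial C$ by Lemma \ref{combinatorial convex} (the full subcomplex cut out by fixing the sides of all such hyperplanes is convex and contains a combinatorial geodesic from $x_1$ to $x_2$ — indeed the sub-path of $\omega$, or rather its projection to $P_v$, works). That common standard subcomplex is the desired $v$-peripheral subcomplex of type $\partial C$.

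\medskip

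The main obstacle I anticipate is the bookkeeping in the middle step connecting "$h'$ separates $x_1$ from $x_2$ in $H$" to "$\bar h$ fails to separate the lifts": one must be careful that the hyperplane $h'$ detecting a type-$\partial C$ peripheral subcomplex really has support disjoint from both $\bar v$ \emph{and} $\partial C$ (vertices of $\partial C$ are by definition the vertices of $lk(\bar v)$ adjacent to something in $C$, so a hyperplane with support in $\partial C$ genuinely can be crossed while traveling through the $C$-component and must be excluded from the argument), and that the collapsing map $\tilde g$ does not accidentally glue the two sides of $h'$. This amounts to checking that the only identification $\tilde g$ makes near $P_v$ is along the $\bar v$-direction, which is exactly the direction $h'$ is transverse to only if $V_{h'}=\bar v$ — and that case is excluded since $V_{h'}\in lk(\bar v)$. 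Handling $\partial C$-supported hyperplanes correctly (by simply not claiming they are controlled) is the subtle point; everything else is a direct reprise of Lemma \ref{7.9}.
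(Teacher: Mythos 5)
Your reduction of the claim to ``no hyperplane $h'$ with $V_{h'}\in lk(\bar v)\setminus\partial C$ separates $x_1$ from $x_2$'' is correct (and matches the paper's setup), but the lift-and-collapse argument you run afterward has a genuine gap at its central step. You assert that $\tilde g^{-1}(\omega)$ is disjoint from the hyperplane $\bar h\subset\tilde g^{-1}(h')$ without justification, and this is in fact false. Since $h'$ separates $x_1$ from $x_2$, the path $\omega$ must cross $h'$; because the extreme edges $e_1,e_2$ have labels in $C$ and $V_{h'}\notin C$, the crossing happens at an interior edge $e'\subset\omega$ with both endpoints in the component $L$. Since $V_{h'}\neq\bar v$, the edge $e'$ is not collapsed by $\tilde g$, so $\tilde g^{-1}(e')$ is a single edge dual to $\bar h$, and $\tilde g^{-1}(\omega)\supset\tilde g^{-1}(e')$ genuinely crosses $\bar h$. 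The mechanism you are appealing to works in Lemma \ref{7.9} only because there the hyperplane $\bar h$ is a \emph{collapsed} $\bar v$-hyperplane, mapping under $\tilde g$ to the subcomplex $M\subset P_v$ rather than to a hyperplane of $X(\Gamma)$; a path lying entirely outside $P_v$ then trivially misses $M$ and lifts to miss $\bar h$. Here $h'$ is a genuine hyperplane of $X(\Gamma)$ whose dual direction $V_{h'}\in lk(\bar v)$ is untouched by the collapse, so separation by $h'$ lifts to separation by $\bar h$: nothing is gained by passing to $\bar X(\Gamma)$. (A smaller error: the support of $h'\cap P_v$ does \emph{not} omit $\bar v$; since $V_{h'}\in lk(\bar v)$ we have $\bar v\in lk(V_{h'})$, so $\bar v$ is in that support.)

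The paper's proof does not use the doubling cover at all, and is shorter. It locates a hyperplane $h_e$ with label $v_e\notin\partial C$ separating $K_1$ from $K_2$ (hence $x_1$ from $x_2$), takes the edge $e'\subset\omega$ dual to $h_e$, and walks inside the carrier $N_{h_e}$ from an endpoint of $e'$ back to $P_v\cap N_{h_e}$. The final edge $e''$ of that walk, the one meeting $P_v$, has label $v_{e''}$ adjacent to $v_e$; because $v_e\notin\partial C$, this forces $v_{e''}\notin C$. But the concatenated path from $x_1$ through $\omega$ to $e'$ and then through the carrier to $x_3\in P_v$ stays outside $P_v$ except at its endpoints, so Remark \ref{7.10}(2) applied to it forces $v_{e''}$ to lie in the same component of $\Gamma\setminus St(\bar v)$ as the label of $e_1$, i.e.\ in $C$ --- a contradiction. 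If you want to salvage your write-up, you should replace the lift-and-collapse paragraph by exactly this carrier argument: it is the carrier, not the doubling cover, that lets you ``transport'' the bad label $v_e\notin\partial C$ down to an edge entering $P_v$ and thereby contradict Remark \ref{7.10}(2).
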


\begin{proof}
For $i=1,2$, let $K_{i}$ be the $v$-peripheral subcomplex of type $\partial C$ such that $x_{i}\in K_{i}$. Note that $K_{1}$ and $K_{2}$ are horizontal subcomplexes of $\mathcal{P}(\Gamma)$ of the same height. We argue by contradiction and suppose $K_{1}\neq K_{2}$. Then $K_{1}\cap K_{2}=\emptyset$. We claim there exists an edge $e\in P_{v}$ such that its label $v_{e}$ does not belong to $\partial C$ and the hyperplane dual to $e$ separates $K_{1}$ from $K_{2}$. To see this, pick vertices $x\in K_{1}$ and $y\in K_{2}$ such that $$d(x,y)=d(K_{1},K_{2}).$$ Let $\omega_{1}$ be a combinatorial geodesic joining $x$ and $y$. Then $\omega_{1}\subset P_{v}$ by Lemma \ref{combinatorial convex}. Moreover, every hyperplane dual to some edge in $\omega_{1}$ separates $K_{1}$ and $K_{2}$. Thus there exists an edge $e\in\omega_{1}$ such that $v_{e}\notin\partial C$, otherwise we would have $\omega_{1}\subset K_{1}$. 

Let $h_{e}$ be the hyperplane dual to $e$ and let $N_{h_{e}}$ be the carrier of $h_{e}$. Then $h_{e}$ separates $x_{1},x_{2}$ and there exists an edge $e'\subset\omega$ parallel to $e$ ($\omega$ is the path in Lemma \ref{7.10}). Pick endpoint $y\in e'$ and let $\omega_{2}\subset N_{h_{e}}$ be an edge path of shortest combinatorial length connecting $y$ and $P_{v}\cap N_{h_{e}}$. Let $x_{3}$ be the other endpoint of $\omega_{2}$ and let $e''\subset \omega_{2}$ be the edge containing $x_{3}$. Then $d(v_{e''},v_{e})=1$ ($v_{e''}$ is the label of $e''$), and it follows from $v_{e}\notin \partial C$ that
\begin{equation}
\label{7.12}
v_{e''}\notin C.
\end{equation}
Let $\omega_{3}$ be an edge path connecting $x_{1}$ and $x_{3}$ obtained by first following $\omega$ from $x_{1}$ to $y$, then following $\omega_{2}$ until $x_{3}$. Then applying Lemma \ref{7.10} to $\omega_{3}$ yields a contradiction to (\ref{7.12}).
\end{proof}

\begin{cor}
\label{7.13}
Suppose $\Gamma$ is connected. Let $v$ and $\bar{v}$ be as in Lemma \ref{7.9} and pick a component $C$ of $\Gamma\setminus St(\bar{v})$. Suppose $K_{1}$ and $K_{2}$ are two distinct $v$-peripheral subcomplexes of type $\partial C$ and they have the same height. Let $w_{1},w_{2}\in\mathcal{P}(\Gamma)\setminus St(v)$ be vertices such that $\pi(w_{i})\in C$ for $i=1,2$. Suppose $P_{w_{i}}\cap K_{i}\neq\emptyset$ for $i=1,2$. Then $w_{1}$ and $w_{2}$ are in different $v$-branches.
\end{cor}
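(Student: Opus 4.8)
The plan is to argue by contradiction: assume $w_1$ and $w_2$ lie in a common $v$-branch $B$ and deduce that $K_1=K_2$, which is forbidden by hypothesis. The mechanism is to build an edge path in $X(\Gamma)$ that falls under the scope of Remark \ref{7.10} and Lemma \ref{7.11}.

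First I would fix the geometric data near $K_1$ and $K_2$. Since $P_{w_i}\cap K_i\neq\emptyset$ and both are subcomplexes, pick a vertex $x_i\in P_{w_i}\cap K_i$, and let $l_i$ be the unique standard geodesic through $x_i$ in the parallel class $w_i$ (it exists because $x_i\in P_{w_i}$). The edges of $l_i$ carry the label $\bar w_i:=\pi(w_i)\in C$, which is not a vertex of $St(\bar v)$; since $P_v$ is the standard subcomplex with support $St(\bar v)$ and is combinatorially convex (Lemma \ref{combinatorial convex}), $l_i$ contains no edge of $P_v$, so $l_i\cap P_v$ is $0$-dimensional, and being nonempty (it contains $x_i\in K_i\subset P_v$) and forcing a whole segment into $P_v$ if it had two vertices, it must equal the single vertex $x_i$. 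In particular $P_{w_i}\cap P_v\neq\emptyset$.

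Next, since $w_1$ and $w_2$ lie in the same connected component of $\mathcal{P}(\Gamma)\setminus St(v)$, the construction carried out in the proof of Lemma \ref{7.9} (applied with its $v_1=w_1$, $v_2=w_2$, and with the chosen geodesics $l_1,l_2$) produces an edge path $\sigma$ from a vertex $x\in l_1\setminus P_v$ to a vertex $y\in l_2\setminus P_v$ with $\sigma\cap P_v=\emptyset$. Let $\omega$ be the concatenation of the sub-segment of $l_1$ from $x_1$ to $x$, then $\sigma$, then the sub-segment of $l_2$ from $y$ to $x_2$. Because $l_i\cap P_v=\{x_i\}$, each of the two geodesic sub-segments meets $P_v$ only in its endpoint $x_i$, so $\omega$ joins the vertices $x_1,x_2\in P_v$ and $\omega\setminus\{x_1,x_2\}$ is a connected subset of $X(\Gamma)\setminus P_v$, hence lies in a single component of $X(\Gamma)\setminus P_v$. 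The edge of $\omega$ at $x_i$ is the extreme edge of the corresponding geodesic sub-segment, so it has label $\bar w_i\in C$. Thus $\omega$ meets the hypotheses of Remark \ref{7.10}, with $C$ the component of $\Gamma\setminus St(\bar v)$ containing both $\bar w_1$ and $\bar w_2$; applying Lemma \ref{7.11} yields a single $v$-peripheral subcomplex $K$ of type $\partial C$ containing both $x_1$ and $x_2$. Since through a given vertex there is exactly one standard subcomplex with a prescribed support, from $x_i\in K_i\cap K$ and the fact that $K_i$ and $K$ both have support $\partial C$ we get $K_i=K$ for $i=1,2$, whence $K_1=K_2$, the desired contradiction.

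The one step requiring care is the construction of $\omega$: one must check that prolonging the Lemma \ref{7.9} path along $l_1$ and $l_2$ to reach $x_1$ and $x_2$ introduces no new intersection with $P_v$, which is exactly what $l_i\cap P_v=\{x_i\}$ guarantees. Everything else is bookkeeping with Remark \ref{7.10}, Lemma \ref{7.11}, and uniqueness of standard subcomplexes through a vertex.
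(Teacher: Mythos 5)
Your proof is correct and follows essentially the same route as the paper's: assume $w_1,w_2$ share a $v$-branch, invoke the path construction from the second paragraph of Lemma \ref{7.9}'s proof to get an edge path outside $P_v$ linking $l_1\setminus P_v$ and $l_2\setminus P_v$, then reduce to Lemma \ref{7.11} to force $K_1=K_2$. The only difference is that you spell out the step the paper leaves implicit, namely prolonging the path along $l_1$ and $l_2$ to endpoints $x_1,x_2\in P_v$ and verifying $l_i\cap P_v=\{x_i\}$ so that the extended path still meets $P_v$ only at its endpoints.
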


\begin{proof}
For $i=1,2$, let $\ell_{i}$ be a standard geodesic such that $\ell_{i}\cap K_i\neq\emptyset$ and $\Delta(\ell_i)=w_i$. If $w_1$ and $w_2$ are in the same $v$-branch, then the argument in the second paragraph of the proof of Lemma \ref{7.9} implies there exists an edge path $\omega\subset X(\Gamma)\setminus P_v$ connecting a vertex in $\ell_1\setminus P_v$ to a vertex in $\ell_2\setminus P_v$. Then it follows from Lemma \ref{7.11} that $K_1=K_2$, which is a contradiction.
\end{proof}

\begin{cor}
\label{7.14}
Suppose $\Gamma$ is of weak type II. Let $v$ and $\bar{v}$ be as before, and let $\ell\subset X(\Gamma)$ be a standard geodesic such that $\Delta(\ell)=v$. Pick vertex $x\in \ell$, then 
\begin{enumerate}
\item There is a 1-1 correspondence between $v$-branches in the $v$-tier of height $x$ and pairs $(C,K)$, where $C$ is a component in $\Gamma\setminus St(\bar{v})$ and $K$ is a $v$-peripheral subcomplexes in $X(\Gamma)$ of height $x$ such that $\Gamma_{K}=\partial C$. Moreover, let $B$ be the $v$-branch corresponding to $(C,K)$. Then $\partial B=\Delta(K)$.
\end{enumerate}
Now we assume $\Gamma$ is of type II, then the following holds.
\begin{enumerate}[resume]
\item For every $v$-peripheral subcomplex $A\subset\mathcal{P}(\Gamma)$, there exists a unique $v$-peripheral subcomplex $K\subset X(\Gamma)$ of height $x$ such that $\Delta(K)=A$.
\item Let $A$ be as in (2). Then there are only finitely many $v$-branches with boundary equal to $A$ in a $v$-tier.
\item Let $v_{1},v_{2}\in\mathcal{P}(\Gamma)$ be two different vertices and $B_{i}\subset\mathcal{P}(\Gamma)$ be a $v_{i}$-branch for $i=1,2$. Then $B_{1}\neq B_{2}$.
\item Let $v_{1},v_{2}$ be as above. Then $\mathcal{P}(\Gamma)\setminus (lk(v_{1})\cap lk(v_{2}))$ is connected.
\end{enumerate}
\end{cor}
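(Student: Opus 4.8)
\textbf{Proof plan for Corollary \ref{7.14}.}

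The plan is to work through the five items essentially in order, since each one feeds into the next. For (1), fix the standard geodesic $l$ with $\Delta(l)=v$ and the vertex $x\in l$; write $P_v=P_l$. Given a $v$-branch $B$ in the $v$-tier of height $x$, I want to extract from it a pair $(C,K)$. The component $C$ of $\Gamma\setminus St(\bar v)$ will be the one carrying $\pi(B\cap(\text{vertices near }P_v))$; more precisely, by Lemma \ref{7.4} (applicable since type II implies weak type II) the branch $B$ contains a vertex $w'$ with $P_{w'}\cap P_v\neq\emptyset$, and by Lemma \ref{5.17} together with the argument of Lemma \ref{7.9} all such $w'$ in $B$ have $\pi(w')$ in a single component $C$ of $\Gamma\setminus St(\bar v)$ — this is where Lemma \ref{7.9} is used to see the component is well-defined, and type II (rather than weak type II) guarantees that when $d(v,w')$ could be $1$ we still get a consistent statement by reducing to the case $d(v,w')=2$. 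The subcomplex $K$ is then the $v$-peripheral subcomplex of type $\partial C$ and height $x$ meeting $P_{w'}$; Lemma \ref{7.11} shows $K$ is independent of the choice of $w'\in B$, and Corollary \ref{7.13} shows distinct branches in the tier with the same $C$ give distinct $K$'s, so $(C,K)$ determines $B$ injectively. Surjectivity: given $(C,K)$, pick a standard geodesic $l'$ with $l'\cap K\neq\emptyset$ and $\pi(\Delta(l'))\in C$; its parallel class $\Delta(l')$ lies in $\mathcal P(\Gamma)\setminus St(v)$ and in the $v$-tier of height $x$, and the branch containing it maps back to $(C,K)$. Finally $\partial B=\Delta(K)$: a vertex $u\in St(v)$ is adjacent to some vertex of $B$ iff (after translating to $X(\Gamma)$) the corresponding standard geodesic meets a standard geodesic in $B$'s branch inside a $2$-flat, which by Lemma \ref{3.2} and the definition of $\partial C$ forces $\pi(u)\in\partial C$ and the geodesic to sit in $K$; conversely every edge of $K$ contributes such a $u$.

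For (2), given a $v$-peripheral subcomplex $A\subset\mathcal P(\Gamma)$, by definition $A=\partial B$ for some $v$-branch $B$, so part (1) produces a $v$-peripheral subcomplex $K\subset X(\Gamma)$ with $\Delta(K)=A$; I can arrange it to have height $x$ by translating along $P_v$ (translation along $l$ fixes the map $\Delta$). Uniqueness: if $K_1\neq K_2$ are $v$-peripheral of the same type $\partial C$ and height $x$ with $\Delta(K_1)=\Delta(K_2)$, then picking standard geodesics $l_i\subset K_i$ with equal $\Delta$-image forces $l_1$ and $l_2$ parallel; since they are both horizontal of height $x$ in $P_v$ and parallel, standard parallelism in the De Rham factor $P_v\cong S(lk(\bar v))$-cover $\times\mathbb R$ forces $l_1=l_2$, and running over a spanning set of $\partial C$ gives $K_1=K_2$. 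Part (3) is then immediate from (1): the $v$-branches in a fixed $v$-tier with boundary $A$ correspond to pairs $(C,K)$ with $\Delta(K)=A$; $C$ is determined by $A$ via $\partial C=\Gamma_A$ (using type II and the injectivity coming from Lemma \ref{7.11}), and by (2) the peripheral $K$ of the given height is unique, so there is exactly one such branch — in particular finitely many.

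For (4), suppose $B$ were simultaneously a $v_1$-branch and a $v_2$-branch with $v_1\neq v_2$. Then $v_1\in St(v_2)$ is impossible for the obvious reason that $v_1\notin B$ while, if $v_1\in B$ we get a contradiction — so I need $v_1\notin B$ and $v_2\notin B$; but a $v_i$-branch is a component of $\mathcal P(\Gamma)\setminus St(v_i)$, and one checks (via Remark \ref{7.8}, applied with type II) that a single component cannot avoid two distinct closed stars simultaneously in this way: concretely, $v_2\in St(v_1)$ would have to hold (else $v_2$ lies in some $v_1$-branch $B'$, and since $B$ is a $v_1$-branch not containing $v_2$ we'd need $B\subset St(v_2)$, impossible as branches are nonempty open pieces disjoint from the relevant star), and symmetrically $v_1\in St(v_2)$; but then $v_1,v_2$ are adjacent in $\mathcal P(\Gamma)$, and the structure of $v_1$-branches shows $B$ lies in a single $v_2$-tier, in fact in $St(v_2)$ or in one $v_2$-branch — chasing this through parts (1)-(3) and Lemma \ref{7.4} yields $B$ cannot be all of a $v_2$-branch, a contradiction. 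Finally (5): a vertex $u$ of $\mathcal P(\Gamma)$ lies in $lk(v_1)\cap lk(v_2)$ iff $u$ is adjacent to both $v_1$ and $v_2$; one shows $\mathcal P(\Gamma)\setminus(lk(v_1)\cap lk(v_2))$ is connected by transporting a path to $X(\Gamma)$ and using that, by the type II hypothesis on $\Gamma$, $\Gamma\setminus(lk(\bar v_1)\cap lk(\bar v_2))$ is connected — the lift of such a $\Gamma$-path to a suitable $(F(\Gamma))_x$ avoids $lk(v_1)\cap lk(v_2)$ by Lemma \ref{3.2}, and one patches local pieces together the way it is done in the proof of Lemma \ref{7.9}.

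The main obstacle I expect is part (1), specifically pinning down \emph{exactly} which component $C$ a given branch $B$ determines and proving this is well-defined: a branch $B$ may contain many vertices $w'$ with $P_{w'}\cap P_v\neq\emptyset$, and one must show all of their $\pi$-images land in the \emph{same} component of $\Gamma\setminus St(\bar v)$ — this is precisely the content one gets by combining Lemma \ref{7.9} (different components $\Rightarrow$ different branches) with Lemma \ref{7.4} (every branch does contain such a $w'$), but assembling these into the clean bijection, and simultaneously nailing $\partial B=\Delta(K)$, requires care. The type II hypothesis (as opposed to merely weak type II) is what makes $\partial C$ recoverable from $\Gamma_A$ and makes the counting in (3) and (5) go through; I expect (4) and (5) to then follow formally from (1)--(3) plus Remark \ref{7.8} and the type II connectivity of $\Gamma$.
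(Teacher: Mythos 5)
Your general strategy for part (1) tracks the paper's: use Lemma \ref{7.4} to find a witness $w'$ in each branch with $P_{w'}\cap P_v\neq\emptyset$, then Lemma \ref{7.9} and Corollary \ref{7.13} for well-definedness and injectivity, and a lift-to-$(F(\Gamma))_{x_i}$ argument for surjectivity. That portion is sound. However, several of the later parts contain real gaps.

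For (2), your uniqueness argument is incorrect. You claim that if $K_1\neq K_2$ are $v$-peripheral of the same type and height and $\Delta(K_1)=\Delta(K_2)$, then picking parallel standard geodesics $l_i\subset K_i$ with the same $\Delta$-image and the same height forces $l_1=l_2$. It does not: in the splitting $P_v\cong l_v\times X(lk(\bar v))$, two parallel standard geodesics with the same $\Delta$-image can project to two distinct parallel geodesics in the $X(lk(\bar v))$-factor (this is exactly what happens when $\partial C$ is a proper subset of $lk(\bar v)$; there are then infinitely many $v$-peripheral subcomplexes of type $\partial C$ at a fixed height). The uniqueness one really wants is a consequence of type II, not of parallelism: if $K_1\cap K_2=\emptyset$, one finds a horizontal separating hyperplane in $P_v$, whose label $\bar w$ lies in $lk(\bar v)$, and Lemma \ref{3.2} gives $\Gamma_{K_1}\subset St(\bar w)$, whence $lk(\bar w)\cap lk(\bar v)$ separates $\Gamma$ — contradicting type II. This step is essential and missing from your proof.

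Your (3) over-claims exactly one branch, reasoning that $C$ is recovered from $A$ via $\partial C=\Gamma_A$. But distinct components of $\Gamma\setminus St(\bar v)$ can share the same boundary (this already happens for the paper's model example of a $5$-cycle and a $6$-cycle glued along a vertex star), so $C$ is not determined by $\Gamma_A$. What is true, and all that is asserted, is that there are finitely many such $C$, hence finitely many branches with a given boundary in a tier. The distinction matters downstream (the tuple $(n_1,\dots,n_k)$ in Section 5 counts these).

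Your (4) is not a proof. The claim that "a single component cannot avoid two distinct closed stars simultaneously" is false in general, and the subsequent chase through "$v_2\in St(v_1)$ would have to hold... chasing this through parts (1)--(3)..." does not isolate a contradiction. The argument that actually closes this is different in character: from $B_1=B_2$ one extracts $K_i\subset P_{v_i}$ with $\Delta(K_i)=\partial B_i$, observes these are parallel so $\Gamma_{K_1}=\Gamma_{K_2}\subset lk(\bar v_1)\cap lk(\bar v_2)$, and then uses that $\Gamma_{K_1}=\partial C$ separates $\bar v_1$ from a vertex of $C$, contradicting type II — with a separate reduction when $\bar v_1=\bar v_2$ using a hyperplane separating $P_{v_1}$ from $P_{v_2}$. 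You should supply this (or an equivalent) rather than the connectivity heuristic.

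Finally, (5) needs the split into the cases $\bar v_1\neq\bar v_2$ and $\bar v_1=\bar v_2$. In the first case the key point is that, by type II, $\partial C\setminus (lk(\bar v_1)\cap lk(\bar v_2))\neq\emptyset$ for every component $C$ of $\Gamma\setminus St(\bar v_1)$, so every $v_1$-branch can be connected to $v_1$ avoiding $lk(v_1)\cap lk(v_2)$ through a boundary vertex. The case $\bar v_1=\bar v_2$ does not reduce to lifting a path in $\Gamma$ to a single $(F(\Gamma))_x$, because $v_1$ and $v_2$ then cannot be simultaneously realized in one such chart; instead one replaces $v_2$ by an auxiliary vertex $v_h$ coming from a hyperplane separating $P_{v_1}$ from $P_{v_2}$ and shows $St(v_1)\cap St(v_2)\subset St(v_1)\cap St(v_h)$. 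Your transport-and-patch outline does not address this.

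In short: part (1) is on the right track (though the proof of $\partial B=\Delta(K)$ needs the Helly argument to place the relevant geodesic inside $K$); parts (2), (3), (4) and (5) each contain a genuine error or omission as detailed above, and the correct arguments all use the type II hypothesis in a more targeted way (via a separating sub-link contradiction) than your sketch does.
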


\begin{proof}
For $i=1,2$, pick pairs $(C_{i},K_{i})$ as above, let $w_{i}\in\mathcal{P}(\Gamma)$ be a vertex such that $\pi(w_{i})\in C_{i}$ and $P_{w_{i}}\cap K_{i}\neq\emptyset$, we claim $w_{1}$ and $w_{2}$ are in the same $v$-branch if and only if $C_{1}=C_{2}$ and $K_{1}=K_{2}$. Assuming the claim, then the first part of (1) follows from Lemma \ref{7.4}. The only if direction follows from Lemma \ref{7.9} and Corollary \ref{7.13}. For the other direction, pick vertex $x_{i}\in P_{w_{i}}\cap K_{i}$, it suffices to consider the case when $x_{1}$ and $x_{2}$ are joined by an edge $e\subset K_{1}$. Let $\ell_{e}$ be the standard geodesic containing $e$ and let $v_{e}=\Delta(\ell_{e})$. Then $\pi(v_{e})\in \partial C_{1}$ and there exists $\bar{u}\in C_{1}$ such that 
\begin{equation}
\label{7.15}
d(\bar{u},\pi(v_{e}))=1.
\end{equation}
For $i=1,2$, let $\bar{\omega}_{i}\subset C_{1}$ be the edge path connecting $\bar{u}$ and $\pi(w_{i})$. Then we lift $\bar{\omega}_{i}$ to an edge path $\omega_{i}\subset (F(\Gamma))_{x_{i}}$. (\ref{7.15}) implies we can concatenate $\omega_{1}$ and $\omega_{2}$ to obtain a path connecting $w_{1}$ and $w_{2}$ outside $St(v)$.

Now we prove the second statement of (1). Pick pair $(C,K)$ as above and $B$ be the associated $v$-branch. Since $\Gamma_K=\partial C$, for each standard geodesic $\ell\subset K$, there exists a standard geodesic $\ell'$ such that 
\begin{enumerate}
	\item $\pi(\Delta(\ell'))\in C$;
	\item $\ell'$ and $\ell$ span a 2-flat.
\end{enumerate}
 Thus $\Delta(\ell')\in B$ and $\Delta(\ell)\in \partial B$. Hence $\Delta(K)\subset \partial B$. Now we prove the other direction. Pick $u\in \partial B$. By Lemma \ref{7.4}, we can assume there exists $w'\in B$ such that $d(w',u)=1$ and $P_{w'}\cap P_v\neq\emptyset$. Then $\pi(w')\in C$ by Lemma \ref{7.9}, hence $\pi(u)\in\partial C$. Note that $P_{w'}\cap P_u\neq\emptyset$ and $P_{v}\cap P_u\neq\emptyset$. Thus $P_v\cap P_u\cap P_{w'}\neq\emptyset$ by Lemma \ref{2.1}. Pick vertex $z$ in this triple intersection. Then $z\in K$ by Lemme \ref{7.13}. Let $\ell_z$ be a standard geodesic such that $z\in \ell_z$ and $\Delta(\ell_z)=u$. Since $\pi(u)\in\partial C=\Gamma_K$, we have $\ell_z\subset K$. Thus $u\in\Delta(K)$.

The existence in (2) follows from Lemma \ref{7.4} and the above discussion. Let $K_{1}$ and $K_{2}$ be two $v$-peripheral subcomplexes of the same height such that $\Delta(K_{1})=\Delta(K_{2})=A$. Then the Hausdorff distance $d_{H}(K_{1},K_{2})<\infty$ by Lemma \ref{3.1}. If $K_{1}\cap K_{2}\neq\emptyset$, then $K_{1}=K_{2}$ since $\Gamma_{K_{1}}=\Gamma_{K_{2}}$. Otherwise there exists a horizontal edge $e\subset P_{v}$ such that the hyperplane dual to $e$ separates $K_{1}$ from $K_{2}$ (note that $K_{1}$ and $K_{2}$ are horizontal). Suppose $\bar{w}\in \Gamma$ is the label of $e$. Then $d(\bar{w},\bar{v})=1$ and $\Gamma_{K_{1}}\subset St(\bar{w})\setminus\{\bar{w}\}$ by Lemma \ref{3.1}. It follows that $lk(\bar{w})\cap lk(\bar{v})$ contains $\Gamma_{K_{1}}$, thus $lk(\bar{w})\cap lk(\bar{v})$ separates $\bar v$ from a vertex in a component of $\Gamma\setminus St(\bar v)$. This contradicts that $\Gamma$ has type II. (3) follows from (1) and (2).

To see (4), suppose $B_{1}=B_{2}$. By (1), there exist standard subcomplexes $K_i\subset P_{v_i}$ for $i=1,2$ such that $\Delta(K_i)=\partial B_{i}$. Then $K_1$ and $K_2$ are parallel, hence $\Gamma_{K_1}=\Gamma_{K_2}$. Let $\bar{v}_{i}=\pi(v_{i})$ for $i=1,2$. Then $$\Gamma_{K_1}\subset lk(\bar{v}_{1})\cap lk(\bar{v}_{2}).$$ By Lemma \ref{7.4}, there exists vertex $w\in B_{1}$ such that $\bar{w}=\pi(w)\in C$ where $C$ is a component of $\Gamma\setminus St(v_{1})$ with $\partial C=\Gamma_{K_1}$. Therefore, $\Gamma_{K_1}$ separates $\bar{v}_{1}$ from $\bar{w}$, so does $lk(\bar{v}_{1})\cap lk(\bar{v}_{2})$. This leads to a contradiction in the case $\bar{v}_{1}\neq\bar{v}_2$. Suppose $\bar{v}_1=\bar{v}_2$. Then $P_{v_1}$ and $P_{v_2}$ are standard complexes with the same support. Thus $P_{v_{1}}\cap P_{v_{2}}=\emptyset$, otherwise we would have $P_{v_{1}}=P_{v_{2}}$ and $v_{1}=v_{2}$. Let $h$ be a hyperplane separating $P_{v_1}$ and $P_{v_2}$ such that the carrier of $h$ intersects $P_{v_1}$. Then the label of edges dual to $h$, denoted by $\bar{v}_h$, satisfies $d(\bar{v}_h,\bar{v}_1)\ge 2$. It follows from Lemma \ref{3.2} that $\Gamma_{K_1}\subset lk(\bar{v}_h)$. Thus $lk(\bar{v}_{1})\cap lk(\bar{v}_h)$ separates $\Gamma$, which is a contradiction.

To see (5), first we assume $d(\bar{v}_{1},\bar{v}_{2})\neq 0$. Since $\Gamma$ is of type II, for any component $C$ of $\Gamma\setminus St(\bar{v}_{1})$,
\begin{equation}
\label{7.16}
\partial C\setminus (lk(\bar{v}_{1})\cap lk(\bar{v}_{2}))\neq\emptyset.
\end{equation}
Let $B$ be a $v_{1}$-branch. Then (1) and (2) imply there exist a standard subcomplex $K\subset P_{v_{1}}$ and a component $C'$ of $\Gamma\setminus St(\bar{v}_{1})$ such that $\partial B=\Delta(K)$ and $\Gamma_{K}=\partial C'$. Hence $\partial C'=\pi(\partial B)\cap \Gamma$ (recall that we have identified $\Gamma$ with the 1-skeleton of $F(\Gamma)$). It follows from (\ref{7.16}) that $$\partial B\setminus (lk(v_{1})\cap lk(v_{2}))\neq\emptyset,$$ otherwise we would have $$\partial C'\subset\pi(\partial B)\subset\pi (lk(v_{1})\cap lk(v_{2}))\subset lk(\bar{v}_{1})\cap lk(\bar{v}_{2}).$$ Then every vertex in $B$ can be connected to $v_{1}$ outside $lk(v_{1})\cap lk(v_{2})$ and (5) follows.

If $\bar{v}_{1}=\bar{v}_{2}$, then $P_{v_{1}}\cap P_{v_{2}}=\emptyset$. Hence $d(v_{1},v_{2})\ge 2$ and $$lk(v_{1})\cap lk(v_{2})=St(v_{1})\cap St(v_{2}).$$ Let $h$ and $\bar{h}_v$ be as in the proof of (4). Let $\ell_{h}$ be a standard geodesic dual to $\ell$ and let $v_{h}=\Delta(\ell_{h})$. Note that $d(v_h,v_1)\ge 2$ and $\pi(v_h)=\bar{v}_h\neq \bar{v}_1$. It suffices to prove $$St(v_{1})\cap St(v_{2})\subset St(v_{1})\cap St(v_{h}),$$ since this reduces the current case to the previous case. Let $k$ be a vertex in $St(v_{1})\cap St(v_{2})$. Then $P_{k}$ has nontrivial intersection with both $P_{v_1}$ and $P_{v_2}$. Since $h$ separates $P_{v_1}$ and $P_{v_2}$, we have $P_k\cap h\neq\emptyset$. Hence $\ell_h\subset P_k$ and $d(k,v_{h})\le 1$. 
\end{proof}

\subsection{Quasi-isometry invariance of type II and weak type II} The main goal of this subsection is Corollary~\ref{7.22}, where it is shown that weak type II and type II are quasi-isometry invariants for right-angled Artin groups.
\begin{lem}
\label{7.17}
If $\Gamma$ is of weak type II, then
\begin{enumerate}
\item There is no non-adjacent transvection in $\out(G(\Gamma))$.
\item $\mathcal{P}(\Gamma)$ is of weak type II.
\end{enumerate}
\end{lem}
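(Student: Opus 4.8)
The plan is to prove (1) and (2) somewhat in parallel, since both reduce to ruling out the same combinatorial configuration. Recall that a non-adjacent transvection exists in $\out(G(\Gamma))$ precisely when there are distinct non-adjacent vertices $w,v\in\Gamma$ with $lk(w)\subset St(v)$; since $d(v,w)\geq 2$ we actually have $lk(w)\subset lk(v)$, so $lk(w)\subset lk(v)\cap lk(w)=lk(w)$, giving $lk(w)= lk(v)\cap lk(w)$. The key observation is that if $lk(w)\subseteq lk(v)$ with $d(v,w)=2$, then $lk(v)\cap lk(w)=lk(w)$ separates $w$ from the rest of $\Gamma$: indeed $w$ is not in $St(v)$ (as $d(v,w)=2$), $w\notin lk(w)$, and every edge-path out of $w$ must pass through $lk(w)$, so $\{w\}$ is a union of connected components of $\Gamma\setminus(lk(v)\cap lk(w))$. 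If $\Gamma$ has more than the single vertex $w$ — which it does, since $v\neq w$ — this disconnects $\Gamma\setminus(lk(v)\cap lk(w))$, contradicting the weak type II hypothesis applied to the pair $v,w$ with $d(v,w)=2$. This proves (1). (A small edge case: one must check $\Gamma$ is not a single vertex or an edge, but those are handled trivially since $d(v,w)=2$ forces at least $3$ vertices.)

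For (2), I must show $\mathcal{P}(\Gamma)$ is of weak type II, i.e. $\mathcal{P}(\Gamma)$ is connected and for any two vertices $v_1,v_2\in\mathcal{P}(\Gamma)$ with $d(v_1,v_2)=2$, the complex $\mathcal{P}(\Gamma)\setminus(lk(v_1)\cap lk(v_2))$ is connected. Connectedness of $\mathcal{P}(\Gamma)$ follows since $\Gamma$ is connected and the projection $\pi:\mathcal{P}(\Gamma)\to F(\Gamma)$ admits sections $i_p$ whose images generate; alternatively it is standard that $\mathcal{P}(\Gamma)$ is connected whenever $\Gamma$ is. For the separation statement, let $\bar v_i=\pi(v_i)$. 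The strategy is to show any vertex $u\in\mathcal{P}(\Gamma)\setminus(lk(v_1)\cap lk(v_2))$ can be joined to $v_1$ by an edge-path avoiding $lk(v_1)\cap lk(v_2)$. The natural tool is Lemma \ref{7.4} together with the branch analysis: $\mathcal{P}(\Gamma)\setminus St(v_1)$ is a union of $v_1$-branches, and by Lemma \ref{7.4} each $v_1$-branch $B$ contains a vertex $w'$ with $P_{w'}\cap P_{v_1}\neq\emptyset$, whose $\pi$-image lies in a fixed component $C$ of $\Gamma\setminus St(\bar v_1)$, so that $\pi(B)$ (and hence $\pi(\partial B)$) is controlled by $\partial C$. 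Since $\Gamma$ is weak type II and $d(\bar v_1,\bar v_2)$ is either $0$ or $2$, one argues as in Corollary \ref{7.14}(5) that $\partial C\setminus(lk(\bar v_1)\cap lk(\bar v_2))\neq\emptyset$ (the case $d(\bar v_1,\bar v_2)=2$ is exactly the weak type II hypothesis; the case $\bar v_1=\bar v_2$ needs the hyperplane-pushing trick from Corollary \ref{7.14}(5) to reduce to a pair at distance $2$), whence $\partial B\not\subseteq lk(v_1)\cap lk(v_2)$, so every vertex in $B$ connects to $v_1$ avoiding $lk(v_1)\cap lk(v_2)$. Finally a vertex $u\in St(v_1)\setminus(lk(v_1)\cap lk(v_2))$ is either adjacent to $v_1$ (done) or equals $v_1$ (done), so it remains only to handle vertices of $St(v_1)$ not adjacent to $v_1$, which cannot occur by definition of the closed star.

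The main obstacle I anticipate is the bookkeeping in the case $\bar v_1=\bar v_2$ (so $v_1\neq v_2$ are distinct vertices of $\mathcal{P}(\Gamma)$ with the same label, hence $d(v_1,v_2)\geq 2$ and $P_{v_1}\cap P_{v_2}=\emptyset$): here $lk(v_1)\cap lk(v_2)$ must be compared against $lk(\bar v_1)\cap lk(\bar v_h)$ for the label $\bar v_h$ of a hyperplane separating $P_{v_1}$ from $P_{v_2}$, exactly as in the proof of Corollary \ref{7.14}(5), using that any $k\in St(v_1)\cap St(v_2)$ has $P_k$ meeting both $P_{v_1}$ and $P_{v_2}$, hence meeting that separating hyperplane, hence $d(k,v_h)\leq 1$. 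One subtlety is that Corollary \ref{7.14} is stated under the \emph{type II} hypothesis, not weak type II, so I would not invoke it directly but rather re-run the relevant portion of its proof, checking that only the weak type II hypothesis (applied at pairs of vertices at distance $2$) is needed for the branches that actually arise. A second, more minor point is ensuring the reduction genuinely lands on a pair of $\mathcal{P}(\Gamma)$-vertices at combinatorial distance $2$ rather than $\geq 3$; if $d(v_1,v_2)>2$ the intersection $lk(v_1)\cap lk(v_2)$ is empty and there is nothing to prove, so only $d(v_1,v_2)=2$ is relevant, which is consistent with the definition of weak type II.
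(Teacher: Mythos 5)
Your argument for (1) is correct and amounts to a careful spelling-out of what the paper dismisses as ``follows directly from the definition.'' For (2), the overall plan --- branch analysis via Lemma \ref{7.4} combined with the argument pattern of Corollary \ref{7.14}(5) --- is the right one and close to the paper's own proof, but there is a concrete gap. You assert that $d(\bar v_1,\bar v_2)$ is either $0$ or $2$, and this is false. Since $\pi$ is simplicial and $d(v_1,v_2)=2$, you only know $d(\bar v_1,\bar v_2)\in\{0,1,2\}$, and $d(\bar v_1,\bar v_2)=1$ genuinely occurs: one can have $v_1,v_2$ labelled by adjacent generators of $\Gamma$ whose parallel sets $P_{v_1},P_{v_2}$ are disjoint but both meet some common $P_u$, so that $d(v_1,v_2)=2$ in $\mathcal{P}(\Gamma)$. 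In that case the weak type II hypothesis gives you no direct control over $lk(\bar v_1)\cap lk(\bar v_2)$ (the hypothesis only constrains pairs of $\Gamma$-vertices at distance exactly $2$), so the inequality $\partial C\setminus(lk(\bar v_1)\cap lk(\bar v_2))\neq\emptyset$ is not available the way you invoke it.

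The fix is exactly the ingredient you already identified for $\bar v_1=\bar v_2$, and the paper handles $d(\bar v_1,\bar v_2)=1$ by reducing to it. Namely, if $P_{v_1}\cap P_{v_2}\neq\emptyset$, then by Lemma \ref{3.1} it is a standard subcomplex whose support is $St(\bar v_1)\cap St(\bar v_2)$; but $d(\bar v_1,\bar v_2)=1$ puts both $\bar v_1$ and $\bar v_2$ in this support, so $P_{v_1}\cap P_{v_2}$ contains a standard $2$-flat spanned by representatives of $v_1$ and $v_2$, whence $d(v_1,v_2)=1$, a contradiction. Therefore $P_{v_1}\cap P_{v_2}=\emptyset$, and now the hyperplane-separation step from the second case of Corollary \ref{7.14}(5) applies verbatim: a hyperplane $h$ separating $P_{v_1}$ from $P_{v_2}$ whose carrier touches $P_{v_1}$ has dual label $\bar v_h$ with $d(\bar v_h,\bar v_1)\geq 2$, and one shows $lk(v_1)\cap lk(v_2)\subset lk(v_1)\cap lk(v_h)$, transferring the problem to a pair whose labels \emph{are} at $\Gamma$-distance $\geq 2$, where weak type II bites. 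Once you add this case, your proof aligns with the paper's.
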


\begin{proof}
(1) follows directly from the definition. To see (2), pick distinct vertices $v_1,v_2\in\mathcal{P}(\Gamma)$ such that $d(v_{1},v_{2})=2$. For $i=1,2$, let $\bar{v}_i=\pi(v_i)$. The case $d(\bar{v}_1,\bar{v}_2)=2$ and $\bar{v}_1=\bar{v}_2$ has been dealt with in Corollary \ref{7.14} (5). Now we assume $d(\bar{v}_1,\bar{v}_2)=1$. If $P_{v_{1}}\cap P_{v_{2}}\neq\emptyset$, then it is a standard complex whose support is the intersection of the supports of $P_{v_1}$ and $P_{v_2}$, which is $St(\bar{v}_1)\cap St(\bar{v}_{2})$. Thus $d(v_{1},v_{2})=1$, which yields a contradiction. So $P_{v_{1}}\cap P_{v_{2}}=\emptyset$ and we have reduced to second case of Corollary \ref{7.14} (5).
\end{proof}

\begin{lem}
\label{5.1}
\cite[Lemma 5.1]{raagqi1}
Let $\Gamma$ be a finite simplicial graph. Pick a vertex $\bar{w}\in\Gamma$ and let $\Gamma_{\bar{w}}$ be the minimal stable subgraph containing $\bar{w}$. Denote $\Gamma_{1}=lk(\bar{w})$ and $\Gamma_{2}=lk(\Gamma_{1})$ $($see Section 2.1 for definition of links$)$, then either of the following is true:
\begin{enumerate}
\item $\Gamma_{\bar{w}}$ is a clique. In this case $St(\bar{w})$ is a stable subgraph. %Moreover, $St(\bar{w})$ is the full subgraph spanned by $\Gamma_{\bar{w}}$ and $\Gamma^\perp_{\bar{w}}$.
\item Both $\Gamma_{1}$ and the join $\Gamma_{1}\circ\Gamma_{2}$ of $\Gamma_1$ and $\Ga_2$ are stable subgraphs of $\Gamma$. Moreover, $\Gamma_{2}$ is disconnected.
\end{enumerate}
\end{lem}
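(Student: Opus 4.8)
The plan is to reduce everything to the combinatorial description of stable subgraphs from \cite[Theorem 3.35]{raagqi1}, which I will use in the form: a full subgraph $\Delta\subseteq\Gamma$ is stable if and only if, whenever $v\in\Delta$ and $w\in\Gamma$ satisfy $lk(v)\subseteq St(w)$, one has $w\in\Delta$. Equivalently, $\Gamma_{\bar w}$ is the smallest full subgraph containing $\bar w$ that is closed under the operation ``$v\rightsquigarrow w$ whenever $lk(v)\subseteq St(w)$'', and every stable subgraph containing $\bar w$ contains $\Gamma_{\bar w}$. I will also use the following elementary observations, immediate from the definitions: $\bar w\in\Gamma_2=lk(\Gamma_1)$; $\Gamma_1\cap\Gamma_2=\emptyset$ (a common vertex would be adjacent to itself); $\bar w$ is an isolated vertex of $\Gamma_2$ (a neighbour of $\bar w$ inside $\Gamma_2$ would lie in $\Gamma_1\cap\Gamma_2$); and the full subgraph spanned by $\Gamma_1\cup\Gamma_2$ is genuinely the join $\Gamma_1\circ\Gamma_2$. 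The proof then splits on whether $\Gamma_2=\{\bar w\}$ or $\Gamma_2\supsetneq\{\bar w\}$, and I will show these correspond exactly to cases (1) and (2).

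First I would check, with no extra hypothesis, that $\Gamma_1\circ\Gamma_2$ is always stable, by verifying the criterion directly. If $v\in\Gamma_1=lk(\bar w)$ and $lk(v)\subseteq St(w)$, then $\bar w\in lk(v)\subseteq St(w)$, so $w\in\{\bar w\}\cup lk(\bar w)\subseteq\Gamma_2\cup\Gamma_1$. If $v\in\Gamma_2$ and $lk(v)\subseteq St(w)$, then $\Gamma_1\subseteq lk(v)\subseteq St(w)$; hence either $w\in\Gamma_1$, or every vertex of $\Gamma_1$ is a neighbour of $w$, i.e. $w\in lk(\Gamma_1)=\Gamma_2$. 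In all cases $w\in\Gamma_1\cup\Gamma_2$. When $\Gamma_2=\{\bar w\}$ this statement reads ``$St(\bar w)$ is stable''; when $\Gamma_2\supsetneq\{\bar w\}$ it supplies the second stability assertion of case (2).

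Next, suppose $\Gamma_2=\{\bar w\}$; I would prove $\Gamma_{\bar w}$ is a clique by exhibiting a stable clique through $\bar w$. Let $D$ be the full subgraph spanned by $\{v\in St(\bar w)\mid lk(\bar w)\subseteq St(v)\}$, so $\bar w\in D$. Two distinct $v,v'\in D$ are adjacent (if one is $\bar w$ this is clear; otherwise $v,v'\in lk(\bar w)$, and $lk(\bar w)\subseteq St(v)$ with $v'\ne v$ forces $v\sim v'$), so $D$ is a clique. And $D$ is stable: given $v\in D$ with $lk(v)\subseteq St(w)$, if $v=\bar w$ then $lk(\bar w)\subseteq St(w)$, and $d(\bar w,w)\ge 2$ would force $w\in lk(\Gamma_1)=\Gamma_2=\{\bar w\}$, a contradiction, so $w\in St(\bar w)\cap D$; if $v\in lk(\bar w)$ then $\bar w\in lk(v)\subseteq St(w)$ gives $w\in St(\bar w)$, and combining $lk(\bar w)\subseteq St(v)$, $lk(v)\subseteq St(w)$, and the fact that $v\in D$ is adjacent to every other vertex of $lk(\bar w)$, one gets $lk(\bar w)\subseteq St(w)$, so again $w\in D$. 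Thus $D$ is a stable subgraph containing $\bar w$, whence $\Gamma_{\bar w}\subseteq D$ is a clique; with the previous paragraph (which gives $St(\bar w)=\Gamma_1\circ\Gamma_2$ stable) this is case (1).

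Finally, suppose $\Gamma_2\supsetneq\{\bar w\}$; I would verify case (2). Since $\bar w$ is an isolated vertex of the nontrivial graph $\Gamma_2$, $\Gamma_2$ is disconnected. Pick $z\in\Gamma_2\setminus\{\bar w\}$; then $z\not\sim\bar w$ (else $z\in\Gamma_1\cap\Gamma_2$), so $d(\bar w,z)\ge 2$ and $lk(\bar w)\subseteq lk(z)\subseteq St(z)$, i.e. $\bar w\rightsquigarrow z$, so $z\in\Gamma_{\bar w}$ and $\Gamma_{\bar w}$ is not a clique. For stability of $\Gamma_1=lk(\bar w)$: if $v\in lk(\bar w)$ and $lk(v)\subseteq St(w)$, then $\bar w\in lk(v)\subseteq St(w)$ gives $w=\bar w$ or $w\in lk(\bar w)$; the case $w=\bar w$ would give $lk(v)\subseteq St(\bar w)$, hence every $z'\in\Gamma_2$ (being a neighbour of $v\in lk(\bar w)$) lies in $lk(v)\subseteq St(\bar w)$, forcing $z'\in\{\bar w\}\cup lk(\bar w)$ and hence $z'=\bar w$ by $\Gamma_1\cap\Gamma_2=\emptyset$, i.e. $\Gamma_2=\{\bar w\}$, a contradiction; so $w\in lk(\bar w)$. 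With $\Gamma_1\circ\Gamma_2$ stable (second paragraph, or alternatively Lemma \ref{3.50}), this gives case (2). The genuinely non-routine step is the construction of the auxiliary clique $D$ in the case $\Gamma_2=\{\bar w\}$: it is the only place where one must produce a new stable subgraph rather than check a candidate against the criterion, and it is where the hypothesis $\Gamma_2=\{\bar w\}$ is really used; everything else is a finite combinatorial chase once \cite[Theorem 3.35]{raagqi1} is in hand.
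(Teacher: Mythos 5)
This lemma is stated in the paper only as a citation to \cite[Lemma~5.1]{raagqi1}, so there is no in-paper proof to compare against; I will instead assess the proposal on its own terms. Your argument is correct, and it proceeds in the way one would expect the source does, namely by reducing everything to the combinatorial recognition of stable subgraphs. You are correct that both directions of that criterion are in play: the sufficiency of the closure condition for stability (this is the content of \cite[Theorem~3.35]{raagqi1}, used as in the proof of Lemma \ref{4.7}) when you verify that $\Gamma_1\circ\Gamma_2$, $D$, and $\Gamma_1$ are stable, and the necessity (used as \cite[Lemma~3.32]{raagqi1} in the proof of Lemma \ref{6.1}) when you show $z\in\Gamma_{\bar w}$ for $z\in\Gamma_2\setminus\{\bar w\}$. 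The elementary observations at the start ($\bar w\in\Gamma_2$, $\Gamma_1\cap\Gamma_2=\emptyset$, $\bar w$ isolated in $\Gamma_2$, the span of $\Gamma_1\cup\Gamma_2$ is the genuine join) are all correct and appropriately flagged. The case split on whether $\Gamma_2=\{\bar w\}$ or $\Gamma_2\supsetneq\{\bar w\}$ is clean and exhaustive, and each computation checks out; in particular the auxiliary clique $D=\{v\in St(\bar w)\mid lk(\bar w)\subseteq St(v)\}$ in the case $\Gamma_2=\{\bar w\}$ is exactly the device needed, and your verification that $D$ is a stable clique containing $\bar w$ (and hence that $\Gamma_{\bar w}$ is a clique) is correct, though somewhat compressed in the sub-case $v\in lk(\bar w)$ — spelling out that $lk(\bar w)\setminus\{v\}\subseteq lk(v)\subseteq St(w)$ while $v\in St(w)$ follows from $w\in St(\bar w)\subseteq St(v)$ would make that step airtight. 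Overall a correct and well-organized reconstruction.
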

%Note that the moreover part in Assertion (1) is not stated in the content of \cite[Lemma 5.1]{raagqi1}, however, it is stated explicitly in the proof of \cite[Lemma 5.1]{raagqi1} (1).

\begin{lem}
\label{7.18}
Suppose $\mathcal{P}(\Gamma)$ is of weak type II. Let $q:G(\Gamma)\to G(\Gamma')$ be a quasi-isometry. Then $q$ induces a simplicial isomorphism $q_{\ast}:\mathcal{P}(\Gamma)\to\mathcal{P}(\Gamma')$.
\end{lem}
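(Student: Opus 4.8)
The goal is to upgrade Theorem \ref{4.8} (which only guarantees the existence of \emph{some} simplicial isomorphism $\mathcal{P}(\Gamma)\to\mathcal{P}(\Gamma')$) to the statement that the quasi-isometry $q$ \emph{induces} such an isomorphism, in the sense that it is compatible with the coarse geometry of standard flats. First I would check that the hypothesis $\mathcal{P}(\Gamma)$ is of weak type II feeds into Theorem \ref{4.8}: by Lemma \ref{7.17}(1) applied to $\mathcal{P}(\Gamma)$ — or rather, the relevant observation is that weak type II of $\mathcal{P}(\Gamma)$ forbids non-adjacent transvections in $\out(G(\Gamma))$, and we separately need the same for $\Gamma'$. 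Here one uses that $q$ is a quasi-isometry together with the quasi-isometry invariance results of Section \ref{subsec_qi invariant} (Theorem \ref{2.13} and the stability apparatus) to transport the weak type II condition — or at least the ``no non-adjacent transvection'' conclusion — across $q$ to $G(\Gamma')$. Once both $\out(G(\Gamma))$ and $\out(G(\Gamma'))$ are known to be non-adjacent-transvection free, Theorem \ref{4.8} directly yields the desired simplicial isomorphism $q_\ast$.

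The key point that needs genuine argument is the transfer of the structural hypothesis to $\Gamma'$. The plan is: a non-adjacent transvection in $\out(G(\Gamma'))$ would mean there are vertices $v,w\in\Gamma'$ with $d(v,w)\ge 2$ and $lk(v)\subseteq St(w)$; by the characterization of stable subgraphs (cf.\ the proof of Lemma \ref{4.7} and \cite[Theorem 3.35]{raagqi1}) this is equivalent to some maximal clique of $\Gamma'$ failing to be stable. Stability is a quasi-isometry-theoretic notion (Definition \ref{3.38}), so the property ``every maximal clique is stable'' should be stable under quasi-isometry once one knows the De Rham factors match up (Theorem \ref{2.13}). Thus if every maximal clique of $\Gamma$ is stable — which holds because $\mathcal{P}(\Gamma)$ weak type II $\Rightarrow$ $\Gamma$ weak type II $\Rightarrow$ no non-adjacent transvection in $\out(G(\Gamma))$ by Lemma \ref{7.17}, hence Lemma \ref{4.7} applies — then the same holds for $\Gamma'$, so $\out(G(\Gamma'))$ is non-adjacent-transvection free as well.

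I expect the main obstacle to be precisely this quasi-isometry transfer step: making rigorous that ``no non-adjacent transvection'' (equivalently ``all maximal cliques stable'') passes from $G(\Gamma)$ to any $G(\Gamma')$ quasi-isometric to it. One has to be careful because a single maximal standard flat of $X(\Gamma)$ need not be preserved by an arbitrary quasi-isometry \emph{a priori} — that preservation is exactly what stability asserts — so the argument must be bootstrapped from the dimension/De Rham data that \emph{is} known to be a quasi-isometry invariant, together with the lattice $\mathcal{KS}$ of coarse intersection classes used in the proof of Theorem \ref{4.8}. A clean way is: the maximal-dimensional standard flats are exactly the top-dimensional elements of $\mathcal{KS}$, which $q$ preserves; combined with the rank/De Rham matching this forces the combinatorial structure governing non-adjacent transvections to agree. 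After this, the remainder is a direct citation of Theorem \ref{4.8} to produce $q_\ast$, and (if one wants the ``induces'' language) Remark \ref{4.9} to record that $q_\ast$ agrees with $\tilde q_\ast$ on $\mathcal{S}(\mathcal{P}(\Gamma))$.
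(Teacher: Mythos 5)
The proposal has a genuine gap at exactly the step you flagged as the main obstacle: the transfer of ``no non-adjacent transvection'' (equivalently, stability of all maximal cliques) from $\Gamma$ to $\Gamma'$. This is not established in the paper, and your proposed ``clean way'' does not fill the gap, because maximal cliques need not be top-dimensional: the lattice $\mathcal{KS}$ lets you match up top-dimensional flats, but a maximal clique of $\Gamma'$ of dimension $< \dim X(\Gamma')$ could in principle fail to be stable even if every maximal clique of $\Gamma$ is stable. One can show that the $q$-image of each stable subcomplex of $X(\Gamma)$ is a stable subcomplex of $X(\Gamma')$ (and vice versa for a quasi-inverse), but this only says the two classes of stable subcomplexes correspond coarsely; it does not tell you that every maximal flat of $X(\Gamma')$ lies in that class. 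Without that, Theorem \ref{4.8} cannot be invoked on both sides, and your argument would at best reproduce Remark \ref{4.9}, which only gives an \emph{injective} $q_\ast$.

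The paper avoids this entirely and takes a fundamentally different route. It applies Remark \ref{4.9} to get an injective $q_\ast:\mathcal{P}(\Gamma)\to\mathcal{P}(\Gamma')$ using only the one-sided hypothesis on $\Gamma$ (obtained from Lemma \ref{7.20} and Lemma \ref{7.17}), then proves surjectivity by contradiction. After reducing via Theorem \ref{2.13} to $\Gamma$ irreducible and not a clique, one assumes a vertex $w'\in\mathcal{P}(\Gamma')$ is missed by $q_\ast$, applies Lemma \ref{5.1} to $\pi(w')$ (which holds for any finite graph, so it does not presuppose anything about $\out(G(\Gamma'))$), and in the non-trivial case extracts a disconnected stable standard subcomplex $K_2'$. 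Pulling the picture back along $q_\ast^{-1}$ and using that the image $q_\ast(\mathcal{P}(\Gamma))$ is coarsely dense (infinite diameter of $\mathcal{P}(\Gamma')$), one finds vertices $u_1,u_2\in\mathcal{P}(\Gamma)$ separated by $lk(v_1)\cap lk(v_2)$ for suitable $v_1,v_2$, contradicting the weak type II hypothesis on $\mathcal{P}(\Gamma)$. Note that the QI-invariance of weak type II is established only as a \emph{consequence} (Corollary \ref{7.22}), so any argument that routes through QI-invariance of the hypothesis would be circular. If you want a proof that does not retrace the paper's surjectivity argument, you would have to give an independent reason why the injective $q_\ast$ hits every vertex of $\mathcal{P}(\Gamma')$, and the structural ingredient you need for that is precisely the weak type II hypothesis, used as in the paper.
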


The proof is a variation of \cite[Theorem 5.3]{raagqi1}.

\begin{proof}
Let $\Gamma=\Gamma_{1}\circ\Gamma_{2}$ be any join decomposition. Then $\Gamma$ is of weak type II if and only if each $\Gamma_{i}$ is of weak type II. So in the light of Theorem \ref{2.13}, we only need to focus on the case when $\Gamma$ is irreducible and is not a clique. In this case $\Gamma'$ is also irreducible and is not a clique, hence $diam(\mathcal{P}(\Gamma))=\infty$ and $diam(\mathcal{P}(\Gamma'))=\infty$ \cite[Lemma 26 (5)]{kim2013embedability}.

Let $q_{\ast}:\mathcal{P}(\Gamma)\to\mathcal{P}(\Gamma')$ be the simplicial embedding in Theorem \ref{4.8}. Suppose $q_{\ast}$ is not surjective. Then there exists a vertex $w'\in\mathcal{P}(\Gamma')$ which is not in the image of $q_{\ast}$. Let $\bar{w}'=\pi(w')$ and let $\ell'$ be a standard geodesic $\Delta(\ell')=w'$. We apply Lemma \ref{5.1} to $\bar{w}'\in\Gamma'$. If case (1) is true, let $F'$ be the standard flat in $X(\Gamma')$ such that $\ell'\subset F'$ and $\Gamma_{F'}=\Gamma_{w'}$. Since $\Gamma_{w'}$ is stable, $$w'\in\Delta(F')\subset q_{\ast}(\mathcal{P}(\Gamma')),$$ which is a contradiction.

If case (2) is true, let $\Gamma'_{1}=lk(\bar{w}')$ and $\Gamma'_{2}=lk(\Gamma'_{1})$. Take $K'_{1}$ and $K'$ to be the standard subcomplexes in $X(\Gamma')$ such that (1) $\Gamma_{K'_{1}}=\Gamma'_{1}$ and $\Gamma_{K'}=\Gamma'_{1}\circ\Gamma'_{2}$; (2) $\ell'\subset K'$ and $K'_{1}\subset K'$. Set $M'_{1}=\Delta(K'_{1})$ and $M'=\Delta(K')$. Let $K'_{2}$ be an orthogonal complement of $K'_{1}$ in $K'$, i.e. $K'_{2}$ is a standard subcomplex such that $\Gamma_{K'_{2}}=\Gamma'_{2}$ and $K'=K'_{1}\times K'_{2}$. It follows that $M'$ has a join decomposition $M'=M'_{1}\ast M'_{2}$ for $M'_{2}=\Delta(K'_{2})$. By construction, $w'\in M'$ and $lk(w')=M'_{1}$.

Since $K'$ and $K'_{1}$ are stable, there exist stable standard subcomplexes $K$ and $K_{1}$ in $X(\Gamma)$ such that the Hausdorff distances satisfy $d_{H}(q(K),K')<\infty$ and $d_{H}(q(K_1),K'_1)<\infty$. Moreover, by applying Theorem \ref{2.13} to the quasi-isometry between $K$ and $K'$, there exists standard subcomplex $K_{2}\in K$ such that $K=K_{1}\times K_{2}$ and $K_{2}$ is quasi-isometric to $K'_{2}$, thus $\Gamma_{K_{2}}$ is also disconnected. Let $M_{i}=\Delta(K_{i})\subset\mathcal{P}(\Gamma)$ for $i=1,2$ and $M=M_{1}\ast M_{2}=\Delta(K)$. Then $q_{\ast}^{-1}(M'_{1})=M_{1}$ by (\ref{4.10}) and (\ref{4.11}). Since $lk(w')=M'_{1}$ and $w'\notin q_{\ast}(\mathcal{P}(\Gamma))$,
\begin{equation}
\label{7.185}
q^{-1}_{\ast}(St(w'))=M_{1}.
\end{equation}

Let $I=q_{\ast}(\mathcal{P}(\Gamma))$. Then $I$ is $D$-dense in $\mathcal{P}(\Gamma')$ for some constant $D>0$, i.e. each vertex of $\mathcal{P}(\Gamma)$ is at combinatorial distance $\le D$ from a vertex in $I$. To see this, it suffices to show $\Gamma'$ contains a stable clique, but this follows from the existence of stable clique in $\Gamma$. 

We claim every $w'$-tier contains vertices arbitrarily far from $w'$. To see this, let $\ell'$ be a standard geodesic such that $\Delta(\ell')=w'$. We consider the action $G(\Ga)\acts X(\Ga)$ by deck transformations and the induced action $G(\Ga)\acts\P(\Ga)$. Then the stabilizer of $\ell'$ is isomorphic to $\Z$. Moreover, this copy of $\Z$ acts transitively on the collection of $w'$-tiers. Now the claim follows from $diam (\mathcal{P}(\Gamma'))=\infty$.

We pick vertices $w'_{1},w'_{2}\in \mathcal{P}(\Gamma')$ such that they are not in the same $w'$-tier and $d(w'_{i},w')>D+5$ for $i=1,2$. Let $u'_{i}\in I$ be a vertex such that $d(u'_{i},w'_{i})\le D$. Then $u'_{1}$ and $u'_{2}$ is separated by $St(w')$ and $$d(u'_{i},I\cap St(w'))>4.$$ Define $u_{i}=q^{-1}_{\ast}(u'_{i})$, then $u_{1}$ and $u_{2}$ are in different components of $$\mathcal{P}(\Gamma)\setminus q^{-1}_{\ast}(St(w'))=\mathcal{P}(\Gamma)\setminus M_{1},$$ and 
\begin{equation}
\label{7.19}
d(u_{i},M_{1})>4.
\end{equation}

Since $\Gamma_{K_{2}}$ is disconnected, there exists vertex $v_{1},v_{2}\in M_{2}$ such that $d(v_{1},v_{2})=2$. Recall that $M=M_{1}\ast M_{2}\subset\mathcal{P}(\Gamma)$, so $$M_{1}\subset lk(v_{1})\cap lk(v_{2}).$$ Moreover, $$d(u_{i},lk(v_{1})\cap lk(v_{2}))>0$$ by (\ref{7.19}), so $u_{1}$ and $u_{2}$ are separated by $lk(v_{1})\cap lk(v_{2})$, which is contradictory to our assumption on $\mathcal{P}(\Gamma)$. So $q_{\ast}$ must be surjective.
\end{proof}

\begin{lem}
\label{7.20}
If $\mathcal{P}(\Gamma)$ is of weak type II, then $\Gamma$ is of weak type II.
\end{lem}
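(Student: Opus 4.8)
The plan is to prove the contrapositive: assuming $\Gamma$ is \emph{not} of weak type II, I will produce two vertices $v_1,v_2\in\mathcal{P}(\Gamma)$ with $d(v_1,v_2)=2$ such that $\mathcal{P}(\Gamma)\setminus(lk(v_1)\cap lk(v_2))$ is disconnected, thereby showing $\mathcal{P}(\Gamma)$ is not of weak type II. First I would note that $\Gamma$ is connected, since otherwise $\mathcal{P}(\Gamma)$ is disconnected and hence trivially not of weak type II (using that $\mathcal{P}$ of a disjoint union is a disjoint union, or that $\mathrm{diam}$ considerations from \cite{kim2013embedability} fail). So I may assume $\Gamma$ is connected but there exist vertices $\bar{v}_1,\bar{v}_2\in\Gamma$ with $\Gamma\setminus(lk(\bar{v}_1)\cap lk(\bar{v}_2))$ disconnected, and since $\Gamma$ is connected this forces $d(\bar{v}_1,\bar{v}_2)=2$ (if $d=1$ then $lk(\bar v_1)\cap lk(\bar v_2)$ is contained in a closed star that does not separate a connected graph after removing a \emph{link} — one checks the standard fact that removing $lk(\bar v_1)\cap lk(\bar v_2)$ when $\bar v_1,\bar v_2$ adjacent cannot disconnect; and if $d\ge 3$ there is nothing to remove in the relevant sense). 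The key geometric input is that $lk(\bar v_i)$ corresponds to hyperplanes, so $lk(\bar v_1)\cap lk(\bar v_2)$ is the support of an intersection of hyperplanes, and a graph-level separation should lift to a separation one level up in $\mathcal{P}(\Gamma)$.

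The main step is the lifting argument. Choose a standard geodesic $l_i\subset X(\Gamma)$ with $\pi(\Delta(l_i))=\bar v_i$, chosen so that $l_1$ and $l_2$ pass through a common vertex $x$ — this is possible since $d(\bar v_1,\bar v_2)=2$ means there is a vertex $\bar u$ adjacent to both, hence $P_{v_1}\cap P_{v_2}\ne\emptyset$ by Lemma \ref{2.1}-type reasoning, and I can arrange $x\in l_1\cap l_2$. Set $v_i=\Delta(l_i)$. Then $d(v_1,v_2)=2$ in $\mathcal{P}(\Gamma)$: they are distinct, non-adjacent (their supports are at distance $2$ and they do not span a $2$-flat through $x$ in the relevant sense), but $\Delta(l_u)$ for a geodesic $l_u$ through $x$ with support $\bar u$ is adjacent to both. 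Next, $lk(v_1)\cap lk(v_2)$ consists of vertices $w\in\mathcal{P}(\Gamma)$ whose parallel set $P_w$ meets both $P_{v_1}$ and $P_{v_2}$; by Lemma \ref{3.2}, for such $w$ we have $\pi(w)\in lk(\bar v_1)\cap lk(\bar v_2)$. Let $C$ be a component of $\Gamma\setminus(lk(\bar v_1)\cap lk(\bar v_2))$ and let $C'$ be the union of the others. Using the embedding $i_x:F(\Gamma)\to\mathcal{P}(\Gamma)$ through the vertex $x$, I obtain vertices $w_1\in i_x(C)$ and $w_2\in i_x(C')$. I claim $w_1$ and $w_2$ lie in different components of $\mathcal{P}(\Gamma)\setminus(lk(v_1)\cap lk(v_2))$; the obstruction to a connecting path is exactly the hyperplane-separation picture: any edge-path in $\mathcal{P}(\Gamma)$ avoiding $lk(v_1)\cap lk(v_2)$ projects, via the track/hyperplane correspondence and an argument paralleling the proof of Lemma \ref{7.9} (in particular the collapsing map $\tilde g:\bar X(\Gamma)\to X(\Gamma)$ with the hyperplane $\bar h$ of support $lk(\bar v_1)\cap lk(\bar v_2)$), to a path in $\Gamma$ avoiding $lk(\bar v_1)\cap lk(\bar v_2)$ and connecting $C$ to $C'$, a contradiction.

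The hard part will be verifying that a path in $\mathcal{P}(\Gamma)$ avoiding $lk(v_1)\cap lk(v_2)$ really does descend to a path in $\Gamma$ avoiding $lk(\bar v_1)\cap lk(\bar v_2)$: a generic edge $e$ of such a $\mathcal{P}(\Gamma)$-path has $\Delta(l_e)\notin lk(v_1)\cap lk(v_2)$, meaning $P_{\Delta(l_e)}$ misses $P_{v_1}$ or $P_{v_2}$, but translating this into the statement "$\pi(\Delta(l_e))\notin lk(\bar v_1)\cap lk(\bar v_2)$" is not automatic since $\pi$ is not injective on links. I expect to handle this exactly as in Lemma \ref{7.11} and Corollary \ref{7.14}: push the whole configuration into a single parallel set / single copy of $X(St(\bar v_i))$ using Helly (Lemma \ref{2.1}) to get $P_{v_1}\cap P_{v_2}\cap P_w\ne\emptyset$ whenever needed, so that the height bookkeeping of Remark \ref{7.10} applies and the $\Gamma$-labels of consecutive edges stay outside $lk(\bar v_1)\cap lk(\bar v_2)$. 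Once the descent is established, connectedness of $\Gamma\setminus(lk(\bar v_1)\cap lk(\bar v_2))$ being false forces connectedness of $\mathcal{P}(\Gamma)\setminus(lk(v_1)\cap lk(v_2))$ to be false, completing the contrapositive and hence the lemma.
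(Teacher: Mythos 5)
Your approach is genuinely different from the paper's, but it has a gap that you yourself flagged and that your proposed fix does not close. You want to descend a hypothetical edge path $u_0,\dots,u_n$ in $\mathcal{P}(\Gamma)\setminus(lk(v_1)\cap lk(v_2))$ to an edge path in $\Gamma\setminus(lk(\bar v_1)\cap lk(\bar v_2))$ via $\pi$. The obstruction is exactly what you noticed: $u_i\notin lk(v_1)\cap lk(v_2)$ only says that $P_{u_i}$ fails to meet $P_{v_1}$ or fails to meet $P_{v_2}$ in the right way, which does not preclude $\pi(u_i)\in lk(\bar v_1)\cap lk(\bar v_2)$. Your proposed remedy --- push the configuration into a single parallel set via Helly so that $P_{v_1}\cap P_{v_2}\cap P_{u_i}\neq\emptyset$ --- is not available for a general path: there is no reason the $P_{u_i}$ meet $P_{v_1}$ or $P_{v_2}$ at all, let alone pairwise, and the path in $\mathcal{P}(\Gamma)$ does not come from a single coherent path in $X(\Gamma)$ since the geodesics realizing consecutive edges need not be chosen consistently. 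Helly (Lemma~\ref{2.1}) never gets off the ground, so Remark~\ref{7.10}'s height bookkeeping does not apply and the descent fails. Worse, the descent statement you are invoking is not a corollary of Lemma~\ref{7.9}: that lemma goes from separation in $\Gamma$ to separation in $\mathcal{P}(\Gamma)$ for two vertices whose parallel sets both hit $P_v$; it does not say that a $\mathcal{P}(\Gamma)$-path avoiding $lk(v_1)\cap lk(v_2)$ projects to a $\Gamma$-path avoiding $lk(\bar v_1)\cap lk(\bar v_2)$.

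The paper's proof avoids this entirely by building the separation from below rather than trying to descend an arbitrary path. From $\Gamma\setminus(lk(\bar v_1)\cap lk(\bar v_2))$ disconnected it extracts a component $C$ of $\Gamma\setminus St(\bar v_1)$ with $\partial C\subset lk(\bar v_1)\cap lk(\bar v_2)$, lets $K\subset P_{v_1}$ be the $v_1$-peripheral subcomplex of type $\partial C$ through the base vertex $x_0$, notes $\Delta(K)\subset lk(v_1)\cap lk(v_2)$, and then proves that for the $v_1$-branch $B$ containing the lift $v_3$ of some $\bar v_3\in C$ one has $\partial B=\Delta(K)$. Since any edge leaving $B$ lands in $\partial B\subset lk(v_1)\cap lk(v_2)$, removing $lk(v_1)\cap lk(v_2)$ separates $B$ from $v_1$, which is exactly the failure of weak type II for $\mathcal{P}(\Gamma)$. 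The hard work (establishing $\partial B\subset\Delta(K)$ via the path constructions of Lemma~\ref{7.9}, Remark~\ref{7.10}, and Lemma~\ref{7.11}) is localized to paths in $X(\Gamma)\setminus P_{v_1}$ emanating from the base vertex, where the height and label bookkeeping genuinely applies. If you want to salvage your write-up, replace the descent claim with this branch-boundary computation: the target is $\partial B\subset lk(v_1)\cap lk(v_2)$, not a path-lifting statement.
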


\begin{proof}
Suppose $\Gamma$ is not of weak type II. Then there exist vertices $\bar{v}_{1},\bar{v}_{2}\in\Gamma$ such that $d(\bar{v}_{1},\bar{v}_{2})=2$ and $$\Gamma\setminus lk(\bar{v}_{1})\cap lk(\bar{v}_{2})$$ is disconnected. Then we can find component $C$ of $\Gamma\setminus St(\bar{v}_{1})$ such that $$\partial C\subset lk(\bar{v}_{1})\cap lk(\bar{v}_{2}).$$ Pick vertex $x_{0}\in P_{v_1}$ and let $K\subset P_{v_{1}}$ be the standard subcomplex with support $=\partial C$ that contains $x_{0}$. Pick vertex $\bar{v}_{3}\in C$ (it is possible that $\bar{v}_3=\bar{v}_2$). For $i=1,2,3$, let $v_{i}\in (F(\Gamma))_{x_{0}}$ be the lift of $\bar{v}_{i}$. Then $$\Delta(K)\subset St(v_{1})\cap St(v_{2})=lk(v_{1})\cap lk(v_{2}).$$ Let $B$ be the $v_{1}$-branch that contains $v_{3}$. We claim $\partial B=\Delta (K)$, which then implies $v_{1}$ and $B$ are in different components of $\mathcal{P}(\Gamma)\setminus (lk(v_{1})\cap lk(v_{2}))$.

Note that $\Delta(K)\subset \partial B$ follows from the argument in (1) of Corollary \ref{7.14}. To see the other direction, pick $w_{1}\in \partial B$ and $w_{2}\in B$ such that $d(w_{1},w_{2})=1$. Let $\ell_{3}$ be the geodesic such that $x_{0}\in \ell_{3}$ and $\Delta(\ell_{3})=v_{3}$. If $P_{w_{2}}\cap P_{v_{1}}=\emptyset$, then by the argument in Lemma \ref{7.9}, we can find an edge path $\omega\in X(\Gamma)\setminus P_{v_{1}}$ connecting $x\in \ell_{3}\setminus\{x_{0}\}$ and $y\in P_{w_{2}}\cap P_{w_{1}}$. Let $\omega_{1}\subset P_{w_{1}}$ be a horizontal edge path connecting $y$ and some vertex $z\in P_{w_{1}}\cap P_{v_{1}}$. Such path exists since $P_{w_{1}}\cap P_{v_{1}}$ contains a standard geodesic $\ell$ with $\Delta(\ell)=w_1$. We can also assume $\omega_1\cap P_{v_1}=\{z\}$. Let $e\subset\omega_{1}$ be the edge containing $z$ and $\bar{v}_{e}$ be the label of $e$. Since $\omega_1$ is horizontal in $P_{w_1}$,
\begin{equation}
\label{7.21}
d(\bar{v}_{e},\pi(w_{1}))=1.
\end{equation}
Let $\omega'$ be the edge path obtained by (1) going from $x_{0}$ to $x$ along $\ell_{3}$; (2) going from $x$ to $y$ along $\omega$; (3) going form $y$ to $z$ along $\omega_{1}$. By applying Lemma \ref{7.10} and Lemma \ref{7.11} to $\omega'$, we have $\bar{v}_{e}\in C$ and $z\in K$. Hence $\pi(w_{1})\in\partial C$ by (\ref{7.21}). This, together with $z\in P_{w_1}$ and $z\in K$ imply $w_{1}\in\Delta(K)$. If $P_{w_{2}}\cap P_{v_{1}}\neq\emptyset$, we still have $w_{1}\in\Delta(K)$ by the proof of the second statement of Corollary \ref{7.14} (1). Thus $\partial B\subset\Delta(K)$.
\end{proof}

Actually the above argument also shows that if $\mathcal{P}(\Gamma)$ is of type II, then $\Gamma$ is of type II. The following corollary follows from (5) of Corollary \ref{7.14}, Lemma \ref{7.17}, Lemma \ref{7.18} and Lemma \ref{7.20}.

\begin{cor}
\label{7.22}
$\Gamma$ is of (weak) type II if and only if $\mathcal{P}(\Gamma)$ is of (weak) type II. If $G(\Gamma')$ is quasi-isometric to $G(\Gamma)$, then $\Gamma'$ is also of (weak) type II.
\end{cor}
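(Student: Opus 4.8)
The plan is to assemble Corollary \ref{7.22} purely as a logical consequence of the lemmas already established in this subsection, so almost no new argument is needed. For the first sentence I would first recall that part (5) of Corollary \ref{7.14} gives one implication: if $\Gamma$ is of type II, then for any two distinct vertices $v_1,v_2\in\mathcal{P}(\Gamma)$ the complement $\mathcal{P}(\Gamma)\setminus(lk(v_1)\cap lk(v_2))$ is connected, i.e.\ $\mathcal{P}(\Gamma)$ is of type II. For the reverse implication I would invoke Lemma \ref{7.20}, which says $\mathcal{P}(\Gamma)$ of weak type II implies $\Gamma$ of weak type II, together with the remark immediately following Lemma \ref{7.20} that the same argument shows $\mathcal{P}(\Gamma)$ of type II implies $\Gamma$ of type II. The weak type II version of the equivalence is Lemma \ref{7.17}(2) together with Lemma \ref{7.20}. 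So the equivalence ``$\Gamma$ is of (weak) type II $\iff$ $\mathcal{P}(\Gamma)$ is of (weak) type II'' follows by combining these four inputs, treating the two cases (type II and weak type II) in parallel.

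For the second sentence, the strategy is to transport the (weak) type II property across a quasi-isometry using the extension complex as the intermediary. Suppose $G(\Gamma')$ is quasi-isometric to $G(\Gamma)$ and $\Gamma$ is of (weak) type II. By the first sentence, $\mathcal{P}(\Gamma)$ is of (weak) type II. In particular, by Lemma \ref{7.17}(1), there is no non-adjacent transvection in $\out(G(\Gamma))$. Here I would need to observe that the quasi-isometry invariance machinery requires the no-non-adjacent-transvection hypothesis on \emph{both} $\Gamma$ and $\Gamma'$; one handles $\Gamma'$ by noting that if $\out(G(\Gamma'))$ contained a non-adjacent transvection then so would $\out(G(\Gamma))$, or alternatively by appealing directly to Lemma \ref{7.18}, whose hypothesis only mentions $\mathcal{P}(\Gamma)$. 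Indeed Lemma \ref{7.18} applies: since $\mathcal{P}(\Gamma)$ is of weak type II, the quasi-isometry $q:G(\Gamma)\to G(\Gamma')$ induces a simplicial isomorphism $q_\ast:\mathcal{P}(\Gamma)\to\mathcal{P}(\Gamma')$. A simplicial isomorphism preserves the combinatorial property ``for all distinct vertices $v,w$, the complement of $lk(v)\cap lk(w)$ is connected'' — and in the weak type II case it preserves distances, so it preserves the restriction to pairs with $d(v,w)=2$ as well. Hence $\mathcal{P}(\Gamma')$ is of (weak) type II, and applying the first sentence of the corollary once more (to $\Gamma'$) gives that $\Gamma'$ is of (weak) type II.

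The main thing to be careful about — the only place this is not completely mechanical — is the bookkeeping in the weak type II case: one must confirm that the definition of weak type II for $\mathcal{P}(\Gamma)$ (quantifying over pairs $v_1,v_2$ of \emph{vertices of $\mathcal{P}(\Gamma)$} with $d(v_1,v_2)=2$, using the combinatorial distance in $\mathcal{P}(\Gamma)$) is indeed the condition that Lemma \ref{7.17}(2), Lemma \ref{7.20}, and Lemma \ref{7.18} all refer to, and that a simplicial isomorphism of flag complexes is an isometry for the combinatorial vertex distance so that the ``$d=2$'' restriction transfers. These are all immediate, so no genuine obstacle arises; the corollary is essentially a one-line assembly of the preceding results, and I would write it as such.

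\begin{proof}
The first implication of the equivalence follows from Corollary \ref{7.14} (5) in the type II case and from Lemma \ref{7.17} (2) in the weak type II case. The converse is Lemma \ref{7.20} (and the remark immediately following its proof for the type II case). For the last statement, suppose $\Gamma$ is of (weak) type II and $q:G(\Gamma)\to G(\Gamma')$ is a quasi-isometry. By the equivalence just proved, $\mathcal{P}(\Gamma)$ is of (weak) type II, so Lemma \ref{7.18} gives an induced simplicial isomorphism $q_{\ast}:\mathcal{P}(\Gamma)\to\mathcal{P}(\Gamma')$. Since $q_{\ast}$ is a simplicial isomorphism between flag complexes, it is an isometry for the combinatorial distance between vertices; hence it preserves the property defining (weak) type II, and $\mathcal{P}(\Gamma')$ is of (weak) type II. Applying the equivalence once more, now to $\Gamma'$, we conclude that $\Gamma'$ is of (weak) type II.
\end{proof}
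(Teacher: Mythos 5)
Your proof is correct and follows essentially the same route the paper intends: the paper explicitly states that Corollary \ref{7.22} follows from Corollary \ref{7.14}(5), Lemma \ref{7.17}, Lemma \ref{7.18} and Lemma \ref{7.20} (with the observation immediately after the proof of Lemma \ref{7.20} supplying the type II direction), and your write-up assembles exactly these ingredients in the expected order. Your brief checks — that Lemma \ref{7.18} only needs a one-sided hypothesis, and that a simplicial isomorphism of flag complexes preserves the combinatorial distance and hence the (weak) type II condition — are accurate and address the only points at which care is needed.
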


\subsection{RAAGs of weak type I}
\label{sec:3.4}
In this subsection we introduce the notion of RAAGs of weka type I and prove quasi-isometric classification results for them.
\begin{definition}
\label{7.24}
A finite simplicial graph $\Gamma$ is of \textit{weak type I} if:
\begin{enumerate}
\item $\Gamma$ is of weak type II.
\item $\Gamma$ does not contain any separating closed star.
\end{enumerate}
$G(\Ga)$ is of \textit{weak type I} if $\Ga$ is of weak type I.
\end{definition}

It is immediate from the definition that if $\Gamma=\Gamma_{1}\circ\Gamma_{2}$, then $\Gamma$ is of weak type I if and only if $\Gamma_{1}$ and $\Gamma_{2}$ are of weak type I.

\begin{lem}
\label{7.25}
$G(\Gamma)$ is of weak type I if and only if
\begin{enumerate}
\item $\Gamma$ does not contain any separating closed star.
\item There do not exist vertices $\bar{v},\bar{w}\in\Gamma$ such that $d(\bar{v},\bar{w})=2$ and $\Gamma=St(\bar{v})\cup St(\bar{w})$.
\end{enumerate}
\end{lem}

Thus Definition \ref{1.1} and Definition \ref{7.24} are consistent.
\begin{proof}
For the only if direction, note that if $\Gamma=St(\bar{v})\cup St(\bar{w})$ with $d(\bar{v},\bar{w})=2$, then $lk(\bar{v})\cap lk(\bar{w})$ separates $\Gamma$. For the if direction, we follow the argument in \cite[Theorem 5.3]{raagqi1}. Suppose there exist vertices $\bar{v}_{1}$ and $\bar{v}_{2}$ such that $lk(\bar{v}_{1})\cap lk(\bar{v}_{2})$ separates $\Gamma$. Let $\{C_{j}\}_{j=1}^{d}$ be the connected components of $F(\Gamma)\setminus lk(\bar{v}_{1})\cap lk(\bar{v}_{2})$. Then at most one of $C_{j}$ is contained in $St(\bar{v}_{1})$. If $d\ge 3$, $St(\bar{v}_{1})$ would separate $F(\Gamma)$, contradiction. Suppose $d=2$. At least one of $C_1$ and $C_2$ is inside $St(\bar{v}_1)$, otherwise $St(\bar{v}_{1})$ will separate $F(\Gamma)$. We assume $C_1\subset St(\bar{v}_1)$. Thus $\bar{v}_2\in C_2$. Similarly, at least one of $C_1$ and $C_2$ is inside $St(\bar{v}_2)$. So we must have $C_2\subset St(\bar{v}_2)$. Hence $F(\Gamma)=St(\bar{v}_{1})\cup St(\bar{v}_{2})$, contradiction again. 
\end{proof}

\begin{thm}
\label{weak type I visible}
Let $\Gamma_{1}$ be of weak type I. Then any simplicial isomorphism $s:\mathcal{P}(\Gamma_{1})\to\mathcal{P}(\Gamma_{2})$ is visible. In particular, if $q:G(\Gamma_{1})\to G(\Gamma_{2})$ is a quasi-isometry, then $q$ will induce a visible map $q_{\ast}:\mathcal{P}(\Gamma_{1})\to\mathcal{P}(\Gamma_{2})$. In this case, $\Gamma_{2}$ is of weak type II, hence $\out(\Gamma_{2})$ does not contain non-adjacent transvections.
\end{thm}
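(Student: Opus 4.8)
The plan is to verify the hypothesis of Lemma \ref{7.1} for $s$, namely that $s$ carries every $v$-tier of $\mathcal{P}(\Gamma_{1})$ into a single $s(v)$-tier of $\mathcal{P}(\Gamma_{2})$; Lemma \ref{7.1} then yields that $s$ is visible. The remaining ``in particular'' assertions are cheap once this is known: a graph of weak type I is of weak type II, so $\mathcal{P}(\Gamma_{1})$ is of weak type II by Lemma \ref{7.17}(2), hence a quasi-isometry $q$ induces a simplicial isomorphism $q_{\ast}$ by Lemma \ref{7.18}, which is then visible by the first part; and $\Gamma_{2}$ is of weak type II by the quasi-isometry invariance in Corollary \ref{7.22}, so it has no non-adjacent transvection by Lemma \ref{7.17}(1). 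Thus everything reduces to the statement that an arbitrary simplicial isomorphism $s$ with $\Gamma_{1}$ of weak type I maps tiers into tiers.

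To set this up I would first transport structure through $s$: since $\mathcal{P}(\Gamma_{2})\cong\mathcal{P}(\Gamma_{1})$ is of weak type II, $\Gamma_{2}$ is of weak type II by Lemma \ref{7.20}, so both graphs enjoy the branch/peripheral description of Lemma \ref{7.23}. Now fix $v$, put $\bar v=\pi(v)$, and assume $St(\bar v)\neq\Gamma_{1}$ (otherwise $\mathcal{P}(\Gamma_{1})\setminus St(v)=\emptyset$ and there is nothing to check). Since $\Gamma_{1}$ has no separating closed star, $\Gamma_{1}\setminus St(\bar v)$ is a single component $C$, and $\partial C\subseteq lk(\bar v)$ because $\bar v$ is adjacent to nothing outside $St(\bar v)$. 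By Lemma \ref{7.23}, the $v$-branches correspond bijectively to the components of $X(\Gamma_{1})\setminus P_{v}$ and hence --- $C$ being the only component of $\Gamma_{1}\setminus St(\bar v)$ --- to the $v$-peripheral subcomplexes $K\subseteq X(\Gamma_{1})$ of type $\partial C$, with $\partial B=\Delta(K)$ for the branch $B$ attached along $K$; moreover, unwinding the definition of a tier together with Lemma \ref{7.23}, two $v$-branches lie in one $v$-tier exactly when their peripheral subcomplexes have the same height in $P_{v}\cong X(lk(\bar v))\times\mathbb{R}$.

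The argument then splits. If $\partial C=lk(\bar v)$, the only $v$-peripheral subcomplex of type $\partial C$ at a given height is the whole cross-section $X(lk(\bar v))\times\{t\}$, so each $v$-tier is a single $v$-branch and $s$ automatically carries it into the $s(v)$-tier containing its image. The remaining case, $\partial C\subsetneq lk(\bar v)$, occurs precisely when some $\bar w\in lk(\bar v)$ satisfies $St(\bar w)\subseteq St(\bar v)$ --- an adjacent-transvection direction --- and here a $v$-tier may contain several $v$-branches; this is the hard case. My plan for it: suppose $B_{1},B_{2}$ lie in one $v$-tier but $s(B_{1}),s(B_{2})$ lie in different $s(v)$-tiers, and let $K_{1},K_{2}$ be the associated peripheral subcomplexes, which have equal height but are distinct, so (as in the proof of Corollary \ref{7.14}(2)) there is a horizontal hyperplane $h$ of $P_{v}$ separating $K_{1}$ from $K_{2}$ whose dual standard geodesic $l_{h}$ carries a class $u\in lk(v)$; since $h$ also separates the two components of $X(\Gamma_{1})\setminus P_{v}$ glued along $K_{1}$ and $K_{2}$, the branches $B_{1},B_{2}$ meet distinct $u$-tiers. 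Transporting this configuration back from the $\mathcal{P}(\Gamma_{2})$ side through $s^{-1}$, one should be able to force $\Gamma_{1}$ either to contain a separating closed star or to satisfy $\Gamma_{1}=St(\bar a)\cup St(\bar b)$ with $d(\bar a,\bar b)=2$, contradicting weak type I via Lemma \ref{7.25}. I expect this last step to be the main obstacle: $\Gamma_{2}$ is only known to be of weak type II, so the separation data produced on the target side must be pulled back carefully and reconciled with the peripheral-subcomplex and height structure of $\mathcal{P}(\Gamma_{1})$, and it is here that the two defining conditions of weak type I get used together.
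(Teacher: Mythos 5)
Your reduction of the ``in particular'' assertions and your treatment of the easy case $\partial C = lk(\bar v)$ are correct, but the central plan---verifying the hypothesis of Lemma \ref{7.1}, that every simplicial isomorphism $s$ carries each $v$-tier into a single $s(v)$-tier---is doomed, because that hypothesis is false in precisely the hard case you flag. Take a weak type I graph with a vertex $\bar v$ at which $\partial C \subsetneq lk(\bar v)$; for instance the $6$-cycle $a$-$b$-$c$-$d$-$e$-$g$ with an extra vertex $f$ joined to $a$ and $b$, where $\partial C=\{b,g\}\subsetneq\{b,f,g\}=lk(a)$. Then a $v$-tier contains infinitely many branches, distinguished by their peripheral subcomplexes. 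The elementary-permutation construction of Lemma \ref{8.2} (which only needs weak type II plus the absence of non-adjacent transvections, hence applies here) then produces a quasi-isometry $q$ of $X(\Gamma_1)$ that is $\alpha$ on the component $L$ of $X(\Gamma_1)\setminus P_v$ attached to a chosen branch $B$, is $\alpha^{-1}$ on $L'=\alpha(L)$, and is the identity elsewhere, where $\alpha$ translates along $l$ (with $\Delta(l)=v$) by a nonzero amount. The induced $q_\ast \in Aut(\mathcal{P}(\Gamma_1))$ fixes every vertex outside $B\cup B'=B\cup\alpha_\ast(B)$ and swaps $B,B'$, and since the tier $T$ through $B$ contains branches other than $B$, the set $q_\ast(T)=(T\setminus B)\cup B'$ sits at two different heights and is contained in no single $v$-tier. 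So $q_\ast$ violates the hypothesis of Lemma \ref{7.1} while being visible as the theorem asserts: tier-preservation is a sufficient, not a necessary, condition, and it genuinely fails for weak type I graphs. (A smaller issue: your appeal to Corollary \ref{7.14}(2) in the hard case would require $\Gamma_1$ to be of type II, which weak type I does not give.)

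The argument that actually works is the contrapositive, at the level of branches rather than tiers, and it never needs tier-preservation. Suppose $s$ is not visible at $p$. Helly's property (Lemma \ref{2.1}) yields a hyperplane $h$ separating two of the flats $F'_i,F'_j$; taking $l'$ dual to $h$ and $v'=\Delta(l')$, one finds $v'_1\in\Delta(F'_i)$, $v'_2\in\Delta(F'_j)$ in different $v'$-tiers, hence by Lemma \ref{5.17} in different components of $\mathcal{P}(\Gamma_2)\setminus St(v')$. Pulling back through the simplicial isomorphism $s$, the vertices $s^{-1}(v'_1),s^{-1}(v'_2)$ lie in $(F(\Gamma_1))_p$ and in different components of $\mathcal{P}(\Gamma_1)\setminus St(v)$ where $v=s^{-1}(v')$. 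Remark \ref{7.8}---this is the weak type II half of the hypothesis---then forces $v\in(F(\Gamma_1))_p$, so $St(\pi(v))$ separates $F(\Gamma_1)$, contradicting the other half (no separating closed star). You correctly anticipate that the two conditions defining weak type I must be used together, but the mechanism that pairs them is this branch-level contradiction, not the stronger tier-preservation your plan rests on and cannot obtain.
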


\begin{proof}
Let $p$, $\{F_{i}\}_{i=1}^{n}$ and $\{F'_{i}\}_{i=1}^{n}$ be as in Definition \ref{visible}. Suppose $s$ is not visible. Then there exist $i\neq j$ and a hyperplane $h$ separating $F'_{i}$ and $F'_{j}$. Let $\ell'$ be a standard geodesic dual to $h$ and let $v'=\Delta(\ell')$. Then there exists $v'_{1}\in\Delta(F'_{i})$ and $v'_{2}\in\Delta(F'_{j})$ such that they are in different $v'$-tiers. Thus $St(v')$ separates $v'_{1}$ from $v'_{2}$ by Lemma \ref{5.17}. Let $v=s^{-1}(v')$. Then $$(F(\Gamma))_{p}\setminus St(v)$$ is disconnected, hence $v\in(F(\Gamma))_{p}$ by Lemma \ref{7.8}. This would imply $F(\Gamma)$ has a separating closed star, which is a contradiction. The second statement follows from Lemma \ref{7.18}.
\end{proof}

\begin{thm}
\label{7.26}
Suppose $G(\Gamma_{1})$ and $G(\Gamma_{2})$ are groups of weak type I. Then they are quasi-isometric if and only if they are isomorphic.
\end{thm}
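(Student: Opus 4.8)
The \emph{if} direction is immediate, since an isomorphism of finitely generated groups is a quasi-isometry. For the \emph{only if} direction, let $q\colon G(\Gamma_{1})\to G(\Gamma_{2})$ be a quasi-isometry, viewed as a quasi-isometry $X(\Gamma_{1})\to X(\Gamma_{2})$. Since $\Gamma_{1}$ and $\Gamma_{2}$ are of weak type I they are of weak type II (Definition~\ref{7.24}), so by Lemma~\ref{7.17}(1) neither outer automorphism group contains a non-adjacent transvection. Theorem~\ref{4.8} then gives a simplicial isomorphism $q_{\ast}\colon\mathcal{P}(\Gamma_{1})\to\mathcal{P}(\Gamma_{2})$, and Theorem~\ref{weak type I visible} shows $q_{\ast}$ is visible; applying Theorem~\ref{4.8} and Theorem~\ref{weak type I visible} to $q^{-1}$ (legitimate, as $\Gamma_{2}$ is of weak type I) produces a visible simplicial isomorphism $\mathcal{P}(\Gamma_{2})\to\mathcal{P}(\Gamma_{1})$, which one checks is the inverse of $q_{\ast}$. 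So $q_{\ast}$ is visible in both directions.

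Next I would reduce to the case in which $\Gamma_{1}$ and $\Gamma_{2}$ are irreducible, non-complete, with $G(\Gamma_{i})$ of trivial centre. Write $\Gamma_{i}=K_{i}\circ\Gamma_{i}^{0}$, where $K_{i}$ is the clique spanned by the vertices adjacent to every other vertex of $\Gamma_{i}$; then $G(\Gamma_{i})=\mathbb{Z}^{|K_{i}|}\oplus G(\Gamma_{i}^{0})$, the group $G(\Gamma_{i}^{0})$ has trivial centre, and $\Gamma_{i}^{0}$ is again of weak type I because weak type I passes to join factors. Theorem~\ref{2.13} applied to $q$ forces $|K_{1}|=|K_{2}|$ and, after reindexing, yields quasi-isometries between corresponding irreducible non-flat De Rham factors of $X(\Gamma_{1}^{0})$ and $X(\Gamma_{2}^{0})$; each such factor is $X(\cdot)$ of an irreducible non-complete weak type I graph with trivial centre, and proving those graphs have isomorphic RAAG's reassembles $G(\Gamma_{1})\cong G(\Gamma_{2})$. (By the paragraph above, the simplicial isomorphism induced on each matched pair of extension complexes is again visible both ways.)

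In the reduced setting I would run the reconstruction of Definition~\ref{visible}. For $p\in X(\Gamma_{1})$ let $F_{1},\dots,F_{n}$ be the maximal standard flats through $p$; triviality of the centre gives $p=\bigcap_{i}F_{i}$, and visibility of $q_{\ast}$ gives $\bigcap_{i}F_{i}'\neq\emptyset$, while visibility of the inverse forces this to be a single vertex $\bar{q}(p)$, with $\overline{q^{-1}}$ inverse to $\bar q$. Thus $\bar{q}\colon X(\Gamma_{1})^{(0)}\to X(\Gamma_{2})^{(0)}$ is a bijection taking each maximal standard flat onto a maximal standard flat and each intersection of maximal standard flats onto the corresponding intersection, all compatibly with $q_{\ast}$. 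The remaining task is to promote $\bar{q}$ to a group isomorphism $G(\Gamma_{1})\to G(\Gamma_{2})$ (equivalently, to a graph isomorphism $\Gamma_{1}\cong\Gamma_{2}$): I would show that the standard flats $\bar q$ may fail to preserve are exactly the ``non-rigid'' ones, whose simplex in $\mathcal{P}(\Gamma_{1})$ is not an intersection of maximal simplices, i.e.\ whose label is a vertex dominated by an adjacent vertex, and that along these directions the indeterminacy of $\bar q$ is by a composition of adjacent transvections. Correcting $\bar q$ by such an automorphism of $G(\Gamma_{2})$ turns it into a cubical isomorphism, which then induces the desired graph isomorphism.

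The hard part is this last step. In contrast to the finite-out case, thick Euclidean buildings, or mapping class groups, here $\bar{q}$ need not send non-maximal standard flats to standard flats, so one cannot simply invoke ``cubical isomorphism $\Rightarrow$ graph isomorphism''; one must instead analyse precisely the non-rigid directions and argue that, under irreducibility, triviality of the centre, and weak type I, the only remaining freedom is captured by the adjacent-transvection subgroup of $\operatorname{Aut}(G(\Gamma_{2}))$, so that the ambiguity is genuinely an automorphism rather than an obstruction. I expect the weak type I hypothesis on \emph{both} graphs (which, via Theorem~\ref{weak type I visible}, is what delivers visibility in both directions and hence bijectivity of $\bar q$) to be exactly what prevents the non-rigid directions from destroying rigidity; when only one graph is assumed weak type I this breaks down and, as the introduction indicates, one must instead deform $\bar q$, which is the route of Theorem~\ref{1.3}.
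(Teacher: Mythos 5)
Your plan is not complete and overshoots the difficulty of this particular theorem. You correctly observe (via Theorem~\ref{weak type I visible}) that $q_{\ast}$ is visible, and that the same applies to a quasi-inverse. But you then launch into the reconstruction $\bar q$, the reduction to trivial centre, and a classification of rigid versus non-rigid directions, ending with ``The hard part is this last step'' and ``I expect \dots'' --- i.e.\ the step that would actually establish $G(\Gamma_1)\cong G(\Gamma_2)$ is left as a conjecture. That is a genuine gap: you have not shown the ambiguity in $\bar q$ along non-rigid directions is realised by an automorphism of $G(\Gamma_2)$, nor that correcting by such an automorphism yields a cubical isomorphism. That analysis is precisely the content of the much harder Theorem~\ref{1.3} (the whole of Section~4), and it is not needed here.

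The paper's own proof is much shorter and bypasses $\bar q$ entirely. Visibility of $q_{\ast}$ applied at a vertex $x_1\in X(\Gamma_1)$ says that the maximal flats $F_i'$ corresponding under $q_{\ast}$ to the maximal flats through $x_1$ have a common point $x_2$; since every maximal simplex of $q_{\ast}((F(\Gamma_1))_{x_1})$ is one of the $\Delta(F_i')$ and each $\Delta(F_i')\subset (F(\Gamma_2))_{x_2}$, this gives $q_{\ast}\bigl((F(\Gamma_1))_{x_1}\bigr)\subset (F(\Gamma_2))_{x_2}$. Because $i_{x_j}$ is a simplicial isomorphism onto a full subcomplex (with $\pi\circ i_{x_j}=\mathrm{id}$), this is already a simplicial (hence graph) embedding $\Gamma_1\hookrightarrow\Gamma_2$. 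Applying the same to the quasi-inverse gives $\Gamma_2\hookrightarrow\Gamma_1$, and since both graphs are finite these embeddings are isomorphisms, hence $G(\Gamma_1)\cong G(\Gamma_2)$. No De Rham reduction, no reconstruction map, and no deformation are required; the hypothesis that \emph{both} graphs are weak type I is used exactly to get visibility in both directions so that the embedding argument can be run symmetrically.
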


\begin{proof}
Let $q:G(\Ga_1)\to G(\Ga_2)$ be a quasi-isometry. By Theorem \ref{weak type I visible}, $q$ induces a visible simplicial isomorphism $q_{\ast}:\P(\Ga)\to\P(\Ga)$. Pick vertex $x_1\in X(\Ga_1)$. Then the visibility implies $$q_{\ast}((F(\Ga_1))_{x_1})\subset (F(\Ga_2))_{x_2}$$ for some vertex $x_2\in X(\Ga_2)$. This induces a graph embedding $\Ga_1\to \Ga_2$. By consider the quasi-isometry inverse of $q$, we obtain another graph embedding $\Ga_2\to \Ga_1$. Hence $\Ga_1\cong \Ga_2$ and $G(\Ga_1)\cong G(\Ga_2)$.
\end{proof}

Though the definition of weak type I looks technical, it is actually a natural condition to consider for the following reason.

\begin{thm}
\label{7.28}
The following are equivalent:
\begin{enumerate}
\item $G(\Gamma)$ is of weak type I.
\item There do not exist vertex $x\in X(\Gamma)$ and vertex $v\in\mathcal{P}(\Gamma)$ such that $St(v)$ separates $(F(\Gamma))_{x}$.
\item Every element in $Aut(\mathcal{P}(\Gamma))$ is visible.
\end{enumerate}
\end{thm}

We will not need $(3)\Rightarrow (2)$ in the rest of the paper.
\begin{proof}
$(1)\Leftrightarrow(2)$ follows from Lemma \ref{7.29} below and $(2)\Rightarrow(3)$ follows from the proof of Theorem \ref{weak type I visible}. It suffices to prove $(3)\Rightarrow (2)$. We argue by contradiction and suppose $v_{1},v_{2}$ are vertices in different components of $(F(\Gamma))_{x}\setminus St(v)$. Then Lemma \ref{7.9} implies $v_{1}$ and $v_{2}$ are in different $v$-branches. For $i=1,2$, let $B_{i}$ be the $v$-branch that contains $v_{i}$. Let $\ell$ be a standard geodesic such that $\Delta(\ell)=v$ and $x\in \ell$ and pick $\alpha\in G(\Gamma)$ to be an non-trivial element such that $\alpha(\ell)=\alpha$. Let $\alpha_{\ast}:\P(\Ga)\to\P(\Ga)$ be the induced map. Then $\alpha_{\ast}$ fixes every point in $St(v)$. Thus there exists $f\in Aut(\mathcal{P}(\Gamma))$ such that 
\begin{enumerate}
	\item $f$ fixes every vertex in $\mathcal{P}(\Gamma)\setminus(B_{1}\cup\alpha_{\ast}(B_{1}))$;
	\item $f|_{B_{1}}=\alpha_{\ast}|_{B_{1}}$ and $f|_{\alpha_{\ast}(B_{1})}=\alpha^{-1}_{\ast}|_{\alpha_{\ast}(B_{1})}$. 
\end{enumerate}

We claim $f$ is not visible. To see this, for $i=1,2$, pick maximal standard flat $F_{i}$ such that $x\in F_{i}$ and $v_{i}\in\Delta(F_{i})$. Then $$f(\Delta(F_{1}))=\alpha_{\ast}(\Delta(F_{1}))$$ and $f(\Delta(F_{2}))=\Delta(F_{2})$, thus the maximal standard flats corresponding to $f(\Delta(F_{1}))$ and $f(\Delta(F_{2}))$ are separated by a hyperplane dual to $\ell$, hence have empty intersection.
\end{proof}

\begin{lem}
\label{7.29}
$\Gamma$ is of weak type II if and only if there do not exist vertex $x\in X(\Gamma)$ and vertex $v\in \mathcal{P}(\Gamma)\setminus(F(\Gamma))_{x}$ such that $(F(\Gamma))_{x}\setminus St(v)$ is disconnected.
\end{lem}

\begin{proof}
By Lemma \ref{7.8}, it suffices to prove the if direction. Suppose $\Gamma$ is not of weak type II. Let $\{\bar{v}_{i}\}_{i=1}^{3}$, $\{v_{i}\}_{i=1}^{3}$ and $x_{0}\in X(\Gamma)$ be as in Lemma \ref{7.20}. For $i=1,2$, let $\ell_{i}$ be the standard geodesic such that $x_{0}\in \ell$ and $\Delta(\ell_{i})=v_{i}$. Pick vertex $x'_{0}\neq x_{0}$ in $\ell_{1}$ and let $\ell'_{2}$ be the standard geodesic such that $x'_{0}\in \ell'_{2}$ and $\pi(\Delta(\ell'_{2}))=\bar{v}_{2}$. Then $d(v'_{2},v_{1})=2$ where $v'_{2}=\Delta(\ell'_{2})$, in particular $x_{0}\notin P_{v'_{2}}$, hence $v'_{2}\notin (F(\Gamma))_{x_{0}}$. 

Since $P_{v'_{2}}$ and $x_{0}$ are separated by some hyperplane dual to $\ell_{1}$, thus by Lemma \ref{3.2}, $$St(v'_{2})\cap (F(\Gamma))_{x_{0}}\subset St(v_{1}).$$ Recall that $d(\bar{v}_{3},\bar{v}_{1})\ge 2$, then $v_{3}\in (F(\Gamma))_{x_{0}}\setminus St(v_{1})$. It follows that $v_{3}\notin St(v'_{2})$.

We claim that $v_{3}$ and $v_{1}$ are in different components of $\mathcal{P}(\Gamma)\setminus St(v'_{2})$, which then implies $(F(\Ga))_{x_0}\setminus St(v'_2)$ is disconnected. Lemma \ref{7.20} already implies $v_{1}$ and $v_{3}$ are separated by $lk(v_{1})\cap lk(v_{2})$. Let $\alpha\in G(\Gamma)$ be the left translation such that $\alpha(x_{0})=x'_{0}$. Then $\alpha(\ell_2)=\ell'_2$. Now we pass to the induced action $G(\Ga)\acts\P(\Ga)$, then $\alpha(v_2)=v'_2$. Since $\alpha$ fixes $St(v_{1})$, we have $$lk(v_{1})\cap lk(v_{2})=\alpha(lk(v_{1})\cap lk(v_{2}))=lk(v_{1})\cap lk(v'_{2}).$$ So $lk(v_{1})\cap lk(v'_{2})$ separates $v_{1}$ from $v_{3}$ and the claim follows.
\end{proof}

\section{Quasi-isometric invariance of $v$-branches and peripheral subcomplexes}

In this section we collection several observations on quasi-isometric invariance of $v$-branches and $v$-peripheral subcomplexes.
While the content of this section is closely related to Section~\ref{sec:extension complex structure}, it will not be used until Section~\ref{sec:shuffle}.

\subsection{Quasi-isometric invariance of $v$-branches}
Let $G(\Gamma)$ be a RAAG of weak type II, and let $q:G(\Gamma)\to G(\Gamma')$ be a quasi-isometry. Then $G(\Gamma')$ is of weak type II by Corollary~\ref{7.22}. Then both $\out(G(\Gamma))$ and $\out(G(\Gamma'))$ do not admit non-adjacent transvections (Lemma~\ref{7.17}). Thus Theorem~\ref{4.8} implies that $q$ induces a simplicial isomorphism $q_*:\mathcal{P}(\Gamma)\to\mathcal{P}(\Gamma')$. Take a vertex $v$ of $\mathcal{P}(\Gamma)$. Then $q_*$ induces a bijection between $v$-branches and $q_*(v)$-branches. Later in Section~\ref{sec:shuffle} we want to use this bijection as a quasi-isometry invariant of $q$. However, a priori this is problematic as the simplicial isomorphism $q_*$ is not canonically determined by $q$ (as in the construction of the proof of Theorem~\ref{4.8}), so strictly speaking, $q_*(v)$ is not even well-defined. The goal of this subsection is to clarify this point. 
\begin{lem}
	\label{lem:unique}
	Suppose $\Gamma$ is of weak type II. Let $q:G(\Gamma)\to G(\Gamma')$ be a quasi-isometry and let $\tilde{q}_*$ be as in Lemma~\ref{iso}. For $i=1,2$, let $(q_i)_*:\mathcal{P}(\Gamma)\to \mathcal{P}(\Gamma')$ be a simplicial isomorphism such that $(q_i)_*(s)=\tilde q_*(s)$ for any stable simplex $s\subset\mathcal{P}(\Gamma)$.
	\begin{enumerate}
		\item For any vertex $v\in\mathcal{P}(\Gamma)$, we have $(q_1)_*(St(v))=(q_2)_*(St(v))$.
		\item For $i=1,2$, we know $d_{H}(q(P_{v}),P_{(q_i)_{\ast}(v)})<\infty$.
		\item If $\Gamma$ is of type II and $G(\Gamma)$ is centerless, then $(q_1)_*=(q_2)_*$.
	\end{enumerate} 
\end{lem}

\begin{proof}
	Let $\bar v\in \Gamma$ be the label of $v\in\mathcal{P}(\Gamma)$. Then it follows from Lemma \ref{7.17}, Lemma \ref{4.7} and Lemma \ref{5.1} that $St(\bar{v})$ is a stable subgraph. Thus $St(v)$ is a stable subcomplex of $\mathcal{P}(\Gamma)$. By Corollary~\ref{7.22} and Lemma~\ref{7.17}, both $\out(G(\Gamma))$ and $\out(G(\Gamma'))$ do not admit non-adjacent transvection. Thus Corollary~\ref{4.9} implies $$(q_1)_*(St(v))=(q_2)_*(St(v))=\tilde q_*(St(v)).$$ Then Assertion (1) follows. This also implies Assertion (2) as $\Delta(P_v)=St(v)$ and $\Delta(P_{(q_i)_{\ast}(v)})=St((q_i)_{\ast}(v))$.

	Now we prove Assertion (3). By Corollary~\ref{7.22}, $\Gamma'$ is of type II.
	As $$(q_i)_*(St(v))=St((q_i)_*(v)),$$ we know $$St(w_1)=St(w_2)$$ where $w_i$ is defined as $(q_i)_*(v)$. Then $$St(\bar w_1)=St(\bar w_2).$$ We must have $\bar w_1=\bar w_2$, otherwise $lk(\bar w_1)\cap lk(\bar w_2)$ separates $\bar w_1$ from any vertex outside $St(\bar w_1)$, which contradicts that $\Gamma'$ is of type II (note that $G(\Gamma)$ is centerless implies that $G(\Gamma')$ is centerless, so $G(\Gamma')$ contains at least one vertex outside $St(\bar w_1)$). Thus $w_1=w_2$.
\end{proof}

\begin{example}
We give an example of Lemma~\ref{lem:unique} (1).
Let $\Gamma$ be the graph obtained by gluing a pentagon and the 1-skeleton of a 3-simplex along an edge. Let $a$ and $b$ be the two vertices of $\Gamma$ which is outside the pentagon. Take a quasi-isometry $q:G(\Gamma)\to G(\Gamma)$. Let $v\in\mathcal{P}(\Gamma)$ be a vertex labeled by $a$, and let $s$ be the maximal simplex of $\mathcal{P}(\Gamma)$ containing $v$. Then $s$ is a stable subcomplex. We know $q_*(a)\in \tilde q_*(s)$. However, $q_*(a)$ could be either the vertex in $\tilde q_*(s)$ labeled by $a$ or the vertex in $\tilde q_*(s)$ labeled by $b$. Note that the star of these two possible values of $q_*(a)$ are equal.

We now give an example of Lemma~\ref{lem:unique} (3). Let $\Gamma$ be the graph obtained by gluing a pentagon and a triangle along an edge. Let $a$ the vertex of $\Gamma$ outside the pentagon. Take a quasi-isometry $q:G(\Gamma)\to G(\Gamma)$. Let $v\in\mathcal{P}(\Gamma)$ and $s\subset \mathcal{P}(\Gamma)$ be as before. Then $q_*(a)\in \tilde q_*(s)$. Then $q_*(a)$ can only be the vertex in $\tilde q_*(s)$ labeled by $a$. Note that if $v$ is not labeled by $a$, then $\{v\}$ is a stable subcomplex of $\mathcal{P}(\Gamma)$. Then for any standard line $\ell$ with $\Delta(\ell)=v$, $q(\ell)$ is at finite Hausdorff distance to a standard line $\ell'$ with $\Delta(\ell')=q_*(v)$. However, if $v$ is labeled by $a$, in general $q(\ell)$ might not be at finite Hausdorff distance from any standard line, due to the transvection at $a$, yet $q_*(v)$ is canonically defined.
\end{example}

Returning to the discussion before Lemma~\ref{lem:unique}, even though $q_*(v)$ depends on the choices in the proof of Theorem~\ref{4.8}, Lemma~\ref{lem:unique} implies that the set $St(q_*(v))$ does not depend on these choices, so is the set of $q_*(v)$-branches.

\begin{cor}
	\label{cor:unique}
	Suppose $\Gamma$ is of type II and $G(\Gamma)$ is centerless. Let $q:G(\Gamma)\to G(\Gamma')$ be a quasi-isometry and let $\tilde{q}_*$ be as in Lemma~\ref{iso}. Then there is a unique simplicial isomorphism $q_*:\mathcal{P}(\Gamma)\to \mathcal{P}(\Gamma')$  such that $(q_i)_*(s)=\tilde q_*(s)$ for any stable simplex $s\subset\mathcal{P}(\Gamma)$. In particular, for any quasi-isometric inverse $q^{-1}:G(\Gamma')\to G(\Gamma)$, we have $(q^{-1})_*=(q_*)^{-1}$.
\end{cor}

\begin{proof}
The first statement of the corollary is an immediate consequence of Lemma~\ref{lem:unique} (3). For the in particular part, note that $\widetilde{q^{-1}}_*\circ \tilde q_*(s)=s$. Then $q^{-1}_*\circ  q_*$ must be the identity map by Lemma~\ref{lem:unique} (3) again. Similarly, $q_*\circ q^{-1}_*$ is identity.
\end{proof}

\subsection{Correspondence between $v$-branches and subcomplexes of $X(\Gamma)$} 
\label{sec:correspondence}
The main goal of this subsection is Proposition \ref{7.23}, where we establish 1-1 correspondence between $v$-branches and components of $X(\Gamma)\setminus P_v$, and prove a quasi-isometric invariance result.

Let $\Gamma$ be an arbitrary simplicial graph (not necessarily of weak type II). Take vertex $v\in\mathcal{P}(\Gamma)$.
For any $v$-branch $B$, we denote the full subcomplex of $\mathcal{P}(\Gamma)$ spanned by vertices in $B$ and $\partial B$ by $\bar{B}$. For any component $L$ of $X(\Gamma)\setminus P_{v}$, we use $\partial L$ to denote the full subcomplex of $X(\Gamma)$ spanned by vertices outside $L$ which are adjacent to some vertex in $L$, and use $\bar{L}$ to denote the full subcomplex spanned by vertices in $L$ and $\partial L$. Note that $\bar{B}$ may not be the closure of $B$ and $\bar{L}$ may not be the closure of $L$.

For any subcomplex $K\subset X(\Gamma)$, let $\{F_{\lambda}\}_{\lambda\in \Lambda}$ be the collection of standard flats in $K$ and define $\Delta(K)=\cup_{\lambda\in\Lambda} \Delta(F_{\lambda})$. 

\begin{lem}
	\label{full subcomplex}
	If $K$ is a convex subcomplex, then $\Delta(K)$ is a full subcomplex of $\P(\Ga)$.
\end{lem}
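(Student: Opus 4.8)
The plan is to show directly that $\Delta(K)$ contains every simplex of $\P(\Ga)$ all of whose vertices lie in $\Delta(K)$. Unpacking the definitions, a simplex $\sigma \subset \P(\Ga)$ corresponds to a parallel class of standard flats, and its vertices are parallel classes of standard geodesics that are mutually orthogonal. So suppose $v_0, \dots, v_m$ are vertices of $\P(\Ga)$ spanning a simplex $\sigma$, and assume each $v_j \in \Delta(K)$; we must produce a standard flat $F \subset K$ with $\Delta(F) = \sigma$. By definition of $\Delta(K)$, for each $j$ there is a standard geodesic $\ell_j \subset K$ with $\Delta(\ell_j) = v_j$. The issue is that these $\ell_j$ are scattered through $K$ and need not be mutually orthogonal or share a common vertex, so they do not obviously assemble into a single flat sitting inside $K$.

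The first step is to use combinatorial convexity of $K$ (Lemma \ref{combinatorial convex}) to move the $\ell_j$ into alignment. Starting from $\ell_0$, pick a vertex $p_0 \in \ell_0$. For each $j \geq 1$, I would argue that the parallel class of $\ell_j$ contains a representative through $p_0$ that still lies in $K$: indeed $K$ is convex, hence its parallel sets behave well, and one can connect a vertex of $\ell_j$ to $p_0$ by a combinatorial geodesic $\omega \subset K$; the hyperplanes dual to $\ell_j$ either all cross $\omega$ or the relevant translate of $\ell_j$ through the far endpoint of $\omega$ still lies in $K$ by Lemma \ref{combinatorial convex}. Iterating, one lands standard geodesics $\ell_0', \dots, \ell_m'$ through the common vertex $p_0$, each in $K$, with $\Delta(\ell_j') = v_j$. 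Now the key point: since $v_0, \dots, v_m$ span a simplex in $\P(\Ga)$, the map $i_{p_0}: F(\Ga) \to \P(\Ga)$ (isometric embedding, Lemma \ref{isometric embedding}) together with the correspondence between simplices of $\P(\Ga)$ through a fixed point and cliques of $\Ga$ forces the labels $V_{\ell_0'}, \dots, V_{\ell_m'}$ to span a clique in $\Ga$; hence the $\ell_j'$ are pairwise orthogonal standard geodesics through $p_0$, so they span a standard flat $F = K(p_0, \Ga')$ where $\Ga'$ is that clique, and $\Delta(F) = \sigma$.

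The remaining step is to conclude $F \subset K$. Here I would invoke combinatorial convexity once more: $F$ is spanned by the $\ell_j'$, all of which lie in $K$ and pass through $p_0$; every vertex of $F$ is obtained from $p_0$ by a combinatorial geodesic that stays within the standard flat $F$ and uses only the directions $\ell_0', \dots, \ell_m'$, and such a geodesic joins two vertices of $K$ (namely $p_0$ and suitable vertices on the $\ell_j'$, reached coordinate by coordinate), so by Lemma \ref{combinatorial convex} it lies in $K$. Therefore $F \subset K$ and $\sigma = \Delta(F) \subset \Delta(K)$, which is exactly fullness of $\Delta(K)$.

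The main obstacle I anticipate is the first step — sliding the geodesics $\ell_j$ to a common basepoint while keeping them inside $K$. One must be careful that translating $\ell_j$ along a combinatorial geodesic in $K$ genuinely produces a parallel standard geodesic still contained in $K$, rather than merely a coarsely parallel object; this is where convexity of $K$ (as opposed to just $K$ being a standard subcomplex) is essential, and one should check it cleanly using Lemma \ref{combinatorial convex} and the description of parallelism via hyperplanes. Once all the geodesics share a vertex, the orthogonality of the labels and the final containment are comparatively routine.
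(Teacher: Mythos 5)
Your translation step has a genuine gap, and it is more serious than the caveat you raise at the end. You fix $p_0\in\ell_0$ and then assert that the parallel class $v_j$ of $\ell_j$ has a representative through $p_0$ lying in $K$. In general no such representative exists at all, in $K$ or anywhere, because $p_0$ need not lie in the parallel set $P_{v_j}$. Concretely, take $\Gamma$ the path $a-b-c-d$ and let $K$ be the (cubical) convex hull of $\ell_0=K(a,\{b\})$ and $\ell_1=K(d,\{c\})$. The vertices $v_0=\Delta(\ell_0)$ and $v_1=\Delta(\ell_1)$ are adjacent in $\P(\Gamma)$ (witnessed by the orthogonal geodesics $K(e,\{b\})$, $K(e,\{c\})$), yet $\ell_0\cap P_{v_1}=\emptyset$: every vertex of $\ell_0$ has an $a$ in its normal form while $P_{v_1}=K(e,\{b,c,d\})$ contains none, so no $c$-geodesic in the class $v_1$ passes through any point of $\ell_0$. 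The lemma still holds here -- $K$ does contain the $\{b,c\}$-flat through $e$ -- but the flat is obtained by choosing the common base point well, not by sliding $\ell_1$ onto a prescribed point of $\ell_0$. The hyperplane bookkeeping you sketch does not cure this: once $\omega$ crosses a hyperplane disjoint from $P_{v_j}$ (here, the $a$-hyperplane between $e$ and $a$), the parallel translate ceases to exist, and nothing forces $\omega$ to avoid such hyperplanes.

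The ingredient you are missing is Helly, Lemma \ref{2.1}. The subcomplexes $K,P_{v_0},\dots,P_{v_m}$ are convex and pairwise intersect: $K\cap P_{v_j}\supset\ell_j$, while $P_{v_i}\cap P_{v_j}$ contains the $2$-flat witnessing the edge $\{v_i,v_j\}$. Helly therefore produces a vertex $p_0\in K\cap\bigcap_j P_{v_j}$, and with this $p_0$ your remaining steps are sound: $K\cap P_{v_j}$ is a convex subcomplex of the product $P_{v_j}\cong l_{v_j}\times h_{v_j}$ containing the whole line $\ell_j$, hence splits as $l_{v_j}\times M$, so the parallel geodesic through $p_0$ lies in $K$; the labels of these geodesics span a clique, so they span a standard flat $F$ with $\Delta(F)=\sigma$; and $F\subset K$ by combinatorial convexity (Lemma \ref{combinatorial convex}), exactly as you argue. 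With that one step inserted, this is a valid alternative to the paper's proof, which avoids the base-point problem entirely by a coarse argument: it takes any representative flat $F$ with vertices of $\Delta(F)$ in $\Delta(K)$, forms $(F',K')=\inc(F,K)$, uses Lemma \ref{2.6}(4) and the parallel copies in $K$ to show each coordinate direction of $F$ stays bounded distance from the convex sub-box $F'\subset F$, concludes $F'=F$, and reads off the desired flat as $K'\subset K$.
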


\begin{proof}
	Let $F\subset X(\Ga)$ be a standard flat such that vertices of $\Delta(F)$ are in $\Delta(K)$. Suppose $(F',K')=\inc (F,K)$. Lemma \ref{2.6} (4) implies that every standard geodesic of $F$ is contained in a $R$-neighbourhood of $F'$ for some $R>0$. However, $F'$ is a convex subcomplex of $F$, so actually $F=F'$. Hence $\Delta(F)\subset\Delta(K)$.
\end{proof} 

\begin{lem}
	\label{lem:closure}
Let $\Gamma$ be arbitrary. Pick vertex $v\in\mathcal{P}(\Gamma)$ and let $\bar{v}=\pi(v)$. Let $L$ be a component of $X(\Gamma)\setminus P_{v}$. Then
	\begin{enumerate}
		\item $\partial L$ is a $v$-peripheral subcomplex of $X(\Gamma)$. Moreover, the topological boundary $\partial^{Top} L$ of $L$ (i.e. $\partial^{Top} L$ is the closure of $L$ in $X(\Gamma)$ minus $L$) is contained in $\partial L$, and $\partial^{Top} L$ contains the 1-skeleton of $\partial L$.
		\item $\bar{L}=L\cup \partial L$, and $\bar L$ is a convex subcomplex of $X(\Gamma)$.
	\end{enumerate}
\end{lem}

\begin{proof}
		To see (1), note that Lemma \ref{7.10} implies there exists a component $C$ of $\Ga\setminus St(\bar{v})$ such that the label of any edge which connects a vertex in $L$ and a vertex outside $L$ is inside $C$. Pick a vertex in $X(\Ga)\setminus L$ which is adjacent to some vertex in $L$, and let $K$ be the $v$-peripheral subcomplex of type $\partial C$ that contains this vertex. Then Lemma \ref{7.11} implies vertex set of $\partial L$ is contained in $K$, hence $\partial L\subset K$. Note that $\partial^{Top} L$ is a subcomplex whose vertex set is the same as $\partial L$, hence $\partial^{Top} L\subset \partial L$. On the other hand, the proof of the first statement of Corollary \ref{7.14} (1) implies every edge of $K$ is contained in $\partial^{Top} L$. Thus (1) follows.
	
	To see (2), note that $L\cup \partial L$ is a subcomplex.  We claim for a vertex $x\in L\cup \partial L$, if a collection of mutual orthogonal edges emanating from $x$ are contained in $L\cup \partial L$, then the cube spanned by these edges is contained in $L\cup \partial L$. This is clear when $x\in L$. The case when $x\in \partial L$ follows from that the labels of these edges are either vertices in $C$ or vertices in $\partial C$. From the claim we know $L\cup \partial L$ is a locally convex subcomplex of $X(\Gamma)$, in particular it is a full subcomplex, hence (2) follows as locally convex subcomplexes of CAT(0) cube complexes are convex.
\end{proof}

\begin{prop}
	\label{7.23}
Suppose $\Gamma$ is of weak type II. Pick vertex $v\in\mathcal{P}(\Gamma)$ and let $\bar{v}=\pi(v)$. Let $L$ be a component of $X(\Gamma)\setminus P_{v}$.  Let $q: X(\Gamma)\to X(\Gamma')$ be a quasi-isometry. Then the following holds true.
	\begin{enumerate}
		\item There is a 1-1 correspondence between $v$-branches and components of $X(\Gamma)\setminus P_{v}$. In particular, there exists a unique $v$-branch $B$ such that $\Delta(\bar{L})=\bar{B}$ and $\Delta(\partial L)=\partial B$. 
		\item There is a component $L'$ of $X(\Gamma')\setminus P_{q_{\ast}(v)}$ such that $$d_{H}(q(L),L')<\infty.$$
		\item For any component $C$ of $\Gamma\setminus St(\bar{v})$, $\partial C$ is a stable subgraph of $\Gamma$.
	\end{enumerate}
\end{prop}

\begin{proof}
We first prove Assertion (1). For $i=1,2$, let $e_{i}\subset X(\Gamma)$ be an edge such that one of its endpoint $x_{i1}\in X(\Gamma)\setminus P_{v}$ and another endpoint $x_{i2}\in P_{v}$. Let $\bar{v}_{i}\in \Gamma$ be the label of $e_{i}$, and let $C_{i}$ be the component of $\Gamma\setminus St(\bar{v})$ that contains $\bar{v}_{i}$. We claim $x_{11}$ and $x_{21}$ are in the same component of $X(\Gamma)\setminus P_{v}$ if and only if $C_{1}=C_{2}$ and $x_{21}$ and $x_{22}$ belong to the same $v$-peripheral subcomplex of $\partial C_{1}$. Then we have a 1-1 correspondence between components of $X(\Gamma)\setminus P_{v}$ and the pair $(C,K)$ as in Corollary \ref{7.14} (1) and the first part of Assertion (1) follows. The only if part of the claim follows from Lemma \ref{7.10} and Lemma \ref{7.11}. Note that $C_{1}$ contains more than one point (otherwise $\Ga$ will not be of weak type II), so the if direction holds in the special case when $\bar{v}_{1}=\bar{v}_{2}$, $x_{21}=x_{22}$ and $x_{11}\neq x_{12}$. The general case follows from the argument in the proof of Corollary \ref{7.14} (1).
	
	Suppose $(C,K)$ is the pair as above corresponding to $L$. Then the above claim implies $\partial L=K$. Let $B$ be the $v$-branch corresponding to $(C,K)$. Then $\partial B=\Delta(K)=\Delta(\partial L)$. Now we prove $\Delta(\bar{L})\subset\bar{B}$. Let $\ell\subset\bar{L}$ be a standard geodesic. If $d(\Delta(\ell),v)\ge 2$, by Lemma \ref{7.4}, there exists standard geodesic $\ell_{1}$ such that $\Delta(\ell_{1})$ and $\Delta(\ell)$ are in the same $v$-branch and $\ell_{1}\cap P_{v}\neq \emptyset$. The argument in Lemma \ref{7.9} implies that there exists an edge path $\omega\subset X(\Gamma)\setminus P_{v}$ connecting a vertex in $\ell$ and a vertex in $\ell_{1}$, thus $$\ell_{1}\subset \bar{L}.$$ It follows that $\ell_{1}\cap K\neq\emptyset$ and $\pi(\Delta(\ell_{1}))\subset C$, so $$\Delta(\ell_{1})\in B$$ by Corollary \ref{7.14} (1). Hence $$\Delta(\ell)\in B.$$ If $d(\Delta(\ell),v)= 1$, since $\bar{L}\cap P_{v}=K$, we apply Lemma \ref{2.6} (4) with $C_1=\bar{L}$ and $C_2=P_v$ to deduce that $\ell$ stays in the $R$-neighbourhood of $K$ for some $R>0$, thus $$\Delta(\ell)\in \Delta(K)=\partial B.$$ Note that $\Delta(\ell)\in\bar{B}$ in both cases, so $\Delta(\bar{L})\subset\bar{B}$. Now we prove $\bar{B}\subset\Delta(\bar{L})$. Pick vertex $w\in\bar{B}$. If $w\in\partial B$, then we are done by $\partial B=\Delta(\partial L)\subset\Delta(\bar{L})$. Suppose $w\in B$. Pick an edge $e\subset X(\Ga)$ which connects a point in $L$ and a point outside $L$ and let $\ell_e$ be the standard geodesic containing $e$. Then $\ell_e\subset \bar{L}$ by the discussion in the previous paragraph. Then $$\Delta(\ell_e)\in \bar{L}\subset \bar{B}.$$ However, $\Delta(\ell_e)\notin \partial B$, hence $$\Delta(\ell_e)\in B.$$ The argument in Lemma \ref{7.9} implies that there exists an edge path outside $P_{v}$ connecting a vertex in $\ell_e$ and a vertex in $P_w$. Thus $w\in\Delta(\bar{L})$. In summary, each vertex of $\bar{B}$ is in $\Delta(\bar{L})$. Since $\bar{L}$ is convex, $\Delta(\bar{L})$ is a full subcomplex by Lemma \ref{full subcomplex}, then $\bar{B}\subset\Delta(\bar{L})$.
	
	To see (2), let $\{\Delta_{\lambda}\}_{\lambda\in\Lambda}$ be the collection of maximal simplexes in $\mathcal{P}(\Gamma)$ such that $\Delta_{\lambda}\cap B\neq\emptyset$ and let $\{F_{\lambda}\}_{\lambda\in\Lambda}$ be the collection of maximal standard flats such that $\Delta(F_{\lambda})=\Delta_{\lambda}$. We claim $$d_{H}(L,\cup_{\lambda\in\Lambda}F_{\lambda})<\infty$$ where $d_H$ denotes the Hausdorff distance. Note that $\Delta_{\lambda}\subset\bar{B}$, hence $F_{\lambda}\subset\bar{L}$ by Assertion (1) and the maximality of $F_{\lambda}$. Pick an arbitrary vertex $x\in L$ and let $\ell_{x}$ be a standard geodesic such that $d(\pi(\Delta(\ell_{x})),\bar{v})\ge 2$ and $x\in \ell_{x}$. Then $$d(\Delta(\ell_{x}),v)\ge 2.$$ Hence $\ell_x\cap P_v$ is at most one point. It follows from the proof of Assertion (1) that $\ell_{x}\subset \bar{L}$ and $\Delta(\ell_{x})\subset B$. Thus there exists $\lambda_{0}\in\Lambda$ such that $$x\in \ell_x\subset F_{\lambda_{0}}.$$ Thus $L$ is contained in some neighborhood of $\cup_{\lambda\in\Lambda}F_{\lambda}$. However, $$d_H(L,\bar{L})<\infty$$ by Lemma~\ref{lem:closure}, hence the claim follows. Let $B'=q_{\ast}(B)$ and $L'$ be the component of $X(\Gamma')\setminus P_{q_{\ast}(v)}$ corresponding to $B'$ (note that $\Gamma'$ is also of weak type II by Corollary \ref{7.22}). By Lemma \ref{4.7}, for each $\lambda\in\Lambda$, there exists a unique maximal standard flat $F'_{\lambda}\subset X(\Gamma')$ such that $$d_{H}(q(F_{\lambda}),F'_{\lambda})<C$$ ($C$ is independent of $\lambda$). Note that $\{\Delta(F'_\lambda)\}_{\lambda\in\Lambda}$ is the collection of maximal simplexes of $\mathcal{P}(\Gamma)$ which have non-empty intersection with $B'$. We argue as before to deduce  $$d_{H}(L',\cup_{\lambda\in\Lambda}F'_{\lambda})<\infty.$$ Then $$d_{H}(q(L),L')<\infty.$$

	Now we prove (3). By Lemma~\ref{lem:unique},  $d_{H}(P_{v},P_{q_{\ast}(v)})<\infty$. Let $$K'=P_{q_{\ast}(v)}\cap \bar{L'}.$$ Then $K'$ is a $q_{\ast}(v)$-peripheral subcomplex by Lemma~\ref{lem:closure}, hence is a standard subcomplex. Recall that $K=P_{v}\cap L$, so $d_{H}(q(K),K')<\infty$ by Lemma \ref{2.6} (4).
\end{proof}

\section{Rigidity and flexibility of RAAG of weak type I}
\subsection{Motivating discussion and overview}
The goal of this section is to understand RAAGs that are quasi-isometric to a given RAAG of weak type I. We will start with a discussion of motivating examples.

We start with the case when $\Gamma$ is a pentagon. Let $G(\Gamma')$ be a RAAG quasi-isometric to $G(\Gamma)$. This gives a simplicial isomorphism $\mathcal{P}(\Gamma)\to \mathcal{P}(\Gamma')$ (cf. Theorem~\ref{4.8}). We can use this to conjugate the natural action of $G(\Gamma')$ on $\mathcal{P}(\Gamma')$ to another action $G(\Gamma')\curvearrowright \mathcal{P}(\Gamma)$. However, each automorphism of $\mathcal{P}(\Gamma)$ is visible, as for each vertex $v\in \mathcal{P}(\Gamma)$, each $v$-tier only contains one $v$-branches. This follows from Corollary~\ref{7.14} and Lemma~\ref{7.1}. Thus each automorphism of $\mathcal{P}(\Gamma)$ gives a bijection of $G(\Gamma)$ that preserves maximal standard flats (recall that we identify $G(\Gamma)$ as the vertex set of $X(\Gamma)$, and we will refer maximal standard flats in $G(\Gamma)$ as the intersections of maximal standard flats in $X(\Gamma)$ with $G(\Gamma)$). As a special feature of the pentagon graph, we know each standard flats in $G(\Gamma)$ is an intersection of maximal standard flats. Thus each automorphism of $\mathcal{P}(\Gamma)$ gives a bijection of $G(\Gamma)$ that sends standard flats to standard flats. We refer to this kind of bijections as \emph{flat-preserving bijections}. The action $G(\Gamma')\curvearrowright \mathcal{P}(\Gamma)$ gives an action $G(\Gamma')\curvearrowright G(\Gamma)$ by flat-preserving bijections. If each flat-preserving bijection of $G(\Gamma)$ were left translations of $G(\Gamma)$, then we can conclude immediately that $G(\Gamma')$ is isomorphic to a finite index subgroup of $G(\Gamma)$. However, this is not the case in general. 

Recall that each standard line in $G(\Gamma)$ is a left coset of a standard $\mathbb Z$-subgroup. Thus each standard line is labeled by a generator of $G(\Gamma)$, and inherits an order from $\mathbb Z$. A flat-preserving bijection of $G(\Gamma)$ is a left translation if and only if it respects the order on each standard line, and respects the labels of standard lines. Thus it is too much to hope that the action $G(\Gamma')\curvearrowright G(\Gamma)$ is by left translations of $G(\Gamma)$. However, if we are able to find a different labeling and ordering of the standard lines of $G(\Gamma)$ such that both of them are invariant $G(\Gamma')\curvearrowright G(\Gamma)$, then the action of $G(\Gamma')\curvearrowright G(\Gamma)$ is conjugate to an action by left translations, which will imply $G(\Gamma')$ is a finite index subgroup of $G(\Gamma)$. The new labeling and ordering need to satisfy some natural consistency conditions for this to work, and
the conjugation is via a flat-preserving bijection which connects the new labeling and ordering of standard lines to the old ones. 

The way to produce $G(\Gamma')$-invariant labeling and ordering of standard lines of $G(\Gamma)$ is as follows. As the map $\mathcal{P}(\Gamma)\to \mathcal{P}(\Gamma')$ coming from the quasi-isometry is visible, it gives a map $f:G(\Gamma) \to G(\Gamma')$ which is $G(\Gamma')$-invariant, where the action $G(\Gamma')\curvearrowright G(\Gamma')$ is by left translation. Very roughly speaking, we want to pull back the usual labeling and ordering of standard lines in $G(\Gamma')$ to obtain a $G(\Gamma')$-invariant labeling and ordering of standard lines of $G(\Gamma)$. However, this needs some work as $f$ is not an injective map, so pulling back does not make sense immediately. This gives a rough summary of the strategy in \cite{raagqi1} for handling the pentagon case.

Now we consider the general case of RAAGs of weak type I. The example to have in mind is $\Gamma$ being a pentagon and a triangle glued along an edge. Any quasi-isometry $q:G(\Gamma)\to G(\Gamma')$ still induces a visible simplicial isomorphism $q_*:\mathcal{P}(\Gamma)\to\mathcal{P}(\Gamma')$. Thus as before we have an action $\rho:G(\Gamma')\curvearrowright G(\Gamma)$. The key difference from the previous case is that the action is no longer by flat-preserving bijections. For an element $g'\in G(\Gamma')$, we still know $\rho(g')$ send maximal standard flats to maximal standard flats. But standard flats which are not maximal might not be preserved, as they are not necessarily intersections of maximal standard flats. For example, let $\bar v$ be the vertex in $\Gamma$ that is not insider the pentagon. Then $\rho(g')$ could send a standard line labeled by $\bar v$ to something which is not a standard line. More precisely, suppose the vertices of the triangle $\Gamma_1$ in $\Gamma$ are $\{\bar v,\bar v_1,\bar v_2\}$. Then $\rho(g')$ sends a standard flat $F$ of type $\Gamma_1$ to another 3-dimensional standard flat (as this standard flat is maximal). Moreover, as standard lines of type $\bar v_1$ and $\bar v_2$ are intersections of maximal standard flats, they are sent to standard lines by $\rho(g')$. As a consequence, those 2-dimensional standard flats of type $\{\bar v_1,\bar v_2\}$ are also sent to standard flats by $\rho(g')$. We want to think $F$ as being foliated by these 2-dimensional standard flats. The map $\rho(g')$ sends the leaves to parallel standard flats in another 3-dimensional standard flats, however, the $\rho(g')$-images of standard lines of type $\bar v$ in $F$ (which are transverse to all the leaves) could be rather arbitrary. So we can think $\rho(g')|_{F}$ as a leave-preserving shearing map.

For more general $\Gamma$ of weak type I, we will typically see that $\rho(g')$ preserves some standard flats, but there could also be different types of ``shearing'' of a family of lower dimensional standard flats inside a bigger standard flats. Such shearing could happen whenever an adjacent transvection is possible (e.g. in the above example, there is an adjacent transvection sending $\bar v$ to $\bar v_1$ or $\bar v_2$), though the behavior of $\rho(g')$ is in generally more complicated than adjacent transvections. Moreover, in general RAAGs of weak type I could allow many adjacent transvections.

The strategy in the case of pentagon no longer works, due to the failure of preservation of standard lines. In the case of weak type I, we will use an atlas system which encodes how various shearing is happening.  More precisely, an atlas on $G(\Gamma)$ is a collection of bijections between stable (cf. Definition~\ref{3.38}) standard flats in $G(\Gamma)$ and $\mathbb Z^n$ with suitable $n$, such that these bijections satisfies some natural consistency condition (see Definition~\ref{L-atlas}). This generalizes the pentagon case, as each standard line is stable in this case and we will have bijections between standard lines and $\mathbb Z$s, which correspond to the order on standard lines as discussed before. 

Now the main point is to construct an atlas on $G(\Gamma)$ which is invariant under the action of $G(\Gamma')$ (cf. Proposition~\ref{prop:atlas}). Once this is established, we can conclude that the action of $G(\Gamma')$ on $G(\Gamma)$ is conjugated to an action by left translation, which implies that $G(\Gamma')$ is a finite index subgroup of $G(\Gamma)$.

\subsection{An atlas for RAAG}

Let $G(\Gamma)$ be a RAAG of weak type I with trivial center. We identify $G(\Ga)$ with the 1-skeleton of $X(\Ga)$ and define a \textit{standard flat} in $G(\Ga)$ to be the vertex set of some standard flat in $X(\Ga)$.

Theorem \ref{weak type I visible} implies there is a homomorphism $s:Aut(P(\Gamma))\to Perm(G(\Gamma))$, where $Perm(G(\Gamma))$ is the permutation group of elements in $G(\Ga)$. Note that images of $s$ preserve maximal standard flats. However, this may not be true for all standard flats, since adjacent transvections are allowed in $\out(G(\Ga))$.

Let $\mathcal{P}(\Ga)$ be the extension complex and let $\pi:\P(\Ga)\to F(\Ga)$ be label-preserving simplicial map defined in Section \ref{basics about raag}. Note that for any vertex $x\in X(\Ga)$, $\pi$ induces an isomorphism $(F(\Ga))_x\to F(\Ga)$. This motivates the following definition.

\begin{definition}[Coherent labelling]
	\label{def:coherent labeling}
A \textit{coherent labelling} of $G(\Ga)$ is a simplicial map $L:\mathcal{P}(\Ga)\to F(\Ga)$ such that $L$ induces an isomorphism $(F(\Ga))_x\to F(\Ga)$ for every vertex $x\in X(\Gamma)$.
\end{definition} 

Assume $n=\dim(X(\Gamma))$. Let $\mathcal{F}(\Gamma)$ be the collection of stable standard flat in $X(\Gamma)$ and let $\mathcal{F}_{k}(\Gamma)$ be the collection of $k$-flats in $\mathcal{F}(\Gamma)$. Define $\mathcal{F}_{< k}(\Gamma):=\cup_{i=1}^{k-1}\mathcal{F}_{i}(\Gamma)$. Here we are considering the set itself, not the coarse equivalent classes of the sets (compared to Theorem \ref{4.8}). Recall that we use $v(K)$ to denote the set of vertices in a subset $K$ of some polyhedral complex. 

\begin{definition}[$L$-atlas]
	\label{L-atlas}
	An \textit{$L$-atlas} is a coherent labelling $L:\mathcal{P}(\Gamma)\to F(\Gamma)$ together with a collection of bijections 
	\begin{equation*}
	\{v(F)\to \Bbb Z^{v(L(\Delta(F)))}\}_{F\in\mathcal{F}_{k}(\Gamma),1\le k\le n}
	\end{equation*}
	with the following compatibility condition: pick $F_{1}\in\mathcal{F}_{m}(\Gamma)$ and $F_{2}\in\mathcal{F}_{\ell}(\Gamma)$ with $F_{1}\subset F_{2}$, let $f:v(F_{2})\to \Bbb Z^{v(L(\Delta(F_{2})))}$ and $g:v(F_{1})\to \Bbb Z^{v(L(\Delta(F_{1})))}$ be the associated bijections. Suppose $p:\Bbb Z^{v(L(\Delta(F_{2})))}\to \Bbb Z^{v(L(\Delta(F_{1})))}$ is the natural projection. Then
	\begin{enumerate}
		\item $f(v(F_{1}))$ is a coset of $\Bbb Z^{v(L(\Delta(F_{1})))}$ in $\Bbb Z^{v(L(\Delta(F_{2})))}$.
		\item The following diagram commutes up to translation:
		\begin{center}
			$\begin{CD}
			v(F_{1})                         @>g>>        \Bbb Z^{v(L(\Delta(F_{1})))}\\
			@VViV                                                              @AApA\\
			v(F_{2})                  @>f>>       \Bbb Z^{v(L(\Delta(F_{2})))}
			\end{CD}$
		\end{center}
		Here $i$ is the inclusion map.
	\end{enumerate}
\end{definition}

\begin{definition}[Equivalence and pull back]
We say $L$-atlas $\mathcal{A}_{L}$ and $L'$-atlas $\mathcal{A}_{L'}$ are \textit{equal up to translations} if $L=L'$ and the bijections in $\mathcal{A}_{L}$ and $\mathcal{A}_{L'}$ agree up to translation. We will write $\mathcal{A}_{L}\overset{e}{=}\mathcal{A}_{L'}$ in this case. Pick $\alpha\in Aut(\P(\Ga))$ and let $\alpha_{\ast}:G(\Ga)\to G(\Ga)$ be the bijection induced by $\alpha$ (cf. Section~\ref{sec:visible} and Theorem~\ref{7.28}). Recall that $\alpha_{\ast}$ preserves stable standard flats. The \textit{pull-back} of an $L$-atlas $\A_L$ under $\alpha$, denoted by $\alpha^{\ast}(\A_L)$, is defined to be the $(L\circ \alpha)$-atlas with its charts being the pull-backs of charts of $\A_L$ under $\alpha_{\ast}$. More precisely, charts of $\alpha^{\ast}(\A_L)$ are compositions:
\begin{equation*}
\{v(F)\stackrel{\al_*}{\to} \al_*(v(F)) \to \Bbb Z^{v(L(\Delta(\al_*(F))))}=\Bbb Z^{v(L\circ \alpha(\Delta(F)))}\}_{F\in\mathcal{F}_{k}(\Gamma),1\le k\le n}
\end{equation*}
Note that $L(\Delta(\al_*(F)))$ and $L\circ \alpha(\Delta(F))$ are the same subset of $F(\Gamma)$.
\end{definition}

\begin{remark}
Note that the construction of $\al_*$ from $\al$ in Section~\ref{sec:visible} only use the information of what are $\al$-images of maximal simplexes in $\mathcal{P}(\Gamma)$. Thus a priori it could happen that two different elements $\alpha_1$ and $\alpha_2$ in $Aut(\mathcal{P}(\Gamma))$ give the same $\al_*:G(\Gamma)\to G(\Gamma)$. It is natural ask how different is $\alpha^{\ast}_1(\A_L)$ from $\alpha^{\ast}_2(\A_L)$.

We now clarify this point via the following example. Suppose $\Gamma$ is obtained by gluing a pentagon and the 1-skeleton of a 3-simplex along an edge. Let $a$ and $b$ be the two vertices of $\Gamma$ which are outside the pentagon. Let $e$ be an edge in $\mathcal{P}(\Gamma)$ which maps to the edge $\overline{ab}\subset \Gamma$ under the map $\mathcal{P}(\Gamma)\to F(\Gamma)$. We take $\al_1\in Aut(\mathcal{P}(\Gamma))$ be the identity map. Take $\al_2\in Aut(\mathcal{P}(\Gamma))$ be the automorphism which exchanges the two end points of $e$ and fixes all other vertices of $\mathcal{P}(\Gamma)$ pointwise. Then $(\al_1)_*=(\al_2)_*$ is the identity map and $\al_1(\Delta(F))=\al_2(\Delta(F))=\Delta(F)$ for any $F\in \mathcal{F}(\Gamma)$. Thus all charts in $\alpha^{\ast}_1(\A_L)$ and $\alpha^{\ast}_2(\A_L)$ are exactly the same. However, $L\circ \al_1$ and $L\circ \al_2$ are different maps. So $\alpha^{\ast}_1(\A_L)\stackrel{e}{\neq}\alpha^{\ast}_2(\A_L)$.
\end{remark}

Recall that we label each circle in $S(\Ga)$ by a generator of $G(\Ga)$ and fix an orientation for each circle. This lifts to $G(\Ga)$-invariant labelling and orientation of edges in $X(\Ga)$. Moreover, we have induced action $G(\Ga)\acts\P(\Ga)$ and induced $G(\Ga)$-invariant labelling of vertices in $\P(\Ga)$. This leads to a naturally defined $L$-atlas as follows. 

\begin{definition}[Reference atlas]
	\label{def:reference atlas}
Let $L$ be the label preserving map $\pi:\mathcal{P}(\Gamma)\to F(\Gamma)$. For each vertex $u\in\P(\Ga)$, we pick a standard geodesic $\ell\subset X(\Ga)$ such that $\Delta(\ell)=u$, and identify vertices of $\ell$ with $\Z^{u}$ in an orientation-preserving way. Let $p_u:G(\Ga)\to \Z^u$ be the map induced by the $CAT(0)$ projection from $G(\Ga)$ to $\ell$ (recall that we have identified $G(\Ga)$ with vertices of $X(\Ga)$, and Lemma \ref{2.6} implies that the image of each vertex of $X(\Ga)$ under the $CAT(0)$ projection is a vertex in $\ell$). For each standard flat $F\subset X(\Ga)$, $p_u(v(F))$ is surjective if $u\in\Delta(F)$, otherwise $p_u(v(F))$ is a point. This induces a bijection $\prod_{u\in\Delta(F)}p_u:v(F)\to \Z^{v(\Delta(F))}$ and we define the chart for $F$ to be $\prod_{u\in\Delta(F)}p_u$ post-composed with $\Z^{v(\Delta(F))}\to \Z^{v(L(\Delta(F)))}$. One readily verifies this atlas $\A_L$ satisfies the above definition of $L$-atlas, moreover, the diagram in (2) commutes exactly, not up to translations. The following properties are immediate.
\begin{enumerate}
	\item $\A_L$ is $G(\Ga)$-invariant up to translations in the sense that $g^{\ast}(\A_L)\overset{e}{=}\A_L$ for all $g\in G(\Ga)$. Conversely, if $\alpha\in Aut(\P(\Ga))$ satisfies $\alpha^{\ast}(\A_L)\overset{e}{=}\A_L$, then the induces map $\alpha_{\ast}:G(\Ga)\to G(\Ga)$ is a left translation.
	\item $\A_L$ is unique up to translations. Since the only ambiguity in the definition of $\A_L$ is the orientation-preserving identification of $v(\ell)$ with $\Z^u$, which is unique up to translations.
\end{enumerate}
The atlas $\A_L$ is called the \textit{reference} atlas.
\end{definition}

\begin{lem}
\label{6.1}
Let $G(\Gamma)$ be of weak type I and pick $F\in \mathcal{F}(\Gamma)$. Then there exist standard flats $\{F_{i}\}_{i=1}^{k}$ in $F$ such that $F$ is the convex hull of these flats and each $F_{i}$ is the intersection of maximal standard flats.
\end{lem}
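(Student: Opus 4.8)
The plan is to produce, for each coordinate direction of $F$, one ``atomic'' subflat which is an intersection of maximal standard flats, and then observe that $F$ is the convex hull of these. Write $\sigma=\Gamma_{F}$ for the support of $F$; since $F\in\mathcal{F}(\Gamma)$, the clique $\sigma$ is a stable full subgraph of $\Gamma$. I would fix a vertex $p\in F$, enumerate the vertices of $\sigma$ as $\bar v_{1},\dots,\bar v_{m}$ (so $m=\dim F$), and let $\ell_{j}=K(p,\{\bar v_{j}\})\subset F$ be the standard geodesic through $p$ with label $\bar v_{j}$. For each $j$ set $\mathcal{M}_{j}=\{M : M$ a maximal standard flat with $\ell_{j}\subset M\}$ and $F_{j}=\bigcap_{M\in\mathcal{M}_{j}}M$. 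The claim will be that each $F_{j}$ is a standard flat with $\ell_{j}\subset F_{j}\subset F$ which is an intersection of maximal standard flats, and that $F$ is the convex hull of $F_{1},\dots,F_{m}$; the lemma then holds with $k=m$.

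First I would record the elementary properties of $F_{j}$. The set $\mathcal{M}_{j}$ is nonempty: extending $\sigma$ to a maximal clique $\nu$, the flat $K(p,\nu)$ is a maximal standard flat containing $\ell_{j}$. Every member of $\mathcal{M}_{j}$ passes through $p$, so all finite sub-intersections are nonempty by Lemma \ref{2.1}; combining this with Lemma \ref{3.1} and the fact (immediate from Lemma \ref{2.6}) that $\inc(C_{1},C_{2})=(C_{1}\cap C_{2},C_{1}\cap C_{2})$ whenever $d(C_{1},C_{2})=0$, one obtains that $F_{j}$ is a standard subcomplex, hence --- being contained in a flat --- a standard flat through $p$, and by Lemma \ref{3.2} that $\Gamma_{F_{j}}=\bigcap_{M\in\mathcal{M}_{j}}\Gamma_{M}$. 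Moreover $M\mapsto\Gamma_{M}$ is a bijection from $\mathcal{M}_{j}$ onto the set of maximal cliques of $\Gamma$ containing $\bar v_{j}$ (a standard flat is determined by its support and any one of its vertices; $K(p,\nu)\in\mathcal{M}_{j}$ for every maximal clique $\nu\ni\bar v_{j}$; and the support of a maximal standard flat is a maximal clique). Hence $\Gamma_{F_{j}}$ equals the intersection $\eta_{j}$ of all maximal cliques of $\Gamma$ containing $\bar v_{j}$; in particular $\bar v_{j}\in\Gamma_{F_{j}}$, so $\ell_{j}\subset F_{j}$, and $F_{j}$ is by construction an intersection of maximal standard flats.

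The heart of the argument is the containment $F_{j}\subset F$, equivalently $\eta_{j}\subseteq v(\sigma)$. Suppose $w\in\eta_{j}$. Then $w$ is adjacent to $\bar v_{j}$, and for any $u\in lk(\bar v_{j})$ the clique $\{\bar v_{j},u\}$ extends to a maximal clique, which contains $\bar v_{j}$ and therefore $w$; thus $u\in St(w)$. This shows $lk(\bar v_{j})\subset St(w)$. Since $\sigma$ is a stable full subgraph and $\bar v_{j}\in v(\sigma)$, the stability criterion of \cite[Theorem 3.35]{raagqi1}, applied exactly as in the proof of Lemma \ref{4.7}, forces $w\in v(\sigma)$. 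Hence $\eta_{j}$ is a subclique of $\sigma$, and $F_{j}=K(p,\eta_{j})\subset K(p,\sigma)=F$.

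It remains to assemble the pieces. Each $F_{j}$ contains $p$ and $\ell_{j}$ and is contained in $F$, so the convex hull of $F_{1}\cup\dots\cup F_{m}$ is contained in $F$ and contains $\ell_{1}\cup\dots\cup\ell_{m}$. Since $F$ is a flat whose coordinate geodesics through $p$ are precisely $\ell_{1},\dots,\ell_{m}$, it is the convex hull of $\ell_{1}\cup\dots\cup\ell_{m}$; therefore $F$ is the convex hull of $F_{1},\dots,F_{m}$, each of which is an intersection of maximal standard flats, completing the proof. I expect the main obstacle to be the third paragraph --- converting the combinatorial statement ``$w$ lies in every maximal clique through $\bar v_{j}$'' into the hypothesis $lk(\bar v_{j})\subset St(w)$ of the stability criterion; the remaining steps are routine bookkeeping with standard subcomplexes and their supports.
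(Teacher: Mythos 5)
Correct, and essentially the paper's own argument: everything reduces to the graph-level fact that for a vertex $\bar v_j$ of the stable clique $\sigma=\Gamma_F$, the intersection of all maximal cliques containing $\bar v_j$ lies in $\sigma$, which you obtain from $lk(\bar v_j)\subset St(w)$ together with closedness of stable subgraphs under this relation --- exactly what the paper does via \cite[Lemma 3.32]{raagqi1}, phrased there as the statement that the minimal stable subgraph containing a vertex equals the intersection of the maximal cliques containing it. The only adjustment is the citation: in Lemma \ref{4.7} the paper uses \cite[Theorem 3.35]{raagqi1} in the direction that the closure property implies stability, whereas your step needs the converse direction (stable implies closed), which is precisely \cite[Lemma 3.32]{raagqi1}; with that reference fixed, your flat-level bookkeeping (Helly, Lemmas \ref{3.1} and \ref{3.2} to identify $F_j$ and its support, and the convex-hull step) is routine and matches the translation from graphs to flats that the paper leaves implicit.
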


\begin{proof}
Pick vertex $w\in\Gamma$. Let $\Gamma_{w}$ be the minimal stable subgraph containing $w$ and Let $\Gamma'_{w}$ be the intersection of maximal cliques that contains $w$. It suffices to show $\Gamma_{w}=\Gamma'_{w}$. Since each maximal clique is stable (Lemma \ref{7.17} and Lemma \ref{4.7}), $\Gamma_{w}\subset\Gamma'_{w}$. Pick vertex $v\in\Gamma'_{w}$, then $w^{\perp}\subset St(v)$, thus $v\in\Gamma_{w}$ by \cite[Lemma 3.32]{raagqi1}. It follows that $\Gamma'_{w}\subset\Gamma_{w}$.
\end{proof}

Suppose $G(\Gamma)$ has weak type I and it has trivial center. Let $q:G(\Gamma)\to G(\Gamma')$ be a quasi-isometry and let $s:\mathcal{P}(\Gamma)\to\mathcal{P}(\Gamma')$ be an induced simplicial isomorphism (cf. Definition~\ref{7.24} and Lemma \ref{7.18}). By Theorem \ref{weak type I visible} and Section~\ref{sec:visible}, $s$ induces a map $\phi:G(\Gamma)\to G(\Gamma')$. 

\begin{lem}
If we assume in additional that $G(\Gamma)$ There exists $D_0>0$ such that $d(q(x),\phi(x))<D_0$ for any $x\in G(\Gamma)$. Thus $\phi$ is a quasi-isometry.
\end{lem}

\begin{proof}
By Lemma~\ref{4.7}, Definition~\ref{7.24} and Lemma~\ref{7.17}, each maximal clique subgraph of $\Gamma$ is stable. Thus there exists $D_1>0$ such that for any maximal standard flat $F\subset X(\Gamma)$, there exists a maximal standard flat $F'\subset X(\Gamma')$ such that $d_H(q(F),F')<D_1$. Let $\{F_i\}_{i=1}^n$ and $\{F'_i\}_{i=1}^n$ be as in Section~\ref{sec:visible}. Then $d(q(\cap_{i=1}^n F_i),\cap_{i=1}^n F'_i)$ is uniformly upper bounded, which implies the lemma.
\end{proof}

For every $g'\in G(\Gamma')$, there is left translation $\bar{\phi}_{g'}:G(\Gamma')\to G(\Gamma')$, which gives rise to a simplicial isomorphism $\bar{s}_{g'}:\mathcal{P}(\Gamma')\to\mathcal{P}(\Gamma')$. Let $$s_{g'}=s^{-1}\circ \bar{s}_{g'}\circ s.$$ Then $s_{g'}$ induces a unique bijection $\phi_{g'}:G(\Gamma)\to G(\Gamma)$ by Theorem \ref{weak type I visible}, moreover,
\begin{equation}
\label{5.8}
\bar{\phi}_{g'}\circ\phi=\phi\circ\phi_{g'}
\end{equation}
In summary, we have $G(\Gamma')$ acts on $G(\Gamma')$, $\mathcal{P}(\Gamma')$, $G(\Gamma)$ and $\mathcal{P}(\Gamma)$. 

\begin{lem}\
\label{6.2}
\begin{enumerate}
\item $\phi$ is surjective. For any $y,y'\in G(\Gamma')$, $|\phi^{-1}(y)|=|\phi^{-1}(y')|<\infty$.
\item For any $k$ and $F\in \mathcal{F}_{k}(\Gamma)$, there exists unique $F'\in \mathcal{F}_{k}(\Gamma')$ such that $\phi(v(F))=v(F')$. Moreover, let $\Stab(v(F'))$ and $\Stab(v(F))$ be the stabilizer of $v(F')$ and $v(F)$ with respect to the action $G(\Ga')\acts G(\Ga')$ and $G(\Ga')\acts G(\Ga)$ respectively. Then $\Stab(v(F'))=\Stab(v(F))$. In this case we will write $F'=\phi(F)$ for simplicity.
\item Let $F_{1},F_{2}\in \mathcal{F}_{k}(\Gamma)$ be parallel standard flats. Then for vertices $x_{1}\in F_{1}$ and $x_{2}\in F_{2}$, $$|\phi^{-1}(\phi(x_{1}))\cap F_{1}|=|\phi^{-1}(\phi(x_{2}))\cap F_{2}|.$$
\end{enumerate}
\end{lem}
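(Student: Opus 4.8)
The plan is to establish the three parts in order, since (2) feeds into (3), and (1) is essentially immediate from what is already known about the straightening map $\phi$. For part (1), surjectivity of $\phi$ and finiteness of fibres should be read off from the fact that $\phi$ is obtained from the visible simplicial isomorphism $s$ by the reconstruction $x\mapsto \cap_i F_i'$ via Theorem \ref{weak type I visible}: since $G(\Gamma)$ has trivial centre, each vertex $x$ is the intersection of its maximal standard flats, and since $s$ is a simplicial \emph{isomorphism} (with a visible inverse), every vertex of $X(\Gamma')$ is likewise hit and lies in only finitely many preimages; the fact that all fibre cardinalities coincide will come from the $G(\Gamma')$-equivariance encoded in (\ref{5.8}), since $G(\Gamma')$ acts transitively on its own vertex set and $\phi$ intertwines this action with the reconstructed action $G(\Gamma')\curvearrowright G(\Gamma)$, so the group moves any fibre to any other fibre bijectively.

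For part (2), the existence of the flat $F'$ with $\phi(v(F))=v(F')$ of the same dimension is exactly the statement that stable standard flats are preserved (Definition \ref{3.38}), combined with the fact that $q_\ast=s$ identifies $\Delta(F)$ with some simplex $\Delta(F')$ of the same dimension; uniqueness follows because two distinct standard flats with the same $\Delta$-image are parallel but distinct, and $\phi$ having trivial-centre source forces the specific parallel copy. The equality of stabilizers $\mathrm{Stab}(v(F'))=\mathrm{Stab}(v(F))$ is where I would be most careful: one inclusion says that if $g'\in G(\Gamma')$ preserves $v(F')$ setwise then the induced bijection $\phi_{g'}$ of $G(\Gamma)$ preserves $v(F)$, and the reverse. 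This should follow from (\ref{5.8}): $\bar\phi_{g'}\circ\phi=\phi\circ\phi_{g'}$, so $\bar\phi_{g'}(v(F'))=\bar\phi_{g'}(\phi(v(F)))=\phi(\phi_{g'}(v(F)))$; if $\phi_{g'}$ fixes $v(F)$ then the right side is $\phi(v(F))=v(F')$, giving $g'\in\mathrm{Stab}(v(F'))$, and conversely if $g'$ fixes $v(F')$ then $\phi(\phi_{g'}(v(F)))=v(F')=\phi(v(F))$, and since $\phi_{g'}(v(F))$ is again the vertex set of a stable $k$-flat with the same $\Delta$-image, uniqueness from the first half of (2) forces $\phi_{g'}(v(F))=v(F)$. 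The main obstacle here is pinning down that $\phi_{g'}$ sends stable standard flats to stable standard flats with controlled $\Delta$-image — but this is precisely because $\phi_{g'}$ is induced by the simplicial isomorphism $s_{g'}=s^{-1}\circ\bar s_{g'}\circ s$, and each of these three maps preserves the appropriate $\Delta$-decorated combinatorics.

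For part (3), let $F_1,F_2$ be parallel stable $k$-flats, so $\Delta(F_1)=\Delta(F_2)$ as simplices of $\mathcal{P}(\Gamma)$. Applying $s$ we get $\Delta(\phi(F_1))=\Delta(\phi(F_2))$, so $\phi(F_1)$ and $\phi(F_2)$ are parallel standard flats in $X(\Gamma')$. The parallelism gives a canonical bijection between $v(F_1)$ and $v(F_2)$ (nearest-point projection, Lemma \ref{2.6}), and similarly on the $\Gamma'$ side. I would like to argue that $\phi$, up to the ambiguity measured by its fibres, respects this parallel identification: concretely, there is an element $g'\in G(\Gamma')$ translating $\phi(F_1)$ to $\phi(F_2)$ along the parallelism, and one can choose the corresponding $\phi_{g'}$ on the $G(\Gamma)$ side, which by part (2)'s stabilizer argument and the structure of $s_{g'}$ carries $F_1$ to $F_2$ compatibly with their parallelism. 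Then $\phi^{-1}(\phi(x_1))\cap F_1$ and $\phi^{-1}(\phi(x_2))\cap F_2$ are exchanged bijectively by $\phi_{g'}$, once $x_2$ is taken to be the parallel-translate of $x_1$; the general case reduces to this by composing with a translation inside $F_2$, which does not change the cardinality since all fibres of $\phi$ restricted to a single flat are translates of one another by the $\mathbb{Z}^k$-action. The hard part will be verifying that the relevant $\phi_{g'}$ genuinely implements the parallel identification of $F_1$ with $F_2$ rather than some other matching — this requires tracing through how $s$ and hence $s_{g'}$ act on the tiers associated to the standard geodesics spanning these flats, using Lemma \ref{5.17} and the fact that $G(\Gamma')$ acts transitively on tiers of a fixed standard geodesic (as used in the proof of Lemma \ref{7.18}).
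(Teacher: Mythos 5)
Your handling of part (2) conflates two fundamentally different notions of preservation. You write that the existence of $F'$ with $\phi(v(F))=v(F')$ ``is exactly the statement that stable standard flats are preserved (Definition~\ref{3.38})''. But Definition~\ref{3.38} is a \emph{coarse} statement about the quasi-isometry $q$: it guarantees $d_H(q(F),F')<D$ for some standard flat $F'$, not that the straightening $\phi$ sends the vertex set of $F$ \emph{exactly} onto the vertex set of $F'$. Passing from coarse to exact preservation is the whole substance of this lemma, and the fact that it fails for arbitrary standard flats (only for stable ones, after work) is the reason the entire deformation machinery of Section 4 exists. The paper proves the exact statement by a structural induction: first for maximal flats (where it follows from the definition of $\phi$ via visibility), then for intersections of maximal flats (using transitivity of $Stab(v(F'))$ and (\ref{5.8})), and finally for arbitrary stable flats by invoking Lemma~\ref{6.1} to express $F$ as the convex hull of intersections of maximal flats. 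You never establish the base case or use Lemma~\ref{6.1}, so your stabilizer argument rests on an unproved assertion. Your remark about uniqueness (``$\phi$ having trivial-centre source forces the specific parallel copy'') is also not an argument; the potential for multiple parallel flats to map onto the same $F'$ is precisely the subtlety, and it is resolved only by the stabilizer computation, not by trivial centre.

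Part (3) has a sharper gap. Your plan is to take $g'\in G(\Gamma')$ translating $\phi(F_1)$ to $\phi(F_2)$ and argue $\phi_{g'}$ carries $F_1$ to $F_2$ compatibly with parallelism. But the paper's first move — which you do not reproduce — is to use exactly such a $g'$ to reduce to the case $\phi(v(F_1))=\phi(v(F_2))=v(F')$. After that reduction the relevant translation $g'$ is the identity and $\phi_{g'}=\mathrm{id}$, which carries $F_1$ to $F_1$, not to $F_2$. The parallelism map $p:v(F_1)\to v(F_2)$ is genuinely \emph{not} implemented by any $\phi_{g'}$; the discrepancy between $\phi|_{v(F_1)}$ and $\phi|_{v(F_2)}\circ p$ is exactly the ``shearing'' phenomenon that Section 4.2 must repair. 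What one knows a priori is only that these two maps differ by a bounded amount (both being quasi-isometries onto $v(F')$ that agree coarsely). The equality of fibre cardinalities then requires the volume-growth argument in the paper — comparing $|K_{1,R}|$ and $|K_{2,R}|$ and driving $R\to\infty$ — which has no analogue in your outline. The hard part you flag at the end (``verifying that $\phi_{g'}$ genuinely implements the parallel identification'') is not just hard; it is false in general, and the counting argument is the replacement.

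A smaller issue in part (1): you invoke a ``visible inverse'' of $s$, but only $\Gamma_1$ is assumed of weak type I; Theorem~\ref{weak type I visible} gives only that $\Gamma_2$ is of weak type II, which is not enough to conclude visibility of $s^{-1}$. Surjectivity instead follows directly from (\ref{5.8}) and transitivity of $G(\Gamma')\curvearrowright G(\Gamma')$, once one point is in the image; finiteness of a single fibre follows, as in the paper, from the fact that the $\phi$-images of the complexes $(F(\Gamma))_{p_\lambda}$ are distinct subcomplexes of the finite complex $(F(\Gamma'))_q$. Your equivariance remark for equality of fibre sizes is fine.
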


\begin{proof}
Pick a reference point $q\in \textmd{Im}\ \phi$ and let $K_{q}=(F(\Gamma'))_{q}$. Denote the points in $\phi^{-1}(q)$ by $\{p_{\lambda}\}_{\lambda\in\Lambda}$ and let $K_{p_{\lambda}}=(F(\Gamma))_{p_{\lambda}}$. Since $\{\phi(K_{p_{\lambda}})\}_{\lambda\in\Lambda}$ are distinct subcomplexes of $K_{q}$, $\Lambda$ is a finite set. The other parts of (1) follows from (\ref{5.8}).

Now we prove (2). It is clear if $F$ is a maximal standard flat. Next we look at the case when $F=\cap_{i=1}^{h}F_{i}$ where each $F_{i}$ is a maximal standard flat. Let $F'_{i}$ be maximal standard flat in $X(\Gamma')$ such that $\Delta(F'_{i})=s(\Delta(F_{i}))$ for $1\le i\le h$ and let $F'=\cap_{i=1}^{h}F'_{i}$. Then $$\phi(v(F))\subset v(F').$$ Note that $\Stab(v(F'))$ (resp. $\Stab(v(F'_i))$) is a conjugate of a standard subgroup of type $\Gamma'_{F'}$ (resp. $\Gamma'_{F'_i}$). Thus $$\Stab(v(F'))\subset \Stab(v(F_i)).$$ Then
the stabilizer $\Stab(v(F'))$ fixes $\Delta(F'_{i})$ for all $i$, hence it fixes $\Delta_{i}$ for all $i$ and $\Stab(v(F'))\subset \Stab(v(F))$. Since $\Stab(v(F'))$ acts on $v(F')$ transitively, (\ref{5.8}) implies $$\phi(v(F))=v(F')$$ and $$\Stab(v(F))\subset \Stab(v(F')).$$ Thus $\Stab(v(F'))= \Stab(v(F))$.

In general, by Lemma \ref{6.1}, we can assume $F$ is the convex hull of $F_{1},F_{2}\in\mathcal{F}(\Gamma)$ such that (2) is true for flats in $F$ which are parallel to $F_{1}$ or $F_{2}$. Let $F'_{i}=\phi(F_{i})$ for $i=1,2$. Then $F'_{1}\cap F'_{2}\neq\emptyset$ and the convex hull of $F'_{1}$ and $F'_{2}$ is a flat $F'$ (since $F$ is contained in a maximal standard flat, whose image under $\phi$ is a maximal standard flat containing $F'_1$ and $F'_2$). It follows from Lemma \ref{3.50} that $F'\in\mathcal{F}(\Gamma')$. Note that any standard flat that is parallel to $F_{1}$ and intersects $F_{2}$ is mapped by $\phi$ to a standard flat that is parallel to $F'_{1}$ and intersects $F'_{2}$, thus $\phi(v(F))\subset v(F')$. Let $F_{3}\subset F$ be a standard flat parallel to $F_{1}$ and let $F'_{3}=\phi(F_{3})$. Since parallel standard flats in $X(\Gamma')$ have the same stabilizer, we have $$\Stab(v(F_{1}))=\Stab(v(F'_{1}))=\Stab(v(F'_{3}))=\Stab(v(F_{3})).$$ By considering all such $F_{3}$'s in $F$, we have $$\Stab(v(F_{1}))\subset \Stab(v(F)).$$ Similarly, $\Stab(v(F_{2}))\subset \Stab(v(F))$, thus 
\begin{align*}
\Stab(v(F'))&= \langle \Stab(v(F'_{1})),\Stab(v(F'_{2}))\rangle\\
&=\langle \Stab(v(F_{1})), \Stab(v(F_{2}))\rangle\subset \Stab(v(F))
\end{align*}
Now we can conclude $\phi(v(F))=v(F')$ as before. It also follows that $\Stab(v(F))\subset \Stab(v(F'))$, thus 
\begin{equation}
\label{6.3}
\Stab(v(F'))= \Stab(v(F)).
\end{equation}

Now we prove (3). Note that for a pair of parallel standard flats $F'_{1}$ and $F'_{2}$ in $X(\Gamma')$, there exists $g'\in G(\Gamma')$ such that $g'(v(F'_{1}))=v(F'_{2})$, so by (\ref{5.8}), it suffices to prove (3) in the case where $$\phi(v(F_{1}))=\phi(v(F_{2}))=v(F').$$ Note that $F_1$ and $F_2$ are parallel and there is an isometry $p:F_1\to F_2$ induced by the nearest point projection in the ambient CAT(0) cube complex. The map $p$ sends vertices to vertices, hence restrict to a bijection $p: v(F_{1})\to v(F_{2})$, which we call the \emph{parallelism map} between $v(F_1)$ and $v(F_2)$. 
Denote $$p_{1}=\phi|_{v(F_{1})}:v(F_{1})\to v(F')$$ and $$p_{2}=\phi|_{v(F_{2})}\circ p: v(F_{1})\to v(F').$$ Then there exist $L$ and $A$ such that $p_{1}$ and $p_{2}$ are $(L,A)$-quasi-isometries and
\begin{equation}
\label{6.4}
d(p_{1}(x),p_{2}(x))<A
\end{equation}
for any $x\in v(F_{1})$. Pick $y\in v(F')$ and let $r_{i}$ be the number of points $|p^{-1}_{i}(y)|$ in $p^{-1}_{i}(y)$ for $i=1,2$ ($r_{i}$ does not depend on $y$ by previous discussion). We argue by contradiction and assume $r_{1}<r_{2}$. Pick base point $x_{0}\in v(F_{1})$, let $m=\dim(F_{1})$, $$B_{R}=B(x_{0},R)$$ and $$K_{i,R}=p_{i}(B_{R})$$ for $i=1,2$. Then it follows from (\ref{6.4}) that  
\begin{align*}
&|K_{1,R}|\le |N_{A}(K_{2,R})|=|K_{2,R}|+|N_{A}(K_{2,R})\setminus K_{2,R}| \\
&\le|K_{2,R}|+ |p^{-1}_{2}(N_{A}(K_{2,R})\setminus K_{2,R})|\le |K_{2,R}|+ |B_{LA+A+R}\setminus B_{R}|\\
&\le |K_{2,R}|+ CR^{m-1}(LA+A),
\end{align*}
here $C$ is some constant independent of $R$. Thus by symmetry we have
\begin{equation}
\label{6.5}
||K_{1,R}|-|K_{2,R}||\le CR^{m-1}(LA+A).
\end{equation}
On the other hand, $B_{R}\subset p^{-1}_{i}({K_{i,R}})\subset B_{R+A}$ for $i=1,2$, thus
\begin{equation}
\label{6.6}
CR^{m}\le |p^{-1}_{i}({K_{i,R}})|=r_{i}|K_{i,R}|\le C(R+A)^{m}
\end{equation}
for $i=1,2$. Now (\ref{6.5}) and (\ref{6.6}) imply
\begin{align*}
&CR^{m}/r_{1}-C(R+A)^{m}/r_{2}\le |K_{1,R}|-|K_{2,R}|  \\
&\le||K_{1,R}|-|K_{2,R}||\le CR^{m-1}(LA+A).
\end{align*}
Since $r_{1}<r_{2}$, we will get a contradiction when $R\to\infty$.
\end{proof}

\begin{lem}
\label{6.7}
Suppose $G(\Gamma)$ has weak type I and trivial center.
Given $L$-atlas $\mathcal{A}_{L}$ and $L'$-atlas $\mathcal{A}_{L'}$, there exists $\alpha\in Aut(\mathcal{P}(\Gamma))$ such that $\alpha^{\ast}(\mathcal{A}_{L'})\overset{e}{=}\mathcal{A}_{L}$. 
\end{lem}

%Moreover, there is a 1-1 correspondence between elements in $Aut(\mathcal{P}(\Gamma))$ and the following set of information:
%\begin{enumerate}
%	\item A base point $p\in G(\Gamma)$.
%	\item A class of $L$-atlases which are equal up to translations.
%\end{enumerate}

The proof is variation of \cite[Lemma 5.7]{raagqi1}.
\begin{proof}
We prove the first part of the lemma. Pick $v\in G(\Gamma)$, set $\alpha'(e)=v$. For $u\in G(\Gamma)$, pick a word $w_{u}=a_{1}a_{2}\cdots a_{n}$ representing $u$, let $u_{i}=a_{1}a_{2}\cdots a_{i}$ for $1\le i\le n$ and $u_{0}=e$. We define $q_{i}=\alpha'(a_{1}a_{2}\cdots a_{i})\in G(\Gamma)$ inductively as follows: set $q_{0}=v$ and suppose $q_{i-1}=\alpha'(a_{1}a_{2}\cdots a_{i-1})$ is already defined. Let $F_{i}\in \mathcal{F}(\Gamma)$ be a standard flat containing $u_{i-1}$ and $u_{i}$ and let $F'_{i}$ be the unique standard flat such that $q_{i-1}\in F'_{i}$ and $$L'(\Delta(F'_{i}))=L(\Delta(F_{i})).$$ There is a natural identification of $g_i:F_{i}\to F'_{i}$ via the charts $$f:F_{i}\to \Bbb Z^{v(L(\Delta(F_{i})))}$$ and $$f':F'_{i}\to \Bbb Z^{v(L'(\Delta(F'_{i})))}=\Bbb Z^{v(L(\Delta(F_{i})))}$$ such that $g_i=(f')^{-1}\circ f$. Up to post-composing $f$ and $f'$ by translations, we can assume $f_{i}(u_{i-1})=q_{i-1}$. Then we define $q_{i}=f_{i}(u_{i})$. Note that the definition of  $q_{i}=\alpha'(a_{1}a_{2}\cdots a_{i})$ does not depend on the choice of $F_{i}$ by the compatibility condition (2).

We now claim for any other word $w'_{u}$ representing $u$, $\alpha'(w_{u})=\alpha'(w'_{u})$, hence we have a well-defined map $\alpha': G(\Gamma)\to G(\Gamma)$. To see this, recall that one can obtain $w_{u}$ from $w'_{u}$ by performing the following two basic moves:
\begin{enumerate}
\item $w_{1}aa^{-1}w_{2}\to w_{1}w_{2}$.
\item $w_{1}abw_{2}\to w_{1}baw_{2}$ when $a$ and $b$ commutes.
\end{enumerate}
It is clear that $\alpha'(w_{1}aa^{-1}w_{2})=\alpha'(w_{1}w_{2})$ and it suffices to show $\alpha'(ab)=\alpha'(ba)$ where $a$ and $b$ are mutually commuting generators. Let $F$ be a maximal standard flat that contains $e,a$ and $b$, we could always choose $F$ in the definition of $\alpha'(ab)$ or $\alpha'(ba)$, thus they are equal.

By switching the role of $\A_L$ and $\A_{L'}$, we can define $\alpha'':G(\Ga)\to G(\Ga)$ which maps $v$ to $e$ in a similar way.
It is not hard to check $\alpha'$ and $\alpha''$ are mutual inverse. Thus $\alpha'$ is bijective, moreover, $\alpha'$ preserves $\mathcal{F}(\Gamma)$. To define $\alpha$, pick vertex $w\in\mathcal{P}(\Gamma)$, let $\Delta$ be a maximal simplex containing $w$. Take $F\subset X(\Gamma)$ to be the flat such that $\Delta(F)=\Delta$ and take $F'$ to be the maximal standard flat such that $\alpha(v(F))=v(F')$, we set $\alpha(w)$ to be the unique point such that $\alpha(w)\in\Delta(F')$ and $L(w)=L'(\alpha(w))$. One readily verifies that $L=L'\circ\alpha$, $\alpha'$ is induced by $\alpha$ and $\alpha'$ pulls back the charts up to translations, so $\alpha^{\ast}(\mathcal{A}_{L'})\overset{e}{=}\mathcal{A}_{L}$. 
\end{proof}

\subsection{Shearing standard flats}
In this subsection we prove the following theorem.
\begin{thm}
	\label{6.12}
	Let $G(\Gamma)$ be a group of weak type I. Then the following are equivalent.
	\begin{enumerate}
		\item $G(\Gamma')$ is quasi-isometric to $G(\Gamma)$.
		\item $G(\Gamma')$ is isomorphic to a finite index subgroup of $G(\Gamma)$.
		\item $G(\Gamma')$ is isomorphic to a special subgroup (Section \ref{special subgroup}) of $G(\Gamma)$.
	\end{enumerate}
\end{thm}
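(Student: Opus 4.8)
The plan is to prove $(3)\Rightarrow(2)\Rightarrow(1)$ by soft arguments and to concentrate all the work on $(1)\Rightarrow(3)$. For $(3)\Rightarrow(2)$ I would just quote Theorem \ref{special subgroup theorem}(1): a special subgroup has index equal to the finite number $|v(K)|$ of vertices of its associated compact convex subcomplex. For $(2)\Rightarrow(1)$, a finite index subgroup of a finitely generated group, with any word metric, is quasi-isometric to the ambient group via the inclusion. Before attacking $(1)\Rightarrow(3)$ I would first reduce to the case that $G(\Gamma)$ has trivial centre: write $\Gamma=\Delta\circ\Gamma_{0}$ with $\Delta$ the clique on the universal vertices of $\Gamma$, so that $G(\Gamma)=\mathbb{Z}^{m}\oplus G(\Gamma_{0})$ with $m=|\Delta|$ and $G(\Gamma_{0})$ of weak type I with trivial centre (weak type I passes to join factors). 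By Theorem \ref{2.13} a quasi-isometry $q\colon G(\Gamma)\to G(\Gamma')$ splits compatibly with the De Rham decompositions, giving $G(\Gamma')=\mathbb{Z}^{m}\oplus G(\Gamma_{0}')$ with $G(\Gamma_{0}')$ of trivial centre and a quasi-isometry $G(\Gamma_{0})\to G(\Gamma_{0}')$; since products of special subgroups are special (take the product of the associated convex subcomplexes) and $\mathbb{Z}^{m}$ is a special subgroup of itself, it is enough to realise $G(\Gamma_{0}')$ as a special subgroup of $G(\Gamma_{0})$.

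So assume $G(\Gamma)$ has trivial centre and let $q\colon G(\Gamma)\to G(\Gamma')$ be a quasi-isometry. I would feed $q$ into the machinery of Section 4: it produces a visible simplicial isomorphism $q_{*}\colon\mathcal{P}(\Gamma)\to\mathcal{P}(\Gamma')$ (Theorem \ref{weak type I visible}), a surjective quasi-isometry $\phi\colon G(\Gamma)\to G(\Gamma')$ with uniformly finite fibres, an action of $G(\Gamma')$ on $G(\Gamma)$ by bijections $\{\phi_{g'}\}$ with $\phi\circ\phi_{g'}=\bar{\phi}_{g'}\circ\phi$ (equation (\ref{5.8})), and a $G(\Gamma')$-invariant-up-to-translation atlas $\bar{\mathcal{A}}_{L}$ in which $\phi$ has the shape $\varphi_{w}(a)=\lfloor a/d_{w}\rfloor+r_{w}$ on every chart (this is induction hypothesis (2) of Section 4.2). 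From this I would extract two facts. First, the action $\{\phi_{g'}\}$ is free: $\phi_{g'}(x)=x$ forces $g'\phi(x)=\phi(x)$, hence $g'=e$. Second, the ``floor division'' local form of $\phi$ on maximal standard flats shows that, at the level of vertices, $\phi$ extends to a surjective, $G(\Gamma')$-equivariant cubical map $\bar{\phi}\colon X(\Gamma)\to X(\Gamma')$ which collapses (a pattern of) parallel hyperplanes in blocks, carries standard flats onto standard flats, induces $q_{*}$ on extension complexes, and collapses no standard geodesic to a point.

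The heart of the argument is then as follows. I would set $K:=\bar{\phi}^{-1}(e')\subseteq X(\Gamma)$. Because $\bar{\phi}$ factors into hyperplane collapses, the preimage of a vertex is a compact convex subcomplex, and by equivariance $\bar{\phi}^{-1}(g'e')=\phi_{g'}(K)$, so the $\phi_{g'}$-translates of $K$ are disjoint on $0$-skeleta and cover $X(\Gamma)^{(0)}$; in particular $|v(K)|<\infty$. Next I would identify the parallel classes of standard geodesics that meet $K$. Since such a geodesic must contain a vertex of $K$, this set is $\bigcup_{x\in v(K)}v((F(\Gamma))_{x})$. Visibility of $q_{*}$ gives $q_{*}((F(\Gamma))_{x})\subseteq(F(\Gamma'))_{\phi(x)}=(F(\Gamma'))_{e'}$ for every $x\in v(K)$, so this set maps into $v((F(\Gamma'))_{e'})$; conversely, lifting a standard geodesic through $e'$ along $\bar{\phi}$ starting at a vertex of $K$ exhibits it (up to reparametrisation) as the image of a standard geodesic meeting $K$, and such a lift exists by surjectivity of $\bar{\phi}$ together with the fibre structure. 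Hence $q_{*}$ carries the classes meeting $K$ bijectively onto $v((F(\Gamma'))_{e'})$, so the full subcomplex of $\mathcal{P}(\Gamma)$ they span is $q_{*}^{-1}((F(\Gamma'))_{e'})\cong F(\Gamma')$, with $1$-skeleton isomorphic to $\Gamma'$. Applying Theorem \ref{special subgroup theorem} to the compact convex subcomplex $K$, the special subgroup of $G(\Gamma)$ it determines is the right-angled Artin group on this $1$-skeleton, that is $G(\Gamma')$, which proves $(1)\Rightarrow(3)$.

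The main obstacle I anticipate is not the deformation itself — that analytic work is already done in Section 4 — but the bridge from the atlas to the cube complex: upgrading the combinatorial ``floor division'' form of $\phi$ to the statement that $\bar{\phi}$ is a genuine cubical collapse, and then deducing that its vertex fibres $K$ are compact convex subcomplexes and that standard geodesics through $e'$ admit lifts meeting $K$. These are the delicate points, since cubical maps in general fold and can have disconnected or non-convex fibres; the verification must use that each chart map $\varphi_{w}$ is monotone with connected fibres. As a fallback route I would keep in reserve the following variant: using Lemma \ref{6.7}, first conjugate the $G(\Gamma')$-action by the automorphism of $\mathcal{P}(\Gamma)$ that carries $\bar{\mathcal{A}}_{L}$ to the reference atlas, so as to land as an honest subgroup $G\le G(\Gamma)$ acting freely and cocompactly on $X(\Gamma)$ with $K$ a vertex-strict fundamental domain, and then recognise $G$ as the special subgroup attached to $K$; the two routes reach the same conclusion, and the bookkeeping with the centre in the first paragraph is common to both.
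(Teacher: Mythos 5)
Your proposal follows the paper's route for $(1)\Rightarrow(3)$: run the Section 4 deformation to produce the $G(\Gamma')$-invariant atlas $\bar{\mathcal{A}}_{L}$ with the floor-division charts, extract a cubical map $X(\Gamma)\to X(\Gamma')$, take $K$ to be a vertex fibre, and invoke Theorem~\ref{special subgroup theorem} to recognize the special subgroup determined by $K$ as $G(\Gamma')$. The bookkeeping with the centre via Theorem~\ref{2.13}, the implications $(3)\Rightarrow(2)\Rightarrow(1)$, and the identification of $\{\Delta(\ell_i)\}$ with $v\bigl((F(\Gamma'))_{e'}\bigr)$ via visibility are all sound and spell out details the paper leaves implicit.

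The one point where I would not treat your ``fallback'' as optional: in your main route you assert that the floor-division shape of $\phi$ \emph{itself} yields a cubical map $\bar{\phi}\colon X(\Gamma)\to X(\Gamma')$. That is not quite right. The floor-division form is a statement about $\phi$ read through the charts of $\bar{\mathcal{A}}_{L}$, and the compatibility condition (1) of an $L$-atlas only forces the charts to respect the lattice directions coming from \emph{stable} subflats. A non-stable standard geodesic inside a stable flat $F$ (these exist precisely when there are adjacent transvections) need not be sent by $\bar{h}\colon v(F)\to\mathbb{Z}^{v(L(\Delta(F)))}$ to an axis-parallel line, so the floor-division shape of $h'\circ\phi\circ\bar{h}^{-1}$ does not control what $\phi$ does to actual standard geodesics; $\phi$ alone may fail to carry them to standard geodesics. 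This is exactly the role of $r$ from Lemma~\ref{6.7}: since $r^{\ast}(\bar{\mathcal{A}}_{L})\overset{e}{=}\mathcal{A}_{ref}$ and the reference charts do track actual standard geodesics, the composite $\phi\circ\phi_{r}$ has the floor-division form \emph{in reference coordinates}, hence carries all standard flats to standard flats and extends to a cubical collapse. The paper works with $\phi\circ\phi_{r}$ for precisely this reason, and your ``fallback route'' is really the needed step, not an alternative; once you insert it, the rest of your argument goes through as written.
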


\begin{remark}
From Theorem~\ref{6.12}, we know in particular that a finite index RAAG subgroup $H$ of $G(\Ga)$ is isomorphic to a special subgroup. However, $H$ might not be a special subgroup of $G(\Gamma)$. Interestingly, under the strong condition that $\out(G(\Ga))$ is finite, any finite index RAAG subgroup will automatically be a special subgroup \cite[Section 6]{raagqi1}.
\end{remark}

The following is a consequence of Theorem \ref{6.12} and \cite[Section 6.3]{raagqi1}.

\begin{cor}
	Let $G(\Gamma)$ be a group of weak type I. Then there is an algorithm to determine whether $G(\Gamma')$ and $G(\Gamma)$ are quasi-isometric.
\end{cor}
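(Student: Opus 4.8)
The plan is to deduce this from Theorem \ref{6.12} together with the effective treatment of special subgroups in \cite[Section 6.3]{raagqi1}, so that the real work is just setting up the reduction. First note that ``$G(\Gamma)$ is of weak type I'' is itself decidable directly from $\Gamma$: by Lemma \ref{7.25} it amounts to the two purely combinatorial conditions that $\Gamma$ has no separating closed star and that there is no pair of vertices $\bar v,\bar w$ with $d(\bar v,\bar w)=2$ and $\Gamma=St(\bar v)\cup St(\bar w)$, and both can be checked by inspecting $\Gamma$. So we may assume the hypothesis holds. Then, by Theorem \ref{6.12}, $G(\Gamma')$ is quasi-isometric to $G(\Gamma)$ if and only if $G(\Gamma')$ is isomorphic to a special subgroup of $G(\Gamma)$, so it suffices to produce an algorithm that, on input a pair of finite simplicial graphs $(\Gamma,\Gamma')$ with $G(\Gamma)$ of weak type I, decides whether $G(\Gamma')$ is isomorphic to a special subgroup of $G(\Gamma)$.

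Next I would unwind what a special subgroup is, using Theorem \ref{special subgroup theorem}: every special subgroup of $G(\Gamma)$ arises from a compact convex subcomplex $K\subset X(\Gamma)$ and is isomorphic to $G(\Lambda_K)$, where $\Lambda_K$ is the $1$-skeleton of the full subcomplex of $\mathcal{P}(\Gamma)$ spanned by a maximal collection $\{\Delta(\ell_i)\}$ of pairwise distinct parallel classes of standard geodesics meeting $K$. The crucial point is that the isomorphism type of this special subgroup depends only on the (unlabelled) graph $\Lambda_K$, not on the multiplicities $n_i=|v(K\cap\ell_i)|$; and since a right-angled Artin group determines its defining graph up to isomorphism, $G(\Gamma')$ is isomorphic to a special subgroup of $G(\Gamma)$ precisely when there exists a compact convex subcomplex $K\subset X(\Gamma)$ with $\Lambda_K\cong\Gamma'$ as abstract graphs. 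In particular $|v(\Lambda_K)|=|v(\Gamma')|$ is forced, which pins down the number of parallel classes of standard geodesics through the relevant $K$ and thereby bounds its combinatorial complexity.

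Finally, the existence of such a $K$ is decidable, and this is exactly the content of \cite[Section 6.3]{raagqi1}: there one shows that the relevant compact convex subcomplexes of $X(\Gamma)$ (those meeting a prescribed, bounded number of parallel classes of standard geodesics) are described up to the $G(\Gamma)$-action by a finite list of combinatorial types, and that for each type one can algorithmically check whether it is realized by an honest compact convex subcomplex and, if so, read off the resulting graph $\Lambda_K$. Running this procedure with target graph $\Gamma'$ and combining it with the two previous steps yields the algorithm claimed.

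I expect the main obstacle to be precisely this last step — turning ``there is a compact convex $K$ with $\Lambda_K\cong\Gamma'$'' into a finite, effective search. This rests on the finiteness estimate that a large convex subcomplex of $X(\Gamma)$ can meet many standard geodesics yet only finitely many distinct parallel classes once $|v(\Lambda_K)|$ is fixed to $|v(\Gamma')|$, together with cocompactness of the $G(\Gamma)$-action on $X(\Gamma)$; realizability is then a finite consistency check among the hyperplanes involved. All of this is carried out in \cite[Section 6.3]{raagqi1}, so the contribution here is only the reduction via Theorem \ref{6.12}.
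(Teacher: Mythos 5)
Your proposal is correct and follows essentially the same route as the paper, which gives a one-line proof citing Theorem \ref{6.12} (equivalence of quasi-isometry with being isomorphic to a special subgroup) and \cite[Section 6.3]{raagqi1} (the algorithmic enumeration of special subgroups); you fill in the same reduction with slightly more detail, including the observation that weak type I is itself a decidable graph-theoretic condition.
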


In the rest of this subsection, we prove Theorem~\ref{6.12}. Note that it suffices to prove the case when $G(\Gamma)$ has trivial center. Thus from now on, $G(\Gamma)$ is a RAAG of weak type I with trivial center.  Let $\mathcal{A}_{L'}$ be the reference atlas for $G(\Gamma')$. Let $q,s,s_{g'},\bar{s}_{g'},\phi$, $\phi_{g'}$ and $\bar{\phi}_{g'}$ be as in the discussion before Lemma \ref{6.2}. We will also be using actions of $G(\Ga')$ on $\P(\Ga'),G(\Ga'),\P(\Ga)$ and $G(\Ga)$ discussed over there.
A main ingredient of the proof is the following.

\begin{prop}
	\label{prop:atlas}
Under the above setting, there exists a coherent labelling $L:\mathcal {P}(\Gamma)\to F(\Gamma)$ for $G(\Gamma)$ which is invariant under the action $G(\Gamma')\curvearrowright \mathcal{P}(\Gamma)$ and an $L$-atlas $\mathcal{\bar{A}}_{L}$ for $G(\Gamma)$ such that
\begin{enumerate}
	\item $\mathcal{\bar{A}}_{L}\overset{e}{=}(\phi_{g'})^*(\mathcal{\bar{A}}_{L})$ for any $g'\in G(\Gamma')$. 
	\item (inverse images are boxes) Given $F\in \mathcal{F}(\Gamma)$, let $F'=\phi(F)$ and let $$s_{0}:v(L(\Delta(F)))\to v(L'(\Delta(F')))$$ be the bijection induced by $s$. Suppose $\bar{h}$ and $h'$ are charts for $F$ and $F'$ with respect to $\bar{\mathcal{A}}_L$ and $\mathcal{A}_{L'}$ respectively. Then $\varphi=h'\circ\phi\circ\bar{h}$ admits splitting $$\varphi=\prod_{w\in v(L(\Delta(F)))}\varphi_{w}$$ where $$\varphi_{w}:\Bbb Z^{w}\to \Bbb Z^{s_{0}(w)}$$ (here $\mathbb Z^w$ denotes the copy of $\mathbb Z$ associated with vertex $w$) is of form $$\varphi_{w}(a)=\lfloor a/d_{w}\rfloor+r_{w}$$ for some integers $r_{w}$ and $d_{w}$ ($d_{w}>0$).
\end{enumerate}
\end{prop}

%In the light of Lemma \ref{coherent labelling}, it suffices to show charts of $\bar{A}_{L}$ are $G(\Ga')$-invariant up to translations. The slightly more technical Proposition~\ref{prop:atlas} (2) roughly means under our choice of the charts, the map $\phi$ looks rather simple in the sense that when res in $\mathcal{\bar{A}}_{L}$ are ``pull-backs'' of the charts in $\mathcal{A}_{L'}$. 
Now we prove Theorem~\ref{6.12}, assuming Proposition~\ref{prop:atlas}.
\begin{proof}[Proof of Theorem~\ref{6.12}.]
	$(3)\Rightarrow (1)$ and $(2)\Rightarrow (1)$ are trivial. Now we look at $(1)\Rightarrow (2)$. By Theorem \ref{2.13}, we can assume $G(\Gamma)$ has trivial center. Let $\A_{ref}$ be the reference atlas for $G(\Ga)$ and let $\bar{\mathcal A}_L$ be as in Proposition~\ref{prop:atlas}. By Lemma \ref{6.7}, there exists simplicial isomorphism $r:\mathcal{P}(\Gamma)\to \mathcal{P}(\Gamma)$ such that $r^{\ast}(\bar{\mathcal{A}}_{L})\overset{e}{=}\mathcal{A}_{ref}$. The $G(\Gamma')$-invariance of $\bar{\mathcal{A}}_{L}$ implies $$(r^{-1}\circ s_{g'}\circ r)^{\ast}(\mathcal{A}_{ref})\overset{e}{=}\mathcal{A}_{ref}.$$ Let $\phi_{r}:G(\Gamma)\to G(\Gamma)$ be the map induced by $r$. Then by Definition~\ref{def:reference atlas} (1), $\phi_{r}^{-1}\circ\phi_{g'}\circ\phi_{r}$ is a left translation of $G(\Ga)$. Hence we have obtained a faithful action of $G(\Ga')$ on $G(\Ga)$ via left translations with finitely many orbits. Thus (2) follows.
	
	$(1)\Rightarrow (3)$. Since the atlas $\bar{A}_L$ satisfies Proposition~\ref{prop:atlas} (2), we deduce that for $F\in\mathcal{F}(\Gamma)$, there is $F'\in \mathcal{F}(\Gamma')$ such that the map $\phi\circ\phi_{r}|_{v(F)}:v(F)\to v(F')$ is surjective and it maps a collection of boxes in $v(F)$ to single points in $v(F')$. As $\Stab(F')$ acts transitively on $v(F')$, it follows from Proposition~\ref{prop:atlas} (1) that all these boxes have the same dimension. As each standard flat is contained a stable standard flat, we know from Proposition~\ref{prop:atlas} (2) that $\phi\circ\phi_{r}$ sends standard flats to standard flats. Moreover,
	\begin{itemize}
		\item if two elements of $G(\Gamma)$ are adjacent in $X(\Gamma)$, then their $\phi\circ\phi_{r}$-images are either the same or adjacent;
		\item given a pair of parallel edges $e_1,e_2\subset X(\Gamma)$ are parallel, if the $\phi\circ\phi_{r}( \partial e_1)$ is a single point ($\partial e_1$ denotes the collection of two end points of $e_1$), then the same holds for $\phi\circ\phi_{r}( \partial e_2)$; if the $\phi\circ\phi_{r}(\partial e_1)=\partial e'_1$ for an edge $e'_1\subset X(\Gamma')$, then  $\phi\circ\phi_{r}(\partial e_2)=\partial e'_2$ for an edge $e'_2\subset X(\Gamma')$ with $e'_2$ parallel to $e'_1$.
	\end{itemize}
Now it is clear that $\phi\circ\phi_{r}$ extends to a cubical map $X(\Ga)\to X(\Ga')$. The inverse image of a vertex under this cubical map is a compact subcomplex by Lemma~\ref{6.2} (1). Note that $\phi\circ\phi_{r}$ is $G(\Gamma')$-equivariant, where the $G(\Gamma')$ action on $X(\Gamma)$ is given by $g'\to \phi_{r}^{-1}\circ\phi_{g'}\circ\phi_{r}$. As $\phi$ gives a 1-1 correspondence between maximal standard flats in $X(\Gamma)$ and maximal standard flats in $X(\Gamma')$, and $\phi_r$ gives a bijection on the set of maximal standard flats in $X(\Gamma)$, we know $\phi\circ\phi_{r}$ induces an isomorphism of the associated extension complexes in the sense explained after Theorem~\ref{retraction map}.
Then Lemma~\ref{lem:recognizing special subgroup} implies $G(\Ga')$ is isomorphic to a special subgroup (note that Lemma~\ref{lem:recognizing special subgroup} (2) follows from Proposition~\ref{prop:atlas} (2)).
\end{proof}

In the rest of this subsection, we prove Proposition~\ref{prop:atlas}. We first arrange the coherent labelling $L$.

\begin{lem}
\label{coherent labelling}
There exists a coherence labelling $L:\P(\Ga)\to F(\Ga)$ for $G(\Ga)$ which is invariant under the action $G(\Ga')\acts \P(\Ga)$.
\end{lem}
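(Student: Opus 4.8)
The plan is to build $L$ directly from the reference atlas data together with the information of how $G(\Ga')$ acts on $\P(\Ga)$, so that $G(\Ga')$-invariance is automatic. First I would recall that $\phi:G(\Ga)\to G(\Ga')$ is the quasi-isometry induced by $s$, and that for each $g'\in G(\Ga')$ the map $\phi_{g'}:G(\Ga)\to G(\Ga)$ is the induced bijection, with $\bar\phi_{g'}\circ\phi=\phi\circ\phi_{g'}$ from (\ref{5.8}). The induced action $G(\Ga')\acts\P(\Ga)$ is via the simplicial isomorphisms $s_{g'}=s^{-1}\circ\bar s_{g'}\circ s$. The key starting point is Lemma \ref{6.2}: for every stable standard flat $F\subset X(\Ga)$ there is a unique stable standard flat $\phi(F)\subset X(\Ga')$ with $\phi(v(F))=v(\phi(F))$, and $\phi$ is compatible with parallelism and inclusions of standard flats. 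In particular, for a standard geodesic $l\subset X(\Ga)$ the image $s(\Delta(l))$ is a vertex of $\P(\Ga')$, and I will define the $L$-label of $\Delta(l)\in\P(\Ga)$ to be the $\pi'$-image of $s(\Delta(l))$, i.e. $L:=\pi'\circ s$ on vertices, where $\pi':\P(\Ga')\to F(\Ga')$ is the reference label-preserving map. But this lands in $F(\Ga')$, not $F(\Ga)$; so the real content is to transport this back along a fixed identification. Concretely: fix a base vertex $x_0\in X(\Ga)$, let $i_{x_0}:F(\Ga)\to (F(\Ga))_{x_0}\subset\P(\Ga)$ be the standard embedding, and define $L$ so that on each $(F(\Ga))_x$ it agrees with the isomorphism $(F(\Ga))_x\to F(\Ga)$ induced by $\pi$, but modified fiberwise to be constant on $G(\Ga')$-orbits.

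The cleaner route, which I would actually carry out, is the following. Since $G(\Ga)$ is of weak type I, any simplicial automorphism of $\P(\Ga)$ is visible (Theorem \ref{7.28}), so the action $G(\Ga')\acts\P(\Ga)$ descends to an action by visible simplicial automorphisms, hence to an action $G(\Ga')\acts G(\Ga)$ by bijections preserving the set of stable standard flats and their parallelism classes. Pick a base point $p_0\in G(\Ga)$ and consider its $G(\Ga')$-orbit $\mathcal O=G(\Ga')\cdot p_0$. For $x\in\mathcal O$ I first define $L$ on $(F(\Ga))_x$ by the following coherence requirement: I want $L|_{(F(\Ga))_x}$ to be the composition $(F(\Ga))_x\xrightarrow{\pi}F(\Ga)$ for $x=p_0$, and for $x=g'\cdot p_0$ I \emph{force} $L|_{(F(\Ga))_{g'p_0}}=L|_{(F(\Ga))_{p_0}}\circ (s_{g'})^{-1}$, i.e. I declare $L$ to be $G(\Ga')$-equivariant by fiat on the orbit $\mathcal O$. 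The first thing to check is that this is well-defined: if $g_1'p_0=g_2'p_0$ then $g:=(g_2')^{-1}g_1'$ stabilizes $p_0$ under $G(\Ga')\acts G(\Ga)$, hence $s_g$ fixes every vertex of $(F(\Ga))_{p_0}$ (a visible automorphism fixing a point fixes the star of that point in the relevant sense — this follows from the argument in Theorem \ref{7.28} where $\alpha_\ast$ fixes $St(v)$), so $L|_{(F(\Ga))_{p_0}}\circ(s_{g_1'})^{-1}=L|_{(F(\Ga))_{p_0}}\circ(s_{g_2'})^{-1}$. Next I extend $L$ off the orbit: every vertex $u\in\P(\Ga)$ lies in $(F(\Ga))_x$ for \emph{some} $x\in X(\Ga)$, and since $G(\Ga)$ acts transitively on vertices of $X(\Ga)$ while the $G(\Ga')$-orbit $\mathcal O$ need not be all of $X(\Ga)$, I should instead just define $L$ vertex-by-vertex: for $u=\Delta(l)$, I set $L(u)=\pi(u)$ composed with nothing — no: that is already $G(\Ga)$-invariant, not $G(\Ga')$-invariant. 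The honest statement is that $L$ differs from $\pi$ only in how it distributes labels within each fiber $\pi^{-1}(\bar v)$, and the $G(\Ga')$-action permutes these fibers compatibly because $G(\Ga')\acts\P(\Ga)$ commutes with $\pi\circ s^{-1}\circ(\text{ref labelling on }\Ga')$...

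Let me state the plan more carefully in its final form. Define $L:=$ (reference labelling $\pi'$ of $\P(\Ga')$) $\circ\, s$, postcomposed with a fixed graph isomorphism $F(\Ga')\cong F(\Ga)$ — such an isomorphism exists because $G(\Ga)\cong G(\Ga')$ by Theorem \ref{7.26}, so $\Ga\cong\Ga'$. Then $L:\P(\Ga)\to F(\Ga)$ is simplicial. I must verify two things. (i) \emph{Coherence}: for each $x\in X(\Ga)$, $L$ restricts to an isomorphism $(F(\Ga))_x\to F(\Ga)$. This holds because $s$ maps $(F(\Ga))_x$ isomorphically onto $(F(\Ga'))_{x'}$ for some $x'\in X(\Ga')$ — this is exactly the visibility of $s$ together with Lemma \ref{isometric embedding}: visibility gives $s((F(\Ga))_x)\subset (F(\Ga'))_{x'}$, and since $s$ is a simplicial isomorphism and both are copies of the flag complex of the (abstractly isomorphic) defining graphs, the inclusion is an equality — and then $\pi'$ restricts to an isomorphism $(F(\Ga'))_{x'}\to F(\Ga')$, which composed with $F(\Ga')\cong F(\Ga)$ gives coherence. (ii) \emph{$G(\Ga')$-invariance}: for $g'\in G(\Ga')$ acting on $\P(\Ga)$ via $s_{g'}$, we need $L\circ s_{g'}=L$. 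Unwinding, $L\circ s_{g'}=(\text{iso})\circ\pi'\circ s\circ s^{-1}\circ\bar s_{g'}\circ s=(\text{iso})\circ\pi'\circ\bar s_{g'}\circ s$. Now $\bar s_{g'}$ is the simplicial automorphism of $\P(\Ga')$ induced by the \emph{left translation} $\bar\phi_{g'}$ of $G(\Ga')$, and left translations preserve the reference labelling of $X(\Ga')$, hence $\pi'\circ\bar s_{g'}=\pi'$. Therefore $L\circ s_{g'}=(\text{iso})\circ\pi'\circ s=L$, as desired.

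The main obstacle I anticipate is step (i) — verifying coherence rigorously, specifically the claim that visibility of $s$ forces $s$ to carry the embedded copy $(F(\Ga))_x$ \emph{onto} a full embedded copy $(F(\Ga'))_{x'}$ rather than merely into some larger subcomplex. This requires knowing that $x=\bigcap F_i$ over maximal standard flats through $x$ (trivial center, which holds in this subsection by hypothesis), that the images $F_i'$ have a common vertex $x'$ by visibility, and that the collection $\{F_i'\}$ is exactly the set of \emph{all} maximal standard flats through $x'$ — the last point uses that $s^{-1}$ is also visible (Theorem \ref{weak type I visible} applied to $\Ga_2$, which is of weak type I since weak type I is... actually only quasi-isometry invariant of the weaker property, so one should argue $\Ga'$ is of weak type I directly from $\Ga\cong\Ga'$). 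I would handle this by invoking Theorem \ref{weak type I visible} symmetrically and the elementary fact that a simplicial isomorphism between two flag complexes of graphs that are each abstractly the defining graph, matching maximal simplices through corresponding points, must match the full stars. A secondary subtlety: the proof should not secretly need the full strength of "$\Ga\cong\Ga'$"; if one prefers to avoid citing Theorem \ref{7.26}, one can instead take $L$ to land in $F(\Ga')$ after identifying via $s$ restricted to one fixed $(F(\Ga))_{x_0}$, which gives an \emph{intrinsic} isomorphism $F(\Ga)\cong (F(\Ga))_{x_0}\xrightarrow{s} (F(\Ga'))_{x_0'}\xrightarrow{\pi'} F(\Ga')$ and hence a canonical $F(\Ga')\to F(\Ga)$; I would use this formulation to keep the lemma self-contained.
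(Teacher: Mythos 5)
The proposal has a genuine gap, and it is exactly the one you flag as the ``main obstacle'' but then dismiss incorrectly. Your committed construction $L=\iota\circ\pi'\circ s$ requires a graph isomorphism $\iota:F(\Ga')\to F(\Ga)$, and your coherence argument requires $s$ to map $(F(\Ga))_x$ \emph{onto} a full copy $(F(\Ga'))_{x'}$. Both of these need $\Ga\cong\Ga'$, which you obtain by citing Theorem~\ref{7.26}. But Theorem~\ref{7.26} requires \emph{both} defining graphs to be of weak type I. In the setting of Section~4.2 (hence of this lemma), only $G(\Ga)$ is assumed to be of weak type I; $G(\Ga')$ is an arbitrary RAAG quasi-isometric to it, and Theorem~\ref{weak type I visible} gives only that $\Ga'$ is of weak type II. In fact $\Ga\cong\Ga'$ is false in general: the whole point of Theorem~\ref{6.12} is that $G(\Ga')$ is a possibly proper finite-index special subgroup of $G(\Ga)$, whose defining graph typically has strictly more vertices. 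Concretely, take $\Ga$ to be a $5$-cycle on $\{1,\dots,5\}$ with an extra vertex $6$ adjacent to $1,2$ (so adjacent transvections are present and $\Ga$ is of weak type I), and let $G(\Ga')$ be the index-$2$ special subgroup associated with a $6$-labelled edge; then $\Ga'$ has $9$ vertices while $\Ga$ has $6$, so $s((F(\Ga))_x)$ is a proper subcomplex of $(F(\Ga'))_{x'}$ and $\pi'\circ s$ restricted to $(F(\Ga))_x$ is not onto $F(\Ga')$.

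The fall-back ``intrinsic'' identification also does not save the argument. In that same example the subcomplex $\pi'\bigl(s((F(\Ga))_x)\bigr)\subset F(\Ga')$ genuinely depends on $x$: different parallel copies of the same $\Ga$-geodesic through adjacent vertices can be sent to distinct $\Ga'$-directions, so there is no single map $F(\Ga')\to F(\Ga)$ that inverts these embeddings simultaneously for all $x$. The symmetric invocation of Theorem~\ref{weak type I visible} to get visibility of $s^{-1}$ is likewise unavailable, because its proof needs the \emph{source} to have no separating closed stars, and $\Ga'$ is only known to be of weak type II. Finally, the first ``equivariance-by-fiat'' approach you sketch — define $L=\pi$ on one $(F(\Ga))_{p_0}$, propagate along the $G(\Ga')$-orbit by equivariance, then check consistency — is in fact much closer to the right strategy (freeness of $G(\Ga')\acts X(\Ga)^{(0)}$ does hold, since $\bar\phi_{g'}$ is a fixed-point-free left translation whenever $g'\ne e$, so well-definedness on the orbit is fine), but you abandon it precisely at the point where the real work lies: verifying consistency on overlaps $(F(\Ga))_x\cap(F(\Ga))_y$ when $x$ and $y$ are not in the same $G(\Ga')$-orbit. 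That consistency check, not an isomorphism $\Ga\cong\Ga'$, is the content that must be supplied.
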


This proof of this lemma is part of the proof of \cite[Lemma 5.9]{raagqi1}. We extract here for the convenience of the reader.

\begin{proof}
Recall that each vertex of $\mathcal{P}(\Gamma)$ is labeled by a vertex in $\Gamma$ (cf. Section~\ref{basics about raag}), which induces a simplicial map $L_0:\mathcal{P}(\Gamma)\to F(\Gamma)$. Similarly we define $L'_0:\mathcal{P}(\Gamma')\to F(\Gamma')$. Take $g'\in G(\Gamma)$ and let $i_{g'}:F(\Gamma')\to \mathcal{P}(\Gamma')$ be the embedding as in Lemma~\ref{isometric embedding}. Define
	\begin{equation*}
	L=L_0\circ s^{-1}\circ i_{g'}\circ L'_0\circ s,
	\end{equation*}
	which is a simplicial map from $\mathcal{P}(\Gamma)$ to $F(\Gamma)$. Pick arbitrary $g\in G(\Gamma)$. We need to show $L\circ i_{g}$ is a simplicial isomorphism. Let $K_{g}=i_{g}(F(\Gamma))$ and let $g_{1}'\in G(\Gamma')$ such that $g_{1}'\cdot \phi(g)=g'$. Then $i_{g'}\circ L'_0|_{s(K_{g})}=\bar{s}_{g_{1}'}|_{s(K_{g})}$. Thus 
	\begin{align*}
	L\circ i_{g}=L_0\circ s^{-1}\circ i_{g'}\circ L'_0\circ s\circ i_{g}=L_0\circ s^{-1}\circ \bar{s}_{g'_{1}}\circ s\circ i_{g}=L_0\circ s_{g'_{1}}\circ i_{g},
	\end{align*}
	which is a simplicial isomorphism by Lemma \ref{4.10}. It follows that $L$ is a coherent labeling, moreover,
	\begin{align*}
	(s_{g'})^{\ast}L &= (L_0\circ s^{-1}\circ i_q\circ L'_0\circ s)\circ(s^{-1}\circ \bar{s}_{g'}\circ s)=L_0\circ s^{-1}\circ i_q\circ L'_0\circ \bar{s}_{g'}\circ s \\
	&=L_0\circ s^{-1}\circ i_q\circ L'_0\circ s=L
	\end{align*}
	for any $g'\in G(\Gamma')$, where the third equality follows from $G(\Gamma')$-invariance of $L'_0$. So $L$ is the required coherent labeling.
\end{proof}

%$L$ is defined via the composition $\P(\Ga)\xrightarrow{s}\P(\Ga')\xrightarrow{L'}F(\Ga')\xrightarrow{p} F(\Ga)$. To define $p$, we pick a base point $x'\in X(\Ga')$, then $p$ is defined via the composition $F(\Ga')\to F(\Ga')_{x'}\hookrightarrow \P(\Ga')\xrightarrow{s^{-1}}\P(\Ga)\xrightarrow{\pi} F(\Ga)$. 

It remains to construct an $L$-atlas $\bar{\mathcal A}_L$ for $G(\Gamma)$ satisfying all the requirement. This is the main part of the proof of Proposition~\ref{prop:atlas}. Note that in the light of Lemma \ref{coherent labelling}, for verifying Proposition~\ref{prop:atlas} (1), it
suffices to show charts of $\bar{\mathcal A}_L$ are $G(\Gamma')$-invariant up to translations. We will construct $\bar{\mathcal A}_L$ by induction on the dimension of charts. 

By induction, we assume the charts are already defined for standard flats in $\mathcal{F}(\Gamma)$ of dimension $\le k-1$ such that the following inductive assumptions hold true:
\begin{enumerate}[leftmargin=0.7cm]
\item The charts are compatible and $G(\Gamma')$-invariant up to translations.
\item (inverse images are boxes) Given $F\in \mathcal{F}_{< k}(\Gamma)$, let $F'=\phi(F)$ and let $$s_{0}:v(L(\Delta(F)))\to v(L'(\Delta(F')))$$ be the bijection induced by $s$. Suppose $\bar{h}$ and $h'$ are charts for $F$ and $F'$ respectively. Then $\varphi=h'\circ\phi\circ\bar{h}$ admits splitting $$\varphi=\prod_{w\in v(L(\Delta(F)))}\varphi_{w}$$ where $$\varphi_{w}:\Bbb Z^{w}\to \Bbb Z^{s_{0}(w)}$$ is of form $$\varphi_{w}(a)=\lfloor a/d_{w}\rfloor+r_{w}\ \ \ (a\in \Bbb Z^{w})$$ for some integers $r_{w}$ and $d_{w}$ ($d_{w}>0$).
\item (extension condition) For $F_{1},F_{2}\in \mathcal{F}_{< k}(\Gamma)$ such that $\phi(v(F_{1}))=\phi(v(F_{2}))=v(F')$, there is a bijections $f:v(F_{1})\to v(F_{2})$ such that 
\begin{enumerate}
\item $\phi(x)=\phi\circ f(x)$ for any $x\in v(F_{1})$.
\item $\bar{h}_{2}\circ f\circ \bar{h}^{-1}_{1}$ is a translation (here $\bar{h}_{i}:v(F_{i})\to \Bbb Z^{v(L(\Delta(F_{i})))}$ are charts).
\item Let $F\in\mathcal{F}(\Gamma)$ such that $F_{1}\cap F\neq\emptyset$, $F_{2}\cap F\neq\emptyset$ and the convex hull of $F_{1}$ and $F$ is a flat. Then $f(v(F_{1}\cap F))=v(F_{2}\cap F)$.
\end{enumerate}
\end{enumerate}

\begin{remark}
Note that only requiring condition (1) is not enough since the compatibility of existing charts does not imply that we can add more charts in a compatible way to obtain an atlas, thus we need condition (3). 
\end{remark}

For $i=1,2$, let $\varphi_{i}=h'\circ\phi\circ\bar{h}^{-1}_{i}$. Then (2) and (3) imply that $\varphi^{-1}_{1}(y)$ and $\varphi^{-1}_{2}(y)$ are boxes of the same size for any $y\in \Bbb Z^{v(L(\Delta(F')))}$. Thus (3a) and (3b) uniquely determine the map $f$ and we call $f$ a \textit{charts-induced identification} (CII) between $v(F_{1})$ and $v(F_{2})$. 

In order to define charts for standard flats in $\mathcal{F}_{\le k}(\Gamma)$, we need a way to ``connect'' parallel copies of lower dimensional standard flats in a $k$-dimensional stable standard flat. For this purpose, we will define CII between parallel standard flats in $\mathcal{F}_{\le k-1}(\Ga)$ and prove some basic properties via a sequence of lemmas (Lemma~\ref{equivariant CII 0}, Lemma~\ref{equivariant CII}, Lemma~\ref{6.9}, Lemma~\ref{6.10} and Lemma~\ref{6.11}). We will define charts after these preparatory lemmas.

\begin{lem}
\label{equivariant CII 0}
The map $f$ is $\Stab(v(F'))$-equivariant.
\end{lem}

\begin{proof}
Recall that $\Stab(v(F'))=\Stab(v(F_1))=\Stab(v(F_2))$ by Lemma \ref{6.2}. By (1), the induced action of $\Stab(v(F'))$ on the range of $\bar{h}_1$ (or $\bar{h}_2$) is an action by translations, moreover, this action is completely determined by the size of the box $\varphi^{-1}_1(y)$ (or $\varphi^{-1}_{2}(y)$). It follows from (3a) and (3b) that $\bar{h}_{2}\circ f\circ \bar{h}^{-1}_{1}$ is $\Stab(v(F'))$-equivariant. Then the lemma follows.
\end{proof}

%\begin{remark}
%Recall that in \cite[Lemma 5.7]{raagqi1}, all CIIs between standard geodesics are induced by parallelism. This is relaxed to (3c), which says CII is induced by parallelism whenever it has to be. 
%\end{remark}

Let $F_{1},F_{2}\in \mathcal{F}_{<k}(\Gamma)$ be two parallel elements. If $F'_{1}=\phi(F_{1})$ and $F'_{2}=\phi(F_{1})$, then $F'_{1}$ and $F'_{2}$ are parallel. Let $p:v(F'_{1})\to v(F'_{2})$ be the map induced by parallelism, i.e. $p$ sends a vertex in $F'_1$ to the nearest point in $F'_2$ (which is a vertex) with respect to the metric on the ambient CAT(0) cube complex.
Let $g'$ be the unique element in $G(\Gamma')$ such that $\bar{\phi}_{g'}|_{v(F'_{1})}=p$. Suppose $F_{21}=\phi_{g'}(F_{1})$. Then $$\phi(F_{21})=\phi(F_{2})$$ by (\ref{5.8}). We define the \emph{chart induced identification} $$f:v(F_{1})\to v(F_{2})$$ between $v(F_1)$ and $v(F_2)$ by $$f=f_{1}\circ\phi_{g'}$$ where $f_{1}$ is the CII between $v(F_{21})$ and $v(F_{2})$. 

\begin{lem}
\label{equivariant CII}
The CII map $f$ is $\Stab(v(F_1))$-equivariant.
\end{lem}

\begin{proof}
Note that $\Stab(v(F_1))=\Stab(v(F_2))=\Stab(v(F'_1))=\Stab(v(F'_2))$. Since $g'$ commutes with any element in $\Stab(v(F_1))$, $\phi_{g'}$ is $\Stab(v(F_1))$-equivariant. By Lemma \ref{equivariant CII 0}, $f_1$ is $\Stab(v(F_1))$-equivariant. Thus the lemma follows.
\end{proof}

\begin{lem} 
	\label{6.9}
	The following properties of $f$ are true.
	\begin{enumerate}
	\item The map $f$ satisfies all the properties in inductive assumption (3) with (3a) replaced by $\bar{\phi}_{g'}\circ\phi=\phi\circ f$.	
	\item The map $f$ is uniquely characterized by the following two properties:
	\begin{itemize}
		\item $\bar{h}_{2}\circ f\circ \bar{h}^{-1}_{1}$ is a translation.
		\item $\phi\circ f(x)=p\circ \phi(x)$ for any $x\in v(F_{1})$, where $p:v(F'_{1})\to v(F'_{2})$ is the parallelism map.
	\end{itemize} 
	\end{enumerate}
\end{lem}

\begin{proof}
We prove the first assertion of the lemma. Condition (3a) and (3b) follows from inductive assumption (1). It suffices to check (3c). Let $F'=\phi(F)$. Then the convex hull of $F'$ and $F'_{1}$ (or $F'_{2}$) is also a flat, thus $\bar{\phi}_{g'}(v(F'))=v(F')$ and (\ref{6.3}) implies $\phi_{g'}(v(F))=v(F)$. It follows that $$\phi_{g'}(v(F\cap F_{1}))=v(F\cap F_{21}).$$ Note that $F_{21}$ and $F_{2}$ are in the convex hull of $F_{1}$ and $F$, thus $$f_{1}(v(F\cap F_{21}))=v(F\cap F_{2})$$ by (3c), which implies $$f(v(F\cap F_{1}))=v(F\cap F_{2}).$$ The second assertion of the lemma follows from the first assertion and inductive assumption (2).
\end{proof}

\begin{lem}
\label{6.10}
Let $\{F_{i}\}_{i=1}^{4}\subset \mathcal{F}_{<k}(\Gamma)$ such that $F_{1}$, $F_{2}$ and $F_{3}$ are parallel. Suppose $f_{ij}$ is the CII between $v(F_{i})$ and $v(F_{j})$ and $\bar{h}_{i}$ is the chart for $F_{i}$. Then 
\begin{enumerate}
\item $f_{13}=f_{23}\circ f_{12}$.
\item If $F_{4}\subset F_{1}$, then $f_{12}(v(F_{4}))$ is the vertex set of some standard flat and $f_{12}|_{F_{4}}$ is the CII between $v(F_{4})$ and $f_{12}(v(F_{4}))$.
\item If $F_{i}\subset F_{4}$ for $i=1,2$, then $f_{12}$ coincides with the map induced by parallelism between $\bar{h}_{4}(v(F_{1}))$ and $\bar{h}_{4}(v(F_{2}))$ in $\Bbb Z^{v(L(\Delta(F_{4})))}$ (note: for the definition of parallelism map, we treat $\Bbb Z^{v(L(\Delta(F_{4})))}$ as an integer lattice in a Euclidean space, and send a point in $\bar{h}_{4}(v(F_{1}))$ to the nearest point in $\bar{h}_{4}(v(F_{2}))$ with respect to the Euclidean metric).
\item CIIs are $G(\Gamma')$-invariant. Namely for any $g'\in G(\Gamma')$, the CII between $\phi_{g'}(F_{1})$ and $\phi_{g'}(F_{2})$ is given by $\phi_{g'}\circ f_{12}\circ\phi_{g'}^{-1}$.
%\item Induction assumption (3c) is true with $f$ replaced by $f_{12}$ $($we do not need $\phi(v(F_{1}))=\phi(v(F_{2})))$.
\end{enumerate}
\end{lem} 

\begin{proof}
The first assertion is a consequence of Lemma \ref{6.9} (2).
To see (2), by the compatibility of charts, there is a 1-1 correspondence between standard flats in $F_{2}$ that are parallel to $F_{4}$ and cosets of $\Bbb Z^{v(L(\Delta(F_{4})))}$ in $\Bbb Z^{v(L(\Delta(F_{2})))}$, but $\bar{h}_{2}(f_{12}(v(F_{4})))$ is such a coset by the first item of Lemma \ref{6.9} (2), thus $f_{12}(v(F_4))$ is the vertex set of some standard flat. Lemma \ref{6.9} (2) also implies that $f_{12}|_{F_4}$ is a CII. Assertion (3) follows from inductive assumption (2) and Lemma \ref{6.9} (2). Assertion (4) follows from inductive assumption (1) and Lemma \ref{6.9} (2). 
\end{proof}

%(5) is the claim before Lemma \ref{6.9}.
%Let $F$ be the flat in (3c) and suppose $F'=\phi(F)$ and $F'_{i}=\phi(F_{i})$. Then $F'\cap F'_{i}\neq\emptyset$ for $i=1,2$ and the convex hull of $F'$ and $F'_{1}$ is a flat. Let $g'\in G(\Gamma')$ be the element such that $\bar{\phi}_{g'}(F'_{1})=F'_{2}$ and $\bar{\phi}_{g'}|_{v(F'_{1})}$ coincides with the map induced by parallelism. Then $\phi_{g'}|_{v(F_{1})}$ is the CII between $\bar{F}_{1}=\phi_{g'}(F_{1})$ and $F_{1}$. Note that $g'\in \Stab(v(F'))=\Stab(v(F))$, so $\phi_{g'}(v(F)\cap v(F_{1}))=\phi_{g'}(v(F))\cap\phi_{g'}(v(F_{1}))=v(F)\cap v(\bar{F}_{1})$. Since $\phi(\bar{F}_{1})=\phi(F_{2})$, $f(v(F)\cap v(\bar{F}_{1}))=v(F)\cap v(F_{2})$ where $f:v(\bar{F}_{1})\to v(F_{2})$ is the CII. By (1), we have $f_{12}=f\circ\phi_{g'}$, then (5) follows.

Take $F_{1},F\in\mathcal{F}(\Gamma)$ with $\dim(F_{1})<k$ and $F_{1}\subset F$, we define a map $\pi_{1}:v(F)\to v(F_{1})$ as follows. For standard flat $K\subset F$ with $K$ parallel to $F_{1}$, we set $\pi|_{v(K)}=f_{K}$ where $f_{K}:v(K)\to v(F_{1})$ is the CII between $v(K)$ and $v(F_{1})$. We call $\pi_{1}$ a \textit{charts-induced projection} (CIP). 

\begin{lem}
\label{6.11}
Let $F'_{1}=\phi(F_{1})$ and $F'=\phi(F)$. Suppose $F'_{2}$ is an orthogonal complement of $F'_{1}$ in $F'$ and $\bar{h}_{1}$ is the chart for $F_{1}$. Then
\begin{enumerate}
\item $\pi_{1}\circ\bar{h}_{1}$ is $\Stab(v(F'_{2}))$-invariant and $\Stab(v(F'_{1}))$-invariant up to translation, hence is $\Stab(v(F'))$-invariant up to translation.
\item Pick $F_{3}\in\mathcal{F}(\Gamma)$ such that $F_{3}\subset F_{1}$. Let $\pi_{3}:v(F)\to v(F_{3})$ and $\pi_{13}:v(F_{1})\to v(F_{3})$ be CIPs. Then $\pi_{3}=\pi_{13}\circ\pi_{1}$.
\item Assume $\dim(F)<k$ and let $\bar{h}$ be the chart for $F$. Then $\pi_{1}$ coincides with the map induced by the natural projection from $\bar{h}(F)$ to $\bar{h}(F_{1})$ in $\Bbb Z^{v(L(\Delta(F)))}$.
\item Suppose $\pi'_{1}$ is the orthogonal projection $v(F')\to v(F'_{1})$. Then $$\phi\circ\pi_{1}(x)=\pi'_{1}\circ\phi(x)$$ for any $x\in v(F)$.
\item Let $F_{3}\in\mathcal{F}(\Gamma)$ be a standard flat in $F$. Then there exists stable standard flat $F_{4}\in F_{1}$ such that $\pi_{1}(v(F_{3}))=v(F_{4})$ ($F_{4}$ could be a point). Moreover, let $\pi_{4}:F\to F_{4}$ be the CIP. Then $$\pi_{1}|_{v(F_{3})}=\pi_{4}|_{v(F_{3})}.$$
\end{enumerate}
\end{lem}

\begin{proof}
To see (1), note that any element in $\Stab(v(F'_2))$ maps $F'_1$ to a flat parallel to $F'_1$, and this maps is exactly the parallelism map. It follows from Lemma \ref{6.9} (2) that $\pi_{1}\circ\bar{h}_{1}$ is $\Stab(v(F'_{2}))$-invariant. Lemma \ref{equivariant CII} and inductive assumption (1) imply $\pi_{1}\circ\bar{h}_{1}$ is $\Stab(v(F'_{1}))$-invariant up to translation. Assertion (2) follows from Assertion (1) and Lemma \ref{6.10} (2). Assertion (3) follows from Lemma \ref{6.10} (3). Assertion (4) follows from Lemma \ref{6.9} (2). To see (5), we first assume $F_{3}\cap F_{1}\neq\emptyset$ and take $F_{4}=F_{1}\cap F_{3}$, then $$\pi_{1}(v(F_{3}))=v(F_{4})$$ by Lemma~\ref{6.9} (1). In general, we pick a standard flat $\tilde{F}_{1}$ parallel to $F_{1}$ such that $\tilde{F}_{1}\cap F_{3}\neq\emptyset$. Let $f_{1}:\tilde{F}_{1}\to F_{1}$ be the CII and let $\tilde{\pi}_{1}:F\to \tilde{F}_{1}$ be the CIP. Then $f_{1}\circ\tilde{\pi}_{1}=\pi_{1}$, which reduces the problem to the previous case. The second assertion in (5) follows from (2).
\end{proof}

We will construct charts for elements in $\mathcal{F}_{k}(\Gamma)$ in three steps.

\textbf{Step 1: We construct chart for a single element in $\mathcal{F}_{k}(\Gamma)$.}

Pick a standard $k$-flat $F\in\mathcal{F}(\Gamma)$ and vertex $p\in F$. Let $F_{m}$ be the convex hull of all standard flats that are properly contained in $F$, pass through $p$ and belong to $\mathcal{F}(\Gamma)$. Then $F_{m}\in \mathcal{F}(\Gamma)$ by Lemma \ref{3.50}. We divide into three cases depending on the size of $F_m$. In each case we will construct a chart $\bar h: v(F)\to \Bbb Z^{v(L(\Delta(F)))}$ for $F$ and shall verify 
\begin{enumerate}
	\item $\bar h$ is compatible with charts of elements in $\mathcal{F}_{<k}(\Gamma)$;
	\item $\bar h$ is $\Stab(v(F'))$-invariant up to translation;
	\item inductive assumption (2) holds for $\bar h$.
\end{enumerate}

\textit{Case 1:} $F_{m}$ is a point. Let $F'=\phi(F)$ and let $$h':v(F')\to \Bbb Z^{v(L'(\Delta(F')))}$$ be the chart for $F'$. Define $$h=h'\circ\phi:v(F)\to \Bbb Z^{v(L'(\Delta(F')))}.$$ We assign an arbitrary bijection between $v(L'(\Delta(F')))$ and $\{1,2,\ldots, k\}$ with $$k=|v(L'(\Delta(F')))|,$$ which leads to an identification of $\Bbb Z^{v(L'(\Delta(F')))}$ with $\Bbb Z^k$. As $h$ is a map from $v(F)$ from $\Bbb Z^k$, we can write  $$h=(h_{1},h_{2},\cdots,h_{k})$$ where each $h_i$ is a coordinate component of $h$. Denote the identity element in $\Bbb Z^{v(L'(\Delta(F')))}$ by \textbf{0} and let $r=|h^{-1}(\textbf{0})|$. Since elements in $h^{-1}(\textbf{0})$ are representatives of the orbits of the action $\Stab(v(F'))\curvearrowright v(F)$, there is a natural map $v(F)\to h^{-1}(\textbf{0})$. By post-composing this map with a bijection $$h^{-1}(\textbf{0})\to\{0,1,\cdots,r-1\},$$ we obtain a $\Stab(v(F'))$-invariant map $$\chi:v(F)\to\{0,1,\cdots,r-1\}.$$ Now define $$\tilde{h}:v(F)\to \Bbb Z^{v(L'(\Delta(F')))}$$ by sending $x\in v(F)$ to $$(rh_{1}(x)+\chi(x),h_{2}(x),\cdots,h_{k}(x)),$$ then $\tilde{h}$ is a bijection and we have the following commutative diagram:
\begin{center}
\begin{tikzpicture}[scale=1.5]
\node (A) at (0,1) {$v(F)$};
\node (B) at (2.5,1) {$v(F')$};
\node (C) at (0,0) {$\Bbb Z^{v(L'(\Delta(F')))}$};
\node (D) at (2.5,0) {$\Bbb Z^{v(L'(\Delta(F')))}$};
\node (E) at (-2.5,0) {$\Bbb Z^{v(L(\Delta(F)))}$};
\path[->,font=\scriptsize,>=angle 90]
(A) edge node[above]{$\phi$} (B)
(A) edge node[right]{$\tilde{h}$} (C)
(B) edge node[right]{$h'$} (D)
(C) edge node[above]{$\phi'$} (D)
(A) edge node[above]{$h$} (D)
(A) edge node[above]{$\bar{h}$} (E)
(E) edge node[above]{$s'$} (C);
\end{tikzpicture}
\end{center}
Here $\phi'$ is the map induced by $\phi$, $s'$ is the bijection induced by $s:\Delta(F)\to\Delta(F')$ and $\bar{h}=s'^{-1}\circ \tilde{h}$. By construction, $\bar{h}$ is $\Stab(v(F'))$-invariant up to translation and satisfies inductive assumption (2). We choose $\bar{h}$ to be the chart for $F$, which is trivially compatible with the charts already defined.

\textit{Case 2:} $p\subsetneq F_{m}\subsetneq F$. Let $F'$ and $h$ be as before and let $F'_{m}=\phi(F_{m})$. Suppose $F_{c}$ (or $F'_{c}$) is a standard flat which is the orthogonal complement of $F_{m}$ (or $F'_{m}$) in $F$ (or $F'$). Then we have the following commuting diagram:
\begin{center}
$\begin{CD}
v(F)                         @>h>>        \Bbb Z^{v(L'(\Delta(F')))}\\
@VV\pi V                                                              @VV\pi'_{c} V\\
v(F_{c})                  @>h_{c}>>       \Bbb Z^{v(L'(\Delta(F'_{c})))}
\end{CD}$
\end{center}
Here $\pi$ and $\pi'_{c}$ are the natural projections. Note that $h$ maps fibres of $\pi$ to fibres of $\pi'_c$, which induces $h_{c}$. The action of $\Stab(v(F'_{c}))$ permutes the fibres of $\pi$, which induces an action $$\Stab(v(F'_{c}))\curvearrowright v(F_{c}).$$ As in case 1, we can obtain from $h_{c}$ a bijection $$\bar{h}_{c}:v(F_{c})\to\Bbb Z^{v(L(\Delta(F_{c})))}$$ which is $\Stab(v(F'_{c}))$-invariant up to translation. Then $\bar{h}_{c}\circ \pi$ is $\Stab(v(F'_{m}))$-invariant (since $\Stab(v(F'_m))$ stabilizes each fibre of $\pi$ by (\ref{6.3})) and $\Stab(v(F'_{c}))$-invariant up to translation.

Let $\bar{h}_{m}$ be the composition $$v(F)\to v(F_{m})\to\Bbb Z^{v(L(\Delta(F_{m})))}$$ of a CIP with a chart map. Then $\bar{h}_{m}$ is $\Stab(v(F'_{m}))$-invariant up to translation and $\Stab(v(F'_{c}))$-invariant by (1) of Lemma \ref{6.11}. Now we identify $\Bbb Z^{v(L(\Delta(F_{m})))}$ and $\Bbb Z^{v(L(\Delta(F_{c})))}$ as subgroups of $\Bbb Z^{v(L(\Delta(F)))}$ and define $$\bar{h}:v(F)\to \Bbb Z^{v(L(\Delta(F)))}$$ by $$\bar{h}=\bar{h}_{c}\circ \pi+\bar{h}_{m}.$$ It is clear that the bijection $\bar{h}$ is $\Stab(v(F'))$-invariant up to translation. We choose $\bar{h}$ to be the chart for $F$ and the compatibility follows from our construction.

Let $s_{0}$ and $\varphi$ be as in inductive assumption (2). We claim cosets of $\Bbb Z^{w}$ are mapped to cosets of $\Bbb Z^{s_{0}(w)}$ under $\varphi$ for any $w\in v(L(\Delta(F)))$. If $w\in v(L(\Delta(F_{m})))$, then the claim follows from Lemma \ref{6.9} (2) and inductive assumption (2) for $F_m$. If $w\in v(L(\Delta(F_{c})))$, then by Lemma \ref{6.11} (4), $\bar{h}_m$ maps $\Bbb Z^{w}$-cosets to points. The claim follows from the construction of $\bar{h}_{c}$. Thus $\varphi$ splits into products and $\bar{h}$ satisfies inductive assumption (2).

\textit{Case 3:} $F_{m}= F$. Then there exist standard flats $F_{1},F_{2}\in\mathcal{F}_{<k}(\Gamma)$ such that $F$ is the convex hull of $F_{1}$ and $F_{2}$ (a basic case to bare in mind is that $F_1\cap F_2$ is a single point). Let $F_{3}=F_{1}\cap F_{2}$. Suppose $F'=\phi(F)$ and $F'_{i}=\phi(F_{i})$. Take $\bar{h}_{i}$ to be the charts for $F_{i}$ for $1\le i\le 3$ and take $\bar{\pi}_{i}:F\to F_{i}$ to be the CIP for $1\le i\le 3$. Define $$\bar{h}:F\to \Bbb Z^{v(L(\Delta(F)))}$$ by $$\bar{h}=\bar{h}_{1}\circ\bar{\pi}_{1}+\bar{h}_{2}\circ\bar{\pi}_{2}-\bar{h}_{3}\circ\bar{\pi}_{3}.$$ Then by Lemma \ref{6.11} (1), $\bar{h}$ is $\Stab(v(F'))$-invariant up to translation. 

\begin{lem}
	\label{lem:properties}
The following holds true.
\begin{enumerate}
	\item The map $\bar{h}$ is a bijection.
	\item The map $\bar{h}$ is compatible with charts of elements in $\mathcal{F}_{<k}(\Gamma)$.
	\item The map $\bar{h}$ satisfies inductive assumption (2).
\end{enumerate}
\end{lem}

\begin{proof}
For Assertion (1), note that $\bar h$ induces a bijection between standard flats in $F$ which are parallel to $F_3$ and cosets of $\Bbb Z^{v(L(\Delta(F_{3})))}$ in $\Bbb Z^{v(L(\Delta(F)))}$. It suffices to show for any standard flat $\tilde{F}_{3}\subset F$ parallel to $F_{3}$, $\bar{h}$ maps $\tilde{F}_{3}$ bijectively to a coset of $\Bbb Z^{v(L(\Delta(F_{3})))}$. Note that by Lemma \ref{6.9} (2), if we change the standard flats $F_{1}$ and $F_{2}$ in the definition of $\bar{h}$ to some other flats parallel to them, then $\bar{h}$ would differ by translation, thus we can assume $\tilde{F}_{3}=F_{3}$. But $\bar{h}$ restricted to $F_{3}$ is of form $\bar{h}_{1}+\bar{h}_{2}-\bar{h}_{3}$, so what we need to prove is implied by the compatibility condition.

Now we prove Assertion (2). Let $F_{4}\subset F$ be an element in $\mathcal{F}_{<k}(\Gamma)$ and let $\bar{h}_{4}$ be its chart. We can assume $F_{i}\cap F_{4}\neq\emptyset$ by moving $F_{1}$ and $F_{2}$ appropriately as before. For $1\le i\le 3$, let $F_{4i}=F_{4}\cap F_{i}$, let $\bar{\pi}_{4i}:F\to F_{4i}$ be the CIP and let $\bar{h}_{4i}$ be the chart for $F_{4i}$. By (5) of Lemma \ref{6.11}, $\pi_{i}(F_{4})=F_{4i}$ for $1\le i\le 3$ and $$\bar{h}=\bar{h}_{1}\circ\bar{\pi}_{41}+\bar{h}_{2}\circ\bar{\pi}_{42}-\bar{h}_{3}\circ\bar{\pi}_{43}$$ when restricted on $F_{4}$. On the other hand, (3) of Lemma \ref{6.11} and the compatibility condition imply $$\bar{h}_{4}=\bar{h}_{41}\circ\bar{\pi}_{41}+\bar{h}_{42}\circ\bar{\pi}_{42}-\bar{h}_{43}\circ\bar{\pi}_{43}$$ up to translation. Now the compatibility of $\bar{h}_{4}$ and $\bar{h}$ follows from the compatibility of $\bar{h}_{i}$ and $\bar{h}_{4i}$ ($1\le i\le 3$).

For Assertion (3), it suffices to show $\bar{h}$ restricted on each $\Bbb Z$ coset has the desired property. Let $w\in v(L(\Delta(F)))$ and let $K$ be a $\Bbb Z^{w}$ coset. Then $K$ is contained in either a $\Bbb Z^{v(L(\Delta(F_{1})))}$ coset or a $\Bbb Z^{v(L(\Delta(F_{2})))}$ coset. Since $\bar{h}$ is compatible with other charts, there exists a standard flat $F_{5}\subset F$ which is parallel to either $F_{1}$ or $F_{2}$ such that $K\subset\bar{h}(v(F_{5}))$. Moreover, if $\bar{h}_{5}$ is the chart for $F_{5}$, then $\bar{h}_{5}\circ\bar{h}^{-1}(K)$ is again a $\Bbb Z^{w}$ coset. By applying the inductive assumption to $\bar{h}_{5}$, we know $\bar{h}|_{K}$ has the desired behavior.
\end{proof}

\begin{lem}
	\label{lem:translation}
If we choose different $F_{1}$ and $F_{2}$ in the definition of $\bar{h}$, then the resulting chart remains the same up to translation.
\end{lem}

\begin{proof}
Let $\bar h$ be the chart defined above using $F_1$ and $F_2$. 
The lemma follows if we know that for any $F_3\in\mathcal{F}_{<k}(\Gamma)$, the CIP from $v(F)$ to $v(F_3)$ coincides with the map induced by the natural projection from $\bar h(F)$ to $\bar h(F_3)$ in $\mathbb Z^{v(L(\Delta(F)))}$. To show this property of CIP holds, it suffices to show that for any pair of parallel elements $F_4,F_5\subset F$ in $\mathcal{F}_{<k}(\Gamma)$, the CII between $v(F_4)$ and $v(F_5)$ coincides with the map induced by parallelism between $\bar(v(F_4))$ and $\bar(v(F_5))$ in $\mathbb Z^{v(L(\Delta(F)))}$. This can be proved in the same way as Lemma \ref{6.10} (3), using Lemma~\ref{lem:properties} (2).
\end{proof}
\bigskip

\textbf{Step 2: We construct charts for flats with the same $\phi$-image as $F$.}

We define a graph $\Lambda(F)$. Its vertices are in 1-1 correspondence to standard flats that have the same $\phi$-image as $F$ and two vertices are joined by an edge if and only if the corresponding flats are \textit{bolted}, defined as follows.
\begin{definition}
Two parallel elements $H_{1},H_{2}\in \mathcal{F}(\Gamma)$ are \textit{bolted} if there exists $H\in \mathcal{F}(\Gamma)$ such that for $i=1,2$, $H\cap H_{i}\neq\emptyset$, $H\cap H_{i}\subsetneq H_{i}$ and the convex hull of $H$ and $H_{1}$ is a flat. The standard flat $H$ is called a $(H_{1},H_{2})-$\textit{bolt}, we will omit $(H_{1},H_{2})$ when they are clear. 
\end{definition}

The main goal of Step 2 is the following. 
\begin{lem}
	\label{lem:step 2}
Let $F'=\phi(F)$.  There exists a collection of charts, one for each vertex in $\Lambda(F)$, such that 
\begin{enumerate}
	\item each chart is compatible with charts of elements in $\mathcal{F}_{<k}(\Gamma)$;
	\item each chart is $\Stab(v(F'))$-invariant up to translation;
	\item inductive assumption (2) holds for each chart;
	\item inductive assumption (3) is satisfied for each pair of charts.
\end{enumerate}
\end{lem}

We choose a representative in each connected component of $\Lambda(F)$ (the representative in the component containing $F$ is chosen to be $F$), which gives a collection $\{F_\lambda\}_{\lambda\in\Lambda}$. We build a chart $\bar h_{\lambda}$ for each $F_\lambda$ as in Step 1. By Step 1, Lemma~\ref{lem:step 2} (1), (2) and (3) holds for each $\bar h_{\lambda}$. Now we arrange Lemma~\ref{lem:step 2} (4) for each pair of charts in $\{\bar h_\lambda\}_{\lambda\in\Lambda}$.

\begin{lem}
	\label{lem:same dimension}
Take $\{H_1,H_2\}\subset \{F_\lambda\}_{\lambda\in\Lambda}$	with their charts $\bar h_1$ and $\bar h_2$. Then for any $y\in v(F')$, $(\phi\circ\bar{h}^{-1}_{1})^{-1}(y)$ and $(\phi\circ\bar{h}^{-1}_2)^{-1}(y)$ are boxes of the same dimension.
\end{lem}

\begin{proof}	
Note that $H_1$ and $H_2$ must be in the same case of Step 1, so the lemma follows from Lemma \ref{6.2} (3) in case 1 of Step 1. In case 2 of Step 1, the lemma is a consequence of Lemma \ref{6.2} (3) and following obersavation which follows from Lemma~\ref{6.9} (2): for any two parallel standard flats $F_3,F_4\subset \mathcal{F}_{k-1}(\Gamma)$ with charts $\bar h_3$ and $\bar h_4$, $(\phi\circ\bar{h}^{-1}_{3})^{-1}(y_3)$ and $(\phi\circ\bar{h}^{-1}_4)^{-1}(y_4)$ are boxes of the same dimension where $y_i$ is a point in the image of $\phi\circ\bar{h}^{-1}_{i}$ for $i=3,4$.
In case 3 of Step 1, by Lemma~\ref{lem:translation}, (up a translation) the definition of $\bar{h}_i$ does not depend on the choice of the pair of stable flats in $H_i$. So we can choose them such that they are parallel to $F_{1},F_{2}\subset F$ as in Case 3 of Step 1. This, together with the previous observation imply the lemma.
\end{proof}
By Lemma~\ref{lem:same dimension}, there exists a unique identification $f:v(H_1)\to v(H_2)$ characterized by (3a) and (3b) of the inductive assumption. (3c) is trivially true for $f$ and $f$ is $\Stab(v(F'))$-equivariant since both $\bar{h}_1$ and $\bar{h}_{2}$ are $\Stab(v(F'))$-invariant up to translation. Thus Lemma~\ref{lem:step 2} (4) holds for each pair of charts in $\{\bar h_\lambda\}_{\lambda\in\Lambda}$.

%Now every representative has been identified with $F$ and induced identification between representatives obviously satisfies inductive assumption (3).

It remains to define charts for flats inside one connected component of $\Lambda(F)$, so we assume now $\Lambda(F)$ is connected. 

\begin{lem}
\label{construction of CII}
There is a collection of bijections between each pair of flats in $\Lambda(F)$, which is also called CII's, such that
\begin{enumerate}
\item These CII's are compatible under compositions.
\item Each CII is $\Stab(v(F'))$-equivariant and satisfies inductive assumption (3a) and (3c). 
\item Let $f:H_1\to H_2$ be a CII between flats $H_1$ and $H_2$ in $\Lambda(F)$. Suppose $S_{1}\in\mathcal{F}_{<k}(\Gamma)$ be a standard flat in $H_{1}$. Then there exists $S_{2}\in \mathcal{F}_{<k}(\Gamma)$ parallel to $S_{1}$ such that $f(v(S_{1}))=v(S_{2})$ and $f|_{v(S_{1})}$ is the CII between $v(S_{1})$ and $v(S_{2})$.
\end{enumerate}
\end{lem}

Assuming Lemma~\ref{construction of CII}, we can finish the proof of Lemma~\ref{lem:step 2} as follows.
For any flat $H$ in $\Lambda(F)$, we define the chart of $H$ to be the composition of the CII between $H$ and $F$, and the chart map of $F$. This chart satisfies inductive assumption (2) since $F$ also satisfies this condition and the CII satisfies (3a). Recall that the chart of $F$ is compatible with the charts for flats in $\F_{<k}(\Ga)$, so is the chart of $H$ by Lemma \ref{construction of CII} (3). Moreover, this chart is $\Stab(v(F'))$ invariant up to translation by Lemma \ref{construction of CII} (2). Under such definition of charts, the CII between $F$ and another flat in $\Lambda(F)$ trivially satisfies inductive assumption (3b), hence the CII between any two flats in $\Lambda(F)$ satisfies inductive assumption (3b) by Lemma \ref{construction of CII} (1). 

\begin{proof}[Proof of Lemma \ref{construction of CII}]
We will again follow the three cases of how large is $F_m$ in $F$ as in Step 1.	
In case 1, we define the CII between any two flats in $\Lambda(F)$ to be the map induced by parallelism, then (1) of Lemma \ref{construction of CII} is true. Let $F_{1}$ and $F_{2}$ be a pair of bolted flats and let $H$ be a bolt. Suppose $f_{12}:v(F_{1})\to v(F_{2})$ is the CII. Then for $i=1,2$, $H\cap F_{i}$ must be one point and we denote it by $p_{i}$. It is clear that $f_{12}(p_{1})=p_{2}$, thus inductive assumption (3c) follows. Moreover, 
\begin{align*}
\phi(p_{1})&=\phi(v(H)\cap v(F_{1}))=\phi(v(H))\cap\phi(v(F_{1}))=\phi(v(H))\cap\phi(v(F_{2}))\\
&=\phi(v(H)\cap v(F_{2}))=\phi(p_{2})=\phi\circ f_{12}(p_{1}).
\end{align*}
The second and fourth equality follows from Lemma \ref{6.2} (2). Thus (3a) is true for bolted pair of flats. By moving the bolt $H$ around using the action of $\Stab(F')$, we know $f_{12}$ is $\Stab(F')$-equivariant. The connectivity of $\Lambda(F)$ implies that (3a) and the equivariance are true for all pair of flats in $\Lambda(F)$. This finishes case 1.

We need the following terminology before case 2. 
Let $H$ be a standard flat. An \textit{$H$-fibre} is a standard flat parallel to $H$. Let $H_{1},H_{2}\in\mathcal{F}(\Gamma)$ be parallel elements that contain $H$-fibres and let $p:v(H_{1})\to v(H_{2})$ be the map induced by parallelism. We say a bijection $f:v(H_{1})\to v(H_{2})$ is \textit{parallel mod $H$-fibres} if $f(v(H'))=p(v(H'))$ for any $H$-fibre $H'$. For standard flat $S_{i}\in H_{i}$, we will write $f(S_{1})=S_{2}$ if $f(v(S_{1}))=v(S_{2})$.

In case 2, for flats $H_{1}$ and $H_{2}$ in $\Lambda(F)$, we define the CII $f:v(H_{1})\to v(H_{2})$ such that $f$ is parallel mod $F_{m}$-fibres and for each $F_{m}$-fibre $T\subset H_{1}$, $f|_{v(T)}$ is the CII between $v(T)$ and $f(v(T))$. (1) follows from parallelism and Lemma \ref{6.10} (1) for CII's between $F_{m}$-fibres. Let $F_{1}$, $F_{2}$, $f_{12}$ and $H$ be as in case 1. Then for $i=1,2$, there exist $F_{m}$-fibres $F_{im}\subset F_{i}$ such that $F_{i}\cap H\subset F_{im}$. Note that $$f_{12}(v(F_{1m}))=v(F_{2m})$$ and $H$ is also a bolt for $F_{1m}$ and $F_{2m}$ when $F_{1}\cap H\subsetneq F_{1m}$, thus inductive assumption (3c) follows. By Lemma \ref{3.50}, we can assume $H\cap F_{i}$ is actually a $F_{m}$-fibre for $i=1,2$, then the argument in the previous case implies that the image of any $F_m$-fiber in $F_1$ under $\phi$ and $\phi\circ f_{12}$ are the same. Then (3a) follows since we already know it is true for CII's between $F_m$-fibres. The $\Stab(F')$-equivariance follows by applying Lemma \ref{6.10} (4) to CII's between $F_m$-fibres. Note that any element of $\mathcal{F}_{<k}(\Gamma)$ that lies in $F_{1}$ must stay inside a $F_{m}$-fibre, then Lemma \ref{construction of CII} (3) follows from Lemma \ref{6.10} (2).

In case 3, let $H_{1}$ and $H_{2}$ be a bolted pair in $\Lambda(F)$. Pick a vertex $p_{0}\in H_{1}$ and let $H$ be the intersection of all $(H_{1},H_{2})$-bolts that contains $p_{0}$. Then $H$ is also a bolt. We define the CII $$f:v(H_{1})\to v(H_{2})$$ as in case 2 with $F_{m}$-fibres replaced by $H\cap H_{1}$-fibres. The inductive assumption (3c) for $f$ follows from the minimality of $H$ and we can prove (3a) and the $\Stab(v(F'))$-equivariance as before.

Now we prove Lemma \ref{construction of CII} (3) for $f$. It is clear if $S_{1}$ stays inside a $H\cap H_{1}$-fibre. In general, pick a $H\cap H_{1}$-fibre $T_{1}$ such that $$T_{1}\cap S_{1}=S_{11}\neq\emptyset$$ and a standard flat $S_{12}$ which is an orthogonal complement of $S_{11}$ in $S_{1}$. Since $f$ is parallel mod $T_{1}$-fibres, $f(v(S_{1}))$ belongs to a $(T_{1}\times S_{12})$-fibre $R$. Suppose $S_{21}=f(S_{11})$ and $T_{2}=f(T_{1})$. Let $\pi_{i}:H_{i}\to T_{i}$ be the CIP for $i=1,2$. Then $$\pi_{1}(v(S_{1}))=S_{11}$$ by Lemma \ref{6.11} (5), hence $$\pi_{2}(f(v(S_{1})))=S_{21}$$ by Lemma \ref{6.10} (1). But every two $T_{1}$-fibres in $R$ are bolted by $S_{1}$-fibres, then the CII between these two $T_{1}$-fibres is parallel mod $S_{11}$-fibres, which implies $f(v(S_{1}))$ actually stays inside a $(S_{11}\times S_{12})$-fibre. To see the second part of Lemma \ref{construction of CII} (3), note that $S_{1}$ and $S_{2}$ are bolted by $H\cap H_{1}$-fibres, then the CII between them is parallel mod $S_{11}$-fibres by inductive assumption (3c). Thus the CII coincides with $f$ by Lemma \ref{6.10} (2).

For arbitrary pair $H_{1}$ and $H_{2}$ in $\Lambda(F)$, we choose an edge path in $\Lambda(F)$ connecting $H_{1}$ and $H_{2}$, which would induce a CII from $H_{1}$ to $H_{2}$. This CII will automatically satisfies $\Stab(v(F'))$-equivariance, inductive assumption (3a) and Lemma \ref{construction of CII} (3), since these properties are true under compositions. For this CII to be well-defined, we need to show every edge loop in $\Lambda(F)$ induces the identity map. Let $F$ be a base point in the edge loop and let $$f:v(F)\to v(F)$$ be the bijection induced by the edge loop. Pick $F_{1}, F_{2}\in\mathcal{F}_{<k}$ inside $F$ such that their convex hull is $F$, then it follows from (3c) that for $i=1,2$, every CII between two $F_{i}$-fibres in $F$ is parallel mod $F_{1}\cap F_{2}$-fibres. We first assume $F_{1}\cap F_{2}$ is a point. By previous discussion, $f$ maps $F_{i}$-fibre to $F_{i}$-fibre, thus $f$ splits into product $f=f_{1}\times f_{2}$ where $f_{i}:F_{i}\to F_{i}$ are bijections. Moreover, if $g:f(v(F_{1}))\to v(F_{1})$ is the CII, then $$g\circ f|_{v(F_{1})}=\textmd{Id}$$ by (1) of Lemma \ref{6.10}, thus $f|_{v(F_{1})}$ is induced by parallelism and $f_{2}=\textmd{Id}$. Similarly we can prove $f_{1}=\textmd{Id}$, thus $f=\textmd{Id}$. In general, we can run the same argument mod $F_{1}\cap F_{2}$-fibres to show that $f$ sends every $F_{1}\cap F_{2}$-fibre to itself, then $f=\textmd{Id}$ follows by applying Lemma \ref{6.10} (1) to $F_{1}\cap F_{2}$-fibres. 
\end{proof}

\textbf{Step 3: we define charts for any element in  $\mathcal{F}_{k}(\Gamma)$.} 

Let $F$ and $F'=\phi(F)$ be as in the previous steps.
Let $H$ be an element in $\mathcal{F}_{k}(\Gamma)$ such that $\phi(H)$ is in the $G(\Ga')$-orbit of $F'$. Note that this is equivalent to $L'(\Delta(\phi(H)))=L'(\Delta(F'))$. Pick $g'\in G(\Gamma')$ with $\bar{\phi}_{g'}(\phi(H))=F'$, then $\phi_{g'}(H)$ is an element in $\Lambda(F)$. We define the chart of $H$ to be the composition of the chart map of $\phi_{g'}(H)$ and $\phi_{g'}$. If we choose a different $g'$, the resulting chart would differ by a translation, since $\bar{h}$ is $\Stab(v(F'))$-invariant up to translation. By (\ref{5.8}), this chart satisfies inductive assumption (2). Moreover, it is compatible with charts of elements in $\F_{<k}(\Ga)$, since these charts are $G(\Ga')$-invariant up to translations, and they are compatible with charts of flats in $\Lambda(F)$ by the previous step.

By now we have defined a $G(\Gamma')$-invariant (up to translations) collection of charts for flats that are $G(\Gamma')$ orbits of flats in $\Lambda(F)$. This collection corresponds to a stable clique of $k$ vertices in $\Gamma'$, namely the 1-skeleton of $L'(\Delta(F'))$. For each stable $k$-clique in $\Gamma'$, we run the same argument to define charts for the corresponding collection of $k$-flats in $G(\Gamma)$. This gives rise to charts defined for all elements in $\mathcal{F}_{k}(\Gamma)$ that satisfies all the requirements, hence finishes the induction step. In summary, we have constructed a $G(\Gamma')$-invariant (up to translations) $L$-atlas $\bar{\mathcal{A}}_{L}$ such that inductive assumption (2) is true for all charts in this atlas. This finishes the proof of Proposition~\ref{prop:atlas}.

\section{Shuffling tiers and branches}
\label{sec:shuffle}
\subsection{Motivating discussion and overview}
Let $G(\Gamma_1)$ be a RAAG of type II. 
The goal of this section is to understand the class of RAAGs that are quasi-isometric to $G(\Gamma_1)$. The first motivating example to consider is $\Gamma_1$ being a pentagon. In this case, it is known before that any RAAG quasi-isometric to $G(\Gamma_1)$ is isomorphic to a special subgroup of $G(\Gamma_1)$ (\cite{raagqi1}). A key point in the proof is that any quasi-isometry $q:G(\Gamma_1)\to G(\Gamma_2)$ induces a simplicial isomorphism $\alpha:\mathcal{P}(\Gamma_1)\to\mathcal{P}(\Gamma_2)$ that is visible, as for each vertex $v\in \mathcal{P}(\Gamma_1)$, each $v$-tier only contain one $v$-branches. This follows from Corollary~\ref{7.14} and Lemma~\ref{7.1}.

Here is a more interesting example. Suppose $\Lambda_{m,n}$ is obtained by gluing $m$ copies of pentagon and $n$ copies of hexagon along a common closed vertex star. Let $\bar v$ be the central vertex of the common vertex star (see the figure below for $\Lambda_{3,2}$). Let $v\in\mathcal{P}(\Gamma)$ be a lift of $\bar v$. In this case, each $v$-tier contains $(m+n)$ $v$-branches, corresponding to the $(m+n)$ connected components of $\Gamma_1\setminus St(\bar v)$ (cf. Corollary~\ref{7.14}). Thus a simplicial isomorphism $\alpha:\mathcal{P}(\Gamma_1)\to\mathcal{P}(\Gamma_2)$ does not necessarily send a $v$-tier to an $\alpha(v)$-tier. However, the hope is that if we post-compose $\alpha$ by a suitable permutation of the $\alpha(v)$-tiers, then it might be possible for modified $\alpha$ to send $v$-tiers to $\alpha(v)$-tiers, and we could still obtain a visible map. 

\begin{center}
	\includegraphics[scale=0.6]{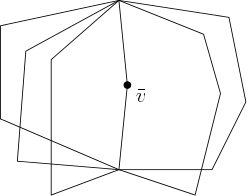}
\end{center}

However, there is an obstruction for this. For example, $G(\Lambda_{2,4})$ is an index 2 special subgroup of $G(\Lambda_{1,2})$. If $\alpha$ goes from $\mathcal{P}(\Lambda_{1,2})$ to $\mathcal{P}(\Lambda_{2,4})$, then it is possible to arrange as above such that it sends a $v$-tier to an $\alpha(v)$-tier. But this is impossible if the domain and range of $\alpha$ are exchanged, simply because in the domain, a $v$-tier has 6 $v$-branches, but in range an $\alpha(v)$-tier has 3 $v$-branches.

This is not the only obstruction. It turns out that as long as in the domain of $\alpha$, $m$ and $n$ are co-prime, then we can always modify $\alpha$ such that it preserves tiers. We generalize this to define a subclass of type II RAAGs, called prime RAAGs. At this point we would avoid give the full details of the definition of prime, and simply says that it involves symmetries of $v$-branches in the sense of Definition~\ref{8.1}. Some of this symmetries of $\mathcal{P}(\Gamma)$ are normal and expected, and they can be piecewisely defined using the action of $G(\Gamma)$ on $\mathcal{P}(\Gamma)$ (as the map $q$ and $q_*$ in the proof of Lemma~\ref{8.2}). However, there are other unexpected permutation of branches, which is the base for the definition of prime RAAGs. %This is what one should expect as in the end we need to be able to permute $v$-branches or $\alpha(v)$-branches.

The first step of the proof is to show if two prime RAAGs are quasi-isometric, then they are isomorphic. This is done in Section~\ref{subsec:prime}.

The second step is to show any RAAG of type II can be realized as a special subgroup of a prime RAAG. Putting these two steps together, we obtain the main theorem that any RAAG which is quasi-isometric to a RAAG of type II, is commensurable with this RAAG. 
The second step is more involved, and takes Section~\ref{subsec:cube}, Section~\ref{subsec:filtration}, and Section~\ref{subsec_wall space}. 

 Recall that if $G(\Gamma)$ were a special subgroup of $G(\Gamma')$, then $\Gamma'$ is a subgraph of $\Gamma$ and $\Gamma$ is a obtained by gluing multiple copies of $\Gamma'$ in a very specific way, and the gluing pattern is encoded in a compact CAT(0) cube complex. More precisely, there is a compact convex subcomplex $K\subset X(\Gamma')$ such that $F(\Gamma)$ is isomorphic to the full subcomplex of $\mathcal{P}(\Gamma)$ spanned by vertices that correspond to standard geodesic lines with non-trivial intersection with $K$. In particular, $F(\Gamma)$ is a union of subcomplexes of form $(F(\Gamma'))_x$ (the notation $(F(\Gamma'))_x$ is defined before Lemma~\ref{isometric embedding}) where $x$ ranges over all vertices of $K$. Each of these subcomplexes is isomorphic to $F(\Gamma')$.

Suppose $G(\Gamma)$ is a RAAG of type II such that it is not prime.
 This ensures that the branches of $\mathcal{P}(\Gamma)$ has certain kind of unexpected symmetry (as remarked before). The end game is how that these symmetries of $\mathcal{P}(\Gamma)$ actually implies $\Gamma$ has a very specific structure explained in the previous paragraph. This is done in three sub-steps. First we use the extra symmetries on $\mathcal{P}(\Gamma)$ to build a wall space structure on $F(\Gamma)$. The dual cube complex to this wall space will be the candidate compact cube complex as in the previous paragraph. Moreover, each vertex of this cube complex gives a subcomplex of $F(\Gamma)$, and $F(\Gamma)$ is a union of these subcomplexes (cf. Lemma~\ref{6.28}). This is done in Section~\ref{subsec:cube}. However, at this point, it is not clear what is the relationship between these subcomplexes. This is analyzed in Section~\ref{subsec:filtration}, where we also study in detail in what pattern are these subcomplexes assembled to give $F(\Gamma)$, and how is this related to the dual cube complex. In the last step, namely Section~\ref{subsec_wall space}, we conclude we indeed construct a prime RAAG such that $G(\Gamma)$ sits inside as a special subgroup.

\subsection{Prime right-angled Artin group}
\label{subsec:prime}
From now on, we assume $G(\Gamma)$ is a centerless RAAG of type II. We also label and orient edges of $X(\Ga)$ in a $G(\Ga)$-invariant way as before (see Section \ref{subsec_notation}). The goal of this subsection is to introduce the notion of prime RAAGs, and prove Theorem~\ref{8.9}.

Let $q:G(\Gamma)\to G(\Gamma')$ be a quasi-isometry and let $q_{\ast}:\mathcal{P}(\Gamma)\to\mathcal{P}(\Gamma')$ be the canonical simplicial isomorphism induced by $q$ (cf. Corollary~\ref{cor:unique}). Pick vertex $v\in\mathcal{P}(\Gamma)$, then $q_{\ast}$ induces a 1-1 correspondence between $v$-branches in $\mathcal{P}(\Gamma)$ and $q_{\ast}(v)$-branches in $\mathcal{P}(\Gamma')$. This correspondence is the starting point to understand the quasi-isometry $q$.

\begin{definition}
\label{8.1}
Let $v\in\mathcal{P}(\Gamma)$ be a vertex. Two $v$-branches $B_{1}$ and $B_{2}$ are \textit{quasi-isometrically indistinguishable} (QII) if there exist a quasi-isometry $f:X(\Gamma)\to X(\Gamma)$ and an induced simplicial isomorphism $f_{\ast}:\mathcal{P}(\Gamma)\to\mathcal{P}(\Gamma)$ such that 
\begin{enumerate}
\item $f_{\ast}$ fixes every vertex in $\mathcal{P}(\Gamma)\setminus(B_{1}\cup B_{2})$.
\item $f_{\ast}(B_{1})=B_{2}$ and $f_{\ast}(B_{2})=B_{1}$.
\end{enumerate}
Such $f$ or $f_{\ast}$ will be called an \textit{elementary permutation}. 
\end{definition}

%Recall that the simplicial isomorphism $f_\ast$ induced by $f$ as in Theorem~\ref{4.8} might not be canonical. However, the $f_\ast$-image of each maximal simplex is canonically determined by $f$. Thus the above notion of QII is well-defined.

\begin{lem}
	\label{lem:QII properties}
	Suppose $\Gamma$ is of type II. Let $f,B_1,B_2$ be as in Definition~\ref{8.1}. Let $q:G(\Gamma)\to G(\Gamma')$ be a quasi-isometry.
\begin{enumerate}
	\item  The map $q_{\ast}$ sends QII $v$-branches to QII $q_{\ast}(v)$-branches.
	\item Let $B_1,B_2,B_3$ be mutually different $v$-branches. If $B_1$ and $B_2$ are QII, $B_2$ and $B_3$ are QII, then $B_1$ and $B_3$ are QII.
\end{enumerate}
\end{lem}

\begin{proof}
For Assertion (1), let $g=q\circ f \circ q^{-1}$, where $q^{-1}$ is a quasi-isometry inverse of $q$. As $q_\ast$ and $ (q^{-1})_\ast$ are inverses of each other by Corollary~\ref{cor:unique}, we know that $g_*$ exchanges $q_*(B_1)$ and $q_*(B_2)$, and fixes all other vertices.

For Assertion (2), let $h:G(\Gamma)\to G(\Gamma)$ be the quasi-isometry witnessing the QII of $B_2$ and $B_3$ with $h_*(B_2)=B_3$. Then $h_*$ sends $\{B_1,B_2\}$ to $\{B_1,B_3\}$. Thus we are done by Assertion (1).
\end{proof}

\begin{lem}
\label{8.2}
Pick a $v$-tier $T$, then for any $v$-branch $B$ such that $B\nsubseteq T$, there exists a $v$-branch $B'\subset T$ such that $B'$ and $B$ are QII. 
\end{lem}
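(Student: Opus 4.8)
The plan is to produce, for a given $v$-branch $B$ and a given $v$-tier $T$, an elementary permutation that swaps $B$ with a $v$-branch sitting inside $T$. The natural source of such permutations is the action of $\stab(l)\cong\Z$ on $X(\Gamma)$, where $l$ is a standard geodesic with $\Delta(l)=v$: this stabilizer acts transitively on the set of $v$-tiers (as recorded in the proof of Lemma~\ref{7.18}), fixes $St(v)$ pointwise, and permutes the $v$-branches in a way that respects the tier decomposition. So first I would fix $l$ and let $\alpha\in\stab(l)$ be a generator; then $\alpha_\ast$ is a simplicial automorphism of $\mathcal P(\Gamma)$ fixing $St(v)$ and carrying the $v$-tier of height $x$ to that of height $\alpha(x)$. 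Choosing the appropriate power $\alpha^m$, we get $\alpha^m_\ast(B)\subset T$; call this branch $B'$.

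The point is then that $\alpha^m_\ast$ itself is \emph{not} an elementary permutation in the sense of Definition~\ref{8.1} (it moves far too many branches), but it can be \emph{modified} into one. This is exactly the trick used in the proof of Theorem~\ref{7.28}: given two branches $B$ and $B'$ that are in the image/pre-image of each other under a global symmetry $\alpha^m_\ast$, one builds $f_\ast\in\Aut(\mathcal P(\Gamma))$ by declaring $f_\ast = \alpha^m_\ast$ on $B$, $f_\ast = \alpha^{-m}_\ast$ on $B'$, and $f_\ast = \id$ on everything else. One must check this is a well-defined simplicial automorphism: since $B$ and $B'$ are distinct $v$-branches (using that $v\notin B\cup B'$ and by Corollary~\ref{7.14}(4) a branch determines $v$), and since $\partial B$, $\partial B'\subset St(v)$ are fixed by both $\alpha^m_\ast$ and $\id$, the three pieces glue consistently along their boundaries; flagness of $\mathcal P(\Gamma)$ lets us extend from the $1$-skeleton. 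Here is the one place I must be slightly careful: if $B=B'$ already (i.e.\ $B$ is itself in $T$) there is nothing to do, and if $\alpha^m_\ast(B)$ happened to equal $B$ we would again be done trivially; so I would first dispose of those degenerate cases.

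Finally, having $f_\ast\in\Aut(\mathcal P(\Gamma))$, I need an actual quasi-isometry $f:X(\Gamma)\to X(\Gamma)$ inducing it. Since $G(\Gamma)$ is of type II, Corollary~\ref{7.22} gives that $\mathcal P(\Gamma)$ is of type II, hence (by Lemma~\ref{7.18}, applied in the form that type~II implies weak type~II and that weak type~II of $\mathcal P$ forces the conclusion) every simplicial isomorphism of $\mathcal P(\Gamma)$ is induced by a quasi-isometry — more precisely, the construction in Lemma~\ref{7.18} is reversible for extension complexes of (weak) type~II, or one can invoke visibility when it applies. Concretely, since $f_\ast$ agrees with the genuine partial-isometry $\alpha^{\pm m}$ on the relevant pieces and with the identity elsewhere, one can realize $f$ by patching the corresponding cubical maps along $P_v$; the required global coarse-Lipschitz control comes from the fact that all pieces are restrictions of isometries of $X(\Gamma)$ and they agree on the overlap $P_v$, whose complementary components are exactly the $v$-branches' supports by Lemma~\ref{7.23}(3). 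I expect the main obstacle to be this last realization step: verifying that the cut-and-paste along $P_v$ yields a genuine (coarse-Lipschitz, coarsely surjective) self-map of $X(\Gamma)$ rather than just a combinatorial automorphism of $\mathcal P(\Gamma)$, and in particular checking that the map is well-defined where several branch-supports meet $P_v$. Once that is in hand, $f$ and $f_\ast$ witness that $B$ and $B'$ are QII, completing the proof.
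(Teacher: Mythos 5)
Your core idea --- translate by $\alpha\in\stab(l)$ to land a copy of $B$ in $T$, then splice $\alpha$ with its inverse into an elementary permutation --- is exactly the paper's. The flaw is in how you propose to realize $f_\ast\in\Aut(\mathcal P(\Gamma))$ by a quasi-isometry. You invoke Lemma~\ref{7.18} as if it said that, for (weak) type~II, every simplicial isomorphism of $\mathcal P(\Gamma)$ is induced by a quasi-isometry. Lemma~\ref{7.18} goes the other way (from a quasi-isometry of $X$ to an isomorphism of $\mathcal P$); the converse is precisely visibility, and Theorem~\ref{7.28} ties visibility of \emph{all} elements of $\Aut(\mathcal P(\Gamma))$ to weak type~I, not to type~II. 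Type~II groups generally have non-visible automorphisms of their extension complexes --- partial conjugations already provide examples, as the paper notes in the introduction --- so that appeal does not go through.

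Your fallback, directly patching $\alpha$ and $\alpha^{-1}$ along $P_v$, is correct and is precisely what the paper does. But the paper never passes through an abstract $f_\ast\in\Aut(\mathcal P(\Gamma))$ at all: it defines $q:X(\Gamma)\to X(\Gamma)$ piecewise from the start (identity off $L\cup L'$, $\alpha$ on $L$, $\alpha^{-1}$ on $L'$, with $L,L'$ the components of $X(\Gamma)\setminus P_v$ matched to $B,B'$ by Lemma~\ref{7.23}), notes the pieces agree on $P_v$ since $\alpha$ stabilizes $P_v$, and then reads off the properties of $q_\ast$ from Lemma~\ref{7.23}. Reasoning in that order sidesteps the realization problem you (correctly) identify as the main obstacle; if you commit to it, your proof closes and coincides with the paper's.
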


\begin{proof}
Pick standard geodesic $\ell\subset X(\Gamma)$ such that $\Delta(\ell)=v$ and suppose $\pi_{\Delta(\ell)}(B)=x$ and $\pi_{\Delta(\ell)}(T)=x'$ ($\pi_{\Delta(\ell)}$ is the map in Lemma \ref{5.17}). Recall that we have an action $G(\Gamma)\curvearrowright X(\Gamma)$, let $\alpha\in G(\Gamma)$ be the element such that $\alpha$ acts by translation along $\ell$ and $\alpha(x)=x'$. As $B\nsubseteq T$, we know $x\neq x'$, hence $\alpha$ is not the identity element.
Note that $\alpha$ induces a simplicial isomorphism $\alpha_{\ast}:\mathcal{P}(\Gamma)\to\mathcal{P}(\Gamma)$, moreover, $\alpha_{\ast}$ fixes every point in $St(v)$. Define $B'=\alpha_{\ast}(B)$. Let $L$ and $L'$ be the components of $X(\Gamma)\setminus P_{v}$ corresponding to $B$ and $B'$ respectively (Proposition \ref{7.23}). Then $\alpha(L)=L'$. Now we consider the following map $q: X(\Gamma)\to X(\Gamma)$ defined by
\begin{center}
$q(z)$=$\begin{cases}
z & \text{if $z\in X(\Gamma)\setminus(L\cup L')$}\\
\alpha(z) & \text{if $z\in L$}\\
\alpha^{-1}(z) & \text{if $z\in L'$}
\end{cases}$
\end{center}
One readily verifies that $q$ is a quasi-isometry and Proposition \ref{7.23} implies that $q_{\ast}$ satisfies the conditions in Definition \ref{8.1}, so $B$ and $B'$ are QII.
\end{proof}

Let $\bar{v}=\pi(v)$. It follows from Corollary \ref{7.14} (1) that each $v$-branches corresponds to a pair $(C,K)$ where $C$ is a component of  $F(\Gamma)\setminus St(\bar{v})$ and $K$ is a $v$-peripheral subcomplex with support $\partial C$ such that $\partial B=\Delta(K)$. We will denote $C=\Pi(B)$ in such case. As $\Ga$ has type II, Corollary \ref{7.14} implies that for each given height, there exists a unique $v$-branch $B'$ at this height such that $\partial B=\partial B'$ and $\Pi(B')=C$.

\begin{definition}
	\label{def:6.4}
We define two components $C_{1}$ and $C_{2}$ of $F(\Gamma)\setminus St(\bar{v})$ are \textit{quasi-isometrically indistinguishable} (QII) if there exist $v$-branches $B_{1}$ and $B_{2}$ which are QII such that $\Pi(B_{i})=C_{i}$ for $i=1,2$.
\end{definition}

This definition does not depend on the choice of $B_{1}$ and $B_{2}$ in the sense of the following lemma.

\begin{lem}
	\label{lem:well defined}
Suppose $C_{1}$ and $C_{2}$ are QII components of $F(\Gamma)\setminus St(\bar v)$. Then for any pair $B'_{1}$ and $B'_{2}$ such that $\partial B'_{1}=\partial B'_{2}$ and $\Pi(B'_{i})=C_{i}$, we know $B'_1$ and $B'_2$ are QII.
\end{lem}

\begin{proof}
We first look at the case $\partial B'_{1}=\partial B_1$. It follows from Corollary \ref{7.14} that there is $g\in G(\Ga)$ with its axis $\ell_g\subset X(\Ga)$ satisfying $\Delta(\ell_g)=v$ such that $g_*(B_1)=B'_1$. Thus $B_1$ and $B'_1$ are QII by Lemma~\ref{8.2}. Similarly, $B_2$ and $B'_2$ are QII. Thus $B'_1$ and $B'_2$ are QII by Lemma~\ref{lem:QII properties} (2). It remains to treat the case $\partial B'_{1}\neq\partial B_{1}$. Let $K$ and $K'$ be the standard subcomplexes in $P_{v}$ such that $\Delta(K')=\partial B'_{1}$ and $\Delta(K)=\partial B_{1}$. Then their supports satisfy $\Gamma_{K}=\Gamma_{K'}$ by Corollary~\ref{7.14}. Thus there exists $\alpha\in G(\Gamma)$ such that its action on $G(\Gamma)$ satisfying $$\alpha(K)=K'$$ and  $$\alpha(P_{v})=P_{v}.$$ Since $\alpha$ is label-preserving, we know $$\Pi(\alpha_{\ast}(B_{1}))=\Pi(B_{1}).$$ Moreover, $\alpha_{\ast}(v)=v$, so $\alpha_{\ast}(B_{1})$ and $\alpha_{\ast}(B_{2})$ is a pair of QII $v$-branches with $\partial (\alpha_{\ast}(B_{1}))=\partial B'_{1}$. Thus we can conclude the proof by using Lemma~\ref{lem:QII properties} (2) and the previous case.
\end{proof}

\begin{lem}
	\label{lem:transitive}
Let $C_1,C_2,C_3$ be three mutually distinct components of $F(\Gamma)\setminus St(\bar v)$. Suppose $C_1$ and $C_2$ are QII, and $C_2$ and $C_3$ are QII. Then $C_1$ and $C_3$ are QII.
\end{lem}

\begin{proof}
Let $B_1$ and $B_2$ be $v$-branches that are QII with $\Pi(B_i)=C_i$ for $i=1,2$. Let $B'_2$ and $B'_3$ be $v$-branches that are QII with $\Pi(B'_i)=C_i$ for $i=2,3$. As in the proof of Lemma~\ref{lem:well defined}, we can find $\alpha\in G(\Gamma)$ such that $\alpha(P_v)=P_v$, $\alpha_*(v)=v$ and $\alpha_*(B'_2)=B_2$. Thus by Lemma~\ref{lem:QII properties} (1), we can assume without loss of generality that $B_2=B'_2$. Now the lemma follows from Lemma~\ref{lem:QII properties} (2).
\end{proof}

Lemma~\ref{lem:transitive} implies that being QII among components of $F(\Gamma)\setminus St(\bar{v})$ is an equivalence relation, hence we can divide these components into QII equivalent classes
$\{\mathcal{C}_{i}\}_{i=1}^{k}$. We associate $\bar{v}$ with a k-tuple of positive integers $$(n_{1},n_{2},\cdots,n_{k}),$$ where each $n_{i}$ is the number of components of $F(\Gamma)\setminus St(\bar{v})$ in $\mathcal{C}_{i}$. The vertex $\bar v$ is \textit{prime} if $$\gcd(n_{1},n_{2},\cdots,n_{k})=1.$$

It follows from (1) of Corollary \ref{7.14} that if $C_{1}$ and $C_{2}$ are QII, then $\partial C_{1}=\partial C_{2}$, so every QII class $\mathcal{C}_{i}$ has a well-defined boundary, which will be denoted by $\partial \mathcal{C}_{i}$.

\begin{definition}
\label{8.3}
A right-angled Artin group $G(\Gamma)$ is \textit{prime} if and only if $F(\Gamma)$ is of type II and all vertices of $F(\Gamma)$ are prime.
\end{definition}

\begin{definition}
	\label{def:stretch factor}
For vertex $v\in \mathcal{P}(\Gamma)$, we define the stretch factor of $q_*$ at $v$ as follows. Take a $v$-branch $B$, and let $\{B_j\}_{j\in J}$ be the collection of $v$-branches that are QII to $B$. Let $n$ be the number of different elements in $\{\Pi(B_j)\}_{j\in J}$. Let $n'$ be the number of different elements in $\{\Pi(q_*(B_j))\}_{j\in J}$. Then the stretch factor of $q_*$ at $v$ is defined to be $n'/n$.	
\end{definition}

%Let $\bar{v}'=\pi(q_{\ast}(v))$. 
%
%
%take a component $C_1$ of $F(\Gamma)\setminus St(\bar v)$
%Note that take a component $C_1$ of $F(\Gamma)\setminus St(\bar v)$, then 
%
%
%By Lemma~\ref{lem:QII properties} and Lemma~\ref{lem:well defined}, $q_{\ast}$ induces a map from QII classes of $F(\Gamma)\setminus St(\bar{v})$ to QII classes of $F(\Gamma')\setminus St(\bar{v}')$. This map
%
%For $1\le i\le k$, let $\mathcal{C}_{i}'$ be the class corresponding to $\mathcal{C}_{i}$ under $q_{\ast}$, and let $n'_{i}$ be the number of components of $F(\Gamma')\setminus St(\bar{v}')$ in $\mathcal{C}_{i}'$. Note that $(n'_{1},n'_{2},\cdots,n'_{k})$ is the tuple associated with $\bar{v}'$.

\begin{lem}
\label{8.4}
The following are true.
\begin{enumerate}
	\item The definition of stretch factor does not depend on the choice of $B$.
	\item  If $G(\Gamma)$ is prime, then the stretch factor is an integer.
\end{enumerate}
\end{lem}

%Such $r$ will be called the \textit{stretch factor} of $q_{\ast}$ at $v$.
\begin{proof}
Let $\ell\subset X(\Gamma)$ and $\ell'\subset X(\Gamma')$ be standard geodesics such that $\Delta(\ell)=v$ and $\Delta(\ell')=q_{\ast}(v)$. Recall that the vertex set $v(\ell)$ has a natural ordering induced from the orientation of edges in $X(\Gamma)$. We identify $v(\ell)$ with $\Bbb Z$ in an order-preserving way. Let $B$ and $\{B_{j}\}_{j\in J}$ be as in Definition~\ref{def:stretch factor}. Suppose  $$\Pi(B)\in\mathcal{C}_{i}.$$ Then by (1) and (2) of Corollary \ref{7.14}, there are exactly $n_{i}$ elements of $\{B_{j}\}_{j\in J}$ in a given $v$-tier, where $n_i$ is the cardinality of $\mathcal C_i$. Pick a total order on elements in $\mathcal{C}_{i}$ and define a total order on $J$ by $j_{1}<j_{2}$ if and only if $$\pi_{v}(B_{j_{1}})<\pi_{v}(B_{j_{2}})\ \ (\pi_{v}\ \mathrm{is\ the\ map\ in\ Lemma\ \ref{5.17}})$$
 or 
 $$\pi_{v}(B_{j_{1}})=\pi_{v}(B_{j_{2}})\ \ \mathrm{and}\ \  \Pi(B_{j_{1}})<\Pi(B_{j_{2}}).$$ We identify $J$ with $\Bbb Z$ in an order-preserving way, then there is a natural map $g_{i}:J\to v(\ell)$ induced by $\pi_{v}$. Note that $$g_{i}(a)=\lfloor a/n_{i}\rfloor$$ up to translation.

Let $\{B'_{k}\}_{k\in K}$ be the collection of $q_{\ast}(v)$-branches such that $B'_{k}$ and $q_{\ast}(B)$ are QII. Then $$\Pi(\{B'_{k}\}_{k\in K})=\mathcal{C}'_{i}$$ for some CII class $\mathcal{C}'_{i}$ of $$F(\Gamma')\setminus St(\bar v')$$ where $\bar{v}'=\pi(q_{\ast}(v))$. Note that $q_{\ast}$ induces a bijection $f_{i}:J\to K$. We identify $v(\ell')$ with $\Bbb Z$ and $K$ with $\Bbb Z$ in the same way as before and let $$g'_{i}: K\to v(\ell')$$ be the natural map given by $$g'_{i}(a)=\lfloor a/n'_{i}\rfloor$$ where $n'_i$ is the cardinality of $\mathcal C'_i$.

We define another map $h_{i}:v(\ell)\to v(\ell')$ as follows. For $x\in v(\ell)$, pick a $B_{j}$ such that $\pi_{v}(B_{j})=x$ and define $$h_{i}(x)=\pi_{\Delta(\ell')}(q_{\ast}(B_{j})).$$ Up to bounded distance, the definition of $h_{i}$ is independent of the choice of $B_{j}$. We claim $h_{i}$ is a quasi-isometry. Pick $B_{j_{1}}$, $B_{j_{2}}$ in $\{B_{j}\}_{j\in J}$. For $m=1,2$, let $L_{j_{m}}$ be the subset of $X(\Gamma)$ as in Proposition \ref{7.23} such that $\Delta(\bar{L}_{j_{m}})=B_{j_{m}}$. Then $$d(L_{j_{1}},L_{j_{2}})=d(\pi_{v}(B_{j_{1}}),\pi_{v}(B_{j_{2}})).$$ Now it follows from Assertion (1) and (3) of Proposition \ref{7.23} that $h_{i}$ is a quasi-isometry. Note that the following diagram commutes up to bounded distance:
\begin{center}
$\begin{CD}
J                            @>f_{i}>>       K\\
@VVg_{i}V                                                              @VVg'_{i}V\\
v(\ell)          @>h_{i}>>        v(\ell')
\end{CD}$
\end{center}
thus $f_{i}$ is a bijective quasi-isometry from $\Bbb Z$ to $\Bbb Z$, hence $f_{i}$ is bounded distance from an isometry and $$h_{i}(x)=(n_{i}/n'_{i})x+b$$ up to bounded distant ($b$ is some constant). Now we pick a different QII class of $v$-branches which gives the QII class $\mathcal{C}_{i'}$ of $F(\Gamma)\setminus St(\bar v)$ and define $h_{i'}$ in similar way, then $$h_{i}=h_{i'}$$ up to bounded distant, but we also have $$h_{i'}(x)=(n_{i'}/n'_{i'})x+b'$$ up to bounded distance, so $n_{i}/n'_{i}=n_{i'}/n'_{i'}$.

To see (2), note that the previous discussion implies that the multiplication of the stretch factor with each $n_i$ (for $1\le i\le k$) is an integer. Thus the stretch factor must be an integer when $\gcd(n_1,\ldots, n_k)=1$.
\end{proof} 

In the rest of this subsection, we prove the following.
\begin{thm}
	\label{8.9}
	If $G(\Gamma)$ and $G(\Gamma')$ are prime right-angled Artin groups, then they are quasi-isometric if and only if they are isomorphic.
\end{thm}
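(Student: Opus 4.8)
The \textbf{if} direction is immediate, since isomorphic groups are quasi-isometric. For the \textbf{only if} direction, let $q\colon G(\Gamma)\to G(\Gamma')$ be a quasi-isometry. Since $\Gamma$, and hence $\Gamma'$ by Corollary \ref{7.22}, is of type II, in particular of weak type II, there is no non-adjacent transvection in $\out(G(\Gamma))$ or $\out(G(\Gamma'))$, so by Lemma \ref{7.18} the quasi-isometry $q$ induces a simplicial isomorphism $q_{\ast}\colon\mathcal{P}(\Gamma)\to\mathcal{P}(\Gamma')$. The plan is to replace $q_{\ast}$ by a simplicial isomorphism of the same form (induced by a quasi-isometry) that is compatible with the projections $\pi$, and then read off an isomorphism $F(\Gamma)\cong F(\Gamma')$.

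Fix a vertex $x\in X(\Gamma)$ and set $K=(F(\Gamma))_{x}$ and $K'=q_{\ast}(K)$. Let $v_{1},\dots,v_{n}$ be the vertices of $\mathcal{P}(\Gamma)$ with $K\setminus St(v_{i})$ disconnected; by Remark \ref{7.8} each $v_{i}\in K$, and $v'_{i}:=q_{\ast}(v_{i})$ are precisely the vertices of $\mathcal{P}(\Gamma')$ with $K'\setminus St(v'_{i})$ disconnected. First I would build a tight filtration $E_{1}\subsetneq\cdots\subsetneq E_{n}=\{v'_{i}\}_{i=1}^{n}$ as in the discussion before Lemma \ref{8.7}, using that a minimal element of $E^{c}$ with respect to $\le_{E}$ can always be adjoined to a tight set $E$. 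Then, inductively over this filtration, I would post-compose $q_{\ast}$ with a finite product of elementary permutations of $v'_{i+1}$-branches so that for every $v'\in E_{i+1}$ all vertices of $K'\setminus St(v')$ lie in a single $v'$-tier. This is possible because both groups are prime, so the stretch factor of $q_{\ast}$ at each $v_{i+1}$ equals $1$ by Lemma \ref{8.4}, and Lemma \ref{8.6} allows the correction to be performed without disturbing the $v'$-tiers already arranged for $v'\in E_{i}$. A short case analysis, treating $v'=v'_{i+1}$, $v'\in E_{i}\cap St(v'_{i+1})$, and $v'\in E_{i}\setminus St(v'_{i+1})$ (the last via Lemma \ref{8.5}), shows the inductive hypothesis propagates; and each elementary permutation is again a quasi-isometry with induced simplicial isomorphism, so the modified $q_{\ast}$ still has the required form.

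After finitely many steps $q_{\ast}$ satisfies the hypothesis of Lemma \ref{8.7}, so $\pi|_{K'}$ is injective; since $\pi$ carries simplices isomorphically onto simplices of the same dimension and $\pi$ restricts to an isomorphism $(F(\Gamma))_{x}\xrightarrow{\sim}F(\Gamma)$, the composition $\pi\circ q_{\ast}$ induces a simplicial embedding $s\colon F(\Gamma)\to F(\Gamma')$ whose image is a full subcomplex. Running the same construction for a quasi-isometry inverse of $q$ yields a simplicial embedding $s'\colon F(\Gamma')\to F(\Gamma)$, so $F(\Gamma)$ and $F(\Gamma')$ have the same number of vertices; hence the full subcomplex $s(F(\Gamma))$ is all of $F(\Gamma')$, so $s$ is a simplicial isomorphism, $\Gamma\cong\Gamma'$, and $G(\Gamma)\cong G(\Gamma')$.

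I expect the main obstacle to be the inductive cleanup in the second paragraph: arranging the ``single $v'$-tier'' condition simultaneously for all of the finitely many relevant vertices $v'$, while ensuring that the correction made at one vertex does not destroy the condition at another. This is precisely where primeness (forcing stretch factor $1$ at each vertex) and the order-theoretic bookkeeping provided by tight sets, the order $\le_{E}$, and Lemma \ref{8.6} are indispensable; once $\pi|_{K'}$ is known to be injective, the remainder of the argument is essentially formal.
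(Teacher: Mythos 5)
Your proposal is correct and follows essentially the same route as the paper: after obtaining $q_{\ast}$ (the paper simply carries it over from the preceding development, while you explicitly invoke Corollary \ref{7.22} and Lemma \ref{7.18}), you use the same tight filtration of $\{v'_i\}$, the same inductive cleanup via stretch factor $1$ (Lemma \ref{8.4}) and Lemma \ref{8.6}, the same three-case analysis keyed to $d(v',v'_{i+1})$, and the same appeal to Lemma \ref{8.7} and the fullness of $s(F(\Gamma))$ to conclude.
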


Suppose $G(\Gamma)$ and $G(\Gamma')$ are prime right-angled Artin groups. Let $q:G(\Gamma)\to G(\Gamma')$ be a quasi-isometry and $q_{\ast}:\mathcal{P}(\Gamma)\to\mathcal{P}(\Gamma')$ be the induced simplicial isomorphism. Pick vertex $x\in X(\Gamma)$. Let $K=(F(\Gamma))_{x}$ and $K'=q_{\ast}(K)$. Suppose $\{v_{i}\}_{i=1}^{n}$ is the collection of vertices in $\mathcal{P}(\Gamma)$ such that $K\setminus St(v_{i})$ is disconnected, then $v_{i}\in K$ for all $i$ by Lemma \ref{7.8}. Let $v'_{i}=q_{\ast}(v_i)$. Then $\{v'_{i}\}_{i=1}^{n}$ are exactly the vertices in $\P(\Ga')$ such that $K'\setminus St(v'_{i})$ is disconnected.

Recall that $\pi:\mathcal{P}(\Gamma')\to F(\Gamma')$ is the canonical projection.
\begin{lem}
	\label{8.7}
	If for any vertex $v'\in\mathcal{P}(\Gamma)$, all vertices in $K'\setminus St(v')$ are in the same $v'$-tier, then $\pi|_{K'}$ is injective. Moreover, $\cap_{v\in K'} P_{v}\neq\emptyset$.
\end{lem}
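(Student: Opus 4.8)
The plan is to bootstrap the hypothesis — which controls the behaviour of $q_\ast$ near a single vertex $v'$ — into a statement about how the whole subcomplex $K' = (F(\Gamma'))_{x'}$ sits inside $\mathcal{P}(\Gamma')$, and then extract the two conclusions from the visibility machinery of Section 3. Here $x' \in X(\Gamma')$ is the vertex with $K' = q_\ast(K) = (F(\Gamma'))_{x'}$; note such $x'$ exists precisely because $K = (F(\Gamma))_{x}$ and $q_\ast$ is a simplicial isomorphism of extension complexes, but strictly we only know $K'$ is the image of a copy of $F(\Gamma)$ — I would first record that under our hypothesis $K'$ is in fact of the form $(F(\Gamma'))_{x'}$, or work directly with $K'$ as an isometrically embedded copy of $F(\Gamma')$ via $\pi|_{K'}$ and Lemma \ref{isometric embedding}.

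\textbf{Injectivity of $\pi|_{K'}$.} Suppose $w_1 \neq w_2$ are vertices of $K'$ with $\pi(w_1) = \pi(w_2) =: \bar{u}$. Since $w_1, w_2$ carry the same label, $d(w_1,w_2) \geq 2$ (distinct standard geodesics through a common vertex with the same label would be equal). Pick a vertex $v' \in \mathcal{P}(\Gamma')$ with $w_1, w_2 \in \mathcal{P}(\Gamma') \setminus \mathrm{St}(v')$ — for instance $v'$ can be taken so that $\bar{v}' := \pi(v')$ lies at distance $\geq 2$ from $\bar{u}$ in $\Gamma'$, using connectivity and that $\Gamma'$ is of type II (in particular $\Gamma'$ is not a single point since $G(\Gamma') \neq \mathbb{Z}$). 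By the hypothesis, $w_1$ and $w_2$ lie in the same $v'$-tier. On the other hand, $w_1$ and $w_2$ are distinct parallel classes with the same label, so a standard geodesic $l_1$ with $\Delta(l_1) = w_1$ and passing through $x'$, and one $l_2$ with $\Delta(l_2) = w_2$ through $x'$, satisfy $l_1 \neq l_2$; since $d(\Delta(l_i), v') \geq 2$, Lemma \ref{3.1} and Lemma \ref{3.2} force $\pi_{\Delta(l')}(l_1)$ and $\pi_{\Delta(l')}(l_2)$ (for a standard geodesic $l'$ with $\Delta(l') = v'$ chosen to separate them) to be distinct vertices of $l'$ — this is exactly where I would need to choose $v'$ carefully so that some hyperplane dual to $l'$ separates $l_1$ from $l_2$, which is possible because $l_1 \neq l_2$ both pass through $x'$ and one can find a standard geodesic through $x'$ whose dual hyperplanes separate two distinct parallel copies coming out of $x'$. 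This contradicts that they share a $v'$-tier, proving $\pi|_{K'}$ is injective.

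\textbf{Nonemptiness of $\bigcap_{v \in K'} P_v$.} I would apply the Helly property, Lemma \ref{2.1}, to the family $\{P_v\}_{v \in v(K')}$ of parallel sets, each of which is a standard subcomplex (hence convex) of $X(\Gamma')$. It suffices to show $P_{v_1} \cap P_{v_2} \neq \emptyset$ for any two vertices $v_1, v_2 \in K'$. Since $v_1, v_2 \in (F(\Gamma'))_{x'}$, there are standard geodesics $l_i$ with $\Delta(l_i) = v_i$ and $x' \in l_i$; hence $x' \in P_{v_1} \cap P_{v_2}$, so pairwise intersections are nonempty and Lemma \ref{2.1} gives $\bigcap_{v \in K'} P_v \neq \emptyset$. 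Indeed $x'$ itself lies in the total intersection. (If one instead does not yet know $K'$ is of the form $(F(\Gamma'))_{x'}$, the injectivity part combined with Remark \ref{7.8} — applied at each $v'$ with $K' \setminus \mathrm{St}(v')$ disconnected — shows all the relevant $v'$ lie in $K'$ and that $K'$ behaves like a local copy of $F(\Gamma')$, from which the common point is recovered; this is the step I expect to require the most care, essentially reconstructing the base vertex $x'$ from the combinatorics of $K'$.)

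\textbf{Main obstacle.} The delicate point is not Helly — that is immediate once the base vertex is available — but rather making the separation argument in the injectivity part fully rigorous: given two distinct parallel classes $w_1 \neq w_2$ with the same label that both "pass through $x'$", one must produce a specific vertex $v'$ such that $\mathrm{St}(v')$ genuinely separates $w_1$ from $w_2$ in $\mathcal{P}(\Gamma')$, so that the hypothesis (same tier) yields a contradiction. This requires choosing $v'$ with $\pi(v')$ at controlled distance from $\pi(w_1)=\pi(w_2)$ and invoking Lemma \ref{5.17}, Lemma \ref{3.1}, and Lemma \ref{3.2} to pin down the projections; the hypothesis of $\Gamma'$ being type II (via Corollary \ref{7.22}, since $\Gamma'$ is quasi-isometric to the type II graph $\Gamma$) guarantees enough room in $\Gamma'$ to make such a choice.
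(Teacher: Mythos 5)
Your proposal has a fundamental circularity and leaves the key step unresolved.

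\textbf{The circularity.} You posit a vertex $x'\in X(\Gamma')$ with $K'=(F(\Gamma'))_{x'}$, but no such $x'$ is available at the start of this argument. $K'=q_\ast(K)$ is merely a full subcomplex of $\mathcal P(\Gamma')$ isomorphic to $F(\Gamma)$; the entire point of Lemma~\ref{8.7} (its second statement) is to produce a vertex $x'$ in $\bigcap_{v\in K'}P_v$, which is exactly the statement that $K'$ sits inside some $(F(\Gamma'))_{x'}$. Your proof of ``$\bigcap_v P_v\neq\emptyset$'' concludes ``indeed $x'$ itself lies in the total intersection'' — but $x'$ was assumed to exist, so this proves nothing. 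Similarly, your fallback (``work directly with $K'$ as an isometrically embedded copy of $F(\Gamma')$ via $\pi|_{K'}$'') invokes the injectivity of $\pi|_{K'}$, which is the \emph{first} conclusion of the lemma, not a hypothesis.

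\textbf{The gap in the injectivity step.} You correctly identify that, given $w_1\neq w_2$ with $\pi(w_1)=\pi(w_2)$, one must produce a vertex $v'$ whose tier structure separates $w_1$ from $w_2$, and you flag this as ``the step I expect to require the most care'' — but then you do not carry it out; you only suggest taking $\pi(v')$ at distance $\geq 2$ from $\pi(w_1)$, which by itself guarantees nothing. The paper's argument dissolves the difficulty: since $\pi(w_1)=\pi(w_2)$ forces $P_{w_1}\cap P_{w_2}=\emptyset$ (two distinct parallel sets with the same support cannot meet), one simply picks \emph{any} hyperplane $h$ separating them, lets $l$ be a standard geodesic dual to $h$, and sets $v'=\Delta(l)$. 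Automatically $d(v',w_i)\geq 2$ (otherwise $P_{w_i}$ would contain a geodesic parallel to $l$, hence meet $h$), and the two sides of $h$ force $\pi_{\Delta(l)}(w_1)\neq\pi_{\Delta(l)}(w_2)$, placing $w_1,w_2$ in different $v'$-tiers and contradicting the hypothesis. There is no careful choice of $v'$ to make; the separating hyperplane hands it to you.

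\textbf{How the Helly step actually goes.} The same hyperplane argument, not the assumed $x'$, yields pairwise intersection: for any $w_1,w_2\in K'$, if $P_{w_1}\cap P_{w_2}=\emptyset$, take a separating hyperplane $h$, $l$ dual to $h$, $v'=\Delta(l)$; then $w_1,w_2$ lie in different $v'$-tiers — contradiction. So $P_{w_1}\cap P_{w_2}\neq\emptyset$ for every pair, and Lemma~\ref{2.1} gives the common point. This is what the paper means by ``follows from Lemma~\ref{2.1} and the previous argument.''
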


\begin{proof}
	Since $\pi$ maps simplexes to simplexes of the same dimension, it suffices to show there do not exist vertex $w_{1},w_{2}\in K'$ such that $\pi(w_{1})=\pi(w_{2})$. Suppose the contrary is true. Then $$P_{w_{1}}\cap P_{w_{2}}=\emptyset.$$ Let $h$ be a hyperplane separating $P_{w_{1}}$ and $P_{w_{2}}$, and let $\ell$ be a standard geodesic dual to $h$. Then $$\pi_{\Delta(\ell)}(w_{1})\neq\pi_{\Delta(\ell)}(w_{2})$$ ($\pi_{\Delta(\ell)}$ is the map in Lemma \ref{5.17}), hence $w_{1}$ and $w_{2}$ are in different $\Delta(\ell)$-tier, which yields a contradiction. The second statement follows from the Lemma \ref{2.1} and the previous argument.
\end{proof}

The following is the main ingredient for Theorem~\ref{8.9}.

\begin{lem}
	\label{lem:main}
Under the assumption of Theorem~\ref{8.9}, it is possible to modify $q_{\ast}$ by post-composing $q_*$ with finitely many elementary permutations such that the assumption of Lemma \ref{8.7} is true.
\end{lem}

Assuming Lemma~\ref{lem:main}, we can finish the proof of Theorem~\ref{8.9} as follows. Lemma~\ref{8.7} and Lemma~\ref{lem:main} gives a simplicial embedding $F(\Gamma)\to F(\Gamma')$.
By considering the quasi-isometry inverse, we have a simplicial embedding $$s':F(\Gamma')\to F(\Gamma),$$ thus $F(\Gamma)$ and $F(\Gamma')$ have the same number of vertices. Note that $s(F(\Gamma))$ is a full subcomplex of $F(\Gamma')$, so $s$ is actually a simplicial isomorphism and Theorem~\ref{8.9} follows. In the rest of this subsection, we prove Lemma~\ref{lem:main}.

\begin{remark}[Informal discussion of the proof of Lemma~\ref{lem:main}.]
We look at some simple special cases of Lemma~\ref{lem:main} before we discuss the proof in full detail. 

The simplest case to consider is that $\{v'_i\}_{i=1}^n$ has only one element. Let $\{B_j\}_{j=1}^k$ be the collection of $v_1$-branches that have non-trivial intersection with $K$. 
Then $\{B_j\}_{j=1}^k$ are in the same $v_1$-tier (as they have the same height as $x$ with respect to $v_1$). By Corollary~\ref{7.13}, $\{B_j\}_{j=1}^k$ are in 1-1 correspondence with connected components of $$K\setminus St(v_1)\cong F(\Gamma)\setminus St(\bar v_1),$$ and the correspondence is giving by sending $B_j$ to $\Pi(B_j)$.
Let $B'_j=q_*(B_j)$. 
As $G(\Gamma)$ is prime, the stretch factor of $q_*$ at $v_1$ is an integer $\ge 1$. Thus there exists a quasi-isometry $g$ such that $g_*:\mathcal{P}(\Gamma)\to\mathcal{P}(\Gamma)$ is a composition of finitely many elementary permutations of $v'_1$-branches with the property that $\{g_*(B'_j)\}_{j=1}^k$ are in the same $v'_1$-tier. Thus Lemma~\ref{lem:main} follows. We refer the procedure in this case as the basic move at $v'_1$.

The next case to consider is that $\{v'_i\}_{i=1}^n$ has two elements, and their distance is $\ge 2$. One naturally want to first preform the basic move at $v'_1$, and then perform the basic move at $v'_2$. The second step will maintain the outcome of the first step by Lemma~\ref{8.5} below, as long as in the second step we send the $v'_2$-branch that contains $v'_1$ to itself, but this is always possible to arrange.

A slightly more complicated case to consider is that $\{v'_i\}_{i=1}^n$ has mutual distance $\ge 2$. We need to pick the correct order to perform basic moves, namely when we perform the basic move at $v'_i$, we want to send the $v'_i$-branch that contains $\{v'_1,\ldots,v'_{i-1}\}$ to itself to maintain the outcome of previous moves. This is possible only if we can order $\{v'_i\}_{i=1}^n$ such that $\{v'_1,\ldots,v'_{i-1}\}$ are in the same $v'_i$-branch, which motivates the notion of \emph{tight subset} of $\{v'_i\}_{i=1}^n$ as in the proof of Lemma~\ref{lem:tight}.

Another case to consider is that $\{v'_i\}_{i=1}^n$ has two elements, and their distance $=1$. Again the key point is to make sure that doing the basic move at $v'_2$ maintains the outcome of the basic move at $v'_1$. Take a $v'_1$-branch $B$. If $v'_2\in \partial B$, then doing basic move at $v'_2$ will send $B$ to itself, as $B$ contains a vertex in $St(v'_2)$, which is fixed pointwise under the basic move at $v'_2$. Thus we need to handle the case when $v'_2\notin \partial B$, which leads to the notion of $v'_2$-non-crossing as in the proof of Lemma~\ref{8.6}. This lemma gives a way to correct the situation if doing basic move at $v'_2$ destroys the outcome of basic move at $v'_1$.
\end{remark}

We start the proof of Lemma~\ref{lem:main} with two simple observations.
\begin{lem}
	\label{8.5}
	Assume $d(v_{1},v_{2})\ge 2$ and let $B$ be any $v_{1}$-branch such that $v_{2}\notin B$. Then $B$ and $v_{1}$ are in the same $v_{2}$-branch, in particular, all such $B$ are in the same $v_{2}$-branch. 
\end{lem}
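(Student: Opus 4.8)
The plan is to work inside the extension complex $\mathcal{P}(\Gamma)$ using the structural description of branches from Corollary \ref{7.14} and Lemma \ref{7.23}, together with the type II hypothesis. Fix distinct vertices $v_1,v_2\in\mathcal{P}(\Gamma)$ with $d(v_1,v_2)\ge 2$, let $\bar v_i=\pi(v_i)$, and fix a standard geodesic $l_i\subset X(\Gamma)$ with $\Delta(l_i)=v_i$. Let $B$ be a $v_1$-branch with $v_2\notin B$. The goal is to show that $B$ and $v_1$ lie in a common $v_2$-branch, i.e.\ that $B\cup\{v_1\}$ can be connected by a path in $\mathcal{P}(\Gamma)$ avoiding $St(v_2)$. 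The key observation is that $B$ lives in a single connected component of $\mathcal{P}(\Gamma)\setminus St(v_1)$, and by Corollary \ref{7.14}(5), $\mathcal{P}(\Gamma)\setminus(lk(v_1)\cap lk(v_2))$ is connected; the difficulty is to upgrade this to connectedness in the complement of the larger set $St(v_2)$.

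First I would reduce to producing, for each vertex $w\in B$, a path from $w$ to $v_1$ avoiding $St(v_2)$. Since $B$ is connected and disjoint from $St(v_1)$, it suffices to connect a single well-chosen $w\in B$ to $v_1$ and then append a path inside $B$ (noting $B\cap St(v_2)$ may be nonempty, so one must be slightly careful — but since $v_2\notin B$ and $B$ is a full subcomplex spanned by a component of $\mathcal{P}(\Gamma)\setminus St(v_1)$, the relevant subtlety is whether $St(v_2)$ can disconnect $B$ itself). Here is where I expect to invoke Lemma \ref{7.4}: I would pick $w\in B$ with $P_w\cap P_{v_1}\neq\emptyset$, and then analyze the position of $w$ relative to $v_2$. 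If $d(v_1,v_2)\ge 2$ and also $d(\bar v_1,\bar v_2)\ge 2$, then $St(v_2)\cap (F(\Gamma))_x$ is contained in $St(v_1)$ for appropriate basepoints $x$ (by the projection argument using Lemma \ref{3.2}), which constrains how $St(v_2)$ can interact with $B$ and $\partial B$. If $d(\bar v_1,\bar v_2)\le 1$ one argues separately using that $P_{v_1}\cap P_{v_2}=\emptyset$ forces a hyperplane separation, reducing to the generic case as in the proof of Corollary \ref{7.14}(5).

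The main technical step — and the main obstacle — will be ruling out that $St(v_2)$ separates $B$ from $v_1$ even though $lk(v_1)\cap lk(v_2)$ does not. The point is that $St(v_2)\setminus (lk(v_1)\cap lk(v_2))$ consists of vertices $u$ with $d(u,v_2)\le 1$ but $u\notin lk(v_1)$; I would argue that any such $u$ on a hypothetical separating path can be rerouted. Concretely, I would take a path $\omega$ from $v_1$ to $w$ avoiding $lk(v_1)\cap lk(v_2)$ (which exists by Corollary \ref{7.14}(5)), and push it off $St(v_2)$: whenever $\omega$ enters $St(v_2)\setminus(lk(v_1)\cap lk(v_2))$, the entry and exit vertices are adjacent to $v_2$ but the segment is not trapped in $lk(v_1)$, so using the join structure of $St(v_2)\cong S(lk(\bar v_2))\ast(\text{point})$ and the fact that $v_1\notin St(v_2)$, one detours around $v_2$ through vertices still avoiding $St(v_2)$ — this is essentially the observation that a star is "geodesically convex" in a way that lets one skirt its interior. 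I would make this precise by working in $(F(\Gamma))_x$ and applying Remark \ref{7.8} and Lemma \ref{7.9} to control which $v_2$-branch each piece lands in, concluding that the whole detoured path, hence $B$ and $v_1$, lies in one $v_2$-branch. Finally, since the $v_2$-branch containing $v_1$ is determined independently of $B$, the "in particular" clause follows immediately.
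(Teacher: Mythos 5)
Your plan identifies the right inputs (Corollary \ref{7.14}(5), the hypothesis $v_2\notin B$) but misses the two quick observations that make the proof short, and the ``push the path off $St(v_2)$'' step you flag as the main obstacle is indeed where the plan breaks down: there is no general reason one can detour a path around a vertex star in a simplicial complex, and the ``join structure'' heuristic you invoke does not give you that.

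The first observation you miss is that $B\cap St(v_2)=\emptyset$ outright. If $u\in B\cap St(v_2)$, then $u\ne v_2$ (as $v_2\notin B$), so $d(u,v_2)=1$. But $u\notin St(v_1)$ (since $u\in B$) and $v_2\notin St(v_1)$ (since $d(v_1,v_2)\ge 2$), so the edge $uv_2$ lies in $\mathcal{P}(\Gamma)\setminus St(v_1)$, forcing $v_2$ into the same component as $u$, i.e.\ $v_2\in B$ --- contradiction. In particular your worry that ``$B\cap St(v_2)$ may be nonempty'' is unfounded, and $B$ already sits inside a single $v_2$-branch. The second observation is that $\partial B\nsubseteq lk(v_1)\cap lk(v_2)$: otherwise removing $lk(v_1)\cap lk(v_2)$ would separate $B$ from $v_1$, contradicting Corollary \ref{7.14}(5). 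Since $\partial B\subseteq St(v_1)$ and $lk(v_1)\cap lk(v_2)=St(v_1)\cap St(v_2)$ when $d(v_1,v_2)\ge 2$, this produces a vertex $w'\in\partial B$ with $w'\notin St(v_2)$. Now $w'$ is adjacent both to $v_1$ (as $w'\in lk(v_1)$) and to some vertex of $B$, and all three vertices avoid $St(v_2)$, so the two-edge path $v_1\to w'\to B$ places $v_1$ and $B$ in the same $v_2$-branch. No parallel-set machinery, no case split on $d(\bar v_1,\bar v_2)$, and no path-rerouting is needed; the argument is entirely combinatorial in $\mathcal{P}(\Gamma)$. Your plan, as written, does not close because the rerouting step is unjustified, and the extra lemmas you import (\ref{7.4}, \ref{7.9}) address a different question than the one at hand.
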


\begin{proof}
	Note that $B\cap St(v_{2})=\emptyset$ (otherwise $v_{2}\in B$). We also have $$\partial B\nsubseteq lk(v_{1})\cap lk(v_{2})=St(v_{1})\cap St(v_{2}),$$ otherwise $B$ and $v_1$ are in different connected components of $\mathcal{P}(\Gamma)\setminus(lk(v_1)\cap lk(v_2))$, which contradicts Corollary \ref{7.14} (5). As $$\partial B\subset St(v_1),$$ we know there is a vertex $w'\in\partial B$ with $w'\notin St(v_{2})$, hence $B$ can be connected to $v_{1}$ via $w'$ outside $St(v_{2})$.
\end{proof}

From now on, we will write $v_{j}|_{v_{i}}v_{k}$ if $v_{j}$ and $v_{k}$ are in different $v_{i}$-branches and write $v_{j}v_{k}|_{v_{i}}$ if $v_{j}$ and $v_{k}$ are in the same $v_{i}$-branch.

\begin{lem}
	\label{8.8}
	Suppose $F(\Gamma)$ is of type II. Let $v_{1},v_{2},v_{3}$ be vertices in $\mathcal{P}(\Gamma)$. If $v_{1}|_{v_{2}}v_{3}$, then $v_{1}v_{2}|_{v_{3}}$ and $v_{3}v_{2}|_{v_{1}}$.
\end{lem}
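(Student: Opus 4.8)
\textbf{Proof plan for Lemma \ref{8.8}.} The statement asserts a kind of ``ultrametric-type'' property: if $v_1$ is separated from $v_3$ by $\Star(v_2)$ in $\mathcal{P}(\Gamma)$, then $v_1$ and $v_2$ are \emph{not} separated by $\Star(v_3)$, and likewise $v_3$ and $v_2$ are not separated by $\Star(v_1)$. By symmetry of the two conclusions it suffices to prove one of them, say $v_1 v_2 |_{v_3} $; the other follows by swapping the names $v_1 \leftrightarrow v_3$ (the hypothesis $v_1|_{v_2}v_3$ is symmetric in $v_1,v_3$). The plan is to argue by contradiction: assume $v_1|_{v_2}v_3$ but also $v_1|_{v_3}v_2$, and derive a contradiction with the type II assumption on $F(\Gamma)$, most likely via Corollary \ref{7.14}(5), which says $\mathcal{P}(\Gamma)\setminus(lk(v)\cap lk(w))$ is connected for any two distinct vertices.

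First I would reduce to the non-degenerate case. If $v_1\in\Star(v_2)$ or $v_3\in\Star(v_2)$ then $v_1|_{v_2}v_3$ is impossible, so $d(v_1,v_2)\geq 2$ and $d(v_2,v_3)\geq 2$; similarly the assumed separation $v_1|_{v_3}v_2$ forces $d(v_1,v_3)\geq 2$. So all three are pairwise at distance $\geq 2$ in $\mathcal{P}(\Gamma)$. Now let $B$ be the $v_2$-branch containing $v_1$ and $B'$ the $v_2$-branch containing $v_3$, with $B\neq B'$. By Lemma \ref{7.4} (applicable since $\mathcal{P}(\Gamma)$ is of weak type II — indeed type II by Corollary \ref{7.22} and Lemma \ref{7.17}, as $F(\Gamma)$ is type II), I may replace $v_1$ by a vertex $w_1$ in the same $v_2$-branch with $P_{w_1}\cap P_{v_2}\neq\emptyset$, and similarly replace $v_3$; note that moving within a $v_2$-branch does not affect which $v_2$-branch a vertex lies in, but one must check it is compatible with the other separation statements — so I would instead keep $v_1,v_3$ fixed and just use Lemma \ref{7.4} to \emph{produce} such parallel-set witnesses when needed. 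The key structural input is Lemma \ref{8.5}: since $d(v_1,v_3)\geq 2$, \emph{any} $v_1$-branch not containing $v_3$ lies in the same $v_3$-branch as $v_1$; dually for $v_3$-branches. I would exploit this to pin down exactly which branches contain which vertices.

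Here is the heart of the argument. Suppose for contradiction $v_1|_{v_3}v_2$, i.e. $v_1$ and $v_2$ lie in different $v_3$-branches. Let $D$ be the $v_3$-branch containing $v_1$ and $D'$ the $v_3$-branch containing $v_2$; by Lemma \ref{8.5}, since $d(v_1,v_3)\ge 2$, the $v_1$-branch $\widetilde B$ containing $v_3$ is the unique $v_1$-branch not containing $v_3$'s complement region — more usefully, any vertex outside $\Star(v_1)$ that is ``on the $v_3$ side'' gets controlled. I would now show that both conclusions $v_1|_{v_2}v_3$ and $v_1|_{v_3}v_2$ together force $v_2$ to be separated from both $v_1$ and $v_3$ in an incompatible way: concretely, consider $\Star(v_2)$. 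We have $v_1|_{v_2}v_3$. The assumption also gives $v_1|_{v_3}v_2$. Applying Lemma \ref{8.5} with the pair $(v_2,v_3)$ (distance $\geq 2$): every $v_2$-branch not containing $v_3$ lies in the $v_3$-branch of $v_2$, which is $D'$. In particular $B$ (the $v_2$-branch of $v_1$, which does not contain $v_3$ since $v_1|_{v_2}v_3$ with $B\neq B'$) satisfies $B\subset D'$ up to the branch it determines, so $v_1\in D'$, the $v_3$-branch of $v_2$ — contradicting $v_1|_{v_3}v_2$, which said $v_1\notin D'$. This contradiction establishes $v_1v_2|_{v_3}$, and the symmetric statement $v_3v_2|_{v_1}$ follows identically.

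\textbf{Main obstacle.} The delicate point is making the branch-containment bookkeeping precise: a $v_2$-branch ``containing $v_3$'' versus a vertex ``being in the $v_3$-branch of $v_2$'' are statements about different decompositions of $\mathcal{P}(\Gamma)$, and Lemma \ref{8.5} must be invoked with the right pair in the right order, checking each time that the relevant pair of vertices is at distance $\geq 2$ and that the branch in question genuinely omits the third vertex. I expect the clean way to organize it is: (i) record that all three pairwise distances are $\geq 2$; (ii) for each of the three separation/non-separation statements phrase it as ``$v_a\in$ (the $v_b$-branch of $v_c$)'' or its negation; (iii) apply Lemma \ref{8.5} to conclude that whenever $v_a$ is in a $v_b$-branch not containing $v_c$, it is forced into the $v_b$-branch of $v_c$; (iv) observe the hypothesis $v_1|_{v_2}v_3$ places $v_1$ in a $v_2$-branch omitting $v_3$, hence into the $v_2$-branch of $v_3$; translate this into a containment of $v_1$ in the $v_3$-branch of $v_2$ via the already-established parallel-set witnesses (Lemma \ref{7.4}, Corollary \ref{7.14}), contradicting $v_1|_{v_3}v_2$. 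If the direct Lemma \ref{8.5} route has a gap, the fallback is the $\bar X(\Gamma)$ / collapsing-map technique of Lemma \ref{7.9}: build the auxiliary space for $\Star(\bar v_3)$ and run a separation-of-hyperplanes argument to locate $v_1,v_2$ on the same side, which is exactly the mechanism used to prove the type II consequences in Corollary \ref{7.14}(5).
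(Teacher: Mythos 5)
Your proof is correct and is essentially the paper's argument: the paper applies the Corollary~\ref{7.14}(5) boundary argument inline to the $v_2$-branch containing $v_3$, giving $v_3v_2|_{v_1}$ directly and invoking symmetry for the other conclusion, while you cite Lemma~\ref{8.5} (whose proof is exactly that boundary argument) with the $v_2$-branch containing $v_1$ to get $v_1v_2|_{v_3}$ and then invoke symmetry. The proof-by-contradiction framing you use is superfluous, since Lemma~\ref{8.5} already yields $v_1v_2|_{v_3}$ directly from $v_1|_{v_2}v_3$ and $d(v_2,v_3)\ge 2$, but this is harmless.
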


\begin{proof}
	Let $B$ be the $v_{2}$-branch that contains $v_{3}$. Since $\mathcal{P}(\Gamma)\setminus (lk(v_{1})\cap lk(v_{2}))$ is connected, we have $$\partial B\nsubseteq lk(v_{1})\cap lk(v_{2})=St(v_{1})\cap St(v_{2}).$$ Then there exists vertex $w\in\partial B$ with $w\notin St(v_{1})$, which implies that $v_{2}$ and $v_{3}$ can be connected via $w$ outside $St(v_{1})$. 
\end{proof}

%Since $q_{\ast}$ does not necessarily map $v$-tier to $q_{\ast}(v)$-tier, we want to modify $q_{\ast}$ by post-composing $q_{\ast}$ with an appropriate permutation of $q_{\ast}(v)$-branches such that $v$-tier is send to $q_{\ast}(v)$-tier. It is easy to do this for a single vertex $v$ when $G(\Ga)$ is prime by Lemma \ref{8.4}, but in general we need to deal with more than one vertices simultaneously, so it is necessary to figure out how to deal with each vertex independently.

%In order to do this, we introduce an auxiliary order. 

Let $E_n=\{v'_j\}_{j=1}^n$. For $1\le i\le n$, define $E_i=\{v'_j\}_{j=1}^i$ and define $E_0=\emptyset$.

\begin{lem}
	\label{lem:tight}
It is possible to order the elements $\{v'_i\}_{i=1}^n$ of $E_n$ such that for each $1\le i\le n$ and any $v'\in E_n\setminus E_i$, we have all elements of $E_i\setminus St(v')$ contained in the same $v'$-branch.
\end{lem}
 
\begin{proof}
 Pick $E\subset\{v'_{i}\}_{i=1}^{n}$ and denote $\{v'_{i}\}_{i=1}^{n}\setminus E$ by $E^{c}$. We say $E$ is \textit{tight} if for any $v'_{i}\in E^{c}$, $E\setminus St(v'_{i})$ is inside a $v'_{i}$-branch. Pick $v'_{i},v'_{j}\in E^{c}$, we define $v'_{i}<_{E}v'_{j}$ if and only if there exists $v'_{k}\in E$ such that $v'_{j}$ and $v'_{k}$ are in different $v'_{i}$-branches. 
 
 We claim that if $E$ is tight, then $\le_{E}$ is a partial order on $E^{c}$. Now we prove the claim. If $v'_{i}<_{E} v'_{j}$ and $v'_{j}<_{E} v'_{i}$, then there exist $v'_{k_{1}}$ and $v'_{k_{2}}$ in $E$ such that $$v'_{j}|_{v'_{i}} v'_{k_{1}}\ \mathrm{and}\ v'_{i}|_{v'_{j}}v'_{k_{2}}.$$ By Lemma \ref{8.8}, we have $$v'_{k_{2}}v'_{j}|_{v'_{i}},$$ so $$v'_{k_{2}}|_{v'_{i}}v'_{k_{1}},$$ which contradicts the tightness of $E$. Thus the relation $\le_{E}$ is antisymmetric. It remains to check the transitivity. Suppose $v'_{i}<_{E} v'_{j}$ and $v'_{j}<_{E} v'_{k}$ for $v'_{i},v'_{j},v'_{k}\in E^{c}$. Then there exist $v'_{\ell}$ and $v'_{m}$ in $E$ such that $$v'_{\ell}|_{v'_{i}}v'_{j}\ \mathrm{and}\ v'_{m}|_{v'_{j}}v'_{k}.$$ Since $$v'_{\ell}\notin St(v'_{j})\ \mathrm{and}\ v'_{m}\notin St(v'_{j}),$$ then $v'_{m}v'_{\ell}|_{v'_{j}}$. We also have $v'_{i}v'_{\ell}|_{v'_{j}}$ by Lemma \ref{8.8}, so $v'_{m}v'_{i}|_{v'_{j}}$. This, together with $v'_{m}|_{v'_{j}}v'_{k}$ imply $v'_{i}|_{v'_{j}}v'_{k}$, hence we have $$v'_{k}v'_{j}|_{v'_{i}}.$$ However, we also know $v'_{\ell}|_{v'_{i}}v'_{j}$, so $v'_{\ell}|_{v'_{i}}v'_{k}$ and $v'_{i}<_{E}v'_{k}$. 

If $E$ is tight, let $v'_{i}\in E^{c}$ be a minimal element in $E^{c}$ with respect to $\le_{E}$. Then $E\cup\{v'_{i}\}$ is also tight. Let $E_{1}=\{v'_{1}\}$. $E_{1}$ is clearly tight, so it is possible to add a vertex in $E_{1}^{c}$ to $E_{1}$ to obtain a tight set $E_{2}$. By repeating this process for $n-1$ times, we obtain a filtration $E_{1}\subsetneq E_{2}\subsetneq\cdots\subsetneq E_{n-1}\subsetneq E_{n}=\{v'_{i}\}_{i=1}^{n}$ such that the requirements of the lemma are met.
\end{proof}

Suppose we have already obtained a quasi-isometry $q_{\ast}$ such that for every vertex $v'\in E_{i}$, vertices of $K'\setminus St(v')$ are in the same $v'$-tier. Suppose $v'_{i+1}=E_{i+1}\setminus E_{i}$ and let $B'$ be the $v'_{i+1}$-branch that contains all points in $E_{i}\setminus St(v'_{i+1})$ (if $E_{i}\setminus St(v'_{i+1})=\emptyset$, we pick an arbitrary $v'_{i+1}$-branch). Let $\{B_{j}\}_{j=1}^{k}$ be the collection of $v_{i+1}$-branches that have non-trivial intersection with $K$. Then $\{B_j\}_{j=1}^k$ are in the same $v_{i+1}$-tier (as they have the same height as $x$ with respect to $v_{i+1}$). By Corollary~\ref{7.13}, $\{B_j\}_{j=1}^k$ are in 1-1 correspondence with connected components of $K\setminus St(v_{i+1})\cong F(\Gamma)\setminus St(\bar v_{i+1})$, and the correspondence is giving by sending $B_j$ to $\Pi(B_j)$. Let $B'_{j}=q_{\ast}(B_{j})$. Since both $G(\Gamma)$ and $G(\Gamma')$ are prime, the stretch factor of $q_{\ast}$ at $v_{i+1}$ is 1, then there exists $g_{\ast}:\mathcal{P}(\Gamma')\to\mathcal{P}(\Gamma')$ such that
\begin{enumerate}
	\item $g_{\ast}$ is a composition of finitely many elementary permutations of $v'_{i+1}$-branches, hence $g_{\ast}$ fixes every point in $St(v'_{i+1})$;
	\item $g_{\ast}$ fixes every point in $B'$;
	\item $\{g_{\ast}(B'_{j})\}_{j=1}^{k}$ are in the same $v'_{i+1}$-tier.
\end{enumerate}
By Lemma \ref{8.6} below, we can assume in additional that 
\begin{enumerate}[resume]
	\item for any $v'\in E_{i}\cap St(v'_{i+1})$ and any $v'$-branch $D$ such that $D\cap K'\neq\emptyset$, $g_{\ast}(D)$ and $D$ are in the same $v'$-tier.
\end{enumerate}

By (1) and (2), $g_{\ast}$ fixes every point in $E_{i+1}$. We claim that vertices of $g_{\ast}(K')\setminus St(v')$ are in the same $v'$-tier for any $v'\in g_{\ast}(E_{i+1})$. There are three cases to consider as follows.
\begin{itemize}
	\item The case $v'=v'_{i+1}$ follows from property (3) of $g_*$.
	\item The case $v'\in E_{i}\cap St(v'_{i+1})$ follows from the inductive assumption and property (4) of $g_*$.
	\item Let $v'\in E_{i}\setminus St(v'_{i+1})$ and $D$ be a $v'$-branch. If $v'_{i+1}\notin D$, then $g_{\ast}(D)=D$ by (2) and Lemma \ref{8.5}; if $v'_{i+1}\in D$, $g_{\ast}(D)=D$ is still true since $g_{\ast}$ fixes $v'$ and $v'_{i+1}$. Thus $g_{\ast}$ does not permute the $v'$-branches and the claim follows.
\end{itemize}

Let $q'_{\ast}=g_{\ast}\circ q_{\ast}$, $K''=g_{\ast}(K')=q'_{\ast}(K)$, $E'_{i}=g_{\ast}(E_{i})$ and $v''_{i}=g_{\ast}(v'_{i})$. Then $\{v''_{i}\}_{i=1}^{n}$ are exactly the vertices in $\mathcal{P}(\Gamma')$ such that $K''\setminus St(v''_{i})$ is disconnected. Note that $$E'_{1}\subsetneq E'_{2}\subsetneq\cdots\subsetneq E'_{n}$$ still satisfies Lemma~\ref{lem:tight}. Moreover, vertices of $K''\setminus St(v'')$ are in the same $v''$-tier for any $v''\in E'_{i+1}$. So we can repeat the previous process to deal with $E'_{i+2}$. 

After finitely many steps, we can assume for every point $v'\in\{v'_{i}\}_{i=1}^{n}$, vertices of $K'\setminus St(v')$ are in the same $v'$-tier, thus $K'$ satisfies the assumption of Lemma \ref{8.7} and $\pi\circ q_{\ast}$ induces a simplicial embedding $s:F(\Gamma)\to F(\Gamma')$. This finishes the proof of Lemma~\ref{8.7} modulo Lemma~\ref{8.6}.

\begin{lem}
\label{8.6}
Take $F(\Gamma)$ which is of type II. Let $w\in\mathcal{P}(\Gamma)$ be a vertex. Let $\{B_i\}_{i=1}^{n}$ be a collection such that each $B_i$ is a $v_{i}$-branch for some vertex $v_i\in\mathcal{P}(\Gamma)$ satisfying $d(v_{i},w)=1$. We assume $B_i\neq B_j$ for $i\neq j$ (however $v_{i}=v_{j}$ is allowed for $i\neq j$).
Let $q:X(\Gamma)\to X(\Gamma)$ be a quasi-isometry such that $q_{\ast}$ fixes every point in $St(w)$. Then there exists a quasi-isometry $q':X(\Gamma)\to X(\Gamma)$ such that $q'_{\ast}$ satisfies:
\begin{enumerate}
\item $q'_{\ast}$ fixes every point in $St(w)$.
\item $q'_{\ast}(B)=q_{\ast}(B)$ for any $w$-branch $B$.
\item $B_{i}$ and $q'_{\ast}(B_{i})$ are in the same $v_{i}$-tier.
\item If $q_{\ast}$ fixes every point in a $w$-branch $B$, then $q'_{\ast}$ also fixes every point in $B$.
\end{enumerate}
\end{lem}

\begin{proof}
We start by introducing an auxiliary notion. Take vertices $w,v\in\mathcal{P}(\Gamma)$ and a $v$-peripheral complex $K\subset \P(\Ga)$. The pair $(v,K)$ is $w$-\textit{non-crossing} if $d(v,w)=1$ and $w\notin K$. In this case, $B\cap St(w)=\emptyset$ for any $v$-branch $B$ such that $\partial B=K$. Moreover, for any other $v$-branch $B'$ with $\partial B'=K$, $B'$ and $B$ are in the same $w$-branch. To see this, note that $$K=\partial B\nsubseteq lk(v)\cap lk(w),$$ otherwise $B$ and $v$ will be in different connected components of $$\mathcal{P}(\Gamma)\setminus(lk(v)\cap lk(w)),$$ which contradicts Corollary \ref{7.14} (5). On the other hand, $K\subset lk(v)$. So $K$ contains a vertex $w'\in lk(v)\setminus St(w)$ such that $B'$ can be connected with $B$ outside $St(w)$ via $w'$. We refer to Remark~\ref{rmk:non-crossing} for a comment on the naming of ``$w$-\textit{non-crossing}''.

If $B_i$ and $q_*(B_i)$ are not in the same $v_i$-tier, we wish to post-composing $q_*$ with elementary permutations to arrange Lemma~\ref{8.6} (3). Suppose in step 1 we already arranged $B_1$ and $q'_*(B_1)$ to be in the same $v_1$-tier. Then in step 2 when arranging the position of $q'_*(B_2)$ we want to maintain the outcome of step 1. One ideal situation is that all the $v_2$-branches upon which we want to perform elementary permutations are contained in the same $v_1$-branch. Then whatever happens in step 2 only takes place within one particular $v_1$-branch and each $v_1$-branch is mapped to itself. This leads us to define the following binary relation, which will guide us on the order of treating elements in $\{B_i\}_{i=1}^n$.

We define a binary relation $\le$ on the set of $w$-non-crossing pairs by $(v_{1},K_{1})\le(v_{2},K_{2})$ if there exist $v_{1}$-branch $B_{1}$ with $\partial B_{1}=K_{1}$ and $v_{2}$-branch $B_{2}$ with $\partial B_{2}=K_{2}$ such that $B_{1}\subset B_{2}$. If $(v_{1},K_{1})<(v_{2},K_{2})$, then $d(v_{1},v_{2})=1$. To see this, note that if $v_{1}=v_{2}$, we must have $B_{1}=B_{2}$ and $K_{1}=K_{2}$. Suppose $d(v_{1},v_{2})=2$. Since $v_{2}\notin B_{1}$, $B_{1}$ must belong to the $v_{2}$-branch that contains $v_{1}$ by Lemma \ref{8.5}. Hence $v_1\in B_2$ and $w\in\partial B_{2}=K_{2}$, which yields a contradiction.

Now we show the relation $<$ is a partial order.
Suppose $(v_{1},K_{1})\le (v_{2},K_{2})$. Since $B_1\subset B_2$, we know $B_1\cap St(v_2)=\emptyset$, thus $$(B_1\cup \partial B_1)\cap \{v_2\}=\emptyset,$$ in particular $$v_2\notin \partial B_1=K_1.$$
Thus $(v_1,K_1)$ is $v_2$-non-crossing and we deduce as before that $B'_{1}\subset B_{2}$ for any $v_{1}$-branch $B'_{1}$ with $\partial B'_{1}=K_{1}$. Thus the relation $\le$ is transitive. If $(v_{1},K_{1})\le (v_{2},K_{2})$, $(v_{2},K_{2})\le (v_{1},K_{1})$ and $(v_{1},K_{1})\neq(v_{2},K_{2})$, then it follows from previous discussion that all $v_{1}$-branches with boundary $=K_{1}$ stay inside one particular $v_2$-branch, and all $v_{2}$-branches with boundary $=K_{2}$ stay inside one particular $v_1$-branch. This implies all $v_{1}$-branches with boundary $=K_{1}$ stay inside one particular $v_1$-branch, which is impossible.
So $\le$ is antisymmetric.

Now we begin to arrange all the requirements of Lemma~\ref{8.6}.
We only need to consider the case when $B_{i}\subset \P(\Gamma)\setminus St(w)$ for all $i$, otherwise $B_{i}$ will contain a vertex fixed by $q_{\ast}$ and (3) is automatic. Let $K_{i}=\partial B_{i}$. Then $(v_{i},K_{i})$ is a $w$-non-crossing pair. Suppose $(v_{1},K_{1})$ is a maximal element in $\{(v_{i},K_{i})\}_{i=1}^{n}$ with respect to the order defined above and suppose $(v_{1},K_{1})=(v_{i},K_{i})$ if and only if $1\le i\le m$. Let $K'_{1}=q_{\ast}(K_{1})$ and let $\{A_{i}\}_{i\in\Bbb Z}$ (or $\{A'_{i}\}_{i\in\Bbb Z}$) be the collection of $v_{1}$-branches with boundary $K_{1}$ (or $K'_{1}$). Then $q_{\ast}$ induces a bijection between $\{A_{i}\}_{i\in\Bbb Z}$ and $\{A'_{i}\}_{i\in\Bbb Z}$. Since $q_{\ast}$ fixes $v_{1}$, the stretch factor of $q_{\ast}$ at $v_{1}$ is 1 by Lemma \ref{8.4}, so we can post-compose $q_{\ast}$ with a finite sequence of elementary permutations of elements in $\{A'_{i}\}_{i\in\Bbb Z}$ such that (3) is true for $1\le i\le m$. Note that $(v_{1},K'_{1})$ is also $w$-non-crossing, so $\{A'_{i}\}_{i\in\Bbb Z}$ are in the same $w$-branch and each of the elementary permutations we post-compose before is supported on this particular $w$-branch (and is identity outside this $w$-branch), hence (1) and (2) still hold.

Pick $i_{0}>m$ and let $D_{1}$ and $D_{2}$ be two QII $v_{i_{0}}$-branches such that $$\partial D_{1}=\partial D_{2}=q_{\ast}(K_{i_{0}}).$$ Let $f_{\ast}$ be an elementary permutation of $D_{1}$ and $D_{2}$. We claim $$f_{\ast}(q_{\ast}(B_{i}))=q_{\ast}(B_{i})$$ for $1\le i\le m$, then the lemma follows by induction on the number of $B_{i}$. To see the claim, note that $$(v_{1},K'_{1})\nleq(v_{i_{0}},q_{\ast}(K_{i_{0}}))$$ (since $(v_{1},K_{1})\nleq(v_{i_{0}},K_{i_{0}})$). Then for any $v_{1}$-branch $E$ such that $\partial E=K'_{1}$, we know $E$ contains a vertex $u\in \mathcal{P}(\Gamma)\setminus(D_{1}\cup D_{2})$, otherwise we would have $E\subset D_{1}$ or $E\subset D_{2}$. Recall that $f_{\ast}(u)=u$, so $f_{\ast}(E)=E$, in particular $f_{\ast}(q_{\ast}(B_{i}))=q_{\ast}(B_{i})$ for $1\le i\le m$.

Property (4) is true since we only need to consider those $B_{i}$'s that are not contained in $w$-branches which are fixed by $q_{\ast}$ pointwise. 
\end{proof}

\begin{remark}
	\label{rmk:non-crossing}
The naming of ``$w$-non-crossing'' comes from the fact that if $(v,K)$ is $w$-non-crossing, then for any connected component $L$ of $X(\Gamma)\setminus P_v$ such that $\Delta(\partial L)=K$ (cf. Section~\ref{sec:correspondence}), $L$ does not cross any hyperplanes of $X(\Gamma)$ whose dual edge can extend to a standard line $\ell$ with $\Delta(\ell)=w$. In other words, all components $L$ of $X(\Gamma)\setminus P_v$ with $\Delta(\partial L)=K$ are in the same height with respect to $\ell$. Though we will not need this fact for later part of the paper.
\end{remark}

\begin{remark}
	\label{rmk:support}
We record a direct consequence of the construction of Lemma~\ref{8.6} that will be used later.	
The map $q'$ in Lemma~\ref{8.6} is obtained by postcomposing $q$ with a finite sequence of elementary permutations such that each of these elementary permutation induces identity map on $\mathcal{P}(\Gamma)\setminus q_*(\mathcal{B})$ where $\mathcal{B}$ is the union of all $w$-branches that contains at least one element from $\{B_i\}_{i=1}^{n}$.
\end{remark}

\subsection{Prime partition, sub-tiers, prime factors and dual cube complexes}
\label{subsec:cube} Given right-angled Artin group $G(\Gamma)$ of type II (not necessarily prime), our goal in the next three subsections is to find a prime right-angled Artin group $G(\Gamma')$ which is quasi-isometric to $G(\Gamma)$. Such $G(\Gamma')$, if exists, must be unique by Theorem \ref{8.9}. In this subsection, we introduce a wall space structure on $F(\Gamma)$ and prove several basic properties of this wall space for later use.

Pick vertex $\bar{v}\in F(\Gamma)$, let $\{\mathcal{C}_{i}\}_{i=1}^{k}$ be the collection of QII classes in $F(\Gamma)\setminus St(\bar{v})$ and let $(n_{1},n_{2},\cdots,n_{k})$ be the associated tuple. Let $\{C_{ij}\}_{j=1}^{n_{i}}$ be the components in $\mathcal{C}_{i}$ and let $$d=\gcd(n_{1},n_{2},\cdots,n_{k}).$$ For each $i$, we choose a map $$f_{i}:\{C_{ij}\}_{j=1}^{n_{i}}\to \{1,2,\cdots,d\}$$ such that for each $1\le m\le d$, there are $n_{i}/d$ elements in $f^{-1}_{i}(m)$. For $1\le m\le d$, let $$\mathfrak{C}_{m}=\cup_{i=1}^{k}f^{-1}_{i}(m).$$ This partition of components of $F(\Gamma)\setminus St(\bar{v})$ into $\{\mathfrak{C}_{m}\}_{m=1}^d$ is called a \textit{prime partition at $\bar{v}$}. Each $\mathfrak{C}_{m}$ is called a \textit{prime factor} at $\bar{v}$. The prime partition comes together with an order, namely, we define $\mathcal{C}_{i}\le \mathcal{C}_{j}$ if $i\le j$. Note that the prime partition is trivial if $\bar{v}$ is prime. Now we fix a prime partition for every non-prime vertex in $F(\Gamma)$. 

\begin{remark}
\label{equal number of prime factors}
Let $\alpha:F(\Ga)\to F(\Ga)$ be a simplicial automorphism. By consider the group automorphism of $G(\Ga)$ induced by $\alpha$, we deduce that the number of prime factors at $\bar{v}$ and the number of prime factors at $\alpha(\bar{v})$ are the same. However, $\alpha$ may not map prime factors at $\bar{v}$ to prime factors at $\alpha(\bar{v})$.
\end{remark}

Let $v\in\mathcal{P}(\Gamma)$ be a vertex such that $\pi(v)=\bar{v}$ and let $T$ be a $v$-tier. Recall that we have a map $\Pi$ which maps $v$-branches to components of $F(\Gamma)\setminus St(\bar{v})$. This would give rise to a partition $$\{\Pi^{-1}(\mathfrak{C}_{m})\cap T\}_{m=1}^{d}$$ of $v$-branches in $T$. Each element in the partition is called a \textit{$v$-sub-tier}. 

The following lemma follows directly from definition.
\begin{lem}
\label{correspondence}
Pick vertex $x\in X(\Ga)$ and let $i_x:F(\Ga)\to \P(\Ga)$ be the natural embedding. Then for vertices $\bar{u},\bar{v},\bar{w}\in F(\Ga)$, $\bar{u}$ and $\bar{v}$ are in different prime factors at $\bar{w}$ if and only if $i_x(\bar{u})$ and $i_x(\bar{v})$ are in different $i_x(\bar{w})$-sub-tiers.
\end{lem}

\begin{lem}
\label{premuting sub-tiers}
Let $S_{1}$ and $S_{2}$ be two $v$-sub-tiers. Then there exists a quasi-isometry $q:X(\Gamma)\to X(\Gamma)$ such that the induces simplicial isomorphism $q_{\ast}:\mathcal{P}(\Gamma)\to\mathcal{P}(\Gamma)$ satisfies
\begin{enumerate}
\item $q_{\ast}$ fixes every vertex in $\mathcal{P}(\Gamma)\setminus(S_{1}\cup S_{2})$.
\item $q_{\ast}(S_{1})=S_{2}$ and $q_{\ast}(S_{2})=S_{1}$.
\item For every $v$-branch $B\subset S_{1}$, $q_{\ast}(B)$ and $B$ are QII.
\end{enumerate}
\end{lem}

\begin{proof}
To see this, note that there exist unique $v$-tiers $T_{1},T_{2}$ and $1\le m_{1},m_{2}\le d$ such that $$S_{i}=T_{i}\cap \Pi^{-1}(\mathfrak{C}_{m_{i}})$$ for $i=1,2$. For each $1\le i\le k$, pick a bijection between $f^{-1}_{i}(m_{1})$ and $f^{-1}_{i}(m_{2})$, and this induces a bijection $\bar{\Lambda}$ from components in $\mathfrak{C}_{m_{1}}$ to components in $\mathfrak{C}_{m_{2}}$. By Corollary \ref{7.14} (1), $\bar{\Lambda}$ induces a bijection $\Lambda$ from $v$-branches in $S_{1}$ to $v$-branches in $S_{2}$ such that $B$ and $\Lambda(B)$ are QII. We define $q$ as follows. Set $q(x)=x$ if $x\in P_{v}$. If $x\notin P_{v}$, let $D$ be the component of $X(\Gamma)\setminus P_{v}$ with $x\in D$ and let $B$ be the $v$-branch corresponding to $D$ (see Proposition \ref{7.23}). If $B$ is not inside $S_{1}\cup S_{2}$, then set $q(x)=x$. Otherwise we assume $B\subset S_{1}$. Let $B'=\Lambda(B)$ and let $D'$ be the associated component of $X(\Gamma)\setminus P_{v}$. Let $f$ be the elementary permutation (Definition \ref{8.1}) of $B$ and $B'$. We can assume $f(D)=D'$ (Proposition \ref{7.23}) and $f$ is a $(L,A)$-quasi-isometry with $L$ and $A$ independent of $B\subset S_{1}$ (see the discussion after Lemma \ref{8.2}). Set $q(x)=f(x)$ in this case. Then $q$ is a quasi-isometry and satisfies all the requirements.
\end{proof}

The reader may check that the same proof of Lemma~\ref{8.6} works to give the following lemma.
\begin{lem}
	\label{8.10}
Lemma \ref{8.6} is still true if we replaced $v_{i}$-tier by $v_{i}$-sub-tier in (3).
\end{lem}

In the rest of this subsection, we show the prime partitions on $F(\Gamma)$ give rise to a pocset structure on $F(\Gamma)$, and construct the dual cube complex.
\begin{definition}
	\label{def:basic}
	Pick non-prime vertex $\bar{v}\in F(\Gamma)$ and let $\{\mathfrak{C}_{j}\}_{j=1}^{d}$ be the prime factors at $\bar{v}$. A \textit{$\bar{v}$-halfspace} of $F(\Gamma)$ is a full subcomplex of form $$St(\bar{v})\cup(\cup_{j=1}^{m}\mathfrak{C}_{j})$$ or $$St(\bar{v})\cup(\cup_{j=m+1}^{d}\mathfrak{C}_{j})$$ with $1\le m<d$. Let $$H=St(\bar{v})\cup(\cup_{j=1}^{m}\mathfrak{C}_{j})\ (\mathrm{or}\ St(\bar{v})\cup(\cup_{j=m+1}^{d}\mathfrak{C}_{j})).$$ We define the \textit{complement} of $H$, denoted by $H^{c}$, to be $$St(\bar{v})\cup(\cup_{j=m+1}^{d}\mathfrak{C}_{j})\ (\mathrm{or}\ St(\bar{v})\cup(\cup_{j=1}^{m}\mathfrak{C}_{j})).$$ A \textit{$\bar{v}$-wall} of $F(\Gamma)$ is a pair of halfspaces $(H,H^{c})$. 
	
	Let $\mathcal{H}(\Gamma)$ be the collection of pairs $(\bar{v},H)$ such that $\bar{v}$ is non-prime and $H$ is a $\bar{v}$-halfspace. If there is another pair $(\bar{v}',H')\in \mathcal{H}(\Gamma)$ such that $H=H'$ and $\bar{v}\neq\bar{v}'$, then $(\bar{v}',H')$ and $(\bar{v},H)$ are viewed as different elements in $\mathcal{H}(\Gamma)$.

	Let $\mathcal{W}(\Gamma)$ be the collection of triples $(\bar{v},H,H^{c})$ such that $(H,H^{c})$ is a $\bar{v}$-wall. Occasionally, we will omit $\bar{v}$ when there is no ambiguity.
\end{definition}

\begin{definition}
	We say two halfspaces $(\bar{v}_{1},H_{1}),(\bar{v}_{2},H_{2})\in\mathcal{H}(\Gamma)$ are \textit{compatible} if $d(\bar{v}_{1},\bar{v}_{2})=1$ or $(H_{1}\cap H_{2})\nsubseteq St(\bar{v}_{1})$. 
\end{definition}

\begin{lem}
	\label{lem:6.25}
	Suppose $d(\bar v_1,\bar v_2)\ge 2$.  Let $C_{1}$ (or $C_{2}$) be the component of $F(\Gamma)\setminus St(\bar{v}_{1})$ (or $F(\Gamma)\setminus St(\bar{v}_{2})$) that contains $\bar{v}_{2}$ (or $\bar{v}_{1}$). Then the following holds:
	\begin{enumerate}
		\item $(H_{1}\cap H_{2})\nsubseteq St(\bar{v}_{1})$ implies $(H_{1}\cap H_{2})\nsubseteq St(\bar{v}_{2})$ and vice versa;
		\item assuming $(H_{1}\cap H_{2})\nsubseteq St(\bar{v}_{1})$, then exactly one of the following three possibilities is true: (1) $\bar{v}_{1}\in H_{2}$ and $\bar{v}_{2}\in H_{1}$; (2) $H_{2}\subsetneq H_{1}$; (3) $H_{1}\subsetneq H_{2}$;
		\item assuming $(H_{1}\cap H_{2})\nsubseteq St(\bar{v}_{1})$, then
		\begin{itemize}
			\item case (1) holds if and only if $C_i\subset H_i$ for $i=1,2$;
			\item case (2) holds if and only if $C_1\subset H_1$ and $C_2\cap H_2=\emptyset$;
			\item case (3) holds if and only if $C_2\subset H_2$ and $C_1\cap H_1=\emptyset$;
		\end{itemize}
		\item in Case (1) of Assertion (2), we have $H_1\nsubseteq H_2$ and $H_2\nsubseteq H_1$.
	\end{enumerate}
\end{lem}

%if $d(\bar{v}_{1},\bar{v}_{2})\ge 2$, then $H_{1}$ and $H_{2}$ are compatible if and only if $C_{1}\cap H_{1}=C_{2}\cap H_{2}=\emptyset$ is not true,

\begin{proof}
	We will first prove Assertions (2) and (3).
	
	We claim $C_{1}\cap H_{1}=C_{2}\cap H_{2}=\emptyset$ is impossible. Indeed, if $C_{1}\cap H_{1}=C_{2}\cap H_{2}=\emptyset$, then $\bar v_1\notin H_2$. Lemma~\ref{8.5} implies that all the components of $F(\Gamma)\setminus St(\bar v_2)$ that are in $H_2$, as well as  $\bar v_2$, are contained in a single component of $F(\Gamma)\setminus St(\bar v_1)$. Then $H_2\setminus E\subset C_1$ where $E$ is defined to be $$St(\bar v_1)\cap St(\bar v_2)=lk(\bar v_1)\cap lk(\bar v_2).$$
	Thus $$H_{2}\subset C_{1}\cup St(\bar{v}_{2}).$$ Hence $$H_{1}\cap H_{2}\subset H_{1}\cap(C_{1}\cup St(\bar{v}_{2}))=H_{1}\cap St(\bar{v}_{2}).$$ As $C_{1}\cap H_{1}=\emptyset$, we know $C_1$ is disjoint from all components of $F(\Gamma)\setminus St(\bar v_1)$ that are in $H_1$. Then $$H_{1}\cap St(\bar{v}_{2})=St(\bar{v}_{1})\cap St(\bar{v}_{2})\subset St(\bar{v}_{1}),$$ which contradicts $$(H_1\cap H_2)\nsubseteq St(\bar v_1).$$ Thus the claim is proved. Thus in order to prove Assertion (2), it suffices to prove Assertion (3).
	
	If $C_{i}\cap H_{i}\neq\emptyset$ for $i=1,2$, then actually $C_{i}\subset H_{i}$ and case (1) holds. The converse is clear. 
	%neither $H_{2}\subset H_{1}$ nor $H_{1}\subset H_{2}$ is true in this case by Lemma \ref{8.5}. 
	If $C_{1}\subset H_{1}$ and $C_{2}\cap H_{2}=\emptyset$, then $\bar v_1\notin H_2$ and $H_{2}\subset C_{1}\cup St(\bar{v}_{2})$ as before. Note that $C_1\subset H_1$ and $\bar v_2\subset C_1$ implies that $$(C_{1}\cup St(\bar{v}_{2}))\subset H_1.$$
	Moreover $\bar{v}_1\in H_1\setminus H_2$, hence case (2) is true. Conversely, if $H_2\subsetneq H_1$, then $\bar v_2\in H_2\subset H_1$. Thus $C_1\subset H_1$. Now we prove $C_2\cap H_2=\emptyset$. If this is not true, then $C_2\subset H_2$. Let $D$ be a component of $F(\Gamma)\setminus St(\bar v_1)$ such that $D\cap H_1=\emptyset$. Then $D\cap C_1=\emptyset$, hence $\bar v_2\notin D$. Then Lemma~\ref{8.5} implies $D$ and $\bar v_1$ are in the same connected component of $F(\Gamma)\setminus St(\bar v_2)$. Thus $D\subset C_2\subset H_2$. Then $H_2$ contains vertices which are not in $H_1$, which contradicts $H_2\subset H_1$. Thus $C_2\cap H_2=\emptyset$. Similarly, we can prove $C_{2}\subset H_{2}$ and $C_{1}\cap H_{1}=\emptyset$ iff case (3) holds. Thus Assertion (3) holds.
	Assertion (1) follows as in cases (1) and (3) of Assertion (2), we have $\bar v_1\in H_1\cap H_2$ and $\bar v_1\notin St(\bar v_2)$. In case (2), $H_2=H_1\cap H_2$, thus $(H_{1}\cap H_{2})\nsubseteq St(\bar{v}_{2})$ is clear.
	Assertion (4) also follows as we already showed that if $C_i\subset H_i$ for $i=1,2$, then $H_2$ contains vertices that are not in $H_1$. Similarly,  $H_1$ contains vertices that are not in $H_2$.
\end{proof}

\begin{lem}
	We define $(\bar{v}_{1},H_{1})\le (\bar{v}_{2},H_{2})$ if $d(\bar{v}_{1},\bar{v}_{2})\neq 1$ and $H_{1}\subset H_{2}$. Then $\le$ gives a partial order on $\mathcal{H}(\Gamma)$. Moreover, $(\mathcal{H}(\Gamma),\le)$ with the complement operation defined before form a pocset.
\end{lem}

\begin{proof}
	If $(\bar{v}_{1},H_{1})\le (\bar{v}_{2},H_{2})$ and $d(\bar v_1,\bar v_2)\ge 2$, then $(H_1\cap H_2)\nsubseteq St(\bar v_1)$. Thus Lemma~\ref{lem:6.25} that implies that $H_1\subsetneq H_2$, which means $(\bar{v}_{1},H_{1})\ge (\bar{v}_{2},H_{2})$ is impossible. Thus if $(\bar{v}_{1},H_{1})\le (\bar{v}_{2},H_{2})$ and $(\bar{v}_{1},H_{1})\ge (\bar{v}_{2},H_{2})$, then $\bar v_1=\bar v_2$. Thus the relation $\le$ is antisymmetric. Now we show transitivity. Pick $(\bar{v}_{3},H_{3})\in\mathcal{H}(\Gamma)$ such that $(\bar{v}_{2},H_{2})\le (\bar{v}_{3},H_{3})$, if two of $\bar{v}_{1},\bar{v}_{2},\bar{v}_{3}$ are the same, then $$(\bar{v}_{1},H_{1})\le (\bar{v}_{3},H_{3})$$ by definition. If $\bar{v}_{1},\bar{v}_{2},\bar{v}_{3}$ are pairwise distinct, let $C_{1}$ and $C_{2}$ be as in Lemma~\ref{lem:6.25} and let $C'_{2}$ be the component of $F(\Gamma)\setminus St(\bar{v}_{2})$ that contains $\bar{v}_{3}$. Since $H_{1}\subsetneq H_{2}$ and $H_{2}\subsetneq H_{3}$, then $C_{1}\cap H_{1}=\emptyset$, $C_{2}\subset H_{2}$ and $C'_{2}\cap H_{2}=\emptyset$ by  Lemma~\ref{lem:6.25}. Thus $\bar{v}_{1}$ and $\bar{v}_{3}$ are in different component of $F(\Gamma)\setminus St(\bar{v}_{2})$ and $d(\bar{v}_{1},\bar{v}_{3})\ge 2$, which implies $(\bar{v}_{1},H_{1})\le (\bar{v}_{3},H_{3})$. It follows that $\le$ is a partial order.
	
	It remains to show $(\bar{v}_{1},H_{1})\le (\bar{v}_{2},H_{2})$ implies $(\bar{v}_{2},H^{c}_{2})\le (\bar{v}_{1},H^{c}_{1})$. The case $\bar{v}_{1}=\bar{v}_{2}$ is clear. If $d(\bar{v}_{1},\bar{v}_{2})\ge 2$, then $H_{1}\cap C_{1}=\emptyset$ and $C_{2}\subset H_{2}$ by Lemma~\ref{lem:6.25}, hence $C_{1}\subset H^{c}_{1}$ and $C_{2}\cap H^{c}_{2}=\emptyset$, which implies $(\bar{v}_{2},H^{c}_{2})\le (\bar{v}_{1},H^{c}_{1})$ by Lemma~\ref{lem:6.25}.
\end{proof}

\begin{lem}
	\label{lem:6.27}
	A subset $U\subset \mathcal{H}(\Gamma)$ is an ultrafilter in the sense of Definition~\ref{2.4} if and only if $U$ satisfies both of the following conditions:
	\begin{enumerate}
		\item for each pair $(\bar{v},H)$ and $(\bar{v},H^{c})$, $U$ contains exactly one of them; 
		\item every pair of halfspaces in $U$ is compatible.
	\end{enumerate}
	
\end{lem}

\begin{proof}
	We fist claim the following are equivalent.
	\begin{enumerate}
		\item $(\bar{v}_{1},H_{1})$ and $(\bar{v}_{2},H_{2})$ are not compatible.
		\item $d(\bar{v}_{1},\bar{v}_{2})\neq 1$ and $(\bar{v}_{1},H_{1})\le (\bar{v}_{2},H^{c}_{2})$.
		\item $d(\bar{v}_{1},\bar{v}_{2})\neq 1$ and $(\bar{v}_{2},H_{2})\le (\bar{v}_{1},H^{c}_{1})$.
	\end{enumerate}
	To see the claim, let us assume $d(\bar{v}_{1},\bar{v}_{2})\ge 2$. Let $C_1$ and $C_2$ be as in Lemma~\ref{lem:6.25}. Then $(\bar{v}_{1},H_{1})$ and $(\bar{v}_{2},H_{2})$ are not compatible $\Leftrightarrow$ $C_{1}\cap H_{1}=C_{2}\cap H_{2}=\emptyset\Leftrightarrow C_{1}\cap H_{1}=\emptyset$ and $C_{2}\subset H^{c}_{2}\Leftrightarrow(\bar{v}_{1},H_{1})\le (\bar{v}_{2},H^{c}_{2})$, where the first step and the last step follow from Lemma~\ref{lem:6.25}. Similarly we can establish the equivalence of (1) and (3) in the claim.
	
	For if direction of the lemma, we need to show if $(\bar v_1,H_1)\le (\bar v_2,H_2)$ and $(\bar v_1,H_1)\in U$, then $(\bar v_2, H_2)\in U$. Indeed, if $(\bar v_2,H_2)\notin U$, then $(\bar v_2,H^c_2)\in U$. Then $$H_1\cap H^c_2\subset H_2\cap H^c_2\subset St(\bar v_2),$$ which contradicts that $(\bar v_1,H_1)$ and $(\bar v_2,H^c_2)$ are compatible. Now we prove the only if direction. Suppose $U$ is an ultrafilter. Then Lemma~\ref{lem:6.27} (1) is clear. If $U$ contains a pair of non-compatible halfspaces $(\bar v_1, H_1)$ and $(\bar v_2, H_2)$, then the claim in the previous paragraph implies that $$(\bar v_1,H_1)\le (\bar v_2,H^c_2).$$ Now Definition~\ref{2.4} (2) implies that $(\bar v_2,H^c_2)\in U$, which contradicts Definition~\ref{2.4} (1). This proves the only if direction of the lemma.
\end{proof}

Let $X$ be the $CAT(0)$ cube complex obtained from the pocset $\mathcal{H}(\Gamma)$ as in Theorem \ref{2.5}. Let $\Phi$ be the pocset isomorphism from the collection of halfspaces in $X$ to $\mathcal{H}(\Gamma)$ as in Theorem \ref{2.5}. Then $\Phi$ induces a bijective map from hyperplanes of $X$ to $\mathcal{W}(\Gamma)$ (cf. Definition~\ref{def:basic}), which is also denoted by $\Phi$. 

Denote the collection of vertices in $X$ by $\{x_{i}\}_{i=1}^{r}$ be the collection of vertices in $X$, and let $\{U(x_{i})\}_{i=1}^{r}$ be the corresponding ultrafilters. 

Let $\Phi(x_{i})$ be the intersection of halfspaces in $U(x_{i})$. For each subcomplex $A\subset X$, we define $\Phi(A)=\cup_{x\in A}\Phi(x)$ where $x$ ranges over vertices in $A$.

\begin{lem}
	\label{6.28}
	Then following are true.
	\begin{enumerate}
		\item For any vertex $\bar{u}\in F(\Gamma)$, $\Phi(x_{i})\setminus St(\bar{u})$ is contained in a prime factor at $\bar{u}$.
		\item For arbitrary simplex $g\subset F(\Gamma)$, there exists an ultrafilter $U$ such that the intersection of halfspaces in $U$ contains $g$. In particular, $\cup_{i=1}^{r}\Phi(x_{i})=F(\Gamma)$.
		\item $\Phi(x_{i})\neq\emptyset$ for all $i$.
		\item  If $A$ is convex, then $\Phi(A)$ is a full subcomplex.
	\end{enumerate}
\end{lem}

\begin{proof}
	Assertion (1) is true as each $\bar u$ wall has a $\bar u$-halfspace containing $\Phi(x_i)$, and all these $\bar u$-halfspaces are compatible.  Now we prove Assertion (2).
	Let $E'$ be the collection of non-prime vertices in $F(\Gamma)$ and let $G$ be the collection of vertices in $g$. Let $$E=E'\cup G=\{\bar u_1,\bar u_2,\ldots,\bar u_n\}.$$ For $1\le i\le n$, define $E_i=\{\bar u_1,\ldots,\bar u_i\}$. We order the elements in $E$ such that for each $1\le i\le n$ and any $\bar u_j\in E\setminus E^c_i$, we have $E_i\setminus St(\bar u_j)$ is inside a single connected component of $F(\Gamma)\setminus St(\bar u_j)$. This can be arranged in the same way as in Lemma~\ref{lem:tight}. 
	We can assume in addition that $\bar{u}_{i}\in G$ if and only if $i\le n_{1}$ and $\bar{u}_{i}\in E'$ if and only if $i\ge n_{2}$. For $i\ge n_{2}$, if $E_{i-1}\setminus St(\bar{u}_{i})\neq\emptyset$, let $C_{i}$ be the component of $F(\Gamma)\setminus St(\bar{u}_{i})$ that contains $E_{i-1}\setminus St(\bar{u}_{i})$ (this is possible by our choice of $E_{i}$). If $E_{i-1}\setminus St(\bar{u}_{i})=\emptyset$, let $C_{i}$ be an arbitrary component. We define $U$ by choosing the unique halfspace that contains $C_{i}$ in each $\bar{u}_{i}$-wall for $i\ge n_{2}$. It clear that the intersection of halfspaces in $U$ contains $g$. It remains to show two halfspaces $$(\bar{u}_{i},H_{1}),(\bar{u}_{j},H_{2})\in U$$ are compatible. The case $d(\bar{v}_{1},\bar{v}_{2})\le 1$ is trivial. We assume $d(\bar{u}_{i},\bar{u}_{j})\ge 2$. Suppose $i<j$, then $\bar{u}_{i}\subset C_{j}\subset H_{2}$, hence $$\bar{u}_{i}\in (H_{1}\cap H_{2})\setminus St(\bar{u}_{j}).$$ It follows that $U$ is an ultrafilter. This justifies (2).
	
	We now prove Assertion (3). 
	Let $U=\{(\bar{u}_{\lambda},H_{\lambda})\}_{\lambda\in\Lambda}$ be an ultrafilter and let $A=\cap_{\lambda\in\Lambda}H_{\lambda}$. To prove Assertion (3), it suffices to justify that if $(\bar{u}_{\lambda},H_{\lambda})$ is minimal in $U$, then $\bar{u}_{\lambda}\in A$. 
	Suppose the contrary is true, then there exists $(\bar{u}_{\lambda'},H_{\lambda'})\in U$ such that $\bar{u}_{\lambda}\notin H_{\lambda'}$, in particular $d(\bar{u}_{\lambda'},\bar{u}_{\lambda})\ge 2$. By Lemma~\ref{lem:6.27}, $H_{\lambda}$ and $H_{\lambda'}$ are compatible. Now Lemma~\ref{lem:6.25} (2) and (3) imply that we must have $H_{\lambda'}\subsetneq H_{\lambda}$, which contradicts the minimality of $(\bar{u}_{\lambda},H_{\lambda})$. 
	
	It remains to prove (4). Let $\{\mathfrak{h}_{i}\}_{i=1}^{t}$ be the collection of halfspaces in $X$ with $A\subset \mathfrak{h}_{i}$ and let $\Phi(\mathfrak{h}_{i})=(\bar{w}_{i},\mathfrak{h'}_{i})$. Suppose $K=\cap_{i=1}^{t}\mathfrak{h'}_{i}$. Since each $\mathfrak{h'}_{i}$ is a full subcomplex, so is $K$. It suffices to show $\Phi(A)=K$. The inclusion $\Phi(A)\subset K$ is clear. Let $\mathcal{W}'(\Gamma)$ be the $\Phi$-image of hyperplanes in $X$ that intersect $A$ and let $\mathcal{H}'(\Gamma)$ be the corresponding collection of halfspaces. Then $\mathcal{H}'(\Gamma)$ is a sub-pocset of $\mathcal{H}(\Gamma)$. We claim $U'\subset \mathcal{H}'(\Gamma)$ is an ultrafilter of $\mathcal{H}'(\Gamma)$ if and only if $U'\cup\{\mathfrak{h'}_{i}\}_{i=1}^{t}$ is an ultrafilter of $\mathcal{H}(\Gamma)$. To see this, we can use the pocset isomorphism $\Phi$ between the halfspaces of $X$ and $\mathcal{H}(\Ga)$ to translate this statement to a statement about halfspaces of $X$, which becomes obvious. We also deduce that $U'\cup\{\mathfrak{h'}_{i}\}_{i=1}^{t}$ corresponds to a vertex in $A$. Thus there is an isometric embedding from the $CAT(0)$ cube complex associated with $\mathcal{H}'(\Gamma)$ to $X$, whose image is exactly $A$. Let $\{U'_{i}\}_{i=1}^{\ell}$ be the collection of ultrafilters on $\mathcal{H}'(\Gamma)$ and let $K_{i}$ be the intersection of halfspaces in $U'_{i}$. Then we can prove $$\cup_{i=1}^{\ell}K_{i}=F(\Gamma)$$ as in Assertion (2). It follows that $$K=K\cap(\cup_{i=1}^{\ell}K_{i})=\cup_{i=1}^{\ell}(K\cap K_{i}),$$ but $K\cap K_{i}=U(x)$ for some vertex $x\in A$, so $K\subset \Phi(A)$.
\end{proof}

%Let $U'=(U\setminus\{(\bar{u}_{\lambda},H_{\lambda})\})\cup\{(\bar{u}_{\lambda},H^{c}_{\lambda})\}$. Then $U'$ is also an ultrafilter since $(\bar{u}_{\lambda},H_{\lambda})$ is minimal in $U$. Let $B$ be the intersection of halfspaces in $U'$. Then $\bar{u}\in B$, in particular, $B\neq\emptyset$.

%note that the discussion in the previous paragraph implies that if $x_{i},x_{j}$ are adjacent and $\Phi(x_{i})\neq\emptyset$, then $\Phi(x_{j})\neq\emptyset$. Moreover, there exists $i_{0}$ such that $\Phi(x_{i_{0}})\neq\emptyset$ by Lemma \ref{8.13}. So (2) follows from the connectedness of $X$.

Recall that two distinct walls $(\bar{v}_{1},H_{1},H^{c}_{1}),(\bar{v}_{2},H_{2},H^{c}_{2})\in\mathcal{W}(\Gamma)$ are \textit{transverse} if none of $(\bar{v}_{1},H_{1})< (\bar{v}_{2},H_{2})$, $(\bar{v}_{1},H_{1})< (\bar{v}_{2},H^{c}_{2})$, $(\bar{v}_{2},H_{2})< (\bar{v}_{1},H_{1})$ and $(\bar{v}_{2},H_{2})< (\bar{v}_{1},H^{c}_{1})$ is true. Thus two such walls are transverse if and only if $d(\bar{v}_{1},\bar{v}_{2})=1$ (note that when $d(\bar{v}_{1},\bar{v}_{2})=1$, even if $H_1\subset H_2$, we still have $(\bar{v}_{1},H_{1})\nleq (\bar{v}_{2},H_{2})$ by our definition). It follows that if $h'_{1}$ and $h'_{2}$ is a pair of crossing hyperplanes in $X$ and $\Phi(h'_{i})$ is a $\bar{v}_{i}$-wall for $i=1,2$, then $d(\bar{v}_{1},\bar{v}_{2})=1$.

\subsection{A filtration for $X$ and $F(\Gamma)$}
\label{subsec:filtration}
In this subsection, our goal is to understand the relationship between different $\Phi(x)$ with $x$ ranging over the $X$, and how $F(\Gamma)$ is assembled from these $\Phi(x)$. The main result of this subsection is Lemma~\ref{lem:key}. As the material in this subsection is comparably technical, The reader might want to assume Lemma~\ref{lem:key} and go ahead to Section~\ref{subsec_wall space} to see how it finishes the proof, before coming back to this subsection.

We now define a filtration for $X$ as well as for $F(\Ga)$. Such filtration is motivated by the generalized star extension introduced in \cite[Section 6.3]{raagqi1}.

We define a chain of convex subcomplexes in $X$ by induction. Pick a vertex $x\in X$ and set $L_{1}=\{x\}$. Suppose we have already defined $L_{i}$. If $L_{i}=X$, then we stop; if $L_{i}\subsetneq X$, pick an edge $e_{i}$ such that $e_{i}\cap L_{i}$ is a vertex and let $L_{i+1}$ be the convex hull of $L_{i}\cup e_{i}$. Let $\{L_{i}\}_{i=1}^{s}$ be the resulting collection of convex subcomplexes. Here is an alternative way of describing $L_{i+1}$. Suppose $h_{i}$ is the hyperplane dual to $e_{i}$ and $N_{i}$ is the carrier of $h_{i}$. Then $h_{i}\cap L_{i}=\emptyset$ by the convexity of $L_{i}$. Let $M_{i}$ be the copy of $(L_{i}\cap N_{i})\times[0,1]$ inside $N_{i}$. Then $L_{i+1}=L_{i}\cup M_{i}$.

Now we look at the relation between $\Phi(L_{i})$ and $\Phi(L_{i+1})$. For $j=1,2$, let $M_{ij}$ be the subcomplex of $M_i$ of form $$(L_{i}\cap N_{i})\times\{j-1\}.$$ We assume $M_{i1}=L_{i}\cap N_{i}$ and let $p:M_{i1}\to M_{i2}$ be the map induced by parallelism. Suppose $(\bar{v},H_{i})\in \mathcal{H}(\Ga)$ is the element corresponding to the halfspace of $h_{i}$ that contains $L_{i}$. Then $$\Phi(M_{i1})\subset \Phi(K)\subset H_{i}$$ and $\Phi(M_{i2})\subset H^{c}_{i}$. For any vertex $x\in M_{i1}$, $(\bar{v},H_{i})$ is a minimal element in $U(x)$, so $\bar{v}\in \Phi(x)\subset\Phi(M_{i1})$. Similarly, $\bar{v}\in\Phi(M_{i2})$.

\begin{lem}
	\label{lem:key}
	The following are true.
	\begin{enumerate}
		\item There is a simplicial isomorphism $\bar h_*:\Phi(M_{i1})\to \Phi(M_{i2})$ with $\bar h_*(\Phi(x))=\Phi(p(x))$ for any vertex $x\in \Phi(M_{i1})$ and $(\bar h_*)^{-1}(\Phi(x))=\Phi(p^{-1}(x))$ for any vertex $x\in \Phi(M_{i2})$.
		\item $\Phi(L_i)\cap \Phi(M_{i1})=\Phi(L_i)\cap \Phi(M_{i2})=St(\bar v, \Phi(M_{i1}))$.
	\end{enumerate}
Thus $\Phi(L_{i+1})$ can be obtained by taking $\Phi(L_{i})$ and $\Phi(M_{i1})\cong \Phi(M_{i2})$, and gluing them along $St(\bar{v},\Phi(M_{i1}))$. 
\end{lem}

We extracted a technical part of the proof as Lemma~\ref{8.12} below. Now we prove Lemma~\ref{lem:key}, assuming Lemma~\ref{8.12}.
\begin{proof}
We claim there exist $\mathfrak{C}_{1}$ and $\mathfrak{C}_{2}$ which are prime factors at $\bar{v}$ such that $\mathfrak{C}_{1}\subset H_{i}$, $\mathfrak{C}_{2}\subset H^{c}_{i}$ and $\Phi(M_{ij})\setminus St(\bar{v})\subset\mathfrak{C}_{j}$ for $j=1,2$. Pick adjacent vertices $x_{1},x_{2}\in M_{i1}$, then there exists $(\bar{v},H'_{i})\in U(x_{1})$ such that $(H_{i}\cap H'_{i})\setminus St(\bar{v})$ is a prime factor at $\bar{v}$. Denote this prime factor by $\mathfrak{C}_{1}$, then $$\Phi(x_{1})\setminus St(\bar{v})\subset \mathfrak{C}_{1}.$$ Let $h$ be the hyperplane dual to the edge joining $x_{1}$ and $x_{2}$ and let $\Phi(h)=(\bar{w},H,H^{c})$. Then $$U(x_{2})=(U(x_{1})\setminus \{H\})\cup\{H^{c}\}.$$ Since $h$ crosses $h_{i}$, $d(\bar{w},\bar{v})=1$. Thus $$(\bar{v},H_{i}),(\bar{v},H'_{i})\in U(x_{2}),$$ which implies $\Phi(x_{2})\setminus St(\bar{v})\subset \mathfrak{C}_{1}$. Now $\Phi(M_{i1})\setminus St(\bar{v})\subset\mathfrak{C}_{1}$ follows from the connectedness of $M_{i1}$. We can choose $\mathfrak{C}_{2}$ in a similar way.

The above argument also implies for any vertex $\bar{u}$ such that $d(\bar{u},\bar{v})\neq 1$, $\Phi(M_{i1})\setminus St(\bar{u})$ is contained in a prime factor at $\bar{u}$. Note that if $M_{i1}\subset St(\bar{v})$, then $\Phi(x)=\Phi(p(x))$ for any vertex $x\in M_{i1}$, hence $M_{i2}=M_{i1}$. Now we assume $M_{i1}\nsubseteq St(\bar{v})$, then we can set up as in Lemma \ref{8.12} below with respect to $K=M_{i1}$, $\bar{v}\in K$, $\mathfrak{C}_{1}$ and $\mathfrak{C}_{2}$. Note that $K$ is a full subcomplex of $F(\Gamma)$ by Lemma~\ref{6.28}. Let $h_{\ast}$ and $\bar{h}_{\ast}$ be the maps in Lemma \ref{8.12}. We claim $\bar{h}_{\ast}(M_{i1})=M_{i2}$. 

Pick vertex $x\in M_{i1}$. We claim for any vertex $\bar{u}$ with $d(\bar{u},\bar{v})\ge 1$, $\Phi(x)\setminus St(\bar{u})\neq\emptyset$ if and only if $\bar{h}_{\ast}(\Phi(x))\setminus St(\bar{u})\neq\emptyset$, and in this case $\Phi(x)\setminus St(\bar{u})$ and $\bar{h}_{\ast}(\Phi(x))\setminus St(\bar{u})$ are in the same prime factor at $\bar{u}$. This follows from Lemma \ref{8.12} (3b) when $d(\bar{u},\bar{v})= 1$. When $d(\bar{u},\bar{v})>1$, note that $\bar{v}\in\Phi(x)$ and $\bar{v}\in\bar{h}_{\ast}(\Phi(x))$, then the claim follows from Lemma \ref{8.12} (3a). Thus for any $(\bar{u},H)\in U(x)$ with $d(\bar{u},\bar{v})\ge 1$, $\bar{h}_{\ast}(\Phi(x))\subset H$. Moreover, (1) of Lemma \ref{8.12} implies $$\bar{h}_{\ast}(\Phi(x))\setminus St(\bar{v})\subset\mathfrak{C}_{2},$$ so $$\bar{h}_{\ast}(\Phi(x))\subset\Phi(p(x))$$ by our choice of $\mathfrak{C}_{2}$ (recall that $p:M_{i1}\to M_{i2}$ is the parallelism map). Denote the number of vertices in $\Phi(x)$ by $|\Phi(x)|$, then $$|\Phi(x)|\le |\Phi(p(x))|.$$ By reversing the role of $M_{i1}$ and $M_{i2}$ and apply Lemma \ref{8.12} with $K=M_{i2}$, we have $|\Phi(p(x))|\le |\Phi(x)|$, hence $$|\Phi(x)|=|\Phi(p(x))|.$$ But $\bar{h}_{\ast}(\Phi(x))$ and $\Phi(p(x))$ are both full subcomplexes, so $$\bar{h}_{\ast}(\Phi(x))=\Phi(p(x)).$$ Thus $\bar{h}_{\ast}(\Phi(M_{i1}))=\Phi(M_{i2})$. This also implies $(\bar h_*)^{-1}(\Phi(x))=\Phi(p^{-1}(x))$, which finished the proof of Assertion (1).

Since $\Phi(M_{ij})$ is a full subcomplex for $j=1,2$ (Lemma~\ref{6.28}), we have $$\bar{h}_{\ast}(St(\bar{v})\cap\Phi(M_{i1}))=\bar{h}_{\ast}(St(\bar{v},\Phi(M_{i1})))=St(\bar{v},\Phi(M_{i2}))=St(\bar{v})\cap\Phi(M_{i2}).$$ However, $\bar{h}_{\ast}$ fixes every point in $St(\bar{v})\cap\Phi(M_{i1})$, so $$St(\bar{v})\cap\Phi(M_{i1})=St(\bar{v})\cap\Phi(M_{i2}).$$ Recall that $$\Phi(L_{i})\cap\Phi(M_{i2})\subset St(\bar{v}),$$ so
\begin{align*}
\Phi(M_{i1})\cap St(\bar{v})&\subset\Phi(M_{i1})\cap\Phi(M_{i2})\subset\Phi(L_{i})\cap\Phi(M_{i2})\\
&\subset\Phi(M_{i2})\cap St(\bar{v})=\Phi(M_{i1})\cap St(\bar{v})
\end{align*}
and all these sets are equal.
\end{proof}

It remains to prove Lemma~\ref{8.12}. We start with an auxiliary result needed in the proof of Lemma~\ref{8.12}.
\begin{lem}
\label{8.11}
If $G(\Gamma)$ is of type II, then given any two vertices $v_{1},v_{2}\in\mathcal{P}(\Gamma)$, there only exists finitely many vertices $w$ such that $v_{1}|_{w}v_{2}$.
\end{lem}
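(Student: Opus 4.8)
The plan is to first cut down the set of candidate $w$ using the label projection $\pi\colon\mathcal{P}(\Gamma)\to F(\Gamma)$, and then translate the remaining, fixed-label, problem into a separation statement in the locally finite complex $X(\Gamma)$. Since $\pi$ sends vertices to vertices and $F(\Gamma)$ is finite, it suffices to bound, for each of the finitely many $\bar w\in F(\Gamma)$, the set $\{w:\pi(w)=\bar w,\ v_1|_w v_2\}$. Fix such a $\bar w$. By Lemma \ref{7.17} a type II graph is of weak type II, so Lemma \ref{7.23} applies: for every $w$ with $\pi(w)=\bar w$ there is a bijection between $w$-branches of $\mathcal{P}(\Gamma)$ and components of $X(\Gamma)\setminus P_w$, with $\bar B=\Delta(\bar L)$ and $\partial B=\Delta(\partial L)$ for corresponding $B$ and $L$. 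Moreover $w\mapsto P_w$ is injective on $\pi^{-1}(\bar w)$ (the label-$\bar w$ geodesics inside $P_w$ form a single parallel class, which is $w$), so it is enough to bound the number of parallel sets $P_w$ that arise.

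Now fix standard geodesics $\ell_1,\ell_2\subset X(\Gamma)$ representing $v_1,v_2$, so $\ell_i\subset P_{v_i}$. Suppose $v_1|_w v_2$, and let $L_i$ be the component of $X(\Gamma)\setminus P_w$ corresponding to the unique $w$-branch containing $v_i$; then $L_1\neq L_2$. Since $v_i$ lies in that branch but not in its boundary, some representative of $v_i$ lies in $\bar L_i$ and none lies in $\partial L_i\subset P_w$; comparing such a representative with the fixed $\ell_i$ via the flat strip they cobound (which is contained in $P_{v_i}$), and using convexity of $\bar L_i$ (Lemma \ref{7.23}(2)), one obtains the dichotomy: either $\ell_i$ itself lies in $\bar L_i$, or $P_{v_i}\cap P_w\neq\emptyset$. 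I expect to handle the generic case $P_{v_1}\cap P_w=P_{v_2}\cap P_w=\emptyset$ first: then each $P_{v_i}$, being connected and disjoint from $P_w$, lies in a single component of $X(\Gamma)\setminus P_w$, necessarily $L_i$; hence $P_{v_1}\subset L_1$, $P_{v_2}\subset L_2$, so $P_w$ separates $P_{v_1}$ from $P_{v_2}$, forcing $P_{v_1}\cap P_{v_2}=\emptyset$ and $d(P_{v_1},P_w)\le d(P_{v_1},P_{v_2})<\infty$. Writing $\inc(P_{v_1},P_{v_2})=(Y_1,Y_2)$ (Lemma \ref{2.6}), the convex hull of $Y_1\cup Y_2$ is a fixed subcomplex isometric to $Y_1\times[0,\Delta_0]$, $\Delta_0=d(P_{v_1},P_{v_2})$, which $P_w$ must cross; by Lemma \ref{2.9} the hyperplanes of $P_w$ it meets there are dual to edges of this fixed hull whose labels lie in $St(\bar w)$. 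Combined with local finiteness of $X(\Gamma)$ (recall $\Gamma$ is finite), this should pin $P_w$ to pass through one of finitely many vertices, giving the bound in the generic case.

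The main obstacle — and where the bulk of the argument will go — is the overlap case, where $P_w$ meets $P_{v_1}$ or $P_{v_2}$. Here Lemma \ref{8.8} helps organise things: $v_1|_w v_2$ forces $w$ to lie in the fixed $v_1$-branch containing $v_2$ and in the fixed $v_2$-branch containing $v_1$, and the converse direction of Corollary \ref{7.14}(1) then constrains $\pi(w)$-adjacent vertices to prescribed components of $\Gamma\setminus St(\bar w)$; the task is to extract from this a single bounded region of $X(\Gamma)$, depending only on $v_1,v_2,\bar w$, that every such $P_w$ must meet, after which local finiteness again finishes. An alternative organisation, possibly cleaner, is induction on $d(v_1,v_2)$ in $\mathcal{P}(\Gamma)$: when $d(v_1,v_2)=2$, every common neighbour $u$ of $v_1,v_2$ must lie in $St(w)$ (otherwise adjacency would put $u$ in both $w$-branches), which strongly restricts $w$; the inductive step peels off the first edge of a geodesic from $v_1$ to $v_2$ and invokes Lemma \ref{8.8} to reduce the separation of $v_1$ and $v_2$ to the separation of a closer pair.
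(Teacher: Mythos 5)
Your proposal is a plan rather than a proof: the case you yourself flag as ``the main obstacle'' --- the overlap case where $P_w$ meets $P_{v_1}$ or $P_{v_2}$ --- is never actually handled, and it is not a negligible case. For a fixed label $\bar w$ with $d(\bar w,\bar v_1)\ge 2$ there are in general infinitely many $w\in\pi^{-1}(\bar w)$ with $P_w\cap P_{v_1}\neq\emptyset$ (the pairwise disjoint translates of $P_w$ can meet the unbounded complex $P_{v_1}$ at infinitely many ``heights''), so finiteness there requires a genuine argument, and neither of your suggested organisations closes it. In the base case $d(v_1,v_2)=2$ of the proposed induction, the condition $lk(v_1)\cap lk(v_2)\subset St(w)$ does not cut the candidates down to a finite set (e.g.\ if $lk(v_1)\cap lk(v_2)$ is a single vertex $u$, every $w\in St(u)$ satisfies it), and in the inductive step the alternative ``$u\in St(w)$'' likewise leaves the infinite family $w\in St(u)$ untreated. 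There is also a softer gap in your ``generic'' case: you make $P_w$ cross the hull of $Y_1\cup Y_2$, but $Y_1$ is only a standard subcomplex and can be unbounded, so ``local finiteness pins $P_w$ to finitely many vertices'' does not follow as stated. That part is repairable: since $P_w$ separates $P_{v_1}$ from $P_{v_2}$, it must cross a fixed \emph{compact} geodesic segment realizing $d(P_{v_1},P_{v_2})$, and distinct parallel sets with the same label are disjoint (as in the proof of Corollary \ref{7.14}(4)), so only finitely many can meet a compact set. But no analogous fix is supplied for the overlap case.

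For comparison, the paper's proof avoids the case split entirely and never passes through separation of parallel sets. It fixes an edge path $\omega$ in $X(\Gamma)$ from $P_{v_1}$ to $P_{v_2}$ with consecutive vertices $x_0,\dots,x_n$, sets $K=\bigcup_i (F(\Gamma))_{x_i}$ (a finite subcomplex of $\mathcal{P}(\Gamma)$, since $\Gamma$ is finite), and shows that every $v\notin K$ fails to separate $v_1$ from $v_2$: an auxiliary observation guarantees that for $v\notin (F(\Gamma))_{x_i}$ the lift of $St(\pi(\Delta(l_i)))$ in $K_{i-1}\cap K_i$ contains a vertex $w_i\notin St(v)$, and Remark \ref{7.8} then lets one chain $v_1w_1|_v$, $w_iw_{i+1}|_v$, $w_nv_2|_v$ along the path. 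This single chaining argument subsumes both your generic and overlap cases; if you want to salvage your approach, the overlap case is where you would need an idea of comparable strength, and none is currently present.
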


\begin{proof}
We first prove a preparatory claim as follows: pick vertex $x\in X(\Gamma)$ and $v\in\mathcal{P}(\Gamma)\setminus (F(\Gamma))_{x}$, let $\bar{w}\in\Gamma$ and let $S_{\bar{w}}\subset(F(\Gamma))_{x}$ be the lift of $St(\bar{w})\subset F(\Gamma)$. Then $S_{\bar{w}}\setminus St(v)\neq\emptyset$. Now we prove this claim. Suppose the contrary is true. Put $\bar{v}=\pi(v)$. Then $St(\bar{w})\subset St(\bar{v})$, hence $\bar{v}\in St(\bar{w})$. Let $v'\in (F(\Gamma))_{x}$ be the lift of $\bar{v}$. Then $v'\in S_{\bar{w}}\subset St(v)$. Note that $d(v',v)=1$ is impossible since $\pi(v')=\pi(v)$, so $v'=v$, which is contradictory to $v\notin (F(\Gamma))_{x}$. Thus the claim is proved.

Now we prove the lemma.
Pick an edge path $\omega$ which connects a vertex in $P_{v_{1}}$ to a vertex in $P_{v_{2}}$. Let $\{x_{i}\}_{i=0}^{n}$ be consecutive vertices in $\omega$, and let $\ell_{i}$ be the standard geodesic containing $x_{i-1}$ and $x_{i}$. Let $K_{i}=(F(\Gamma))_{x_{i}}$ and $K=\cup_{i=0}^{n}K_{i}$. Then $v_{1}\in K_{0}$ and $v_{2}\in K_{n}$. It suffices to show that for any vertex $v\notin K$, $v_{1}$ and $v_{2}$ are in the same $v$-branch. To see this, note that $$\pi(K_{i-1}\cap K_{i})=St(\pi(\Delta(\ell_{i}))),$$ so for $1\le i\le n$, there exists vertex $w_{i}$ such that $$w_{i}\in(K_{i-1}\cap K_{i})\setminus St(v)$$ by the auxiliary result above. By Lemma \ref{7.8}, $w_{i}w_{i+1}|_{v}$ for $1\le i\le n-1$, $v_{1}w_{1}|_{v}$ and $w_{n}v_{2}|_{v}$, so $v_{1}v_{2}|_{v}$.
\end{proof}

\begin{lem}
\label{8.12}
Let $\bar{v}\in F(\Gamma)$ be a non-prime vertex and let $K\subset F(\Gamma)$ be a full subcomplex containing $\bar{v}$ such that for any vertex $\bar{u}\in F(\Gamma)\setminus lk(\bar{v})$, $K\setminus St(\bar{u})$ is inside a prime factor at $\bar{u}$. Suppose in addition that $K\setminus St(\bar{v})\neq\emptyset$ and let $\mathfrak{C}_{1}$ be the prime factor at $\bar{v}$ that contains $K\setminus St(\bar{v})$. Let $\mathfrak{C}_{2}$ be a different prime factor at $\bar{v}$. 
Pick vertex $x\in X(\Gamma)$ and let $K',v,\mathfrak{C}'_{1}$ and $\mathfrak{C}'_{2}$ be the lift of $K,\bar{v},\mathfrak{C}_{1}$ and $\mathfrak{C}_{2}$ in $(F(\Gamma))_{x}\subset\mathcal{P}(\Gamma)$ respectively. For $i=1,2$, let $S_{i}$ be the $v$-sub-tier that contains $\mathfrak{C}'_{i}$. Let $q:X(\Gamma)\to X(\Gamma)$ be a quasi-isometry such that $q_{\ast}$ permutes $S_{1}$ and $S_{2}$ and fixes every points in $\mathcal{P}(\Gamma)\setminus(S_{1}\cup S_{2})$ (Lemma \ref{premuting sub-tiers}).

There exists a quasi-isometry $h:X(\Gamma)\to X(\Gamma)$ such that
\begin{enumerate}
\item $h_{\ast}$ permutes $S_{1}$ and $S_{2}$ and fixes every vertex in $\mathcal{P}(\Gamma)\setminus(S_{1}\cup S_{2})$.
\item The projection map $\pi:\mathcal{P}(\Gamma)\to F(\Gamma)$ restricted on $K'\cup h_{\ast}(K')$ is injective.
\item Let $M=\pi(K'\cup h_{\ast}(K'))$. Let $\bar{h}_{\ast}:K\to\pi(h_{\ast}(K'))$ be the simplicial isomorphism induced by $h_{\ast}$. Pick vertex $\bar{u}\in F(\Gamma)$. Then
\begin{enumerate}
\item If $d(\bar{u},\bar{v})\ge 2$, then $M\setminus St(\bar{u})$ is contained in one prime factor at $\bar{u}$. 
\item If $d(\bar{u},\bar{v})=1$, then $\bar{r}\in K\setminus St(\bar{u})$ if and only if $\bar{h}_{\ast}(\bar{r})\in \bar{h}_{\ast}(K)\setminus St(\bar{u})$. In this case, $\bar{r}$ and $\bar{h}_{\ast}(\bar{r})$ are in the same prime factor at $\bar{u}$. 
\item The subcomplex $\bar{h}_{\ast}(K)$ is full.
\end{enumerate}
\end{enumerate}
\end{lem}

\begin{proof}
We assume $K$ is a full subcomplex. The general case follows from this special case by considering the full subcomplex spanned by $K$.

Let $L=K'\cup q_{\ast}(K')$. By Lemma \ref{8.11}, there are only finitely many vertices $w\in\mathcal{P}(\Gamma)$ such that $St(w)$ separates two vertices in $L$. Denote these vertices by $\{w_{i}\}_{i=1}^{n}$. We claim if $St(w)$ separates two vertices in $K'$ and $d(w,v)\ge 2$, then vertices of $q_{\ast}(K')\setminus St(w)$ are in the same $w$-branch. To see this, suppose $v_{1}|_{w}v_{2}$ for $v_{1},v_{2}\in K'$, then either $v_{1}|_{w}v$ or $v_{2}|_{w}v$, so $v_{1}w|_{v}$ or $v_{2}w|_{v}$ by Lemma \ref{8.8}. Then the claim follows from Lemma \ref{8.5}. Note that the claim is also true if we switch the role of $K'$ and $q_{\ast}(K')$.

By the above claim, we can reorder and divide $\{w_{i}\}_{i=1}^{n}$ into the following 4 groups. 
\begin{enumerate}
\item $w_{1}=v$.
\item $w_{i}\in lk(v)$ for $2\le i\le n_{1}$.
\item $St(w_{i})$ separates $v$ from some vertex in $K'$ for $n_{1}+1\le i\le n_{2}$.
\item $St(w_{i})$ separates $v$ from some vertex in $q_{\ast}(K')$ for $n_{2}+1\le i\le n$.
\end{enumerate}
Note that $q_{\ast}$ induces a bijection between $\{w_{i}\}_{i=n_{1}+1}^{n_{2}}$ and $\{w_{i}\}_{n_{2}+1}^{n}$. Let $$k=n_{2}-n_{1}=n-n_{2}.$$ We also assume $q_{\ast}(w_{i})=w_{i+k}$ for $n_{1}+1\le i\le n_{2}$. 

Let $D=\{w_{i}\}_{i=1}^{n_{2}}$. We claim $D\setminus St(w_{i})$ stays inside a $w_{i}$-branch for $i>n_{2}$. To see this, let $B$ be the $w_{i}$-branch that contains $v$ and let $w_{i_{0}}\in D\setminus St(w_{i})$. It is clear that $w_{i_{0}}\in B$ if $i_{0}\le n_{1}$. If $n_{1}<i_{0}\le n_{2}$, by above discussion, there exists $u\in K'$ such that $w_{i_{0}}u|_{v}$, similarly, there exists $u'\in q_{\ast}(K')$ such that $w_{i}u'|_{v}$. But $u|_{v}u'$, so $w_{i_{0}}|_{v}w_{i}$, and by Lemma \ref{8.8}, we have $w_{i_{0}}v|_{w_{i}}$ and $w_{i_{0}}\in B$. This discussion also implies $\{w_{i}\}_{i=n_{1}+1}^{n_{2}}\subset S_{1}$ and $\{w_{i}\}_{i=n_{2}+1}^{n}\subset S_{2}$.

Let $\{B_{\lambda}\}_{\lambda\in\Lambda}$ be the collection of $w_{i}$-branches that contain vertices of $K'$, here $i$ ranges over all values between $2$ and $n_{1}$. By Lemma \ref{8.6}, we can post-compose $q_{\ast}$ with a simplicial isomorphism $f_{\ast}$ to obtain a map $q'_{\ast}=f_{\ast}\circ q_{\ast}$ that satisfies the conclusions of Lemma \ref{8.6}. By Lemma \ref{8.10}, we can assume that for vertex $u\in K'\setminus St(w_{i})$ ($2\le i\le n_{1}$), $u$ and $q'_{\ast}(u)$ are in the same $w_{i}$-sub-tier. Let $$L'=K'\cup q'_{\ast}(K').$$ Note that if $B_{\lambda}\subset \mathcal{P}(\Gamma)\setminus St(v)$, then $B_{\lambda}\subset S_{1}$. So by Remark~\ref{rmk:support}, $f_{\ast}$ is a composition of elementary permutations that happen inside $S_{2}$, hence $f_{\ast}$ fixes every point in $S_{1}$, in particular, $f_{\ast}(K')=K'$ and $f_{\ast}(L)=L'$. Let $w'_{i}=f_{\ast}(w_{i})$. Then $\{w'_{i}\}_{i=1}^{n}$ is the collection of vertices such that $St(w)$ separates two vertices in $L'$. We divide $\{w'_{i}\}_{i=1}^{n}$ into 4 groups as before and this partition coincides with the partition induced by $f_{\ast}$. Since $w_{i}\in S_{1}$ for $n_1+1\le i\le n_{2}$, so $w'_{i}=w_{i}$ for $i\le n_{2}$. Moreover, the claim in the previous paragraph also holds for $\{w'_{i}\}_{i=1}^{n}$.

We claim for $n_{1}< i\le n_{2}$, vertices of $L'\setminus St(w'_{i})$ are in the same $w'_{i}$-sub-tier. Actually, by Lemma \ref{7.8}, $w'_{i}\in (F(\Gamma))_{x}$ for $n_{1}<i\le n_{2}$. Recall that $K\setminus St(\pi(w'_{i}))$ is inside a prime factor at $\pi(w'_{i})$, so vertices of $K'\setminus St(w'_{i})$ are inside a $w'_{i}$-sub-tier. We know $S_{2}$ contains vertices of $q'_{\ast}(K')\setminus St(v)$, but $w'_{i}\in S_{1}$, so vertices of $q'_{\ast}(K')\setminus St(v)$ and $v$ are in the same $w'_{i}$-branch by Lemma \ref{8.5}. The claim follows.

Let $E_{0}=\{w'_{i}\}_{i=1}^{n_{2}}$ and $E=\{w'_{i}\}_{1}^{n}$. Then $E_{0}$ is tight in $E$ by previous discussion. Let $E_{0}\subsetneq E_{1}\subsetneq E_{2}\subsetneq\cdots\subsetneq E_{k}=E$ be a filtration such that each $E_i$ is tight in $E$ (this can be arranged as in Lemma~\ref{lem:tight}). Up to reordering, we assume $w'_{i+n_{2}}=E_{i}\setminus E_{i-1}$ for $1\le i\le k$ and $q'_{\ast}(w'_{i})=w'_{i+k}$ for $n_{1}< i\le n_{2}$. Suppose there is an integer $m$ ($0\le m\le k$) such that
\begin{enumerate}
\item $q'_{\ast}$ permutes $S_{1}$ and $S_{2}$ and fixes every vertex in $\mathcal{P}(\Gamma)\setminus(S_{1}\cup S_{2})$.
\item For $2\le i\le n_{1}$ and vertex $u\in K'\setminus St(w'_{i})$, $u$ and $q'_{\ast}(u)$ are in the same $w'_{i}$-sub-tier.
\item For $n_{1}< i\le n_{2}+m$, vertices of $L'\setminus St(w'_{i})$ are contained in one $w'_{i}$-sub-tier.
\end{enumerate}
Such $m$ always exists, since (1) and (2) is always true and (3) is true when $m=0$. Our goal to is modify the map $q'_{\ast}$ such that $m=k$. Now we assume $m<k$ and argue by induction.

Let $a=n_{1}+m+1$ and $b=n_{2}+m+1$. Since vertices of $K'\setminus St(w'_{a})$ stay inside a $w'_{a}$-sub-tier and $w'_{b}=q'_{\ast}(w'_{a})$, there is a simplicial isomorphism $g_{\ast}$ which is a composition of finitely many elementary permutations of $w'_{b}$-branches such that 
\begin{enumerate}[label=(\alph*)]
	\item vertices in $g_{\ast}(q'_{\ast}(K'))\setminus St(w'_{b})$ are in the same $w'_{b}$-sub-tier; 
	\item let $B'$ be the $w'_{b}$-branch that contains $v$, then $g_{\ast}$ fixes every point in $B'$.
\end{enumerate}
 Lemma \ref{8.5} implies vertices of $K'\setminus St(w'_{b})$ are in $B'$, so $g_{\ast}$ fixes every point in $K'$. Moreover, the tightness of $E_{m}$ implies $g_{\ast}(w'_{i})=w'_{i}$ for $1\le i\le b$. By Lemma \ref{8.6} and Lemma~\ref{8.10}, we can assume in addition that $g_*$ satisfies
 \begin{enumerate}[resume,label=(\alph*)]
 	\item for any vertex $t\in St(w'_{b})\cap E_{m}$ and any $t$-branch $A$ with $A\cap L'\neq\emptyset$, $g_{\ast}(A)$ and $A$ are in the same $t$-sub-tier. 
 \end{enumerate}

Let $$L''=K'\cup g_{\ast}(q'_{\ast}(K')).$$ Then $g_{\ast}(L')=L''$. Let $w''_{i}=g_{\ast}(w'_{i})$. Then $\{w''_{i}\}_{i=1}^{n}$ is the collection of vertices such that $St(w''_{i})$ separates two vertices in $L''$. Moreover, $g_{\ast}(E_{0})\subsetneq g_{\ast}(E_{1})\subsetneq\cdots\subsetneq g_{\ast}(E_{k})=g_{\ast}(E)$ satisfies that each $g_*(E_i)$ is tight in $g_*(E)$. Note that $g_{\ast}(E_{i})=E_{i}$ for $i\le m+1$. We claim 
\begin{enumerate}[label=(\roman*)]
\item $g_{\ast}\circ q'_{\ast}$ permutes $S_{1}$ and $S_{2}$ and fixes every vertex in $\mathcal{P}(\Gamma)\setminus(S_{1}\cup S_{2})$.
\item For $2\le i\le n_{1}$ and vertex $u\in K'\setminus St(w''_{i})$, $u$ and $g_{\ast}(q'_{\ast}(u))$ are in the same $w''_{i}$-sub-tier.
\item For $n_{1}< i\le b$, vertices of $L''\setminus St(w''_{i})$ are contained in one $w''_{i}$-sub-tier.
\end{enumerate}

Part (i) follows from property (b) of $g_{\ast}$ and Lemma \ref{8.5}. 

We now verify (ii). Assume $2\le i\le n_{1}$, then $w''_{i}=w'_{i}$. If $d(w'_{i},w'_{b})\ge 2$. As $g_{\ast}$ fixes every point in $B'$, we know $g_{\ast}$ induces trivial permutation of $w'_{i}$-branches (indeed, for a $w'_i$-branch $D$, if $w'_b\notin D$, then $g_{\ast}(D)=D$ by Lemma \ref{8.5} and  $g_*|_{B'}=\mathrm{Id}$; if $w'_b\in D$, then $g_{\ast}(D)=D$ is still true since $g_{\ast}$ fixes $w'_b$ and $w'_i$). Now (2) follows from the inductive assumption. If $d(w'_{i},w'_{b})=1$, since $q'_{\ast}(u)\in L'$, $q'_{\ast}(u)$ and $g_{\ast}(q'_{\ast}(u))$ are in the same $w'_{i}$-sub-tier by property (c) of $g_{\ast}$. But $u$ and $q'_{\ast}(u)$ are in the same $w'_{i}$-sub-tier by induction, thus (ii) follows. 

It remains to verify (iii). Suppose $n_{1}< i\le b$. Then $w''_{i}=w'_{i}$ and $w''_{b}=w'_{b}$. Since $g_{\ast}(L')=L''$, the case $i<b$ and $d(w'_{i},w'_{b})=1$ follows from inductive assumption and property (c) of $g_{\ast}$. If $i<b$ and $d(w'_{i},w'_{b})=2$, then we argue as before to see that $g_{\ast}$ induces trivial permutation of $w'_{i}$-branches. Then (iii) follows from the inductive assumption. If $i=b$, by Lemma \ref{8.5}, vertices of $K'\setminus St(w'_{b})$ and $v$ are in the same $w'_{b}$-branch, then (iii) follows from property (a) of $g_{\ast}$. 

After applying the above induction process for finitely many times, we obtain a simplicial isomorphism $h_{\ast}:\mathcal{P}(\Gamma)\to\mathcal{P}(\Gamma)$ which satisfies (1) in Lemma \ref{8.12}. Moreover, let $$\tilde{L}=K'\cup h_{\ast}(K')$$ and let $\{\tilde{w}_{i}\}_{i=1}^{n}$ be the collection of vertices such that $St(\tilde{w}_{i})$ separates two vertices of $\tilde{L}$. Then
\begin{enumerate}
\item For $2\le i\le n_{1}$ and vertex $u\in K'\setminus St(\tilde{w}_{i})$, $u$ and $h_{\ast}(u)$ are in the same $\tilde{w}_{i}$-sub-tier.
\item For $n_{1}< i\le n$, vertices of $\tilde{L}\setminus St(\tilde{w}_{i})$ are contained in one $\tilde{w}_{i}$-sub-tier.
\end{enumerate}
Let $2\le i\le n_{1}$. Since $h_{\ast}(\tilde{w}_{i})=\tilde{w}_{i}$ and vertices of $K'\setminus St(\tilde{w}_{i})$ are contained in one $\tilde{w}_{i}$-tier, (1) implies actually vertices of $\tilde{L}\setminus St(\tilde{w}_{i})$ are contained in one $\tilde{w}_{i}$-tier. Thus $\tilde{L}$ satisfies the assumption of Lemma \ref{8.7} and (2) of Lemma \ref{8.12} follows. Note that $K'$ is a full subcomplex, so is $h_{\ast}(K')$, then $$\bar{h}_{\ast}(K)=\pi(h_{\ast}(K'))$$ is a full subcomplex. 

Pick vertex $x_{0}\in\cap_{w\in\tilde{L}}P_{w}$ ($x_{0}$ may not equal to $x$), then $\tilde{L}\subset (F(\Gamma))_{x_{0}}$. Let $$i_{x_0}:F(\Ga)\to(F(\Gamma))_{x_{0}}\subset \P(\Ga)$$ be the natural embedding. Then $i_{x_0}(M)=\tilde{L}$ and (3a) follows from property (2) of $h_{\ast}$ and Lemma~\ref{correspondence}. Now we look at (3b). For vertex $\bar{k}\in K$,
\begin{align*}
d(\bar{k},\bar{u})&=d(i_{x_0}(\bar{k}),i_{x_0}(\bar{u}))=d( h_{\ast}\circ i_{x_0}(\bar{k}),i_{x_0}(\bar{u})) \\
&=d(\pi\circ h_{\ast}\circ i_{x_0}(\bar{k}),\pi\circ i_{x_0}(\bar{u}))=d(\bar{h}_{\ast}(\bar{k}),\bar{u}).
\end{align*}
The first and the third equality follow from Lemma \ref{isometric embedding}, and the second equality holds since $h_{\ast}$ fixes $i_{x_0}(\bar{u})$. Thus the first part of (3b) is true. The rest of $(3b)$ follows from property (1) of $h_{\ast}$.
\end{proof}

\subsection{Recognizing special subgroups}
\label{subsec_wall space}
Let $L_1\subsetneq L_2\subsetneq \ldots\subsetneq L_s=X$ be the filtration of $X$ defined in Section~\ref{subsec:filtration}.
Let $\Gamma'$ be the 1-skeleton of $\Phi(L_{1})$. Note that $L_1$ is a single vertex and Lemma~\ref{lem:key} implies that the isomorphism type of $\Ga'$ does not depend on the choice of $L_1$. The main goal of this subsection is the following.
\begin{prop}
	\label{prop:special subgroup}
$G(\Ga')$ is a special subgroup of $G(\Ga)$ and $G(\Ga')$ is prime.
\end{prop}

The following is an immediate consequence of this proposition.

\begin{thm}
	\label{8.15}
	Let $G(\Gamma)$ be a right-angled Artin group of type II. Then there exists a prime right-angled Artin group $G(\Gamma')$ such that $G(\Gamma)$ is isomorphic to a special subgroup of finite index in $G(\Gamma')$.
\end{thm}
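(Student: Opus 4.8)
The plan is to manufacture a finite $CAT(0)$ cube complex out of the prime partitions on $F(\Gamma)$, recognize $F(\Gamma)$ as a ``collapse'' of that complex, and then re-embed the complex back into $X(\Gamma')$ for the right-angled Artin group $G(\Gamma')$ cut out by one of its vertices, so that $G(\Gamma)$ surfaces as a special subgroup via the machinery of Section~6.1 of \cite{raagqi1}. \textbf{Step 1: the pocset.} For each non-prime vertex $\bar v$ and each way of splitting its prime factors $\{\mathfrak C_j\}$ into two nonempty unions, one gets a $\bar v$-halfspace $St(\bar v)\cup(\bigcup_{j\le m}\mathfrak C_j)$ and its complement; I would collect all pairs $(\bar v,H)$ into $\mathcal H(\Gamma)$ and check that $(\bar v_1,H_1)\le(\bar v_2,H_2)$, defined by ``$d(\bar v_1,\bar v_2)\ne 1$ and $H_1\subset H_2$'', is a partial order interchanged by $H\mapsto H^{c}$. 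The crux here is that when $d(\bar v_1,\bar v_2)\ge 2$ exactly one of ``$\bar v_1\in H_2$ and $\bar v_2\in H_1$'', ``$H_2\subsetneq H_1$'', ``$H_1\subsetneq H_2$'' holds, which one reads off from which component of $F(\Gamma)\setminus St(\bar v_i)$ contains $\bar v_{3-i}$, using Lemma~\ref{8.5}. Feeding the finite pocset $\mathcal H(\Gamma)$ into Theorem~\ref{2.5} then produces a finite $CAT(0)$ cube complex $X$ with a pocset isomorphism $\Phi$ onto $\mathcal H(\Gamma)$, hence a bijection between hyperplanes of $X$ and walls of $F(\Gamma)$.

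\textbf{Step 2: $X$ versus $F(\Gamma)$.} For a vertex $x\in X$ I set $\Phi(x)$ equal to the intersection of the halfspaces in the ultrafilter $U(x)$. Lemma~\ref{8.13} gives $\bigcup_x\Phi(x)=F(\Gamma)$, minimality gives $\Phi(x)\ne\emptyset$, and compatibility of halfspaces gives that $\Phi(x)\setminus St(\bar u)$ lies in one prime factor at $\bar u$ for every $\bar u$. I would then filter $X$ by convex subcomplexes $L_1\subsetneq\cdots\subsetneq L_s=X$, each $L_{i+1}=L_i\cup M_i$ with $M_i\cong(L_i\cap N_i)\times[0,1]$ straddling one hyperplane $h_i$, and analyze how $\Phi(L_{i+1})$ is assembled from $\Phi(L_i)$: writing $(\bar v,H_i,H_i^{c})=\Phi(h_i)$ and choosing prime factors $\mathfrak C_1\subset H_i$, $\mathfrak C_2\subset H_i^{c}$ with $\Phi(M_{ij})\setminus St(\bar v)\subset\mathfrak C_j$, Lemma~\ref{8.12} applied to $K=M_{i1}$ supplies a label-preserving isomorphism $\bar h_*\colon\Phi(M_{i1})\to\Phi(M_{i2})$ fixing $St(\bar v)\cap\Phi(M_{i1})$ pointwise, so $\Phi(L_{i+1})$ is $\Phi(L_i)$ with $\Phi(M_{i1})$ glued on along a vertex star. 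Defining $\Gamma'$ to be the $1$-skeleton of $\Phi(L_1)$ (a single vertex), this filtration shows the isomorphism type of $\Gamma'$ is independent of the choices.

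\textbf{Step 3: embedding $X$ and extracting the subgroup.} I would inductively build simplicial embeddings $f_i\colon\Phi(L_i)\to\mathcal{P}(\Gamma')$ and cubical embeddings $g_i\colon L_i\to X(\Gamma')$ with $g_i(L_i)$ compact convex, $f_i(\Phi(x))=\widehat{g_i(x)}$, and $g_i$ carrying $\bar v$-edges to $f_i(\bar v)$-edges. The inductive step matches the canonical splitting $X_{\bar v}=h_{\bar v}\times[0,d_{\bar v}-1]$ of the span of the $\bar v'$-walls with $d(\bar v,\bar v')\le 1$ against the product $P_v=l_v\times h_v$ in $X(\Gamma')$, extends $g_i$ by one $f_i(\bar v)$-edge across $h_i$, and glues $\Phi(M_{i2})$ in via $\alpha_*\circ f_i\circ\bar h_*^{-1}$ with $\alpha$ the translation of $X(\Gamma')$ along $l_v$; compatibility then reduces to the Step~2 identity $\bar h_*^{-1}(\Phi(x))=\Phi(p(x))$ under parallelism. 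In the limit this yields $f\colon F(\Gamma)\to\mathcal{P}(\Gamma')$ with image the full subcomplex $\hat{E}\cong F(\Gamma)$, where $E:=g(X)$ is compact convex in $X(\Gamma')$. By the special-subgroup theorem (Theorem~\ref{special subgroup theorem}, Section~6.1 of \cite{raagqi1}), $G(\Gamma)$ is isomorphic to the special subgroup of $G(\Gamma')$ associated with $E$, of index $|v(E)|<\infty$; and since $G(\Gamma')$ is then quasi-isometric to $G(\Gamma)$, Corollary~\ref{7.22} shows $\Gamma'$ is again of type II.

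\textbf{Step 4: primeness of $G(\Gamma')$ --- the main obstacle.} It remains to rule out a non-prime vertex $\bar u\in F(\Gamma')$, and this is the part I expect to require the most care, since it couples the retraction with the ``stretch factor'' estimates. I would apply the retraction $r\colon X(\Gamma')\to X(\Gamma)$ of Theorem~\ref{retraction map} to $E$, with induced simplicial isomorphism $r_*\colon\mathcal{P}(\Gamma')\to\mathcal{P}(\Gamma)$; note $r(E)$ is a single vertex and $r_*(\hat{E})=\widehat{r(E)}$. Given a non-prime $\bar u$, pick $x\in E$ and a lift $v\in\hat{x}$ of $\bar u$, and set $\bar v=f^{-1}(v)$. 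Lemma~\ref{8.14} forces the stretch factor of $r_*$ at $v$ to be at least $d_{\bar v}$, whereas Lemma~\ref{8.4} bounds it above by the number of prime factors at $\pi(r_*(v))$; but $F(\Gamma)\xrightarrow{f}\mathcal{P}(\Gamma')\xrightarrow{r_*}\mathcal{P}(\Gamma)\xrightarrow{\pi}F(\Gamma)$ is a simplicial isomorphism, so by Remark~\ref{equal number of prime factors} that number is again $d_{\bar v}$ --- a contradiction. Hence every vertex of $F(\Gamma')$ is prime, $G(\Gamma')$ is prime (Definition~\ref{8.3}), and such a $\Gamma'$ is moreover unique by Theorem~\ref{8.9}. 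The genuine difficulty throughout is not any one step but controlling the cascade of elementary permutations and sub-tier shuffles in Lemma~\ref{8.12} so that the gluing maps $\bar h_*$ stay label-preserving and mutually compatible across the entire filtration; everything else is assembly.
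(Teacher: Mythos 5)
Your proposal follows essentially the same approach as the paper's proof of Theorem \ref{8.15}: the same pocset $\mathcal{H}(\Gamma)$ from prime partitions and Theorem \ref{2.5}, the same filtration $L_1\subsetneq\cdots\subsetneq L_s=X$ with the gluing-along-a-star analysis via Lemma \ref{8.12}, the same inductive construction of $f_i$ and $g_i$ matching $X_{\bar v}=h_{\bar v}\times[0,d_{\bar v}-1]$ to $P_v$, and the same retraction-plus-stretch-factor argument (Lemmas \ref{8.14}, \ref{8.4}, Remark \ref{equal number of prime factors}) to rule out non-prime vertices in $F(\Gamma')$. You also correctly single out the control of sub-tier shuffles in Lemma \ref{8.12} as the technical crux; no gaps.
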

Now we prove Proposition~\ref{prop:special subgroup}.
\begin{proof}
For convex subcomplex $E\subset X(\Gamma')$, let $\{\ell_{\lambda}\}_{\lambda\in\Lambda}$ be the collection of standard geodesics in $X(\Gamma')$ such that $\ell_{\lambda}\cap E\neq\emptyset$. Denote the full subcomplex in $\mathcal{P}(\Gamma')$ spanned by $\{\Delta(\ell_{\lambda})\}_{\lambda\in\Lambda}$ by $\hat{E}$. An edge $e\subset X(\Gamma')$ is called a \textit{$v$-edge} for $v\in\mathcal{P}(\Gamma)$ if $\Delta(\ell_{e})=v$ ($\ell_{e}$ is the standard geodesic containing $e$). An edge $e\subset X$ is called a \textit{$\bar{v}$-edge} for $\bar{v}\in F(\Gamma)$ if the ultrafilters corresponding to two vertices of $e$ differ by a $\bar{v}$-halfspace. 

We are going to define a sequence of simplicial embeddings $f_{i}:\Phi(L_{i})\to \mathcal{P}(\Gamma')$ and cubical embeddings $g_{i}:L_{i}\to X(\Gamma')$ with $$f_{i+1}|_{\Phi(L_{i})}=f_{i}\ \mathrm{and}\ g_{i+1}|_{L_{i}}=g_{i}$$ which satisfy the following compatibility conditions:
\begin{enumerate}
\item $g_{i}(L_{i})$ is a compact convex subcomplex of $X(\Gamma')$.
\item For any vertex $x\in L_{i}$, $f_{i}(\Phi(x))$=$\widehat{g_{i}(x)}$. In particular, $f_{i}(\Phi(L_{i}))=\widehat{g_{i}(L_{i})}$.
\item $g_{i}$ sends a $\bar{v}$-edge to a $f_{i}(\bar{v})$-edge.
\end{enumerate}
Note that the existence of the embedding when $i=s$ will imply $G(\Gamma')$ is a special subgroup of $G(\Gamma)$.

We need several observations before the construction of $g_{i}$ and $f_{i}$. Pick vertex $v\in\mathcal{P}(\Gamma')$, then the vertices of $P_{v}$ are exactly those vertices $x\in X(\Gamma')$ with $v\in\hat{x}$. Let $\ell_{v}$ be a standard geodesic such that $\Delta(\ell_{v})=v$ and let $h_{v}$ be a hyperplane dual to $\ell_{v}$. We identify $P_{v}$ with $\ell_{v}\times h_{v}$. Then $e\subset X(\Gamma')$ is a $v$-edge if and only if $e\in P_{v}$ and $e$ has trivial projection to the $h_{v}$-factor. Actually, these statements have their analogues in $X$.

%Pick vertex $x,x'\in X$ and let $\{\bar{v},H\}\in U(x)$ be a minimal halfspace. Then $\bar{v}\subset\Phi(x)$. We claim $\bar{v}\in\Phi(x')$ if and only if for any $(\bar{v}',H',H'^{c})\in\mathcal{W}(\Gamma)$ with $(\bar{v}',H')\in U(x)$ and $(\bar{v}',H'^{c})\in U(x')$, we have $d(\bar{v},\bar{v}')\le 1$. The only if direction is clear. The other direction is true because if $d(\bar{v},\bar{v}')\le 1$, then $\bar{v}\in St(\bar{v}')\subset H'\cap H'^{c}$. 

Let $\mathcal{W}_{\bar{v}}(\Gamma)$ be the collection of $\bar{v}'$-walls with $d(\bar{v},\bar{v}')\le 1$ and let $\mathcal{H}_{\bar{v}}(\Gamma)$ be corresponding collection of halfspaces. Denote the corresponding $CAT(0)$ cube complex by $X_{\bar{v}}$. Let $\Sigma\subset\mathcal{H}(\Gamma)$ be the subset made of elements $(\bar{w},R)$ such that $d(\bar{w},\bar{v})\ge 2$ and $\bar{v}\in R$. Pick an ultrafilter $U_{\bar{v}}$ of $\mathcal{H}_{\bar{v}}(\Gamma)$, it is easy to see every pair of halfspaces in $\Sigma\cup U_{\bar{v}}$ are compatible, thus $\Sigma\cup U_{\bar{v}}$ is an ultrafilter of $\mathcal{H}(\Gamma)$ and this induces a cubical embedding $$i_{\bar{v}}:X_{\bar{v}}\to X.$$ Note that $i_{\bar{v}}(X_{\bar{v}})$ is convex in $X$ since two walls in $\mathcal{W}_{\bar{v}}(\Gamma)$ are transverse in $\mathcal{W}_{\bar{v}}(\Gamma)$ if and only if they are transverse in $\mathcal{W}(\Gamma)$. Since every $\bar{v}$-wall is transverse to all $\bar{w}$-walls with $d(\bar{w},\bar{v})=1$, $X_{\bar{v}}$ admits a canonical splitting $$X_{\bar{v}}=h_{\bar{v}}\times[0,d_{\bar{v}}-1],$$ here $h_{\bar{v}}$ is isomorphic to the hyperplane in $X$ corresponding to a $\bar{v}$-wall, and $d_{\bar{v}}$ is the number of prime factors at $\bar{v}$. We will view $X_{\bar{v}}$ as a convex subcomplex of $X$. Note that vertices of $X_{\bar{v}}$ are exactly those vertices $x$ with $\bar{v}\subset\Phi(x)$. Moreover, $e\subset X$ is a $\bar{v}$-edge if and only if $e\in X_{\bar{v}}$ and $e$ has trivial projection to the $h_{\bar{v}}$-factor. 

Suppose we have already constructed $g_{i}$ and $f_{i}$. Let $e_{i},h_{i},N_i$ and $(\bar{v},H_{i},H^{c}_{i})=\Phi(h_{i})$ be as in step 2 and let $v=f_{i}(\bar{v})$. Pick vertex $x\in L_{i}$. Then $$x\in X_{\bar{v}}\Leftrightarrow\bar{v}\in\Phi(x)\Leftrightarrow v\in f_{i}(\Phi(x))\Leftrightarrow v\in \widehat{g_{i}(x)}\Leftrightarrow g_{i}(x)\in P_{v}.$$ Thus $g_{i}$ induced an isomorphism between $X_{\bar{v}}\cap L_{i}$ and $P_{v}\cap g_{i}(L_{i})$. Let $$X_{\bar{v}}\cap L_{i}=\bar{K}_{i}\times \bar{I}_{i}$$ and $$P_{v}\cap g_{i}(L_{i})=K_{i}\times I_{i}$$ be the splitting induced from the splitting of $X_{\bar{v}}$ and $P_{v}$ as above ($\bar{K}_{i}\subset h_{\bar{v}}$, $K_{i}\subset h_{v}$, $\bar{I}_{i}\subset [0,d_{\bar{v}}-1]$ and $I_{i}\subset \ell_{v}$). By (3), $g_{i}|_{X_{\bar{v}}\cap L_{i}}=g_{i1}\times g_{i2}$ with $g_{i1}:\bar{K}_{i}\to K_{i}$ and $g_{i2}:\bar{I}_{i}\to I_{i}$. Suppose $\bar{I}_{i}=[0,a]$, we identify $I_{i}$ with $[0,a]$ via $g_{i2}$ and consistently identify $\ell_{v}$ with $\Bbb R$.

Since $e_{i}$ is a $\bar{v}$-edge, $e_{i}\subset X_{\bar{v}}$. We assume without loss of generality that $$x_{i}=e_{i}\cap L_{i}\in \bar{K}_{i}\times\{a\}.$$ Then $$M_{i1}=L_{i}\cap N_{i}=\bar{K}_{i}\times\{a\}$$ and $$N_{i}=\bar{K}_{i}\times[a,a+1].$$ Similarly, $g_{i}(M_{i1})=K_{i}\times\{a\}$. Note that $g_{i1}$ induces an isomorphism from $\bar{K}_{i}\times[a,a+1]$ to $K_{i}\times[a,a+1]$, this defines $$g_{i+1}:L_{i+1}=L_{i}\cup N_{i}\to g_{i}(L_{i})\cup (K_{i}\times[a,a+1]).$$ Moreover, $h_{v}\times[a,a+1]$, which is the carrier of the hyperplane $h_{v}\times\{a+1/2\}$, satisfies $$(h_{v}\times[a,a+1])\cap g_{i}(L_{i})=K_{i}\times\{a\},$$ so $$g_{i}(L_{i})\cup (K_{i}\times[a,a+1])$$ is a compact convex subcomplex in $X(\Gamma')$.

We consider the left action $G(\Ga')\acts X(\Ga')$ and let $\alpha\in G(\Ga')$ be the translation along $\ell_{v}$ such that $$\alpha(K_{i}\times\{a\})=K_{i}\times\{a+1\}.$$ Then $\alpha$ induces an isomorphism $$\alpha_{\ast}:\widehat{K_{i}\times\{a\}}\to\widehat{K_{i}\times\{a+1\}}.$$ It is clear that $\alpha_{\ast}(\hat{x})=\widehat{\alpha(x)}$ for vertex $x\in K_{i}\times\{a\}$ and $\alpha$ sends $v$-edge to $\alpha_{\ast}(v)$-edge. We define $f_{i+1}$ by
\begin{center}
$f_{i+1}(z)$=$\begin{cases}
f_{i}(z) & \text{if $z\in \Phi(L_{i})$}\\
(\alpha_{\ast}\circ f_{i}\circ(\bar{h}_{\ast})^{-1})(z) & \text{if $z\in \Phi(M_{i2})$}
\end{cases}$
\end{center}
Note that $$f_{i}(z)=(\alpha_{\ast}\circ f_{i}\circ(\bar{h}_{\ast})^{-1})(z)$$ for $$z\in\Phi(L_{i})\cap\Phi(M_{i2})=St(\bar{v},\Phi(M_{i1}))$$ (cf. Lemma~\ref{lem:key} (2)), so $f_{i+1}$ is well-defined. Now we show $f_{i+1}$ and $g_{i+1}$ satisfy the compatibility conditions (2) and (3). Since $$g_{i+1}|_{M_{i2}}=\alpha\circ g_{i}\circ p^{-1},$$ (here $p^{-1}:M_{i2}\to M_{i1}$ is the parallelism map), it suffices to check $p^{-1}$ and $(\bar{h}_{\ast})^{-1}$ satisfy the corresponding compatibility conditions. However, $$(\bar{h}_{\ast})^{-1}(\Phi(x))=\Phi(p^{-1}(x))$$ for vertex $x\in M_{i2}$ by Lemma~\ref{lem:key} (1). Let $e\subset M_{i2}$ be a $\bar{w}$-edge. Then $p^{-1}(e)$ is also a $\bar{w}$-edge. We also deduce that $d(\bar{w},\bar{v})=1$, hence $(\bar{h}_{\ast})^{-1}(\bar{w})=\bar{w}$. It follows that $p^{-1}$ sends $\bar{w}$-edge to $(\bar{h}_{\ast})^{-1}(\bar{w})$-edge.

Let $f:F(\Gamma)\to\mathcal{P}(\Gamma')$ and $g:X\to X(\Gamma')$ be the simplicial embedding and the cubical embedding obtained by the above induction. Then $E=g(X)$ is a compact convex subcomplex of $X(\Gamma')$ and $$\hat{E}=f(F(\Gamma))\cong F(\Gamma).$$ Thus $G(\Ga)$ is isomorphic to a special subgroup of $G(\Ga')$. In particular, $G(\Gamma')$ and $G(\Gamma)$ are quasi-isometric, so $\Gamma'$ is also of type II by Corollary \ref{7.22}. Next we show $G(\Gamma')$ is prime.

%\begin{lem}
%\label{8.14}
%Pick vertex $x\in E$ and $v\in\hat{x}$. Let $\bar{v}=f^{-1}(v)$. Then 
%\begin{equation*}
%\frac{\textmd{number\ of\ components\ in\ }\hat{x}\setminus St(v) }{\textmd{number\ of\ components\ in\ }\hat{E}\setminus St(v) }\le \frac{1}{d_{\bar{v}}}.
%\end{equation*}
%Recall that $d_{\bar{v}}$ is the number of prime factors at $\bar{v}$.
%\end{lem}
%
%\begin{proof}
%We use $c(K)$ to denote the number of components in $K$. By consider the $g$-image of $X_{\bar{v}}\cong h_{\bar{v}}\times [0,d_{\bar{v}}-1]$, we deduce that there is a segment $I_v\subset E$ of length $=d_{\bar{v}}-1$ such that it is made of $v$-edges and it contains $x$. It follows from Corollary \ref{7.14} that two vertices of $\hat{I_v}$ are in the same $v$-branch if and only if they are in the same component of $\hat{y}\setminus St(v)$ for some $y\in I_v$. Thus $\hat{I_v}$ intersects at least $d_{\bar{v}} \cdot c(\hat{x}\setminus St(v))$ many $v$-branches. The same is true for $\hat{E}$ and the lemma follows.
%\end{proof}

Take a vertex in $\bar{u}\in F(\Gamma')$. Let $x\in E$ be a vertex and let $v\in\hat{x}$ be the lift of $\bar{u}$.  Put $\bar{v}=f^{-1}(v)$.
Let $r:X(\Ga')\to X(\Ga)$ be the map in Theorem \ref{retraction map} and let $r_{\ast}:\mathcal{P}(\Gamma')\to\mathcal{P}(\Gamma)$ be the induced simplicial isomorphism.  We claim the stretch factor of $r_*$ at $v$ is $d_{\bar v}$. Note that this claim and Lemma~\ref{8.4} imply $\bar u$ is prime.

It remains to prove the claim. Lemma \ref{8.4} implies that this stretch factor is upper bounded by the number of prime factors at $\pi\circ r_{\ast}(v)$, which is equal to $d_{\bar{v}}$ by Remark \ref{equal number of prime factors} (note that the composition $$F(\Ga)\stackrel{f}{\to}\P(\Ga')\stackrel{r_{\ast}}{\to}\P(\Ga)\stackrel{\pi}{\to}F(\Ga)$$ is a simplicial isomorphism). Now we produce the lower bound.

By considering the $g$-image of $X_{\bar{v}}\cong h_{\bar{v}}\times [0,d_{\bar{v}}-1]$, we deduce that there is a segment $I_v\subset E$ of length $=d_{\bar{v}}-1$ such that it is made of $v$-edges and it contains $x$. Let $\{x_1,\ldots,x_{d_{\bar v}}\}$ be vertices of $I_v$. Then each component $C$ of $F(\Gamma')\setminus St(\bar u)$ gives rise to a component $C_i$ of $\hat x_i\setminus St(v)$ via $$F(\Gamma')\setminus St(\bar u)\cong \hat x\setminus St(v).$$ Let $B_{C_i}$ be the $v$-branch containing $C_i$.
It follows from Corollary \ref{7.14} that $B_{C_i}\neq B_{C_j}$ unless $i=j$, moreover, for a component $C'$ of $F(\Gamma')\setminus St(\bar u)$ with $C'\neq C$, we have $$B_{C_i}\neq B_{C'_j}$$ for $1\le i,j\le d_{\bar v}$. Let $\Pi$ be the map defined before Definition~\ref{def:6.4}. Then $$\Pi(B_{C_i})=C$$ for $1\le i\le d_{\bar v}$. However, as each $B_{C_i}$ contains a component of $$\hat I_v\setminus St(v)\subset \hat E\setminus St(v),$$ we know that $$\Pi(r_*(B_{C_i}))\neq \Pi(r_*(B_{C_j}))$$ for $i\neq j$ and $$\Pi(r_*(B_{C_i}))\neq \Pi(r_*(B_{C'_j}))$$ when $C'\neq C$ (note that $r(E)$ is a vertex in $X(\Ga)$, and $r_{\ast}(\hat{E})=\widehat{r(E)}$ by (1) and (2) of Theorem \ref{retraction map}). This means stretch factor of $r_{\ast}$ at $v$ (Lemma \ref{8.4}) is $\ge d_{\bar{v}}$. Thus the claim is proved.
\end{proof}

%Thus $\hat{I_v}$ intersects at least $d_{\bar{v}} \cdot c(\hat{x}\setminus St(v))$ many $v$-branches. The same is true for $\hat{E}$ and the lemma follows.
%two vertices of $\hat{I_v}$ are in the same $v$-branch if and only if they are in the same component of $\hat{y}\setminus St(v)$ for some $y\in I_v$. Take a connected component of . Then it gives a component of $\hat x\setminus St(v)$ via $F(\Gamma')\setminus St(\bar u)\cong \hat x\setminus St(v)$. Let $B$ be the $v$-branch containing $\hat x\setminus St(v)$. 

\begin{remark}
\label{8.16}
Suppose $\Ga$ is of type II. For each vertex $v\in\mathcal{P}(\Gamma)$, we pick an identification $f_v$ between the collection of $v$-sub-tiers and a copy of integers $\Z_v$. A $v$-halfspace is a subcomplex of $\P(\Ga)$ of form $St(v)\cup f^{-1}_{v}([a,\infty))$ or $St(v)\cup f^{-1}_{v}((-\infty,a])$, where $a\in\Z_v$. We can put a pocset structure on the collection of all these halfspaces in a similar way as before. Then the $CAT(0)$ cube complex associated with this pocset is isomorphic to $X(\Ga')$.

We will not use this fact, so we will not give the detailed argument. However, it is instructive to think about the case when $\out(G(\Gamma))$ is finite. Then the cube complex associated with the above pocset is actually isomorphic to $X(\Ga)$. So the quasi-isometry rigidity/flexibility of $G(\Gamma)$ is reflected in how hard it is to reconstruct $X(\Ga)$ from $\P(\Ga)$ via cubulation. 
\end{remark}

It follows from Corollary \ref{7.22}, Theorem \ref{8.15} and Theorem \ref{8.9} that

\begin{thm}
\label{8.17}
If $G(\Gamma_{1})$ is a right-angled Artin group of type II, then $G(\Gamma_{2})$ is quasi-isometric to $G(\Gamma_{1})$ if and only if $G(\Gamma_{2})$ is commensurable with $G(\Gamma_{1})$. Moreover, there exists a unique prime right-angled Artin group $G(\Gamma)$ such that $G(\Gamma_{1})$ and $G(\Gamma_{2})$ are isomorphic to special subgroups of finite index in $G(\Gamma)$.
\end{thm}

Now we discuss several related examples.

\begin{example}
\label{8.18}
Let $\Gamma_{1}$ be a 5-gon and let $\Gamma_{2}$ be a 6-gon. We glue $\Gamma_{1}$ and $\Gamma_{2}$ along a vertex star to form $\Gamma$ and claim $\Gamma$ is prime. Let $\bar{v}$ be the only vertex of $\Gamma$ such that $St(\bar{v})$ separates $\Gamma$ and for $i=1,2$ and let $C_{i}=\Gamma_{i}\setminus St(\bar{v})$. Pick $v\in\mathcal{P}(\Gamma)$ such that $\pi(v)=\bar{v}$ and let $B_{i}$ be a $v$-branch such that $\Pi(B_{i})=C_{i}$. It suffices to show $B_{1}$ and $B_{2}$ are not QII.

Suppose the contrary is true and let $q$ be the quasi-isometry such that $q_{\ast}(B_{1})=B_{2}$. By Corollary \ref{7.14}, there exist vertices $x_{1},x_{2}\in P_{v}$ such that $(\Gamma_{i})_{x_{i}}\setminus St(v)\subset B_{i}$ for $i=1,2$ (here $(\Gamma_{i})_{x_{i}}\subset(\Gamma)_{x_{i}}$ is the lift of $\Gamma_{i}$). Note that for any $v\in \mathcal{P}(\Gamma)$, $(\Gamma_1)_{x_1}\setminus St(v)$ is contained in a single $v$-branch - this follows from Lemma \ref{7.8}. Thus for any $v\in \mathcal{P}(\Gamma)$, $q_{\ast}((\Gamma_{1})_{x_{1}})\setminus St(v)$ is contained in a single $v$-branch. Then Lemma \ref{8.7} implies that $\cap P_{v_i}\neq\emptyset$ with $v_i$ ranges over vertices in $q_{\ast}((\Gamma_{1})_{x_{1}})$. As $v\in q_{\ast}((\Gamma_{1})_{x_{1}})$, we know $q_{\ast}((\Gamma_{1})_{x_{1}})=(\Gamma_{1})_{x_{3}}$ for some vertex $x_{3}\in P_{v}$. Take a vertex $u$ in $(\Gamma_1)_{x_1}\setminus St(v)$. Then $P_{q_*(u)}\cap P_v\neq\emptyset$  and $\pi(q_*(u))\subset \pi((\Gamma_{1})_{x_{3}})=\Gamma_1$. On the other hand, $(\Gamma_{2})_{x_{2}}\setminus St(v)\subset B_{2}$.
Then Lemma \ref{7.9} implies that $q_{\ast}(B_{1})\neq B_{2}$, which is a contradiction.

Theorem \ref{8.17} implies that any $G(\Gamma')$ quasi-isometric $G(\Gamma)$ is isomorphic to a finite index subgroup of $G(\Gamma)$. Note that by the same proof, this statement is true in the case when $\Gamma$ is obtained by gluing two distinct graphs $\Gamma_{1}$ and $\Gamma_{2}$ ($\out(G(\Gamma_{i}))$ is finite for $i=1,2$) along an isomorphic vertex star.
\end{example}

\begin{example}
\label{8.19}
Let $\Gamma$ be a pentagon
and a hexagon glued together over the star of a vertex. Let $\Gamma'$ be a pentagon
and two hexagons glued together over the star of a vertex. See the picture below. We claim $\mathcal{P}(\Gamma)$ and $\mathcal{P}(\Gamma')$ are isomorphic, but $G(\Gamma)$ and $G(\Gamma')$ are not quasi-isometric. 
\begin{center}
\includegraphics[scale=0.6]{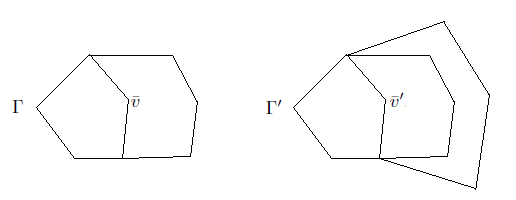}
\end{center}

First we show $G(\Gamma)$ and $G(\Gamma')$ are not quasi-isometric. Suppose the contrary is true and let $q:G(\Gamma)\to G(\Gamma')$ be a quasi-isometry. Let $\pi:\mathcal{P}(\Gamma)\to F(\Gamma)$ and $\pi':\mathcal{P}(\Gamma')\to F(\Gamma')$ be the canonical projection map defined after Lemma~\ref{coarse contain of standard subcomplexes}.

Pick vertex $v\in\mathcal{P}(\Gamma)$ such that $\pi(v)=\bar{v}$ and let $v'=q_{\ast}(v)$. Then $\pi'(v')=\bar{v}'$. This follows from the fact that $\pi(v)=\bar{v}$ (or $\pi'(v')=\bar{v}'$) if and only if there are at least two QII classes among all the $v$-branches (or $v'$-branches). This fact follows from the discussion in Example \ref{8.18} and Corollary \ref{7.14}.

Now we compute the stretch factor of $q_*$ at $v$. Corollary \ref{7.14} implies that a $v$-tier contains two $v$-branches $B_1$ and $B_2$ such that $\Pi(B_1)$ and $\Pi(B_2)$ gives the two connected components of $\Gamma\setminus St(\bar v)$. Suppose $\Pi(B_1)$ is contained in the pentagon subgraph of $\Gamma$, and $\Pi(B_2)$ is contained in the hexagon subgraph of $\Gamma$.
Corollary \ref{7.14} also implies that a $v'$-tier contains three $v'$-branches $B'_1,B'_2$ and $B'_3$ such that $\Pi(B'_1),\Pi(B'_2)$ and $\Pi(B'_3)$ gives the three connected components of $\Gamma'\setminus St(\bar v')$. Suppose $\Pi(B'_2)$ and $\Pi(B'_3)$  are inside the two hexagons in $\Gamma'$. Then using the symmetry of $\Gamma'$ that exchanges the two hexagons, we know $B'_2$ and $B'_3$ are QII. Thus the two hexagons give components of $\Gamma'\setminus St(\bar v')$ that are QII.

We claim $\Pi(q_*(B_1))=\Pi(B'_1)$. Let $x_1$ and $(\Gamma_1)_{x_1}$ be as in Example~\ref{8.18}. We argue as in the second paragraph of Example~\ref{8.18} to see that there exists $x'_1\in P_{v'}$ such that $q_*((\Gamma_1)_{x_1})=(\Gamma'_1)_{x'_1}$ where $\Gamma'_1$ is the pentagon subgraph of $\Gamma'$ and  $(\Gamma'_1)_{x'_1}$ are defined in the same way as $(\Gamma_1)_{x_1}$. Thus $\Pi(q_*(B_1))=\Pi(B'_1)$. Similarly we can show $\Pi(q_*(B_2))=\Pi(B'_2)$ or $\Pi(B'_3)$, $\Pi(q^{-1}_*(B'_1))=\Pi(B_1)$, and $\Pi(q^{-1}_*(B'_2))=\Pi(q^{-1}_*(B'_3))=\Pi(B_2)$. Thus if we use $B_1$ to compute the stretch factor as in Definition~\ref{def:stretch factor}, we will conclude that the factor is $1$. If we use $B_2$ to compute the stretch factor as in Definition~\ref{def:stretch factor}, we obtain the factor is $2$. This contradicts Lemma~\ref{8.4}. Thus the quasi-isometry $q$ does not exist.

It remains to show $\mathcal{P}(\Gamma)$ and $\mathcal{P}(\Gamma')$ are isomorphic.  Let $f_{1}$ and $f_{2}$ be two simplicial embeddings from $\Gamma$ to $\Gamma'$ such that (1) they cover different 6-gons in $\Gamma'$; (2) $f_{1}=f_{2}$ when restricted to the 5-gon in $\Gamma$. We also use $f_{i}$ to denote the group monomorphism induced by $f_{i}$. Let $\omega\in G(\Gamma)$ be a geodesic word and write $\omega=\omega_{1}a_{1}\cdots\omega_{n}a_{n}\omega_{n+1}$, here $a_{i}$ is a product of powers of elements in $St(\bar{v})$ for all $i$, but $\omega_{i}$ does not contain any powers of elements in $St(\bar{v})$ ($\omega_{1}$ or $\omega_{n+1}$ may be trivial). By permuting letters in $a_{i}$, we have $a_{i}=\bar{v}^{k_{i}}b_{i}$, where $b_{i}$ does not contain any power of $\bar{v}$. 

Define a map $h:G(\Gamma)\to G(\Gamma')$ by mapping $\omega$ to $\omega'_{1}a'_{1}\cdots\omega'_{n}a'_{n}\omega'_{n+1}$ such that (1) $\omega'_{i}=f_{1}(\omega_{i})$ if and only if $k_{i-1}/2$ is an integer, otherwise $\omega'_{i}=f_{2}(\omega_{i})$; (2) $a'_{i}=f_{1}(a_{i})$ if and only if the first letter of $w_{i+1}$ is inside the 5-gon, otherwise $a'_{i}=\bar{v}'^{\lfloor k_{i}/2\rfloor}\cdot f_{1}(b_{i})$. Given a different geodesic word $\omega_{1}=\omega$, we can obtain $\omega_{1}$ from $\omega$ by using the commutator relations to permute the letters in $\omega$, moreover, each word in the middle is also a geodesic word. Now it is easy to check that $h$ is well-defined, and for each $S$-geodesic $\ell\subset G(\Gamma)$, there exists a unique $S'$-geodesic $\ell'\subset G(\Gamma')$ such that $h(\ell)=\ell'$ up to finitely many points, moreover, if two $S$-geodesic are parallel (or orthogonal), then the corresponding $h$-images are parallel (or orthogonal), thus $h$ induces a simplicial map $h_{\ast}:\mathcal{P}(\Gamma)\to\mathcal{P}(\Gamma')$. We can define a map $h':G(\Gamma')\to G(\Gamma)$ in a similar fashion which serves as the inverse of $h$, which would imply that $h_{\ast}$ is actually a simplicial isomorphism.
\end{example}

\bibliographystyle{alpha}
\bibliography{1}
\end{document}